\definecolor{webred}{rgb}{0.75,0,0}
\definecolor{webgreen}{rgb}{0,0.75,0}
\newtheorem{theorem}{Theorem}[section]
\newtheorem{lemma}[theorem]{Lemma}
\newtheorem{notation}[theorem]{Notation}
\newtheorem{assumption}[theorem]{Assumption}
\newtheorem{proposition}[theorem]{Proposition}
\newtheorem{corollary}[theorem]{Corollary}
\newtheorem{conjecture}[theorem]{Conjecture}
\newtheorem{definition}[theorem]{Definition\rm}
\def\blacksquare{
\thinspace\nobreak \vrule width 5pt height 5pt depth 0pt}
\newtheorem{remark}[theorem]{Remark}
\newenvironment{proof}{\begin{trivlist}
                       \item[]\hspace{0cm}{\bf Proof: }
                       \hspace{0cm} }{\hfill $\blacksquare$
                     \end{trivlist}}
\newenvironment{proofof}[1]{\begin{trivlist}
                       \item[]\hspace{0cm}{\bf Proof of #1: }
                       \hspace{0cm} }{\hfill $\blacksquare$
                     \end{trivlist}}
\newcommand{\mun}{\mu}
\newcommand{\nun}{\nu}
\newcommand{\jj}{d}
\newcommand{\hs}{\sigma}
\newcommand{\htt}{\tau}
\newcommand{\hz}{{\scriptscriptstyle{\cal{Z}}}}
\newcommand{\hbfn}{\bfn}
\newcommand{\sL}{{\rm L}}
\newcommand{\re}{{\rm e}}
\newcommand{\N}{\mathbb{N}}
\newcommand{\R}{\mathbb{R}}
\newcommand{\dx}{\,\mathrm{d}}
\newcommand{\Dir}{\mathsf{Dir}}
\newcommand{\ga}{\mathfrak{a}}
\newcommand{\an}{a}
\newcommand{\bn}{b}
\newcommand{\bB}{\boldsymbol{B}}
\newcommand{\bA}{\boldsymbol{A}}
\newcommand{\cT}{\mathcal{T}}
\newcommand{\bfn}{\textbf{n}}
\newcommand{\gA}{\mathfrak{A}}
\newcommand{\dr}{\partial}
\newcommand{\Hess}{\mathsf{Hess}\,}
\newcommand{\dist}{\mathsf{dist}}
\newcommand{\spann}{\mathsf{span}}
\newcommand{\gM}{\mathsf{gM}}
\newcommand{\ess}{\mathsf{ess}}
\newcommand{\W}{\mathsf{W}}
\newcommand{\la}{\mathfrak{l}}
\newcommand{\spe}{\mathsf{sp}}
\newcommand{\eps}{\varepsilon}
\newcommand{\new}{\flat}
\newcommand{\Dom}{\mathsf{Dom}}
\newcommand{\supp}{\mathsf{supp}\,}
\newcommand{\wgt}{\mathsf{wg}}
\newcommand{\FH}{\mathsf{c}}
\newcommand{\Lc}{\mathfrak{L}^{\natural}}
\newcommand{\Mc}{\mathcal{M}}
\newcommand{\Oc}{\mathcal{O}}
\newcommand{\Vc}{\mathcal{V}}
\newcommand{\Z}{\mathsf{z}}
\newcommand{\van}{\mathsf{vf}}
\newcommand{\PR}{\mathsf{e}}
\newcommand{\sgn}{\mathsf{sgn}}
\newcommand{\range}{\mathsf{range}}
\newcommand{\Bhe}{{\mathcal B}(h^{-1/2}\varepsilon_{0})} 
\newcommand{\CBhe}{\complement{\mathcal B}(h^{-1/2}\varepsilon_{0})} 
\newcommand{\demi}{\frac 12} 
\definecolor{gr}{rgb}   {0.,   0.69,   0.23 }
\definecolor{bl}{rgb}   {0.,   0.5,   1. }
\definecolor{mg}{rgb}   {0.85,  0.,    0.85}
\definecolor{yl}{rgb}   {0.8,  0.7,   0.}
\definecolor{or}{rgb}  {0.7,0.2,0.2}
\def\bbb{{\cal B}}\def\ccc{{\cal C}}
\def\C{\mathbb C}    
 \def\N{\mathbb N}    
 \def\R{\mathbb R}
\def\D{\partial}
\def\eps{\varepsilon}
\def\set#1{\left\{#1\right\}}
\def\sep#1{\left(#1\right)}
\def\Re{{\mathrm {Re}\,}} \def\Im{{\mathrm {Im}\,}}
\def\max{\textrm{max}}
\begin{document}

\title{Magnetic WKB Constructions}
\author{V. Bonnaillie-No\"el\footnote{D\'epartement de Math\'ematiques et Applications (DMA - UMR 8553), PSL, CNRS, ENS Paris, 45 rue d'Ulm, F-75230 Paris cedex 05, France
\texttt{bonnaillie@math.cnrs.fr}},
F. H\'erau\footnote{LMJL - UMR6629, Universit\'e de Nantes, 2 rue de la Houssini\`ere, BP 92208, 44322 Nantes Cedex 3, France, \texttt{frederic.herau@univ-nantes.fr}}
and N. Raymond\footnote{IRMAR, Univ. Rennes 1, CNRS, Campus de Beaulieu, F-35042 Rennes cedex, France
\texttt{nicolas.raymond@univ-rennes1.fr}}}
\date{\today}
\maketitle

\begin{abstract}
This paper is devoted to the semiclassical magnetic Laplacian. Until now WKB expansions for the eigenfunctions were only established in presence of a non-zero electric potential. Here we tackle the pure magnetic case. Thanks to Feynman-Hellmann type formulas and coherent states decomposition, we develop here a magnetic Born-Oppenheimer theory. Exploiting the multiple scales of the problem, we are led to solve an effective eikonal equation in pure magnetic cases and to obtain WKB expansions. We also investigate explicit examples for which we can improve our general theorem:  global WKB expansions, quasi-optimal Agmon estimates and upper bound of the tunelling effect (in symmetric cases). We also apply our strategy to get more accurate descriptions of the eigenvalues and eigenfunctions in a wide range of situations analyzed in the last two decades.
\end{abstract}

\paragraph{Keywords.} WKB expansion, magnetic Laplacian, Born-Oppenheimer approximation, coherent states, Agmon estimates, tunnel effect.
\paragraph{MSC classification.} 35P15, 35J10, 81Q10, 81Q15.

{\small{\tableofcontents}}

\section{Motivation and main results}

\subsection{Context and motivation}
This paper is devoted to the analysis of the self-adjoint operators on $\sL^2(\R_{s}^m\times\R_{t}^n,\dx s \dx t)$ of the following type 
\begin{equation}\label{Lfh}
\mathfrak{L}_{h}=(hD_{s}+A_{1}(s,t))^2+(D_t+A_{2}(s,t))^2,
\end{equation}
where $A_{1}$ and $A_{2}$ are smooth functions (on which we will sometimes assume more), $D=-i\nabla$,  and where the space $\sL^2(\R_{s}^m\times\R_{t}^n,\dx s \dx t)$ is equipped with the standard scalar product:
$$\langle\psi_{1},\psi_{2}\rangle_{\sL^2(\R_{s}^m\times\R_{t}^n,\dx s \dx t)}=\int_{\R^m\times\R^n}\psi_{1}\overline{\psi_{2}} \dx s\dx t.$$
The corresponding quadratic form is denoted by $\mathfrak{Q}_{h}$. We would like to describe the lowest eigenpairs (eigenvalues and eigenfunctions)
of this operator in the limit $h\to 0$ under elementary confining assumptions.
\subsubsection{The Born-Oppenheimer strategy}
The problem of considering partial semiclassical problems appears for instance in the context of \cite{Martinez89, KMSW92} where the main issue is to approximate the eigenpairs of operators with electrical potentials in the form:
\begin{equation}\label{original}
-h^2\Delta_{s}-\Delta_{t}+V(s,t).
\end{equation}
The main idea, due to Born and Oppenheimer in \cite{BO27}, is to replace, for fixed $s$, the operator $-\Delta_{t}+V(s,t)$ by its eigenvalues $\mu_{k}(s)$ (by convention we omit the index for $k=1$). Then we are led  to consider for instance the reduced operator (called Born-Oppenheimer approximation)
$$-h^2\Delta_{s}+\mun(s),$$
and to apply the semiclassical techniques \textit{\`a la} Helffer-Sj\"ostrand \cite{HelSj84, HelSj85} to analyze in particular the tunnel effect when the potential $\mun$ admits symmetries. The main point is to make the reduction of dimension rigorous. Note that we have always the following lower bound
\begin{equation}\label{lbBOE}
-h^2\Delta_{s}-\Delta_{t}+V(s,t)\geq -h^2\Delta_{s}+\mun(s),
\end{equation}
which usually involves accurate Agmon estimates with respect to $s$: the eigenfunctions of the operator \eqref{original} satisfy the same decay estimates as the eigenfunctions of the one dimensional operator (see for instance \cite{Hel88}).

Our paper aims at understanding the analogy between magnetic case \eqref{Lfh} and electric case \eqref{original}. In particular even the formal dimensional reduction seems to be a little more problematic than in the electric case. Let us write the operator valued symbol of $\mathfrak{L}_{h}$. For $(x,\xi)\in\R^m \times\R^m$, we introduce the electro-magnetic Laplacian acting on $\sL^2(\R^n, \dx t)$:
\[\mathcal{M}_{x,\xi}=(D_{t} +A_{2}(x,t))^2+(\xi+A_{1}(x,t))^2.\]
Let us introduce the notation for the bottom of the spectrum of this operator.
\begin{definition}\label{def.mu}
For all $(x,\xi)\in\R^m\times\R^m$, the bottom of the spectrum of the essentially self-adjoint operator $\mathcal{M}_{x,\xi}$ is denoted by $\mun(x,\xi)$.
\end{definition}
We would like to replace $\mathfrak{L}_{h}$ by the $m$-dimensional pseudo-differential operator:
$$\mun(s,hD_{s}).$$
Under different assumptions, such reductions are considered in \cite[Theorem 2.1 and remark thereafter]{Martinez07} where it is suggested that the spectrum of $\mathfrak{L}_{h}$ could be completely determined by an effective Hamiltonian (a matrix of pseudo-differential operators) whose principal symbol can be described thanks to the spectral invariants of the operator valued symbol of $\mathfrak{L}_{h}$. For the present situation the low lying spectrum of $\mathfrak{L}_{h}$ could be described by the one of $\mun(s,hD_{s})$ modulo $\Oc(h)$ and we will see that, under generic assumptions, $\Oc(h)$ is precisely the order of the spectral gap between the first eigenvalues in the simple well case.

\subsubsection{Multiple scales induced by the fully semiclassical magnetic Laplacian}
Another important motivation to analyze partially semiclassical problems with magnetic fields comes in fact from the fully semiclassical case (\textit{i.e.} when the parameter $h$ multiplies all the derivatives). Let us now explain in which sense. The study of the discrete spectrum magnetic Laplacian $(-i\hbar\nabla+\bA)^2$ has given rise to many contributions in the last twenty years, especially in the semiclassical limit. To have an overview on the subject one may refer to the book by Fournais and Helffer \cite{FouHel10}, the survey by Helffer and Kordyukov \cite{HelKo14} and the lecture notes by Raymond \cite{Ray14}. Many papers are concerned with finding approximations of the first eigenfunctions.
Such approximations are difficult to obtain due to the geometry of a possible boundary (carrying in general a Neumann type condition) and to the possible variations of the magnetic field $\bB=\nabla\times\bA$. In dimension two the case of the disk is investigated in \cite{BPT98, BH, BeSt, PFS00} and generalized to smooth domains in \cite{HelMo01} where it is proved that
\begin{equation}\label{HM-result}
\lambda_{1}(\hbar)=\Theta_{0} \hbar-C_{1}\kappa_{max} \hbar^{3/2}+o(\hbar^{3/2}),
\end{equation}
where $\kappa_{max}$ is the maximal curvature of the boundary and where $\Theta_{0}>0$ and $C_{1}>0$ are universal constants related to a half-line model (actually we have $\Theta_{0}=\mathfrak{h}^{[0]}(\zeta^{[0]}_{0})$, see Section \ref{Sec:Mont}). An important point to notice is that, in the above mentioned papers, nothing is told about the simplicity of the first eigenvalue or about the approximation of the eigenfunctions. A reason for this is that the spirit of the analysis is essentially variational: it is based on the construction of appropriate test functions for the first Rayleigh quotient so that, even if the simplicity of the eigenfunctions were known, nothing could be deduced for the approximation of the eigenfunctions.

The paper \cite{FouHel06a} is the first one to establish, in a smooth case and under non-degeneracy assumptions, the approximation of the eigenfunctions and the simplicity of the lowest eigenvalues. The crucial idea to get such results is to understand a double scale structure due to the inhomogeneity of the pure magnetic Laplacian, which is specific to problems with smooth boundaries or without boundary, and to apply the spectral mapping theorem. In such situations it appears that the microlocalization (on possibly different scales) of the eigenfunctions plays an important role in the determination of the spectral asymptotics. In particular the papers \cite{Ray11b, DomRay12, PoRay12}, which are concerned with varying magnetic fields, establish full asymptotic expansions of the low lying eigenvalues and eigenfunctions by the reduction to the (electric) Born-Oppenheimer approximation which naturally involves different scales. The analysis of \cite{DomRay12}, related to vanishing magnetic fields, is motivated by the papers \cite{Montgomery95, HelMo96, HelKo09, HelKo12a} and solves one of their conjectures on the asymptotic simplicity of the eigenvalues. This paper will provide simple examples suggested by all the above mentioned models. Moreover, as we will see, the proof of the simplicity of the eigenvalues as well as the expansions of the eigenfunctions strongly relies on the multiscale analysis of microlocal models as it is done while studying hypoellipticity. We will see that an analysis \textit{\`a la Born-Oppenheimer} will allow us to deal with all the above mentioned situations.

\subsubsection{Magnetic WKB expansions and Agmon estimates}
In all the papers about asymptotic expansions of the magnetic eigenfunctions, one of the methods consists in using a formal power series expansion. It turns out that these constructions are never in the famous WKB form, but in a weaker and somehow more flexible one. When there is an additional electric potential providing easy confinement estimates, the WKB expansions are possible as we can see in \cite{HelSj87} and \cite{MS99}. The reason for which we would like to have a WKB description of the eigenfunctions is to get a precise estimate of the magnetic tunnel effect in the case of symmetries. Until now, such estimates are only investigated in two dimensional corner domains in \cite{BD06} and \cite{BDMV07} for the numerical counterpart. It turns out that the crucial point to get an accurate estimate of the exponentially small splitting of the eigenvalues is to establish exponential decay of Agmon type. These localization estimates are rather easy to obtain (at least to get the good scale in the exponential decay) in the corner cases due to the fact that the operator is \enquote{more elliptic} than in the regular case in the following sense: the spectral asymptotics is completely determined by the principal symbol. Nevertheless, let us notice here that, on the one hand, the numerics suggests that the eigenvalues do not seem to be simple (see for instance the case of the square in \cite{BDMV07} or of the ellipse in Figure \ref{fig.ellipse}) and, on the other hand, that establishing the optimal Agmon estimates is still an open problem. In smooth cases, due to a lack of ellipticity and to the multiple scales, the localization estimates obtained in the literature are in  general not optimal or rely on the presence of an electric potential (see \cite{N96, N99}): the principal symbol provides only a partial confinement whereas the precise localization of the eigenfunctions seems to be determined by the subprincipal terms. As far as we know, the present paper provides the first examples of WKB expansions in pure magnetic situations as well as quasi-optimal -- optimal in terms of power of $h$ but with no exhibited distance of Agmon -- Agmon estimates in model situations. In particular, we prove for a wide range of situations analyzed in the past decades that the magnetic eigenfunctions are in the WKB form under generic assumptions. This paper can be considered as the first necessary step (WKB expansions and rather accurate Agmon estimates) towards the complete comprehension of the magnetic tunnel effect.

\subsection{Main results and strategy of the proofs}\label{sec.mainresults}

\subsubsection{Spectrum of the simple magnetic wells}
In the simple well situation, we will work under the following assumptions. The first assumption states that the lowest eigenvalue of the operator symbol of $\mathfrak{L}_{h}$ admits a unique and non degenerate minimum and the second one concerns the simplicity of the spectrum of the effective harmonic oscillator.

\begin{assumption}\label{hyp-gen}
\begin{itemize}
\item[-] The function $\R^m\times\R^m\ni(x,\xi)\mapsto \mu(x,\xi)$ is continuous and admits a unique and non degenerate minimum $\mu_{0}$ at a point denoted by $(x_{0},\xi_{0})$ and such that $\liminf_{|x|+|\xi|\to+\infty}\mu(x,\xi)>\mu_{0}$.
\item[-] The family $(\mathcal{M}_{x,\xi})_{(x,\xi)\in\R^m\times\R^m}$ can be extended into a holomorphic family of type (A) in the sense of Kato \cite[Chapter VII]{Kato66} in a complex neighborhood $\mathcal{V}_{0}$ of $(x_{0},\xi_{0})$.
\item[-] For all $(x,\xi)\in\mathcal{V}_{0}\cap(\R^m\times\R^m)$, $\mu(x,\xi)$ is a simple eigenvalue. 
\end{itemize}
\end{assumption}
\begin{remark}
Let us explain an example that we have in mind when stating Assumption \ref{hyp-gen}. As we will see in Section \ref{Sec:Mont}, we will consider operator symbols (acting on $\sL^2(\R_{t})$) in the form
$$\mathcal{M}^{[k]}_{x,\xi}=D_{t}^2+V(x,\xi),\quad \mbox{ with } V(x,\xi)=\left(\xi-\gamma(x)\frac{t^{k+1}}{k+1}\right)^2,\qquad (x,\xi)\in\R^m\times\R^m\,,$$
where $k\geq 1$ and $\gamma$ is uniformly bounded from below by a positive constant. In this case, the domain of $\mathcal{M}^{[k]}_{x,\xi}$ does not depend on $(x,\xi)$ and $\mathcal{M}^{[k]}_{x,\xi}$ depends on $(x,\xi)$ analytically as soon as $\gamma$ is analytic. Therefore it is a real analytic family of type (A) and it is not difficult to extend locally this family into a holomorphic family. The operator has clearly a compact resolvent so that $\mu(x,\xi)$ is always an eigenvalue (and it is simple by a standard ODE argument). By the min-max principle, it is rather direct to see that the function $\mu$ is continuous on $\R^m\times\R^m$ and that it behaves nicely at infinity (for example with $\gamma$ as in Proposition~\ref{WKB-model}). The uniqueness of the minimum and its non degeneracy are related to more advanced considerations (see Section \ref{Sec:Mont}).
\end{remark}

\begin{remark}
Under Assumption~\ref{hyp-gen}, the function $\mu$ is analytic with respect to $(x,\xi)$ and it is 
associated with an $\sL^2$-normalized eigenfunction $u_{x,\xi}\in\mathcal{S}(\R^n)$ which also analytically depends on $(x,\xi)$.\\

In a neighborhood $\mathcal{V}_{0}$ of $(x_{0},\xi_{0})$, we still denote by $u_{x,\xi}$ and $\mun(x,\xi)$ the holomorphic extensions of $u$ and $\mun$ and we have locally:
\begin{equation}\label{u^2}
\int_{\R^n} u_{x,\xi}\overline{u_{\overline{x},\overline{\xi}}}\dx t=1.
\end{equation}
Note that the holomorphic extension of $u$ is not always $\sL^2$-normalized.
\end{remark}

\begin{assumption}\label{hyp-gen'}
Under Assumption \ref{hyp-gen}, let us denote by $\Hess\,\mun(x_{0},\xi_{0})$ the Hessian matrix of $\mun$ at $(x_{0},\xi_{0})$. We assume that the spectrum of the operator $\Hess\,\mun(x_{0},\xi_{0})(\sigma, D_{\sigma})$ is simple.
\end{assumption}
\begin{remark}\label{rem.m1}
Assumption \ref{hyp-gen'} is automatically satisfied when $m=1$.
\end{remark}
The last assumption is a spectral confinement.
\begin{assumption}\label{confining}
For $R\geq 0$, we let $\Omega_{R}=\R^{m+n}\setminus \overline{{\cal B}(0,R)}$. We denote by $\mathfrak{L}_{h}^{\Dir,\Omega_{R}}$ the Dirichlet realization on $\Omega_{R}$ of $(D_{t}+A_{2}(s,t))^2+(hD_{s}+A_{1}(s,t))^2$. 
We assume that there exist $R_{0}\geq 0$, $h_{0}>0$ and $\mu_{0}^*>\mu_{0}$ such that for all $h\in(0,h_{0})$, the bottom of the spectrum of $\mathfrak{L}_{h}^{\Dir,\Omega_{R_{0}}}$  satisfies:
$$\lambda_{1}^{\Dir,\Omega_{R_{0}}}(h)\geq \mu_{0}^*.$$
\end{assumption}
\begin{remark}\label{confining2}
In particular, due to the monotonicity of the Dirichlet realization with respect to the domain, Assumption \ref{confining} implies that there exist $R_{0}>0$ and $h_{0}>0$ such that for all $R\geq R_{0}$ and $h\in(0,h_{0})$:
$$\lambda_{1}^{\Dir,\Omega_{R}}(h)\geq \lambda_{1}^{\Dir,\Omega_{R_{0}}}(h)\geq \mu_{0}^*.$$
\end{remark}
By using the Persson's theorem (see \cite{Persson60}), we have the following proposition.
\begin{proposition}\label{essential}
Under Assumption \ref{confining}, there exists $h_{0}>0$ such that for all $h\in(0,h_{0})$:
$$\inf \spe_{\ess}(\mathfrak{L}_{h})\geq \mu_{0}^*.$$
\end{proposition}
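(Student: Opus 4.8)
The plan is to apply Persson's theorem, which characterizes the bottom of the essential spectrum of a Schr\"odinger-type operator as the limit, over an exhausting sequence of complements of compact sets, of the bottoms of the corresponding Dirichlet problems. Concretely, for a fixed $h\in(0,h_0)$ with $h_0$ as in Assumption \ref{confining}, Persson's theorem gives
\[
\inf\spe_{\ess}(\mathfrak{L}_{h})=\lim_{R\to+\infty}\lambda_{1}^{\Dir,\Omega_{R}}(h)=\sup_{R>0}\lambda_{1}^{\Dir,\Omega_{R}}(h),
\]
the last equality holding because $R\mapsto\lambda_{1}^{\Dir,\Omega_{R}}(h)$ is non-decreasing (the Dirichlet form domain on $\Omega_{R}$ shrinks as $R$ grows, so the infimum of the Rayleigh quotient increases). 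In particular the limit exists in $[-\infty,+\infty]$ and equals the supremum.

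From here the argument is immediate: by Assumption \ref{confining} we have $\lambda_{1}^{\Dir,\Omega_{R_{0}}}(h)\geq\mu_{0}^*$ for all $h\in(0,h_0)$, and by the monotonicity recalled in Remark \ref{confining2} the same lower bound holds for every $R\geq R_{0}$. Passing to the limit $R\to+\infty$ in Persson's formula yields $\inf\spe_{\ess}(\mathfrak{L}_{h})\geq\mu_{0}^*$, which is exactly the claim. The value $h_0$ in the conclusion can be taken to be the one furnished by Assumption \ref{confining}.

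The only point that requires a small amount of care — and hence is the main (mild) obstacle — is checking that the operator $\mathfrak{L}_{h}$ genuinely falls within the scope of Persson's theorem. One should verify that $\mathfrak{L}_{h}$, defined via its quadratic form $\mathfrak{Q}_{h}$ on $\sL^2(\R^m_s\times\R^n_t)$, is a non-negative self-adjoint operator whose form domain is stable under multiplication by smooth cut-off functions, so that the localization (IMS-type) formula underlying Persson's characterization applies; this is standard for magnetic Schr\"odinger operators with smooth potentials $A_1,A_2$. One also uses that the Dirichlet realizations $\mathfrak{L}_{h}^{\Dir,\Omega_{R}}$ are the form extensions associated with $\mathfrak{Q}_h$ restricted to functions supported in $\overline{\Omega_R}$, so that the quantities $\lambda_{1}^{\Dir,\Omega_{R}}(h)$ appearing in Assumption \ref{confining} are precisely those entering Persson's formula. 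Granting these routine facts, the proof is just the two displayed steps above.
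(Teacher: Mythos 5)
Your argument is correct and is exactly the route the paper takes: the paper simply invokes Persson's theorem together with the monotonicity of $R\mapsto\lambda_{1}^{\Dir,\Omega_{R}}(h)$ recorded in Remark \ref{confining2}, which is what you do. The extra paragraph checking that $\mathfrak{L}_{h}$ falls within the scope of Persson's theorem is a reasonable (and standard) precaution that the paper leaves implicit.
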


\begin{theorem}\label{theorem-simple-well}
Under Assumptions \ref{hyp-gen}, \ref{hyp-gen'} and \ref{confining}, and assuming in addition that $A_{1}$ and $A_{2}$ are polynomials, for all $n\geq 1$, there exists $h_{0}>0$ such that for all $h\in(0,h_{0})$ the $n$-th eigenvalue of $\mathfrak{L}_{h}$ exists and satisfies
$$
\lambda_{n}(h) = \lambda_{n,0} +  \lambda_{n,1}h + o(h), $$
where
$\lambda_{n,0}=\mu_{0}$ and $\lambda_{n,1}$ is the $n$-th eigenvalue of $\frac{1}{2}\Hess\,\mun(x_{0},\xi_{0})(\sigma, D_{\sigma})$.
\end{theorem}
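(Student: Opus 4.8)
The plan is to establish the asymptotics by a two-sided argument: an upper bound via a Born--Oppenheimer style construction of quasimodes, and a matching lower bound via a microlocalization/projection argument. The leading term $\lambda_{n,0}=\mu_0$ together with Proposition~\ref{essential} already tells us the relevant spectrum lives near $\mu_0$ below the essential spectrum, so everything is a perturbation problem around the point $(x_0,\xi_0)$ in phase space, at the scale $h^{1/2}$.

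First I would set up the formal Born--Oppenheimer reduction. Using the eigenfunction $u_{x,\xi}$ of $\Mc_{x,\xi}$ attached to $\mun(x,\xi)$, one constructs an approximate projector onto the ``ground state fiber'': for a trial function of the form $\psi(s,t)\approx f(s)\,u_{s,hD_s}(t)$ (interpreted via a quantization of the symbol $(x,\xi)\mapsto u_{x,\xi}$), the action of $\mathfrak{L}_h$ is, to leading orders, the pseudodifferential operator $\mun(s,hD_s)$ acting on $f$. Since $\mun$ has a unique nondegenerate minimum $\mu_0$ at $(x_0,\xi_0)$, rescaling $s=x_0+h^{1/2}\sigma$, $hD_s=\xi_0+h^{1/2}D_\sigma$ and Taylor-expanding $\mun$ gives $\mun(s,hD_s)=\mu_0+\tfrac{h}{2}\Hess\,\mun(x_0,\xi_0)(\sigma,D_\sigma)+o(h)$. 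The operator $\tfrac12\Hess\,\mun(x_0,\xi_0)(\sigma,D_\sigma)$ is a quadratic form in $(\sigma,D_\sigma)$; by Assumption~\ref{hyp-gen'} its spectrum is simple (and discrete, being an elliptic quadratic form since the Hessian is positive definite at a nondegenerate minimum), so it has well-defined eigenvalues $\lambda_{n,1}$ with Schwartz-class eigenfunctions $g_n$. Taking $f(s)=h^{-m/4}g_n(h^{-1/2}(s-x_0))e^{i\xi_0\cdot s/h}$ and building the corresponding $\psi_n$ yields, after controlling the quantization remainders (here the polynomial assumption on $A_1,A_2$ is used so that all symbols are smooth with controlled growth and the pseudodifferential calculus applies cleanly), quasimodes satisfying $\|(\mathfrak{L}_h-\lambda_{n,0}-\lambda_{n,1}h)\psi_n\|=o(h)\|\psi_n\|$, with the $\psi_n$ asymptotically orthonormal. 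The spectral theorem then gives the upper bound: there are $n$ eigenvalues of $\mathfrak{L}_h$ below $\mu_0+\lambda_{n,1}h+o(h)$, which in particular forces them to be below $\mu_0^*$ for $h$ small, so they are genuine discrete eigenvalues $\lambda_1(h),\dots,\lambda_n(h)$.

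For the lower bound I would proceed by localization. Agmon-type estimates (in the semiclassical variable $s$) combined with microlocalization in $\xi$ near $\xi_0$ — using the confinement of $\mun$ at infinity and the lower bound $\mathfrak{L}_h\geq \mun(s,hD_s)+o(h)$ type inequalities, themselves obtained from the operator-valued symbol and the IMS localization formula — show that any eigenfunction associated with an eigenvalue in $[\mu_0,\mu_0+Ch]$ is, up to $o(h)$ in $\sL^2$ and in form, concentrated in a phase-space neighborhood of $(x_0,\xi_0)$ of size $h^{1/2}$, and essentially lies in the range of the approximate ground-state projector. Projecting the eigenvalue equation onto that fiber and using the same rescaling reduces the problem, modulo $o(h)$, to the model operator $\mu_0+\tfrac h2\Hess\,\mun(x_0,\xi_0)(\sigma,D_\sigma)$; the min-max principle applied to the projected form then gives $\lambda_n(h)\geq \mu_0+\lambda_{n,1}h+o(h)$. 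Comparing with the upper bound closes the proof.

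The main obstacle I expect is making the Born--Oppenheimer reduction rigorous with errors genuinely $o(h)$ rather than merely $O(h)$: one must quantize the $(x,\xi)$-dependent fiber eigenfunction $u_{x,\xi}$ (a family of Schwartz functions on $\R^n$, depending analytically on $(x,\xi)$), control the non-commutation between this quantization and the differential operator $\mathfrak{L}_h$, and track the first subprincipal correction carefully — the Feynman--Hellmann formulas alluded to in the abstract presumably identify that the naive $O(h)$ subprincipal term actually reorganizes into the Hessian quadratic form with no extra linear-in-$h$ shift. Handling this, together with the accompanying Agmon/microlocal confinement needed for the lower bound in the absence of any electric potential, is the technical heart of the argument; the holomorphic type-(A) structure in Assumption~\ref{hyp-gen} is what guarantees the requisite smoothness and decay of $u_{x,\xi}$ for these manipulations.
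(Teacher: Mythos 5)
Your overall two-sided architecture (quasimodes for the upper bound, microlocalization and projection for the lower bound) matches the paper's strategy, and your description of the upper bound is essentially correct: after rescaling $s=x_0+h^{1/2}\sigma$, $t=\tau$ and gauging away $\xi_0$, the polynomial assumption on $A_1,A_2$ lets one expand $\mathcal{L}_h=\sum_j h^{j/2}\mathcal{L}_j$, and the Feynman--Hellmann formulas drive the quasimode construction to the harmonic oscillator $\tfrac12\Hess\,\mun(x_0,\xi_0)(\sigma,D_\sigma)$. That is Proposition~\ref{quasimodes}.

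The gap is in the lower bound, where you underestimate the difficulty and misattribute the mechanism. You write that an inequality of the form $\mathfrak{L}_h\geq\mun(s,hD_s)+o(h)$ follows ``from the operator-valued symbol and the IMS localization formula''; neither claim is right. No such operator inequality is available here in the pure magnetic setting (this is exactly what the paper contrasts with the electric lower bound \eqref{lbBOE}), and the IMS formula is not what produces it. The paper's actual device is a coherent-states decomposition in the semiclassical variable $\sigma$ only: one rewrites $\mathcal{L}_h$ in anti-Wick ordered form, so that $\mathcal{L}_h^{\mathsf W}=\int_{\R^{2m}}\mathcal{M}_{x_0+h^{1/2}u,\xi_0+h^{1/2}p}\,\dx u\,\dx p$, and controls the ordering remainders $\mathcal{R}_h$ via the Agmon and space/phase localization estimates (Propositions~\ref{Agmon-tau}--\ref{loc-phase}) to get
\[
\mathcal{Q}_h(\psi)\;\geq\;\int_{\R^{2m}} Q_{h,u,p}(\psi_{u,p})\,\dx u\,\dx p\;-\;Ch\|\psi\|^2.
\]
This is the working substitute for the naive $\mathfrak{L}_h\geq\mun(s,hD_s)+o(h)$, and it does not follow from IMS alone. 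The IMS-type commutator formula (Lemma~\ref{lem.IMS-formula}, applied with $\gA=\ga_j$ and $\gA=\ga_j\ga_j$) enters later and for a different purpose: it yields the polynomial bounds $\|\sigma\psi\|^2+\|\nabla_\sigma\psi\|^2\leq C\|\psi\|^2$ and $\|P(\sigma,D_\sigma)\psi\|^2\leq Ch^{-1/2-\tilde\eta}\|\psi\|^2$, which are then fed into a corrected Feshbach projection $\Pi_h\psi=\Psi_0+h^{1/2}\Psi_1$ (with $\Psi_1$ built from $(\partial_{x_j}u)_{x_0,\xi_0}$, $(\partial_{\xi_j}u)_{x_0,\xi_0}$ so as to solve $(\mathcal{L}_0-\mu_0)\Psi_1=-\mathcal{L}_1\Psi_0$), and into the estimate $\|R_h\psi\|\leq Ch^{3/4-\tilde\eta}\|\psi\|$. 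Only after this machinery is in place can one expand $\mathcal{Q}_h(\psi)$ term by term and extract $\mu_0\|\psi\|^2+h\langle\tfrac12\Hess\,\mun(x_0,\xi_0)(\sigma,D_\sigma)\langle\psi,u_0\rangle,\langle\psi,u_0\rangle\rangle+o(h)\|\psi\|^2$ and apply min-max. Your sketch jumps from ``localize and project'' to ``min-max on the reduced operator'' without the intermediate coherent-states lower bound, the anti-Wick remainder control, or the refined $\Pi_h$ projection; as written, the argument would not close because the naive projection $\Pi_0\psi=\langle\psi,u_0\rangle u_0$ alone only gives an $O(h)$ (not $o(h)$) error, which is fatal since the eigenvalue splitting you want to resolve is itself of order $h$.
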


\begin{remark}
In fact using the double scale construction developed in the proof of the previous theorem,
it is possible to get a complete asymptotic expansion of the following type
$$\lambda_{n}(h)\underset{h\to 0}{\sim} \sum_{j\geq 0} \la_{n,j}h^{j/2},$$
where $\la_{n,0}=\mu_{0}$, $\la_{n,1}=0$ and $\la_{n,2} = \lambda_{n,1}$.
\end{remark}

\paragraph{Strategy of the proof of Theorem \ref{theorem-simple-well}.}
The proof of Theorem \ref{theorem-simple-well} is divided into two main steps. The first step is to construct quasimodes as formal series expansions and to apply the spectral theorem. In order to succeed we will establish Feynman-Hellmann formulas with multiple parameters which are consequences of the perturbation theory of Kato. The second step which is slightly more difficult is to get an accurate estimate of the spectral splitting between the eigenvalues. For that purpose, we will follow the strategy of \cite{Ray13} by using a partial coherent states decomposition with respect to the semiclassical variables $s$ and use it to establish polynomial estimates (in the spirit of \cite{Ray12} and also \cite{HelKo11}) in the phase space satisfied by the eigenfunctions. Then the Feshbach-Grushin type reduction is used to rigorously reduce the dimension and get the spectral splitting. The proof of Theorem \ref{theorem-simple-well} is the aim of Section~\ref{Sec.2}.

\subsubsection{Magnetic WKB expansions: simple well case}
We provide now WKB expansions of the lowest eigenpairs in a pure magnetic case. We reduce here our study to the case when $A_{2}=0$ for reasons motivated in Remark \ref{rem:A20}. We therefore focus now on operators of the form
 \begin{equation}\label{Lfh2}
\mathfrak{L}_{h}=D_t^2 + (hD_{s}+A_{1}(s,t))^2.
\end{equation} 
 Let us state one of the most important results of this paper.

\begin{theorem}\label{WKB-general}
We assume that $A_{2}=0$ and that $A_{1}$ is real analytic. Under Assumptions \ref{hyp-gen}, \ref{hyp-gen'} and \ref{confining}, there exist a function $\Phi=\Phi(s)$ defined in a neighborhood $\Vc$ of $x_{0}$ with  $\Re \Hess\Phi(x_0) >0$ and, for any $n\geq 1$, a sequence of  real numbers $(\lambda_{n,j})_{j\geq 0}$ such that
$$
\lambda_n(h) \underset{h\to 0}{\sim}\sum_{j\geq 0}\lambda_{n,j} h^j,
$$
in the sense of formal series, with $\lambda_{n,0}=\mu_{0}$. Besides there exists a formal
series of smooth functions on $ \Vc \times \R^n_t$
$$
 \an_{n}(. ; h)\underset{h\to 0}{\sim}\sum_{j\geq 0}\an_{n,j} h^j
$$
with $\an_{n,0} \neq 0$ such that
$$
\left(\mathfrak{L}_{h}-\lambda_{n}(h)\right)\left( \an_{n}(. ;h) \re^{-\Phi/h}\right)=\mathcal{O}\left(h^{\infty}\right)  \re^{-\Phi/h}.
$$
Furthermore the functions $t\mapsto a_{n,j}(s,t)$ belong to the Schwartz class uniformly in $s\in\Vc$. In addition, if $A_{1}$ is a polynomial function, there exists $c_0>0$ such that for all $h\in(0,h_{0})$
$$
\mathcal{B}\Big(\lambda_{n,0} + \lambda_{n,1} h,c_0h\Big)\cap \spe\left(\mathfrak{L}_{h}\right)=\{\lambda_{n}(h)\},$$
and $\lambda_{n}(h)$ is a simple eigenvalue.
\end{theorem}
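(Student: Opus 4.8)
The plan is to construct the WKB Ansatz in two stages: first the formal construction of $(\lambda_{n,j})_{j\geq 0}$ and the amplitude series $a_n(\cdot;h)$, and then — in the polynomial case — the spectral accuracy statement locating exactly one eigenvalue in the ball $\mathcal{B}(\lambda_{n,0}+\lambda_{n,1}h,c_0h)$.

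For the formal construction, I would insert $a_n(s,t;h)\,\re^{-\Phi/h}$ into $\mathfrak{L}_h-\lambda_n(h)$ and use the conjugated operator: writing $\re^{\Phi/h}(hD_s+A_1)\re^{-\Phi/h}=hD_s+A_1+i\Phi'(s)$, one sees that $\re^{\Phi/h}\mathfrak{L}_h\re^{-\Phi/h}=D_t^2+(hD_s+i\Phi'(s)+A_1(s,t))^2$. The key algebraic observation is that the formal principal symbol (the $h^0$ term) acting on the leading amplitude $a_{n,0}$ is $\mathcal{M}_{s,i\Phi'(s)}-\mu_0$, an operator in $t$ alone with complex "momentum" $\xi=i\Phi'(s)$. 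Because $\mu_0$ is the non-degenerate minimum of $\mu$ and, by Assumption \ref{hyp-gen}, $(x,\xi)\mapsto \mu(x,\xi)$ and its eigenfunction $u_{x,\xi}$ extend holomorphically to $\Vc_0$, solvability at order $h^0$ forces the complex eikonal equation
$$
\mu\big(s,i\Phi'(s)\big)=\mu_0,\qquad \Phi(x_0)=0,\quad \Phi'(x_0)=\xi_0/i,
$$
which is an eikonal equation whose Hamilton–Jacobi analysis at the non-degenerate critical point $(x_0,\xi_0)$ yields a local solution $\Phi$ with $\Re\Hess\Phi(x_0)>0$ — this is the standard complex WKB/stable-manifold construction (the linearized flow of the Hamiltonian $\mu$ at $(x_0,\xi_0)$ has a stable Lagrangian subspace since $\Hess\mu(x_0,\xi_0)>0$; positivity of $\Re\Hess\Phi$ follows from non-degeneracy of that Hessian). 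Once $\Phi$ is fixed, $a_{n,0}(s,t)=g_0(s)u_{s,i\Phi'(s)}(t)$ for an amplitude $g_0(s)$ still free; the transport equations at higher orders $h^j$ are solved by the Fredholm alternative, projecting onto $u_{s,i\Phi'(s)}$ (the Feynman–Hellmann formulas from the proof of Theorem \ref{theorem-simple-well} give the action of the subprincipal operator, and the orthogonality condition at order $h^j$ is a linear transport ODE in $s$ for $g_{j-1}$ driven by the lower-order data). At the first relevant order this transport operator is precisely the effective harmonic oscillator $\frac12\Hess\mu(x_0,\xi_0)(\sigma,D_\sigma)$ restricted along the stable manifold, and choosing $g_0$ to be its $n$-th Hermite-type eigenfunction both fixes $\lambda_{n,1}$ (consistently with Theorem \ref{theorem-simple-well}) and makes all the transport equations solvable; the Schwartz decay in $t$ is inherited from $u_{s,i\Phi'(s)}\in\mathcal{S}(\R^n)$ together with elliptic regularity of $\mathcal{M}$. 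Truncating the series by Borel summation gives $a_n(\cdot;h)$ with $(\mathfrak{L}_h-\lambda_n(h))(a_n\,\re^{-\Phi/h})=\mathcal{O}(h^\infty)\re^{-\Phi/h}$.

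For the spectral accuracy in the polynomial case, the strategy is to combine the WKB quasimode with the a priori information already obtained in Theorem \ref{theorem-simple-well}: there $\lambda_n(h)=\mu_0+\lambda_{n,1}h+o(h)$ and (again in the polynomial case, via the coherent-states and Feshbach–Grushin argument announced in the strategy of that theorem) the $n$-th eigenvalue is separated from the rest of the spectrum by a gap of order $h$, since the $\lambda_{n,1}$ are the distinct eigenvalues of the effective harmonic oscillator. Concretely: by the spectral theorem applied to the $\mathcal{O}(h^\infty)$-quasimode there is a genuine eigenvalue of $\mathfrak{L}_h$ within $\mathcal{O}(h^\infty)$ of $\mu_0+\lambda_{n,1}h+\mathcal{O}(h^2)$; by the gap estimate no other eigenvalue lies within $c_0 h$ of it for a suitable $c_0>0$ (take $c_0$ smaller than half the minimal gap between consecutive $\lambda_{n,1}$'s among the first $N$ levels, uniformly in $h$ small); hence $\mathcal{B}(\lambda_{n,0}+\lambda_{n,1}h,c_0h)\cap\spe(\mathfrak{L}_h)=\{\lambda_n(h)\}$ and $\lambda_n(h)$ is simple. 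Uniformity in $h\in(0,h_0)$ comes from Proposition \ref{essential} (the essential spectrum stays above $\mu_0^*>\mu_0$) so everything near $\mu_0$ is discrete and the finitely many levels are controlled.

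The main obstacle is the well-posedness and geometric analysis of the complex eikonal equation $\mu(s,i\Phi'(s))=\mu_0$: one must verify that $\mu$ is a genuine analytic symbol near $(x_0,\xi_0)$ (guaranteed by the holomorphic type-(A) hypothesis and the simplicity in Assumption \ref{hyp-gen}), that the associated Hamilton–Jacobi problem admits a local smooth solution $\Phi$ with $\Re\Hess\Phi(x_0)>0$, and — crucially — that the transport equations remain solvable in the Schwartz class uniformly in $s$, which requires uniform control of the resolvent of $\mathcal{M}_{s,i\Phi'(s)}-\mu(s,i\Phi'(s))$ on the orthogonal complement of $u_{s,i\Phi'(s)}$ (a consequence of the spectral gap above $\mu_0$ in the operator symbol, extended holomorphically). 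The interplay between the complex phase and the non-self-adjointness of $\mathcal{M}_{s,i\Phi'(s)}$ is the delicate point: one does not have self-adjointness for complex $\xi$, so the Feynman–Hellmann/Fredholm machinery must be run via the bilinear pairing \eqref{u^2} rather than the Hermitian inner product, and all solvability conditions are phrased accordingly.
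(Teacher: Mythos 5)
Your proposal is correct and follows essentially the same route as the paper: conjugation by $\re^{\Phi/h}$, solution of the complex eikonal equation $\mun(s,i\nabla\Phi(s))=\mu_{0}$ via the stable manifold theorem at the doubly characteristic point, operator-valued transport equations solved by the Fredholm alternative run with the bilinear pairing \eqref{u^2} (using $\overline{u^\natural_{x,\xi}}=u^\natural_{x,\overline{\xi}}$, which is where $A_{2}=0$ enters), identification of $\lambda_{n,1}$ with the spectrum of the effective transport operator (equal to that of $\frac12\Hess\mun(x_{0},\xi_{0})(\sigma,D_{\sigma})$), and finally the spectral theorem combined with Theorem \ref{theorem-simple-well} for the localization and simplicity. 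The only places where you compress what the paper works out in detail are the positivity $\Re\Hess\Phi(x_{0})>0$ (proved there by a homotopy to the harmonic oscillator on the Lagrangian plane $\Lambda_{+}^{0}$) and the solvability of the transport equations in $\mathcal{C}^\infty$, but you correctly flag both as the delicate points.
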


In the previous theorem we used the following definition of formal series of functions.
\begin{notation} \label{formalfunctions}
Let $n\geq1$. We write $\an_{n}(s,t ;h)\underset{h\to 0}{\sim}\sum_{j\geq 0}\an_{n,j}(s,t) h^j$ when for all $J\geq 0$ and $\alpha \in \N^{m+n}$, there exist $h_{J,\alpha}>0$ and $C_{J,\alpha}>0$ such that for all $h\in(0,h_{J,\alpha})$, we have
$$\bigg|\partial^{\alpha} \Big( \an_{n}(s,t ; h)-\sum^J_{j= 0}\an_{n,j}(s,t)h^j\Big)\bigg|\leq C_{J,\alpha}h^{J+1}\quad\mbox{Êlocally in } (s,t) \in \Vc\times\R^n.$$
We also write $a = \Oc(h^\infty)$ when all the coefficients in the series are zero. The case of formal series of numbers is similar.
\end{notation}
Let us also recall that for any arbitrary sequence of smooth functions $a_j$ one can always find, by a procedure of Borel type, a smooth function $a(s,t ; h)$ (up to $\Oc(h^\infty)$) such that
$a(s,t ; h)\underset{h\to 0}{\sim}\sum_{j\geq 0}a_{j}(s,t) h^j$ (see e.g. \cite{Ma02}).
\begin{remark}
Thanks to Theorem \ref{theorem-simple-well}, we have sharp asymptotic expansions of the eigenvalues. In particular, one knows that they become simple in the semiclassical limit. Therefore, by applying the spectral theorem, we get the WKB approximation of the corresponding eigenfunctions from Theorem \ref{WKB-general}.
\end{remark}

\begin{remark}\label{rem:A20}
When $A_{2}$ is not zero, it appears that the dimensional reduction is prevented by the oscillations of the eigenfunctions of the model operator $\mathcal{M}_{x,\xi}$. The problem already appears in the case $t\in\R$: we can gauge out $A_{2}$ at the price to replace $A_{1}$ by $A_{1}+h\nabla_{s}\varphi(s,t)$ which is $h$ dependent. As a consequence of our analysis, we can check that the spectrum associated with the potential $(A_{1}+h\nabla_{s}\varphi,0)$ is shifted by a factor $\mathcal{O}(h)$ compared to the one associated with $(A_{1},0)$. In dimension one for $t$, we can even prove with our method (and a change of gauge) that the phase $\Phi$ in the WKB expansion is $(s,t)$-dependent.
\end{remark}

\paragraph{Strategy of the proof of Theorem \ref{WKB-general}.}
 The new Ansatz considered here is given by a partial WKB expansion with respect to the variable $s$. Under some analyticity assumptions, the effective eikonal equation will be solved thanks to the classical stable manifold theorem and analytic extensions of the eigenpairs of the \enquote{model} operators. The corresponding effective transport equation will be obtained as the Fredholm condition of an operator valued transport equation jointly with the Feynman-Hellmann formulas. Theorem \ref{WKB-general} will be proved in Section \ref{sec:WKB}.

\subsubsection{Generalized Montgomery operators: towards the magnetic tunnel effect}
Let us introduce a family of magnetic Laplacians in dimension two which is related to \cite{HelPer10} and the more recent result by Fournais and Persson \cite{FouPer13}.
For $k\in\N\setminus\{0\}$, we consider the so-called generalized Montgomery operator on $\sL^2(\R^2,\dx \mathsf{s}\dx \mathsf{t})$:
$$\mathcal{L}^{[k],\gM}_{\hbar}=\hbar^2 D_{\mathsf{t}}^2+\left(\hbar D_{\mathsf{s}}-\gamma(\mathsf{s})\frac{\mathsf{t}^{k+1}}{k+1}\right)^2,$$
where $\gamma$ is analytic and does not vanish. The corresponding magnetic field is 
\[B^{[k]}(\mathsf{s}, \mathsf{t})=\gamma(\mathsf{s})\mathsf{t}^k\,.\]
We call $\lambda_{n,\hbar}^{[k],\gM}$ the $n$-th eigenvalue (if exists) of this operator. In order to stick to the previous analysis, we start by the following naive but fundamental rescaling
\begin{equation}\label{eq:rescaling}
\mathsf{s}=s,\qquad\mathsf{t}=\hbar^{\frac{1}{k+2}}t.
\end{equation}
The operator becomes
$$\hbar^{\frac{2k+2}{k+2}}\left(D_{t}^2+\left(\hbar^{\frac{1}{k+2}}D_{s}-\gamma(s)\frac{t^{k+1}}{k+1}\right)^2\right).$$
The investigation is then reduced to the one of
\begin{equation} \label{defgM}
\mathfrak{L}_{h}^{[k]}=D_{t}^2+\left(hD_{s}-\gamma(s)\frac{t^{k+1}}{k+1}\right)^2,
\end{equation}
with $h=\hbar^{\frac{1}{k+2}}$. 

Under some assumptions, the operator $\mathfrak{L}_{h}^{[k]}$ is a particular case of the previous theory. We will see in Section \ref{S:verify-assumptions} (Proposition \ref{verify-assumptions}) that it satisfies Assumptions \ref{hyp-gen}, \ref{hyp-gen'} and \ref{confining}.
As a consequence, we could directly apply Theorem \ref{WKB-general}. But, at least in the case when the oscillations of the function $\gamma$ are small enough, we can prove that the first eigenfunctions are \textit{globally} in the WKB form.
\begin{proposition}\label{WKB-model}\ 
\begin{itemize}
\item If $\gamma$ is a polynomial function and admits a unique minimum $\gamma_{0}>0$ at $s_{0}=0$ which is non degenerate, then Theorem~\ref{WKB-general} applies. 
\item If $\gamma$ is an analytic function and if $\left\|1-\frac{\gamma_{0}}{\gamma}\right\|_{\infty}$ is small enough, then the conclusion of Theorem~\ref{WKB-general} is valid and we can take $x_{0}=s_{0}$, $\Vc=\R$.
\end{itemize}
\end{proposition}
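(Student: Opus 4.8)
The plan is to treat the two bullets separately. The first one is essentially a corollary of what precedes: I would invoke Proposition~\ref{verify-assumptions} (established in Section~\ref{S:verify-assumptions}), which asserts that $\mathfrak{L}_{h}^{[k]}$ satisfies Assumptions~\ref{hyp-gen}, \ref{hyp-gen'} and \ref{confining}, and then observe that here $A_{2}=0$ while $A_{1}(s,t)=-\gamma(s)\frac{t^{k+1}}{k+1}$ is a polynomial, so that Theorem~\ref{WKB-general} applies verbatim -- including its final part on the simplicity of $\lambda_{n}(h)$ and on the spectral gap. The content is in the second bullet, where I want to upgrade the \emph{local} WKB construction of Theorem~\ref{WKB-general} to a \emph{global} one on $\Vc=\R$, and I expect the smallness of $\|1-\gamma_{0}/\gamma\|_{\infty}$ to be precisely what makes the globalization possible.

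The starting point would be an exact scaling identity for the operator symbol. Writing $\mathfrak{h}^{[k]}(\zeta)$ for the bottom of the spectrum of $D_{u}^{2}+\left(\zeta-\frac{u^{k+1}}{k+1}\right)^{2}$ on $\sL^{2}(\R_{u})$ and $\zeta^{[k]}_{0}$ for its unique non-degenerate minimizer (see Section~\ref{Sec:Mont}), the unitary dilation $t=\gamma(x)^{-1/(k+2)}u$ conjugates $\mathcal{M}^{[k]}_{x,\xi}$ into $\gamma(x)^{2/(k+2)}\big(D_{u}^{2}+\big(\gamma(x)^{-1/(k+2)}\xi-\frac{u^{k+1}}{k+1}\big)^{2}\big)$, whence
$$\mun(x,\xi)=\gamma(x)^{2/(k+2)}\,\mathfrak{h}^{[k]}\!\big(\gamma(x)^{-1/(k+2)}\xi\big).$$
From this I would read off that the minimum of $\mun$ is located at $(x_{0},\xi_{0})=\big(s_{0},\gamma_{0}^{1/(k+2)}\zeta^{[k]}_{0}\big)$, that $\Hess\mun(x_{0},\xi_{0})$ is block-diagonal with positive diagonal blocks (because $\gamma'(s_{0})=0$), and that the WKB eikonal equation $\mun(s,i\Phi'(s))=\mu_{0}$ -- an $s$-only phase, since $A_{2}=0$ -- becomes
$$\mathfrak{h}^{[k]}\!\big(\gamma(s)^{-1/(k+2)}\,i\Phi'(s)\big)=\Big(\tfrac{\gamma_{0}}{\gamma(s)}\Big)^{\!2/(k+2)}\mathfrak{h}^{[k]}\big(\zeta^{[k]}_{0}\big).$$

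To solve this on all of $\R$, I would use that once $\|1-\gamma_{0}/\gamma\|_{\infty}$ is small, the right-hand side stays, uniformly in $s\in\R$, in an arbitrarily small complex neighbourhood of the minimum value $\mathfrak{h}^{[k]}(\zeta^{[k]}_{0})$. Since $\mathfrak{h}^{[k]}$ extends holomorphically near $\zeta^{[k]}_{0}$ with a non-degenerate critical point there, I can write $\mathfrak{h}^{[k]}(z)-\mathfrak{h}^{[k]}(\zeta^{[k]}_{0})=\tfrac12(\mathfrak{h}^{[k]})''(\zeta^{[k]}_{0})\,\psi(z)^{2}$ for a local holomorphic coordinate $\psi$ vanishing at $\zeta^{[k]}_{0}$ with $\psi'(\zeta^{[k]}_{0})\ne 0$; inverting the two-valued square root then gives, for every $s$, a value $\zeta(s):=\gamma(s)^{-1/(k+2)}i\Phi'(s)$ close to $\zeta^{[k]}_{0}$, and I would pick the branch for which $\Re\Phi$ is increasing away from $s_{0}$ (so that $\re^{-\Phi/h}$ decays), which yields $\Re\Hess\Phi(x_{0})>0$, with $\Hess\Phi(x_{0})$ in fact real and positive thanks to the block structure above. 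Integrating $\Phi'(s)=-i\gamma(s)^{1/(k+2)}\zeta(s)$ produces a phase $\Phi$ defined on $\Vc=\R$. I expect this global solvability to be the main obstacle of the proof: the complex characteristic (stable manifold) attached to $(x_{0},\xi_{0})$ exists only locally in general, and prolonging it over $s\in\R$ can break down at turning points, at branch points of $\mathfrak{h}^{[k]}$, or simply because $\zeta(s)$ leaves the domain of holomorphy of the model eigenvalue -- as happens for an unbounded polynomial $\gamma$. The hypothesis $\|1-\gamma_{0}/\gamma\|_{\infty}\ll1$ is designed to preclude this by pinning $\zeta(s)$ near $\zeta^{[k]}_{0}$, where $\mathfrak{h}^{[k]}$ behaves like a genuine non-degenerate quadratic, so that the square-root inversion is unambiguous and uniform in $s$.

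With the eikonal solved globally, the remaining steps would follow the scheme of Section~\ref{sec:WKB}. Inserting the Ansatz $\an_{n}(s,t;h)\re^{-\Phi/h}$ with $\an_{n}\sim\sum_{j\geq0}\an_{n,j}h^{j}$ into $\mathfrak{L}_{h}^{[k]}-\lambda_{n}(h)$, i.e.\ expanding $\re^{\Phi/h}\mathfrak{L}_{h}^{[k]}\re^{-\Phi/h}=D_{t}^{2}+\big(hD_{s}+i\Phi'(s)-\gamma(s)\frac{t^{k+1}}{k+1}\big)^{2}$ in powers of $h$, the $h^{0}$ equation is $\big(\mathcal{M}^{[k]}_{s,i\Phi'(s)}-\mu_{0}\big)\an_{n,0}=0$, solved by $\an_{n,0}(s,t)=\alpha_{n,0}(s)\,u_{s,i\Phi'(s)}(t)$ precisely because of the eikonal equation; the $h^{j}$ equations are solved by the Fredholm alternative for $\mathcal{M}^{[k]}_{s,i\Phi'(s)}-\mu_{0}$, the solvability (Feynman--Hellmann) condition being a first-order linear ODE in $s$ that successively determines $\lambda_{n,j}$ and the scalar factor $\alpha_{n,j-1}(s)$, with $\lambda_{n,0}=\mu_{0}$ and $\lambda_{n,1}$ the $n$-th eigenvalue of $\tfrac12\Hess\mun(x_{0},\xi_{0})(\sigma,D_{\sigma})$ as in Theorem~\ref{theorem-simple-well} (smoothness at $x_{0}$ forces the quantization and makes $\alpha_{n,0}$ vanish to order $n-1$ there, so $\an_{n,0}$ is not identically zero). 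Here again the smallness hypothesis is what keeps $\zeta(s)$ inside the region where $\mathfrak{h}^{[k]}$ and its ground state -- hence $\mun$ and $u_{x,\xi}$ -- are holomorphic for every $s$, so that all coefficients of these ODEs are smooth and bounded on $\R$ and the $\alpha_{n,j}$, hence the $\an_{n,j}$, exist globally; the uniform Schwartz decay in $t$ is inherited from $u_{x,\xi}\in\mathcal{S}(\R)$ and the confining model potential. A Borel summation then yields $\an_{n}(s,t;h)$ with $\big(\mathfrak{L}_{h}^{[k]}-\lambda_{n}(h)\big)\big(\an_{n}\re^{-\Phi/h}\big)=\Oc(h^{\infty})\re^{-\Phi/h}$, and when $\gamma$ is a polynomial the simplicity and spectral-gap statements follow from Theorem~\ref{theorem-simple-well}.
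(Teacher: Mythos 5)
Your proposal is correct and follows essentially the same route as the paper: the first bullet is handled exactly as you say via Proposition \ref{verify-assumptions}, and for the second bullet the paper also uses the exact scaling $\mun^{[k]}(x,\xi)=\gamma(x)^{2/(k+2)}\nun^{[k]}\big(\gamma(x)^{-1/(k+2)}\xi\big)$ together with a Morse-lemma inversion of the resulting scalar eikonal equation (Lemma \ref{lem.Morse}), the smallness of $\big\|1-\tfrac{\gamma_0}{\gamma}\big\|_\infty$ being used precisely to keep the right-hand side in the domain of the local square-root biholomorphism uniformly in $s\in\R$, followed by the Fredholm/transport iteration. The only difference is presentational: the paper first conjugates by the unitary normal form $t=\gamma(\sigma)^{-1/(k+2)}\tau$ and the gauge $\re^{ig/h}$ with $g(\sigma)=\zeta_0^{[k]}\int_0^\sigma\gamma^{1/(k+2)}$ before running the WKB scheme, whereas you stay in the original variables, use the scaling only at the symbol level, and absorb the gauge into a complex phase.
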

\begin{remark}For the second point of Proposition~\ref{WKB-model}, 
the simplicity of the eigenvalues is established in \cite{Ray11b, DomRay12} for $k=0,1$ whereas slight adaptations have to be done to deal with the case $k\geq 2$.
\end{remark}

As a direct reformulation and using the rescaling \eqref{eq:rescaling}, we get
 the following result in the original variables.

\begin{corollary}\label{cor.convMont}
The $n$-th eigenvalue of $\mathfrak{L}^{[k],\gM}_{\hbar}$ and the corresponding WKB solution on $\R^2_{\mathsf{s},\mathsf{t}}$ are given, as $\hbar\to 0$, by
$$\lambda^{[k],\gM}_{n,\hbar} = \hbar^{\frac{2k+2}{k+2}} \lambda_{n}(\hbar^{\frac{j}{k+2}}) {\sim} \hbar^{\frac{2k+2}{k+2}}\sum_{j\geq 0} \lambda_{n,j} \hbar^{\frac{j}{k+2}},$$
and
$$u^{[k],\gM}_{n,\hbar}(\mathsf{s},\mathsf{t}){\sim} \an_{n}(\mathsf{s},\hbar^{-\frac{1}{k+2}}\mathsf{t} ; \hbar^{\frac{1}{k+2}})\re^{-\Phi(\mathsf{s})/\hbar^{\frac{1}{k+2}}}$$
where $\an_{n}$ and $\lambda_{n}$ are given by Theorem \ref{WKB-general}.
\end{corollary}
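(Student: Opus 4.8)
The plan is to read the corollary, as the text announces, as a plain unitary change of variables on top of the results already established for $\mathfrak{L}_{h}^{[k]}$. First I would turn the rescaling \eqref{eq:rescaling} into an explicit unitary operator: the map $U_{\hbar}:\sL^2(\R^2_{s,t})\to\sL^2(\R^2_{\mathsf{s},\mathsf{t}})$ given by $(U_{\hbar}v)(\mathsf{s},\mathsf{t})=\hbar^{-\frac{1}{2(k+2)}}v(\mathsf{s},\hbar^{-\frac{1}{k+2}}\mathsf{t})$ is unitary (the constant Jacobian factor is irrelevant for the spectrum), and the computation carried out in the paragraph preceding \eqref{defgM} shows precisely that $U_{\hbar}\,\mathfrak{L}_{h}^{[k]}\,U_{\hbar}^{-1}=\hbar^{-\frac{2k+2}{k+2}}\,\mathcal{L}^{[k],\gM}_{\hbar}$ with $h=\hbar^{\frac{1}{k+2}}$. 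Hence $\mathcal{L}^{[k],\gM}_{\hbar}$ and $\hbar^{\frac{2k+2}{k+2}}\mathfrak{L}_{h}^{[k]}$ are unitarily equivalent, so their $n$-th eigenvalues (when they exist) coincide up to the factor $\hbar^{\frac{2k+2}{k+2}}$, and eigenfunctions correspond through $U_{\hbar}$.

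Second, I would invoke Proposition \ref{WKB-model}: under either of its hypotheses, Theorem \ref{WKB-general} applies to $\mathfrak{L}_{h}^{[k]}$ (directly in the polynomial case, and with its conclusion still valid in the analytic small-oscillation case, $x_{0}=s_{0}$, $\Vc=\R$). It therefore produces the real numbers $\lambda_{n,j}$, the phase $\Phi$ with $\Re\Hess\Phi(x_0)>0$, and the amplitude series $\an_{n}(\cdot\,;h)$ with $\an_{n,0}\neq 0$ and $(\mathfrak{L}_{h}^{[k]}-\lambda_{n}(h))(\an_{n}(\cdot\,;h)\re^{-\Phi/h})=\Oc(h^{\infty})\re^{-\Phi/h}$. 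Substituting $h=\hbar^{\frac{1}{k+2}}$ into $\lambda_{n}(h)\sim\sum_{j\geq 0}\lambda_{n,j}h^{j}$ and multiplying by $\hbar^{\frac{2k+2}{k+2}}$ yields the claimed asymptotics of $\lambda^{[k],\gM}_{n,\hbar}$; applying $U_{\hbar}$ to the quasimode $\an_{n}(\cdot\,;h)\re^{-\Phi/h}$ produces, up to the harmless normalization constant, exactly the function $\an_{n}(\mathsf{s},\hbar^{-\frac{1}{k+2}}\mathsf{t};\hbar^{\frac{1}{k+2}})\re^{-\Phi(\mathsf{s})/\hbar^{\frac{1}{k+2}}}$, which is therefore a quasimode for $\mathcal{L}^{[k],\gM}_{\hbar}$ at $\lambda^{[k],\gM}_{n,\hbar}$ with error $\Oc(\hbar^{\infty})$ (the power of $h^{\infty}$ rescaled is still a power of $\hbar^{\infty}$).

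Third, to upgrade this from "the WKB expression is a quasimode" to "the true eigenfunction is asymptotic to the WKB expression", I would use that $\lambda_{n}(h)$ is simple and isolated from the rest of $\spe(\mathfrak{L}_{h}^{[k]})$ by a gap of order $h$: this is part of the conclusion of Theorem \ref{WKB-general} in the polynomial case, and follows from \cite{Ray11b, DomRay12} (with the minor adaptations for $k\geq 2$ noted in the remark after Proposition \ref{WKB-model}) in the analytic small-oscillation case. The spectral theorem then places the normalized eigenfunction of $\mathfrak{L}_{h}^{[k]}$ within $\Oc(h^{\infty})$ of the normalized quasimode, and transporting by $U_{\hbar}$ gives the stated asymptotics for $u^{[k],\gM}_{n,\hbar}(\mathsf{s},\mathsf{t})$.

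I do not expect a genuine obstacle here: the whole content is a rescaling argument layered on Theorem \ref{WKB-general}. The only points needing a line of care are the bookkeeping of the unitary normalization constant (irrelevant for the spectrum and for eigenfunctions up to scalars) and checking that the spectral gap required by the spectral theorem is indeed available in \emph{both} cases of Proposition \ref{WKB-model}, which is exactly why the simplicity statements recalled there are invoked.
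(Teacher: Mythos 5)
Your proposal is correct and follows exactly the route the paper intends: the paper offers no separate proof, presenting the corollary as a ``direct reformulation'' of Theorem \ref{WKB-general} (via Proposition \ref{WKB-model}) under the rescaling \eqref{eq:rescaling}, which is precisely the unitary-conjugation-plus-substitution argument you spell out. Your extra care about the normalization constant and the availability of the spectral gap in both cases of Proposition \ref{WKB-model} is consistent with the remarks the paper makes after Theorem \ref{WKB-general} and Proposition \ref{WKB-model}.
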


In the perspective of the analysis of the magnetic tunnelling, we will now suppose that $\gamma$, instead of having a unique non degenerate minimum, satisfies the following assumption of double well type.
\begin{proposition}\label{Agmon-puits-double}
Let us assume that the function $\gamma$ is even and has two non degenerate minima at $s_{-}<0$ and $s_{+}=-s_{-}>0$.
Let us fix $\delta\in(0, s_{+})$ and let
$$\displaystyle{\Z(s)=\chi_{\delta,-}(s)\left|\int_{s_{-}}^s \tilde\chi(s')\sqrt{\gamma(s')^{\frac{2}{k+2}}-\gamma_{0}^{\frac{2}{k+2}}}\dx s'\right|+\chi_{\delta,+}(s)\left|\int_{s_{+}}^s \tilde\chi(s')\sqrt{\gamma(s')^{\frac{2}{k+2}}-\gamma_{0}^{\frac{2}{k+2}}}\dx s'\right|},$$
where $0\leq\tilde\chi\leq 1$ is a smooth cutoff function whose support contains $s_{-}$ and $s_{+}$ and where $\chi_{\delta,-}$ and $\chi_{\delta,+}$ are smooth cutoff functions such that
$$\chi_{\delta,-}(s)=\begin{cases}
1& \mbox{for } s\leq \frac{\delta}{2},\\
0& \mbox{for } s\geq \delta,
\end{cases}
\qquad\mbox{ and }\qquad
\chi_{\delta,+}(s)=\begin{cases}
1& \mbox{for } s\geq -\frac{\delta}{2},\\
0& \mbox{for } s\leq -\delta.
\end{cases}$$
Let us consider $C_{0}>0$. There exist $\eps_{0}>0$, $C>0$ and $h_{0}>0$ such that for all eigenpairs $(\lambda,\psi)$ of $\mathfrak{L}^{[k]}_{h}$ satisfying $\lambda\leq \mu_{0}+C_{0}h$ we have, for all $h\in(0,h_{0})$,
$$\|\re^{\eps_{0}\Z/h}\psi\|\leq C\|\psi\|
\qquad\mbox{and}\qquad
\mathfrak{Q}_{h}^{[k]}(\re^{\eps_{0}\Z/h}\psi)\leq  C\|\psi\|^2.$$
\end{proposition}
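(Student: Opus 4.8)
The plan is to establish Agmon-type exponential decay estimates using the standard commutator technique adapted to the magnetic setting, with $\Z$ playing the role of the Agmon distance associated to the effective potential $\mun$.

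\medskip

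\textbf{Step 1: The IMS/Agmon identity.} First I would introduce, for a bounded Lipschitz weight $\Phi$ (eventually $\Phi = \eps_0 \Z$), the conjugated quadratic form. Using the fact that $\re^{\Phi/h}$ is real-valued, one has the classical localization formula
$$\mathfrak{Q}_{h}^{[k]}(\re^{\Phi/h}\psi) = \Re\langle \mathfrak{L}_{h}^{[k]}\psi, \re^{2\Phi/h}\psi\rangle + \||\nabla_{s}\Phi| \, \re^{\Phi/h}\psi/h\cdot (\text{something})\| \ldots$$
more precisely, since only $s$-derivatives carry the weight and $\nabla_s\Phi$ only involves $h^0$ terms while the $s$-derivative in $\mathfrak{L}_h^{[k]}$ is $hD_s$, the error term is $\|\,|\nabla_s\Phi|\,\re^{\Phi/h}\psi\|^2$ (no negative power of $h$). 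Thus for an eigenpair $(\lambda,\psi)$,
$$\mathfrak{Q}_{h}^{[k]}(\re^{\Phi/h}\psi) - \||\nabla_{s}\Phi|\re^{\Phi/h}\psi\|^2 = \lambda\|\re^{\Phi/h}\psi\|^2.$$

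\medskip

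\textbf{Step 2: Lower bound via the operator symbol.} The key point is the lower bound $\mathfrak{Q}_{h}^{[k]}(v) \geq \int \mun(s,hD_s)$-type estimate — that is, the pointwise-in-$s$ bound coming from the fact that for frozen $s$ the $t$-operator $D_t^2 + (\xi - \gamma(s)t^{k+1}/(k+1))^2$ has bottom of spectrum $\mun^{[k]}(s,\xi)$. One obtains, modulo lower order terms controlled by the confinement (Assumption \ref{confining}, Proposition \ref{essential}) and a partition of unity separating a fixed neighborhood of $\{s_-,s_+\}$ from infinity, a bound of the form
$$\mathfrak{Q}_{h}^{[k]}(v) \geq \int_{\R} \mun^{[k]}\!\big(s, hD_s\big) \text{-quadratic form} - Ch\|v\|^2 \geq \big(\min_s \min_\xi \mun^{[k]}(s,\xi) - Ch\big)\|v\|^2$$
on the relevant region, and more usefully a bound where $\mun^{[k]}(s,\cdot)$ at its minimum equals $\mun_0\,\gamma(s)^{2/(k+2)}/\gamma_0^{2/(k+2)}$ (the scaling behaviour of the one-dimensional de Gennes-type model under $\gamma \mapsto \gamma_0$), so that $\min_\xi \mun^{[k]}(s,\xi) - \mu_0 \sim c\,(\gamma(s)^{2/(k+2)} - \gamma_0^{2/(k+2)})$ near the wells. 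Apply this with $v = \re^{\Phi/h}\psi$.

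\medskip

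\textbf{Step 3: Absorbing the weight and conclusion.} Combining Steps 1 and 2 gives
$$\Big(\min_\xi \mun^{[k]}(s,\xi) - \mu_0 - |\nabla_s\Phi(s)|^2 - Ch\Big)\text{-form applied to }\re^{\Phi/h}\psi \ \leq\ (\lambda - \mu_0)\|\re^{\Phi/h}\psi\|^2 \leq C_0 h\|\re^{\Phi/h}\psi\|^2.$$
With $\Phi = \eps_0\Z$, the definition of $\Z$ is engineered precisely so that $|\nabla_s \Z(s)|^2 \le \gamma(s)^{2/(k+2)} - \gamma_0^{2/(k+2)}$ (up to the cutoffs $\tilde\chi$, $\chi_{\delta,\pm}$), hence for $\eps_0$ small enough the coefficient $\min_\xi\mun^{[k]}(s,\xi) - \mu_0 - \eps_0^2|\nabla_s\Z|^2$ is bounded below by a positive constant outside a small neighborhood $\{|s-s_\pm|<\alpha\}$ of the wells, where it may be slightly negative but only of size $O(\alpha) + O(h)$. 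Splitting $\|\re^{\eps_0\Z/h}\psi\|^2$ into the part over $\{|s-s_\pm|<\alpha\}$ (where $\Z$ is bounded, so this is $\le C\|\psi\|^2$ by Step 1 estimates on the remainder, using also the outer confinement to bound the far region) and the rest (absorbed into the left side), a standard absorption argument yields $\|\re^{\eps_0\Z/h}\psi\| \le C\|\psi\|$; feeding this back into Step 1 gives the quadratic form bound $\mathfrak{Q}_{h}^{[k]}(\re^{\eps_0\Z/h}\psi) \le C\|\psi\|^2$.

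\medskip

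\textbf{Main obstacle.} The delicate point is Step 2: turning the abstract lower bound $\mathfrak{L}_h^{[k]} \ge \mun^{[k]}(s,hD_s)$ (which is only formal, $\mun^{[k]}(s,hD_s)$ not being a legitimate pseudodifferential lower bound without error terms) into a rigorous inequality with a controlled $O(h)$ error, uniformly in $s$ including the behaviour at $t\to\infty$ and $s\to\infty$. This requires either a careful partition of unity in $s$ combined with the spectral gap of the frozen $t$-operator (using that $\mun^{[k]}(s,\xi) \to +\infty$ as $|\xi|\to\infty$ faster than any constant, and the confinement Assumption \ref{confining} for large $|s|$), or a direct coherent-states argument in the $s$-variable as in the strategy announced for Theorem \ref{theorem-simple-well}. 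Getting the scaling identity $\min_\xi \mun^{[k]}(s,\xi) = \mu_0\,(\gamma(s)/\gamma_0)^{2/(k+2)}$ exactly right — which is what makes $\Z$ the natural Agmon weight — is a computation with the rescaled de Gennes operator and must be done with care.
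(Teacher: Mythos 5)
Your overall architecture (Agmon identity, pointwise lower bound by the effective symbol $\mun^{[k]}(s,\cdot)$, absorption of $\eps_0^2|\Z'|^2$ against $\min_\xi\mun^{[k]}(s,\xi)-\mu_0$) is the right one, and you correctly work out the scaling identity $\min_\xi\mun^{[k]}(s,\xi)=\gamma(s)^{2/(k+2)}\min_\zeta\nun^{[k]}(\zeta)$ that makes $\Z$ the natural Agmon weight. You also correctly flag where the real work is: turning the heuristic $\mathfrak{L}_h^{[k]}\geq\mun^{[k]}(s,hD_s)$ into a rigorous $s$-pointwise lower bound with a genuine $\Oc(h)$ error. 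But this is where your sketch diverges from the paper's proof and leaves a real gap: neither of the two mechanisms you suggest (a partition of unity in $s$, or a coherent-states decomposition in $s$) is what is used, and neither is obviously workable here. A naive partition of unity at scale $\eta$ pays $\Oc((h/\eta)^2)$ in IMS errors and $\Oc(\eta)$ in freezing errors, which cannot both be $\Oc(h)$ without refinements (e.g.\ a gauge transform killing the linear term of the Taylor expansion of $\gamma$); and a coherent-states argument interacts badly with the exponential weight $\re^{\eps_0\Z/h}$, which does not commute with the $\sigma$-decomposition and whose conjugation errors are not obviously $\Oc(h)$.

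The paper bypasses both of these by a change of variables (the normal form already set up in Section \ref{ss-verify-conf}): one passes to $(\sigma,\tau)$ with $t=\gamma(\sigma)^{-1/(k+2)}\tau$, which is unitary and commutes with multiplication by $\re^{\Phi(s)/h}$ since $\Phi$ depends only on $s=\sigma$. After this transformation the weighted quadratic form factors, up to commutator terms controlled by Proposition \ref{weighted-Agmon} and of size $\Oc(h)\|\psi^{\wgt}\|^2$, as $\gamma(\sigma)^{2/(k+2)}$ times the quadratic form of an operator involving the self-adjoint $\Xi(\sigma,D_\sigma)=\gamma^{-1/(2k+4)}D_\sigma\gamma^{-1/(2k+4)}$, to which plain functional calculus applies and gives the exact lower bound $\min_\zeta\nun^{[k]}(\zeta)\|\gamma^{1/(k+2)}\psi^{\wgt}\|^2-Ch\|\psi^{\wgt}\|^2$. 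That is the mechanism producing the energy inequality \eqref{energy-inequality}. The double-well case then amounts to inserting the cutoffs $\chi_{\delta,\pm}$: the only new terms in the energy inequality are of the form $\eps_0^2\|f_\pm\psi^{\wgt}\|^2$ with $f_\pm$ supported where $\gamma^{2/(k+2)}-\gamma_0^{2/(k+2)}$ is bounded below, so they are absorbed by the positive term on the left, and the conclusion follows as in the one-dimensional electric case. In short: Steps 1 and 3 of your plan are fine, but Step 2 needs the normal-form change of variables and functional calculus rather than partition of unity or coherent states; as written, your proof of the crucial lower bound is incomplete.
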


We can now give a rough upper bound for the tunnel effect for the rescaled Montgomery models.
For that purpose, let us fix $\delta\in(0,s_{+})$ and define the two symmetric model wells. We assume that $\gamma$ is even and has two non degenerate minima at $s_{-}<0$ and $s_{+}=-s_{-}>0$.

We consider the Dirichlet realizations on $\sL^2((-\infty,s_{-}+\delta)\times\R)$ and $\sL^2((s_{+}-\delta,+\infty)\times\R)$ 
of $D_{t}^2+\left(hD_{s}-\gamma(s)\frac{t^{k+1}}{k+1}\right)^2$ respectively denoted by $\mathfrak{H}_{h,-}^\Dir$ and $\mathfrak{H}_{h,+}^\Dir$. These operators are isospectral by symmetry. We want to compare the spectrum of $\mathfrak{L}_{h}^{[k]}$ with the one of the direct sum $\mathfrak{H}_{h}=\mathfrak{H}_{h,-}^\Dir\oplus\mathfrak{H}_{h,+}^\Dir$.
\begin{theorem}\label{tunnelling}
Let us consider $C_{0}>0$. There exist $c>0$, $C>0$, $h_{0}>0$ such that for all $\mu\in\spe\left(\mathfrak{H}_{h}\right)$ and $\lambda\in\spe\left(\mathfrak{L}_{h}^{[k]}\right)$ with $\mu,\lambda\leq \mu_{0}+C_{0}h$, we have, for all $h\in(0,h_{0})$,
$$\range\left(\mathds{1}_{[\mu-C\re^{-c/h}, \mu+C\re^{-c/h}]}\big(\mathfrak{L}_{h}^{[k]}\big)\right)= 2$$
and
$$\dist\left(\lambda,\spe\left(\mathfrak{H}_{h}\right)\right)\leq C\re^{-c/h}.$$
\end{theorem}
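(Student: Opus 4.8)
The plan is to follow the classical Helffer–Sjöstrand scheme for the double-well tunnel effect, now adapted to the pure magnetic setting, using the Agmon estimates of Proposition~\ref{Agmon-puits-double} as the essential localization input. First I would fix the spectral window $I_h=(-\infty,\mu_0+C_0h]$ and recall from Theorem~\ref{theorem-simple-well} (applied to each single well $\mathfrak{H}_{h,\pm}^\Dir$, which satisfies Assumptions~\ref{hyp-gen}, \ref{hyp-gen'}, \ref{confining} by Proposition~\ref{verify-assumptions}) that in this window $\mathfrak{H}_h=\mathfrak{H}_{h,-}^\Dir\oplus\mathfrak{H}_{h,+}^\Dir$ has, after possibly shrinking $C_0$, exactly two eigenvalues (one per well), separated from the rest of its spectrum by a gap of order $h$, and that the same holds for $\mathfrak{L}_h^{[k]}$. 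This already reduces the problem to comparing two two-dimensional spectral subspaces.

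Second, the key construction: let $\mathcal F$ be the two-dimensional spectral subspace of $\mathfrak{H}_h$ associated with $I_h$, spanned by the normalized ground states $\psi_{h,-}$, $\psi_{h,+}$ of the two wells. I would introduce smooth cutoffs $\chi_\pm$ equal to $1$ near the corresponding well $\{s\lessgtr\}$ and supported away from the symmetry axis, and form the quasimodes $\Psi_{h,\pm}=\chi_\pm\psi_{h,\pm}$, which are admissible test functions for $\mathfrak{L}_h^{[k]}$ since $A_1$ is smooth and the cutoffs live where $\psi_{h,\pm}$ are small. The commutator $[\mathfrak{L}_h^{[k]},\chi_\pm]$ is a first-order differential operator with coefficients supported in a region where, by Proposition~\ref{Agmon-puits-double}, $\Z\geq c_\delta>0$; hence both $\|(\mathfrak{L}_h^{[k]}-\mu_{h,\pm})\Psi_{h,\pm}\|$ and the overlaps $\langle\Psi_{h,+},\Psi_{h,-}\rangle$, $\langle(\mathfrak{L}_h^{[k]}-\mu_{h,\pm})\Psi_{h,+},\Psi_{h,-}\rangle$ are $\mathcal O(\re^{-c/h})$, using the weighted bounds $\|\re^{\eps_0\Z/h}\psi\|\le C\|\psi\|$ and the form bound on $\mathfrak{Q}_h^{[k]}(\re^{\eps_0\Z/h}\psi)$ to control the derivative terms. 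The spectral theorem then gives $\dist(\mu_{h,\pm},\spe(\mathfrak{L}_h^{[k]}))\leq C\re^{-c/h}$, which is the second assertion, and shows that the spectral projection $P_h$ of $\mathfrak{L}_h^{[k]}$ onto an $\mathcal O(\re^{-c/h})$-neighborhood of $\{\mu_{h,-},\mu_{h,+}\}$ has range at least $2$.

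Third, for the reverse inclusion and the exact rank, I would run the argument symmetrically: any eigenfunction $\psi$ of $\mathfrak{L}_h^{[k]}$ with eigenvalue in $I_h$ satisfies the Agmon decay of Proposition~\ref{Agmon-puits-double}, so multiplying by the same cutoffs produces quasimodes for $\mathfrak{H}_{h,\pm}^\Dir$ with exponentially small error; since $\mathfrak{H}_h$ has exactly two eigenvalues in the enlarged window, $\mathfrak{L}_h^{[k]}$ can have at most two, so $\range(P_h)=2$ and in fact $\spe(\mathfrak{L}_h^{[k]})\cap I_h$ consists of exactly two eigenvalues $\lambda_{h,1}\le\lambda_{h,2}$, both within $C\re^{-c/h}$ of $\mu_{h,\pm}$. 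Combining the two inclusions with the triangle inequality yields $\dist(\lambda,\spe(\mathfrak{H}_h))\le C\re^{-c/h}$ for any such $\lambda$, and localizing $P_h$ to a single ball $\mathcal B(\mu,C\re^{-c/h})$ around either $\mu$ gives $\range(\mathds 1_{[\mu-C\re^{-c/h},\mu+C\re^{-c/h}]}(\mathfrak{L}_h^{[k]}))=2$ because the two eigenvalues of $\mathfrak{H}_h$ are themselves $\mathcal O(\re^{-c/h})$-close by the even symmetry.

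The main obstacle I anticipate is the rigorous control of the commutator/cross terms: because the confinement here is only \emph{quasi}-optimal (Proposition~\ref{Agmon-puits-double} gives a weight $\Z$ with a small prefactor $\eps_0$, not the true Agmon distance), one must be careful that the cutoffs $\chi_\pm$ are placed precisely in the region $\{\Z\geq c_\delta\}$ and that the first-order derivative terms in $[\mathfrak{L}_h^{[k]},\chi_\pm]\psi$ are absorbed using the form estimate $\mathfrak{Q}_h^{[k]}(\re^{\eps_0\Z/h}\psi)\le C\|\psi\|^2$ rather than a naive $\sL^2$ bound on $hD_s\psi$; the factor $h$ in front of the magnetic derivative and an interpolation between the weighted $\sL^2$ and form bounds is what makes all error terms genuinely $\re^{-c/h}$ for some $c>0$ (possibly smaller than $\eps_0 c_\delta$). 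The rest — the a priori two-dimensionality of each spectral subspace and the abstract comparison of projections — is routine once Theorem~\ref{theorem-simple-well} and Proposition~\ref{Agmon-puits-double} are in hand.
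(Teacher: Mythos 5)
Your proposal follows essentially the same approach as the paper: localize via the Agmon estimates of Proposition~\ref{Agmon-puits-double}, cut off eigenfunctions of one operator to produce exponentially accurate quasimodes for the other, apply the spectral theorem in both directions, and then count (using the order-$h$ gap and the two-fold degeneracy of $\spe(\mathfrak{H}_h)$ by symmetry) to pin the range of the spectral projector to exactly $2$. The paper organizes this into two separate propositions (one for each direction of the quasimode comparison) plus a short contradiction argument for the exact multiplicity, but the ingredients and their order are the ones you describe.
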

The following corollary is a direct consequence of the previous theorem.
\begin{corollary}\label{rem.gap}
In terms of   operator $\mathcal{L}^{[k],\gM}_{\hbar}$  the gap between pairs of eigenvalues is given  by
$$\lambda^{[k],\gM}_{2n,\hbar }-\lambda^{[k],\gM}_{2n-1,\hbar}=
\mathcal{O}\Big(\re^{-c/\hbar^{\frac{1}{k+2}}}\Big),$$
for $\hbar$ small enough depending on $n$.
\end{corollary}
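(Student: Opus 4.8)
The plan is a counting argument built on Theorem~\ref{tunnelling}. By the rescaling~\eqref{eq:rescaling} one has $\spe\left(\mathcal{L}^{[k],\gM}_{\hbar}\right)=\hbar^{\frac{2k+2}{k+2}}\,\spe\left(\mathfrak{L}^{[k]}_{h}\right)$ with $h=\hbar^{\frac{1}{k+2}}$, so it suffices to prove that the eigenvalues $\lambda_m(h)$ of $\mathfrak{L}^{[k]}_h$ satisfy $\lambda_{2n}(h)-\lambda_{2n-1}(h)=\mathcal{O}(\re^{-c/h})$: the prefactor $\hbar^{\frac{2k+2}{k+2}}$ is $\le 1$ for $\hbar$ small and is thus harmlessly absorbed into the exponential bound.

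First I would record the low-energy structure of $\spe(\mathfrak{H}_h)$. Each model well $\mathfrak{H}^{\Dir}_{h,\pm}$ is a single non-degenerate magnetic well whose artificial Dirichlet boundary lies at a fixed positive distance from the minimum of $\gamma$ it encloses; hence the single-well Born--Oppenheimer analysis (Theorem~\ref{theorem-simple-well} and its variant for the Dirichlet realization $\mathfrak{H}^{\Dir}_{h,\pm}$, the Dirichlet boundary contributing only at exponentially small order by Agmon localization as in Proposition~\ref{Agmon-puits-double}; compare the remark after Proposition~\ref{WKB-model}) yields, for each fixed $j$, an eigenvalue $\mu^{\pm}_j(h)=\mu_0+\lambda_{j,1}h+o(h)$, where $\lambda_{j,1}$ is the $j$-th eigenvalue of the one-dimensional effective harmonic oscillator attached to that well. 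Since $m=1$, these $\lambda_{j,1}$ are simple and equally spaced, and by the reflection symmetry of $\gamma$ they are the same at both wells. Consequently, the two wells being isospectral, if we fix $C_0>2(\lambda_{n,1}+1)$, then for $h$ small the part of $\spe(\mathfrak{H}_h)$ below $\mu_0+C_0h$ consists of finitely many distinct values $\mu_1(h)<\mu_2(h)<\cdots$, each of multiplicity exactly $2$, with consecutive gaps bounded below by $c_1 h$ for some $c_1>0$, and at least $n$ of them (since $\mu_n(h)\le\mu_0+(\lambda_{n,1}+1)h<\mu_0+\tfrac{C_0}{2}h$).

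Next, apply Theorem~\ref{tunnelling} with this $C_0$, producing $c,C,h_0>0$ depending on $C_0$ (hence on $n$), and set $I_j:=[\mu_j(h)-C\re^{-c/h},\,\mu_j(h)+C\re^{-c/h}]$. Choose $\theta\in(\lambda_{n,1}+1,C_0)$ outside the discrete set $\{\lambda_{j,1}\}_j$ and put $\Lambda:=\mu_0+\theta h$. For $h$ small, $\mu_n(h)<\Lambda<\mu_0+C_0h$, the intervals $I_j$ with $\mu_j(h)<\Lambda$ are pairwise disjoint (as $c_1 h\gg 2C\re^{-c/h}$), and $\Lambda$ stays at distance $\ge 2C\re^{-c/h}$ from each $\mu_j(h)$. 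The first conclusion of Theorem~\ref{tunnelling} then forces each such $I_j$ to contain exactly two eigenvalues of $\mathfrak{L}^{[k]}_h$ counted with multiplicity, all of them lying below $\Lambda$; the second conclusion forces every eigenvalue of $\mathfrak{L}^{[k]}_h$ that is $\le\Lambda$ to lie in one of these $I_j$. Hence the eigenvalues of $\mathfrak{L}^{[k]}_h$ below $\Lambda$ are distributed exactly two per interval along the ordered disjoint intervals $I_1<I_2<\cdots$, so $\lambda_{2j-1}(h),\lambda_{2j}(h)\in I_j$ for each admissible $j$; taking $j=n$ gives $\lambda_{2n}(h)-\lambda_{2n-1}(h)\le|I_n|=2C\re^{-c/h}$. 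Undoing the rescaling yields the statement.

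Everything past Theorem~\ref{tunnelling} is bookkeeping; the one genuine ingredient to be imported is the low-energy spectral picture of $\mathfrak{H}_h$ — that the single model well has simple eigenvalues with $h$-spacing, so that after doubling by symmetry the spectral clusters of $\mathfrak{H}_h$ are separated on the scale $h$. For polynomial $\gamma$ this follows from Theorem~\ref{theorem-simple-well} applied to each well; for analytic $\gamma$ with small oscillation, from Proposition~\ref{WKB-model}; in general, from the simplicity results of \cite{Ray11b, DomRay12} with the adaptations indicated there for $k\ge 2$. I expect this importation, not the counting, to be the only delicate point.
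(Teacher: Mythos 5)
Your argument is correct and is exactly the counting that the paper has in mind when it calls Corollary~\ref{rem.gap} a ``direct consequence'' of Theorem~\ref{tunnelling}: you combine the two conclusions of that theorem with the $h$-scale separation of the doubly degenerate clusters of $\spe(\mathfrak{H}_h)$ (recorded in the unnamed proposition just before Proposition~\ref{spec-loc}) to conclude that the low-lying eigenvalues of $\mathfrak{L}^{[k]}_h$ lie exactly two per window $I_j$, and then undo the rescaling~\eqref{eq:rescaling}. No gap; same approach as the paper.
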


\paragraph{Strategy of the analysis of $\mathfrak{L}_{h}^{[k]}$}
In the simple well case the study of $\mathfrak{L}_{h}^{[k]}$ follows the same lines as for Theorem \ref{WKB-general} jointly with a normal form argument inspired by \cite{Ray11b, DomRay12, RVN13} which permits simultaneously to make the WKB expansions global and to get quasi-optimal Agmon estimates. Concerning the double well case, in order to get the tunneling effect, we can follow the classical procedure based on the spectral theorem and the previous Agmon estimates.

\subsection{WKB constructions: influence of the geometry}\label{SS:geom}
This section is devoted to the fully semiclassical magnetic Laplacian $(\hbar D_{\mathsf{x}}+\bA)^2$ on $\sL^2(\R^2)$. We now investigate three kinds of models for which the geometry is more intricate and for which our theorems do not directly apply. Nevertheless, our WKB strategy is robust enough and still effective: we are able to exhibit WKB expansions. Note that all the forthcoming situations have already been studied in the literature but never with the accuracy of the WKB point of view. The geometric perturbations at stake are: vanishing magnetic fields on curves or possibly singular boundary.
\subsubsection{Vanishing magnetic fields... or not}\label{intro-van}
We will work under the following assumption on $\bB$ as in \cite{HelKo09} and \cite{DomRay12} which concerns its vanishing order $k\geq 1$ (whereas the limit case $k=0$ will be described afterwards).
\begin{assumption}
We work in the two dimensional case. The zero locus of $\bB$ is a smooth closed non empty curve $\Gamma$:
$$\Gamma=\{\bB(\mathsf{x})=0\},$$
and $\bB$ vanishes exactly at the order $k\geq 1$ on $\Gamma$. Moreover we assume that the $k$-th normal derivative of $\bB$ admits a non degenerate minimum on $\Gamma$ at $\mathsf{x}_{0}$.
\end{assumption}
\begin{remark}
Here we work in dimension two, but there is no doubt that we could adapt the presentation, modulo a few technicalities, to cover the case of magnetic fields vanishing at a given order on hypersurfaces as in \cite{HelKo09}.
\end{remark}
In Section \ref{S:van} we will construct, in a neighborhood of $\mathsf{x}_{0}$ and in normal coordinates, WKB expansions which come within the study of the generalized Montgomery operator $\mathcal{L}^{[k],\gM}_{\hbar}$. Under the additional assumption that the minimum is uniquely reached at $\mathsf{x}_{0}$, the splitting between the lowest eigenvalues has been established in \cite{DomRay12} (for the case $k=1$ and the proof is completely similar for $k\geq 2$) so that these WKB expansions are local (in the sense of Theorem \ref{WKB-general}) approximations of the true eigenfunctions.

These considerations can be extended to the case $k=0$. We consider here a magnetic field which does not vanish on $\overline{\Omega}$. The curve $\Gamma$ represents the boundary of an enclosed open set $\Omega$ carrying a magnetic Neumann condition. In other words, we can perform a WKB construction, for the Neumann realization on $\Omega$ of $(\hbar D_{\mathsf{x}}+\bA)^2$, near each $\mathsf{x}_{0}$ where $\bB_{|\partial\Omega}$ is not degenerately minimal. Assuming moreover that
$$\Theta_{0}\min_{\partial\Omega} \bB<\min_{\Omega}\bB,$$
and that  $\bB_{|\partial\Omega}$ admits a unique and non degenerate minimum, we can get, by using the spectral splitting proved in \cite{Ray11b}, the local WKB approximation of the lowest eigenfunctions.\newline
All the scaling properties, stated in normal coordinates, will be addressed in Section \ref{S:van}.

\subsubsection{Varying edge}\label{intro-edge}
The strategy of this paper can also deal with more singular situations in dimension three. Such a situation is described in the paper \cite{PoRay12} where the semiclassical analysis is done when the boundary of the domain contains a varying edge. We propose to perform the WKB constructions for a simplified version of the operator introduced there. We are interested in the operator defined on $\sL^2(\mathcal{W}_{\alpha},\dx s\dx t \dx z)$ and with Neumann conditions
$$ \mathcal{L}^{\PR}_{\hbar}=\hbar^2D_{t}^2+\hbar^2D_{z}^2+(\hbar D_{s}-t)^2, $$
where
$$\mathcal{W}_{\alpha}=\left\{(s,t,z)\in\R^3, |z|\leq\cT(s)t \right\},$$
with $\cT(s)=\tan\left(\frac{\alpha(s)}{2}\right)$ and where $\alpha : \R\to \R$ is an analytic function which represents the (varying) opening of the wedge $\mathcal{W}_{\alpha}$. We will work under the following assumption.
\begin{assumption}\label{alpha-max}
The function $s\mapsto \alpha(s)$ admits a unique and non degenerate maximum $\alpha_{0}$ at $s=0$.
\end{assumption}
In Section \ref{S:edge}, we will provide local (near the point of the edge giving the maximal aperture) WKB expansions of the lowest eigenfunctions.

\subsubsection{Curvature induced magnetic bound states}\label{intro-c}
As we have seen, in many situations the spectral splitting appears in the second term of the asymptotic expansion of the eigenvalues. It turns out that we can also deal with more degenerate situations. The next lines are motivated by the initial paper \cite{HelMo01} whose main result is recalled in \eqref{HM-result}. Their fundamental result establishes that a smooth Neumann boundary can trap the lowest eigenfunctions near the points of maximal curvature. These considerations are generalized in \cite[Theorem 1.1]{FouHel06a} where the following complete asymptotic expansion of the eigenvalues is proved
\begin{equation}\label{splitting-FH}
\lambda^{\FH}_{n,\hbar}=\Theta_{0}\hbar-C_{1}\kappa_{\max}\hbar^{3/2}
+(2n-1)C_{1}\Theta_{0}^{1/4}\sqrt{\frac{3k_{2}}{2}} \hbar^{7/4}+o(\hbar^{7/4}),
\end{equation}
where $k_{2}=-\kappa''(0)$. In our paper, as in \cite{FouHel06a}, we will consider the magnetic Neumann Laplacian on a smooth domain $\Omega$ such that the algebraic curvature $\kappa$ satisfies the following assumption.
\begin{assumption}\label{kappa-max}
The function $\kappa$ is smooth and admits a unique and non degenerate maximum at $0$.
\end{assumption}
We will prove that the lowest eigenfunctions are approximated by local WKB expansions which can be made global when for instance $\partial\Omega$ is the graph of a smooth function. In particular we will recover the term $C_{1}\Theta_{0}^{1/4}\sqrt{\frac{3k_{2}}{2}}$ by a method different from the one in \cite{FouHel06a}.

\subsection{Organization of the paper}
The paper is organized as follows. Section \ref{Sec.2} is devoted to models with simple magnetic wells and to the proof of Theorem \ref{theorem-simple-well}. Section \ref{sec:WKB} is concerned with the proof of Theorem \ref{WKB-general}. In Section \ref{Sec:Mont} we establish that the generalized Montgomery operators satisfy the assumptions of Theorem \ref{WKB-general}, we prove that  the WKB expansions are global (Proposition \ref{WKB-model}) and we give an upper bound of the tunnel effect (proof of Theorem \ref{tunnelling}).
These theoretical results are illustrated by numerical simulations. Section \ref{Sec:geom} deals with the geometrical examples introduced in Section \ref{SS:geom}.

\section{Simple magnetic wells}\label{Sec.2}
This section is devoted to the proof of Theorem \ref{theorem-simple-well}. In order to perform the investigation we use the following rescaling
\begin{equation}\label{rescaling}
s=x_{0}+ h^{1/2}\sigma,\qquad t=\tau,
\end{equation}
and a gauge transform $\re^{i\xi_{0} \sigma/h^{1/2}}$, so that $\mathfrak{L}_{h}$ becomes
\begin{equation}\label{Lch}
\mathcal{L}_{h}=(D_{\tau}+A_{2}(x_{0}+h^{1/2}\sigma,\tau))^2+(\xi_{0}+h^{1/2}D_{\sigma}+A_{1}(x_{0}+h^{1/2}\sigma,\tau))^2.
\end{equation}
The corresponding quadratic form is denoted by $\mathcal{Q}_{h}$.

\subsection{Formal series and general Feynman-Hellmann formulas}
Let us start by proving the following proposition.

\begin{proposition}\label{quasimodes}
Under Assumption \ref{hyp-gen}, we assume furthermore that $A_{1}$ and $A_{2}$ are polynomials. For all $n\geq 1$,  there exist $C>0$ and $h_{0}>0$ such that, for $h\in(0,h_{0})$,
$$\dist\left(\lambda_{n,0}+\lambda_{n,1}h,\spe(\mathcal{L}_{h})\right)\leq Ch^{3/2},$$
where $\lambda_{n,0}=\mu_{0}$ and $\lambda_{n,1}$ is the $n$-th eigenvalue of $\frac{1}{2}\Hess\,\mun(x_{0},\xi_{0})(\sigma, D_{\sigma})$.
\end{proposition}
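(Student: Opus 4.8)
The goal is to construct, for each $n$, an approximate eigenpair of $\mathcal{L}_{h}$ at energy $\lambda_{n,0}+\lambda_{n,1}h$ with error $\Oc(h^{3/2})$ in $\sL^2$, so that the spectral theorem yields the claim. The plan is to seek a quasimode in the form of a two-scale Ansatz
$$
\psi_{h}(\sigma,\tau)\sim\sum_{j\geq 0}h^{j/2}\,\psi_{j}(\sigma,\tau),
$$
where we will look for $\psi_{j}$ of the product type $\psi_{j}(\sigma,\tau)=\sum_{\ell}f_{j,\ell}(\sigma)\,v_{\ell}(\sigma,\tau)$, the functions $v_{\ell}(\sigma,\cdot)$ being built out of the eigenfunction $u_{x_0+h^{1/2}\sigma\,\cdots}$ of the operator symbol $\mathcal{M}_{x,\xi}$ (frozen at the appropriate point) and its derivatives with respect to the parameters $(x,\xi)$. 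Plugging this Ansatz into $\mathcal{L}_{h}$ and expanding $A_{1},A_{2}$ in powers of $h^{1/2}\sigma$ around $(x_0,\xi_0)$ (here the polynomial assumption is convenient: the Taylor expansion is finite, so there is no remainder to control), one gets a hierarchy of equations $\sum_{j}h^{j/2}(\,\cdots)=\lambda_{n}(h)\psi_{h}$, which we solve order by order.

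The organizing principle for solving the hierarchy is the Feynman–Hellmann machinery: the first equation is $(\mathcal{M}_{x_0,\xi_0}-\mu_0)\psi_0=0$, forcing $\psi_0(\sigma,\tau)=f_0(\sigma)u_{x_0,\xi_0}(\tau)$ for an as-yet-undetermined $f_0$. At each subsequent order, solvability (the Fredholm alternative, since $\mu_0$ is a simple isolated eigenvalue of $\mathcal{M}_{x_0,\xi_0}$ by Assumption \ref{hyp-gen}) requires projecting the equation onto $u_{x_0,\xi_0}$; the point is to show that, after the lower-order $\psi_j$'s have been chosen, the solvability condition at the relevant order is exactly
$$
\left(\tfrac12\Hess\,\mun(x_{0},\xi_{0})(\sigma,D_{\sigma})-\lambda_{n,1}\right)f_{0}=0.
$$
To see the Hessian appear one computes, via Kato's perturbation theory, the first and second derivatives of $\mu(x,\xi)$ at its critical point $(x_0,\xi_0)$: the first derivatives vanish (critical point), and the second-order expansion of $\mathcal{M}_{x_0+h^{1/2}\sigma,\,\xi_0+h^{1/2}D_\sigma}$ combined with second-order perturbation theory reproduces the quadratic form $\tfrac12\Hess\,\mun(x_0,\xi_0)(\sigma,D_\sigma)$ after projection — this is precisely the content of the general Feynman–Hellmann formulas announced in the strategy paragraph. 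Choosing $f_0$ to be the $n$-th eigenfunction of this effective harmonic oscillator (with eigenvalue $\lambda_{n,1}$), which lies in $\mathcal{S}(\R^m)$, makes the order-$h$ equation solvable; one then continues, solving for $\psi_2$ in the orthogonal complement of $u_{x_0,\xi_0}$ and choosing $f_1$ to meet the next solvability condition, and truncates. All functions produced are Schwartz in $\tau$ and, because $f_0$ is Schwartz in $\sigma$, the truncated Ansatz lies in the form domain and in $\Dom(\mathcal{L}_h)$; a direct estimate gives $\|(\mathcal{L}_h-\lambda_{n,0}-\lambda_{n,1}h)\psi_h\|\leq Ch^{3/2}\|\psi_h\|$ with $\|\psi_h\|$ bounded below, and the spectral theorem concludes.

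The main obstacle is the bookkeeping of the Feynman–Hellmann formulas with several parameters: one must differentiate the eigenprojector of $\mathcal{M}_{x,\xi}$ twice in the multiparameter $(x,\xi)$, keep track of the non-commuting operators $\sigma$ and $D_\sigma$ arising from $\xi_0+h^{1/2}D_\sigma$, and verify that the cross terms assemble into exactly the Weyl quantization $\Hess\,\mun(x_0,\xi_0)(\sigma,D_\sigma)$ with the correct factor $\tfrac12$, with no spurious first-order-in-$h^{1/2}$ obstruction (this is why the expansion really starts contributing at order $h$, the $h^{1/2}$-term being killed by the vanishing of $\nabla\mu$ at $(x_0,\xi_0)$ together with a parity/antisymmetry cancellation). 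A secondary, more technical point is to confirm that the eigenfunction $u_{x,\xi}$ and hence all the $v_\ell$ stay in $\mathcal{S}(\R^n)$ uniformly, which uses the confining behaviour of $\mu$ at infinity from Assumption \ref{hyp-gen} together with the polynomial (hence elliptic-at-infinity) structure of $A_1,A_2$; granting the earlier remarks in the excerpt, this is routine.
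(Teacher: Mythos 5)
Your proposal is correct and follows essentially the same approach as the paper: rescale $s=x_0+h^{1/2}\sigma$ and gauge out $\xi_0$, expand $\mathcal{L}_h$ in a finite power series in $h^{1/2}$ (using that $A_1,A_2$ are polynomials), build a two-scale quasimode $\psi=\sum h^{j/2}\psi_j$ starting from $\psi_0=f_0(\sigma)u_{x_0,\xi_0}(\tau)$, solve the hierarchy order by order via the Fredholm alternative and the Feynman--Hellmann identities, obtain the effective harmonic oscillator $\tfrac12\Hess\mu(x_0,\xi_0)(\sigma,D_\sigma)$ at order $h$, and conclude with the spectral theorem. One minor clarification: the vanishing of the order-$h^{1/2}$ eigenvalue coefficient is due solely to $\nabla\mu(x_0,\xi_0)=0$ (not to any parity cancellation; parity is invoked in the paper only to kill higher odd-order terms in the full expansion, which lie beyond the $\mathcal{O}(h^{3/2})$ accuracy required here).
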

We will need the so-called Feynman-Hellmann formulas: 
\begin{proposition}\label{FH}
Let $\eta$ and $\theta$ denote one of the $x_{j}$ or $\xi_{k}$.  Then we have
\begin{equation}\label{FH1}
(\mathcal{M}_{x,\xi}-\mun(x,\xi))(\dr_{\eta} u)_{x,\xi}=(\dr_{\eta}\mun(x,\xi)-\dr_{\eta}\mathcal{M}_{x,\xi}) u_{x,\xi}
\end{equation}
and
\begin{multline}\label{FH2}
(\mathcal{M}_{x_{0},\xi_{0}}-\mu_0)(\dr_{\eta}\dr_{\theta}u)_{x_{0},\xi_{0}}\\
=\dr_{\eta}\dr_{\theta}\mun(x_{0},\xi_{0})u_{x_{0},\xi_{0}}- \dr_{\eta}\mathcal{M}_{x_{0},\xi_{0}}(\dr_{\theta} u)_{x_{0},\xi_{0}}
-\dr_{\theta}\mathcal{M}_{x_{0},\xi_{0}}(\dr_{\eta} u)_{x_{0},\xi_{0}}
-\dr_{\eta}\dr_{\theta}\mathcal{M}_{x_{0},\xi_{0}}u_{x_{0},\xi_{0}},
\end{multline}
where $(\dr_{\eta}u)_{x_{0},\xi_{0}}$ denotes $(\dr_{\eta}u_{x,\xi})_{|(x,\xi)=(x_{0},\xi_{0})}$ and similarly for the other derivatives of $u_{x,\xi}$.
In a neighborhood of $(x_{0},\xi_{0})$ in $\R^m\times\C^m$, we have,
\begin{equation}\label{FH2bis}
\dr_{\eta}\mun(x,\xi)=\int_{\R^n} (\dr_{\eta}{\mathcal{M}_{x,\xi}}\, u_{x,\xi})(\tau) \overline{u_{x,\overline{\xi}}(\tau)}\dx\tau.
\end{equation}
\end{proposition}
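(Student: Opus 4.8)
The plan is to derive all three identities by differentiating the eigenvalue relation $(\mathcal{M}_{x,\xi}-\mun(x,\xi))u_{x,\xi}=0$ and, for the last one, exploiting the duality structure of $\mathcal{M}_{x,\xi}$. The preliminary point is regularity: by Assumption~\ref{hyp-gen} the family $(\mathcal{M}_{x,\xi})$ is holomorphic of type (A) on $\mathcal{V}_{0}$ with $\mun(x,\xi)$ a simple isolated eigenvalue, so Kato's analytic perturbation theory (\cite[Chapter~VII]{Kato66}) provides a holomorphic choice of $\mun$ and of the eigenvector $u_{x,\xi}$; moreover, the domain of $\mathcal{M}_{x,\xi}$ being independent of $(x,\xi)$ for a type (A) family, the maps $(x,\xi)\mapsto (\dr_{\eta}u)_{x,\xi}$ and $(x,\xi)\mapsto(\dr_{\eta}\dr_{\theta}u)_{x,\xi}$ take values in that fixed domain, so that $\mathcal{M}_{x,\xi}$ may legitimately be applied to them. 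This has already been noted in the remarks following Assumption~\ref{hyp-gen}.

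For \eqref{FH1}, I would differentiate $(\mathcal{M}_{x,\xi}-\mun(x,\xi))u_{x,\xi}=0$ with respect to $\eta$, which is one of the $x_{j}$ or $\xi_{k}$ (note that $\dr_{\eta}\mathcal{M}_{x,\xi}$ is a polynomial-coefficient operator of order $\leq 1$, hence harmless), and move the two terms carrying $u_{x,\xi}$ to the right-hand side. For \eqref{FH2}, I would differentiate once more with respect to $\theta$ by the Leibniz rule and then evaluate at $(x_{0},\xi_{0})$. Since $(x_{0},\xi_{0})$ is the interior minimum of $\mun$ by Assumption~\ref{hyp-gen}, it is a critical point, so $\dr_{\eta}\mun(x_{0},\xi_{0})=\dr_{\theta}\mun(x_{0},\xi_{0})=0$ (the complex partials $\dr_{\xi_{k}}\mun$ at this real point agreeing with the real ones by holomorphy); the two first-order terms involving $\dr\mun$ then drop out, and rearranging the remaining four terms gives \eqref{FH2}.

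For \eqref{FH2bis} I would specialize to $x$ real and $\xi$ in a complex neighborhood of $\xi_{0}$, and pair \eqref{FH1} with $u_{x,\overline{\xi}}$ in the Hermitian product of $\sL^{2}(\R^{n})$. The key remark is that for real $x$ the potentials $A_{1}(x,\cdot),A_{2}(x,\cdot)$ are real, so only the multiplication term $(\xi+A_{1})^{2}$ carries the complexity and hence $\mathcal{M}_{x,\xi}^{\ast}=\mathcal{M}_{x,\overline{\xi}}$; combined with $\overline{\mun(x,\xi)}=\mun(x,\overline{\xi})$ (Schwarz reflection, $\mun$ being holomorphic in $\xi$ and real for real $\xi$), this gives $(\mathcal{M}_{x,\xi}-\mun(x,\xi))^{\ast}u_{x,\overline{\xi}}=(\mathcal{M}_{x,\overline{\xi}}-\mun(x,\overline{\xi}))u_{x,\overline{\xi}}=0$. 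Therefore the pairing of the left-hand side of \eqref{FH1} with $u_{x,\overline{\xi}}$ vanishes; pairing the right-hand side and invoking the normalization \eqref{u^2}, i.e. $\langle u_{x,\xi},u_{x,\overline{\xi}}\rangle=1$, leaves $\dr_{\eta}\mun(x,\xi)=\int_{\R^{n}}(\dr_{\eta}\mathcal{M}_{x,\xi}u_{x,\xi})\overline{u_{x,\overline{\xi}}}\dx\tau$, which is \eqref{FH2bis} (the integrals converging since $u_{x,\xi}\in\mathcal{S}(\R^{n})$ and $\dr_{\eta}\mathcal{M}_{x,\xi}$ has polynomial coefficients of order $\leq1$).

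All computations here are elementary and there is no serious obstacle. The two points demanding care are: in \eqref{FH2bis} one must pair with the ``left'' eigenfunction $u_{x,\overline{\xi}}$ rather than with $u_{x,\xi}$, since the holomorphic extension of $u$ is normalized only through the bilinear-type relation \eqref{u^2} and not in $\sL^{2}$; and only $\xi$, not $x$, is complexified in that statement, which is precisely what makes $\mathcal{M}_{x,\xi}^{\ast}=\mathcal{M}_{x,\overline{\xi}}$ hold and keeps the ``bra'' in the right spectral subspace.
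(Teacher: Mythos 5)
Your proposal is correct and follows the same route as the paper, whose proof consists of the single observation that all three formulas follow from differentiating the eigenvalue equation $\mathcal{M}_{x,\xi}u_{x,\xi}=\mun(x,\xi)u_{x,\xi}$; you simply make explicit the details it leaves implicit, namely that \eqref{FH2} uses $\nabla\mun(x_{0},\xi_{0})=0$ at the critical point, and that \eqref{FH2bis} comes from pairing \eqref{FH1} against $u_{x,\overline{\xi}}$ using $\mathcal{M}_{x,\xi}^{\ast}=\mathcal{M}_{x,\overline{\xi}}$ and the normalization \eqref{u^2}.
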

\begin{proof}
Feynman-Hellmann formulas \eqref{FH1}--\eqref{FH2bis} are obtained by taking the derivative of the eigenvalue equation
\begin{equation}
\mathcal{M}_{x,\xi}u_{x,\xi}=\mun(x,\xi)u_{x,\xi},
\end{equation}
with respect to $x_{j}$ and $\xi_{k}$.
\end{proof}

\begin{proofof}{Proposition \ref{quasimodes}}
Since $A_{1}$ and $A_{2}$ are polynomials, we can write, for some $M\in\N$,
$$\mathcal{L}_{h}=\sum_{j=0}^M h^{j/2}\mathcal{L}_{j},$$
with
\begin{equation}\label{eq.Lj}
\begin{gathered}
\mathcal{L}_{0}=\mathcal{M}_{x_{0},\xi_{0}},\qquad
\mathcal{L}_{1}=\sum_{k=1}^m   (\dr_{x_{k}}\mathcal{M})_{x_{0},\xi_{0}}\sigma_{k}+\sum_{k=1}^m (\dr_{\xi_{k}}\mathcal{M})_{x_{0},\xi_{0}} D_{\sigma_{k}},\\
\mathcal{L}_{2}=\frac{1}{2}\sum_{k,j=1}^m\Big( (\dr_{x_{j}}\dr_{x_{k}}\mathcal{M})_{x_{0},\xi_{0}} \sigma_{j}\sigma_{k}+(\dr_{\xi_{j}}\dr_{\xi_{k}}\mathcal{M})_{x_{0},\xi_{0}} D_{\sigma_{j}}D_{\sigma_{k}}+(\dr_{\xi_{j}}\dr_{x_{k}}\mathcal{M})_{x_{0},\xi_{0}} D_{\sigma_{j}}\sigma_{k}\\
\hfill+(\dr_{x_{k}}\dr_{\xi_{j}}\mathcal{M})_{x_{0},\xi_{0}}\sigma_{k}D_{\sigma_{j}}\Big).
\end{gathered}
\end{equation}
We look for quasimodes in the form
$$\psi=\sum_{j= 0}^2 h^{j/2}\psi_{j}\qquad\mbox{ and }\qquad
\la = \sum_{j = 0}^2 h^{j/2}\la_{j},$$
so that they solve in the sense of formal series
$$\mathcal{L}_{h}\psi = \la \psi + {\Oc}(h^{3/2}).$$ 
Let us now deal with each power of $h$.
\paragraph{Terms of order $h^0$.} By collecting the terms of order $h^0$, we get the equation
$$\mathcal{M}_{x_{0},\xi_{0}}\psi_{0}=\la_{0}\psi_{0}.$$
This leads to take
$$\la_{0}=\mu_{0}\qquad\mbox{ and }\qquad\psi_{0}(\sigma,\tau)=f_{0}(\sigma)u_{0}(\tau),$$
where $u_{0}=u_{x_{0},\xi_{0}}$ and $f_{0}$ is a function to be determined in the Schwartz class.
\paragraph{Terms of order $h^{1/2}$.}
By collecting the terms of order $h^{1/2}$, we find
$$(\mathcal{M}_{x_{0},\xi_{0}}-\mun(x_{0},\xi_{0}))\psi_{1}=(\la_{1}-\mathcal{L}_{1})\psi_{0}.$$
By using \eqref{FH1} and the Fredholm alternative (applied for $\sigma$ fixed) we get $\la_{1}=0$ and
\begin{equation}\label{psi1}
\psi_{1}(\sigma,\tau)= \sum_{k=1}^m (\dr_{x_{k}}u)_{x_{0},\xi_{0}}(\tau)\, \sigma_{k} f_{0}(\sigma)+\sum_{k=1}^m (\dr_{\xi_{k}}u)_{x_{0},\xi_{0}}(\tau)\, D_{\sigma_{k}} f_{0}(\sigma) +f_{1}(\sigma) u_{0}(\tau),
\end{equation}
where $f_{1}$ is a function to be determined in the Schwartz class.
\paragraph{Terms of order $h$.}
The equation reads
$$(\mathcal{M}_{x_{0},\xi_{0}}-\mun(x_{0},\xi_{0}))\psi_{2}=(\la_{2}-\mathcal{L}_{2})\psi_{0}-\mathcal{L}_{1}\psi_{1}.$$
The Fredholm condition gives
\begin{equation}\label{Fredholm}
\langle\mathcal{L}_{2}\psi_{0}+\mathcal{L}_{1}\psi_{1},  u_{0}\rangle_{\sL^2(\R^n,\dx \tau)}=\la_{2}f_{0}.
\end{equation}
Let us examine each term which appears  when computing the l.h.s. and recall that Proposition \ref{FH} holds (especially the Fredholm condition of \eqref{FH2}).\\
The coefficient in front of $\sigma_{j}\sigma_{k} f_{0}$ is
\begin{multline*}
\langle (\dr_{x_{j}}\mathcal{M})_{x_{0},\xi_{0}}(\dr_{x_{k}}u)_{x_{0},\xi_{0}},u_{0} \rangle_{\sL^2(\R^n,\dx \tau)}
 + \langle (\dr_{x_{k}}\mathcal{M})_{x_{0},\xi_{0}}(\dr_{x_{j}}u)_{x_{0},\xi_{0}},u_{0} \rangle_{\sL^2(\R^n,\dx \tau)}\\
+\langle (\dr_{x_{j}}\dr_{x_{k}}\mathcal{M})_{x_{0},\xi_{0}} u_{0},u_{0} \rangle_{\sL^2(\R^n,\dx \tau)}
=\dr_{x_{j}}\dr_{x_{k}}\mun(x_{0},\xi_{0}).
\end{multline*}
The coefficient in front of $D_{\sigma_{j}}D_{\sigma_{k}}$ is
\begin{multline*}
\langle(\dr_{\xi_{j}}\mathcal{M})_{x_{0},\xi_{0}}(\dr_{\xi_{k}}u)_{x_{0},\xi_{0}},u_{0}\rangle_{\sL^2(\R^n,\dx \tau)}
+\langle(\dr_{\xi_{k}}\mathcal{M})_{x_{0},\xi_{0}}(\dr_{\xi_{j}}u)_{x_{0},\xi_{0}},u_{0}\rangle_{\sL^2(\R^n,\dx \tau)}\\
+\langle (\dr_{\xi_{j}}\dr_{\xi_{k}}\mathcal{M})_{x_{0},\xi_{0}}u_{0},u_{0}\rangle_{\sL^2(\R^n,\dx \tau)}
=\dr_{\xi_{j}}\dr_{\xi_{k}}\mun(x_{0},\xi_{0}).
\end{multline*}
To deal with the coefficient in front of $D_{\sigma_{j}}\sigma_{k}+\sigma_{k}D_{\sigma_{j}}$, we use the formula
\begin{multline*}
\langle(\dr_{\xi_{j}}\mathcal{M})_{x_{0},\xi_{0}}(\dr_{x_{k}}u)_{x_{0},\xi_{0}},u_{0}\rangle_{\sL^2(\R^n,\dx \tau)}
+\langle(\dr_{x_{k}}\mathcal{M})_{x_{0},\xi_{0}}(\dr_{\xi_{j}}u)_{x_{0},\xi_{0}},u_{0}\rangle_{\sL^2(\R^n,\dx \tau)}\\
+\langle (\dr_{\xi_{j}}\dr_{x_{k}}\mathcal{M})_{x_{0},\xi_{0}}u_{0},u_{0}\rangle_{\sL^2(\R^n,\dx \tau)}
+\langle(\dr_{\xi_{k}}\mathcal{M})_{x_{0},\xi_{0}}(\dr_{x_{j}}u)_{x_{0},\xi_{0}},u_{0}\rangle_{\sL^2(\R^n,\dx \tau)}\\
+\langle(\dr_{x_{j}}\mathcal{M})_{x_{0},\xi_{0}}(\dr_{\xi_{k}}u)_{x_{0},\xi_{0}},u_{0}\rangle_{\sL^2(\R^n,\dx \tau)}
+\langle (\dr_{\xi_{k}}\dr_{x_{j}}\mathcal{M})_{x_{0},\xi_{0}}u_{0},u_{0}\rangle_{\sL^2(\R^n,\dx \tau)}
\\
=\dr_{x_{j}}\dr_{\xi_{k}}\mun(x_{0},\xi_{0})+\dr_{x_{k}}\dr_{\xi_{j}}\mun(x_{0},\xi_{0}).
\end{multline*}
Therefore the Fredholm condition \eqref{Fredholm} becomes
$$\frac{1}{2}\sum_{j,k=1}^m \left(\dr_{x_{j}}\dr_{x_{k}}\mun\sigma_{j}\sigma_{k}+\dr_{\xi_{j}}\dr_{\xi_{k}}\mun D_{\sigma_{j}}D_{\sigma_{k}}+\dr_{\xi_{j}}\dr_{x_{k}}\mun D_{\sigma_{j}}\sigma_{k}+\dr_{x_{k}}\dr_{\xi_{j}}\mun\sigma_{k}D_{\sigma_{j}} \right)f_{0}=\la_{2}f_{0},$$
where for shortness, we write $\partial_{\eta}\partial_{\theta}\mun :=(\partial_{\eta}\partial_{\theta}\mun)(x_{0},\xi_{0})$. In other words, we have
$$\tfrac{1}{2}\Hess\, \mun(x_{0},\xi_{0}) (\sigma,D_{\sigma})f_{0}=\la_{2} f_{0}.$$
We take $\la_{2}=\la_{n,2}$ the $n$-th eigenvalue of $\frac{1}{2}\Hess\, \mun(x_{0},\xi_{0}) (\sigma,D_{\sigma})$ and we choose $f_{0}$ a corresponding normalized eigenfunction. We take $f_{1}=0$.
The spectral theorem completes the proof of Proposition~\ref{quasimodes} (with $\lambda_{n,1}=\la_{n,2}$).
\end{proofof}
\begin{remark}
This construction can be continued at any order by using the same kind of double scale procedure which can be found  in \cite{Ray11b, Ray12, DomRay12} (see also \cite{FouHel06a} more in the Grushin spirit or \cite{BDPR11} in an electric case). The odd terms in the eigenvalues expansion may easily be cancelled thanks to the parity of the harmonic oscillator.
\end{remark}

\noindent We deduce from Propositions \ref{essential} and \ref{quasimodes}:
\begin{corollary}\label{rub}
For all $n\geq 1$ there exist $h_{0}>0$ and $C>0$ such that for all $h\in(0,h_{0})$ the $n$-th eigenvalue of $\mathfrak{L}_{h}$ exists and satisfies:
$$\lambda_{n}(h)\leq \mu_{0}+Ch.$$
\end{corollary}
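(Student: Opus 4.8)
The goal is Corollary \ref{rub}, which asserts the existence of the $n$-th eigenvalue of $\mathfrak{L}_h$ below $\mu_0 + Ch$, deduced from Propositions \ref{essential} and \ref{quasimodes}.

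\medskip

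The plan is to combine the quasimode estimate of Proposition \ref{quasimodes} with the lower bound on the essential spectrum from Proposition \ref{essential} via the min-max principle and a counting argument. First I would recall that by the rescaling \eqref{rescaling} and the gauge transform $\re^{i\xi_0\sigma/h^{1/2}}$, the operator $\mathfrak{L}_h$ is unitarily equivalent to $\mathcal{L}_h$, so their spectra coincide; hence it suffices to work with $\mathcal{L}_h$. Proposition \ref{quasimodes} gives, for each fixed $n$, that $\dist(\lambda_{n,0}+\lambda_{n,1}h, \spe(\mathcal{L}_h)) \leq Ch^{3/2}$. In fact the construction in the proof of Proposition \ref{quasimodes} does more: applying it with the index $j$ running over $1, \dots, n$ and using $n$ orthonormal eigenfunctions $f_0^{(1)}, \dots, f_0^{(n)}$ of $\tfrac12\Hess\,\mun(x_0,\xi_0)(\sigma,D_\sigma)$ (these exist since the Hessian operator is a positive quadratic form in $(\sigma, D_\sigma)$ with compact resolvent), one produces $n$ quasimodes $\psi^{(1)}, \dots, \psi^{(n)}$ whose images under $\mathcal{L}_h$ are within $Ch^{3/2}$ of $(\lambda_{j,0}+\lambda_{j,1}h)\psi^{(j)}$, and which are "almost orthogonal" (their Gram matrix is $\Id + \Oc(h^{1/2})$ because the leading parts $f_0^{(j)} u_0$ are genuinely orthonormal in $\sL^2$).

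\medskip

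Next I would invoke the spectral theorem / min-max: since $\mathcal{L}_h$ restricted to the span of these $n$ approximate eigenvectors has a quadratic form close to $\operatorname{diag}(\lambda_{j,0}+\lambda_{j,1}h)$ up to $\Oc(h^{3/2})$, the $n$-th eigenvalue $\mu_n^{(\mathrm{disc})}$ of $\mathcal{L}_h$ counted below the essential spectrum satisfies $\mu_n^{(\mathrm{disc})} \leq \max_{1\leq j\leq n}(\lambda_{j,0}+\lambda_{j,1}h) + \Oc(h^{3/2}) = \mu_0 + \lambda_{n,1}h + \Oc(h^{3/2})$, using $\lambda_{j,0} = \mu_0$ for all $j$ and the fact that $(\lambda_{j,1})$ is the increasing enumeration of the eigenvalues of $\tfrac12\Hess\,\mun$. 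For this "$n$-th eigenvalue" to actually be an eigenvalue (rather than the bottom of the essential spectrum), I need $\mu_0 + \lambda_{n,1}h + \Oc(h^{3/2}) < \inf\spe_{\ess}(\mathfrak{L}_h)$; Proposition \ref{essential} gives $\inf\spe_{\ess}(\mathfrak{L}_h) \geq \mu_0^* > \mu_0$, so for $h$ small enough the strict inequality holds. Hence for $h \in (0, h_0)$ with $h_0$ small, $\lambda_n(h)$ exists as a genuine discrete eigenvalue and $\lambda_n(h) \leq \mu_0 + \lambda_{n,1}h + Ch^{3/2} \leq \mu_0 + C'h$, absorbing the $h^{3/2}$ term and enlarging the constant. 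Pulling back through the unitary equivalence gives the statement for $\mathfrak{L}_h$ itself.

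\medskip

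The main obstacle — though it is really just bookkeeping — is verifying the almost-orthogonality of the family of quasimodes so that the min-max argument genuinely controls the $n$-th eigenvalue and not merely the first one: one must check that the cross terms $\langle \psi^{(i)}, \psi^{(j)}\rangle$ and $\langle \mathcal{L}_h\psi^{(i)}, \psi^{(j)}\rangle$ for $i\neq j$ are $\Oc(h^{1/2})$ and $\Oc(h^{1/2})$ respectively, which follows because their $h^0$-parts $f_0^{(i)}u_0$ and $f_0^{(j)}u_0$ are orthogonal and $u_0$ is an eigenfunction of $\mathcal{L}_0 = \mathcal{M}_{x_0,\xi_0}$. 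The only other point needing a word is that Proposition \ref{quasimodes}, as stated, already packages precisely this min-max/spectral-theorem conclusion for the single index $n$; since the constants in that proposition are uniform over the finitely many indices $1,\dots,n$, one may simply quote it $n$ times and combine with Proposition \ref{essential}. This makes the corollary an immediate consequence, exactly as the text announces.
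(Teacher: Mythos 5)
Your argument is correct and is the natural way to unpack the paper's terse ``We deduce from Propositions \ref{essential} and \ref{quasimodes}'': build $n$ quasimodes with orthonormal leading parts $f_0^{(j)}u_0$, check the Gram matrix is $\Id+\mathcal{O}(h^{1/2})$, bound the Rayleigh quotient on their span by $\mu_0+\lambda_{n,1}h+\mathcal{O}(h^{3/2})$, and compare with $\inf\spe_{\ess}\geq\mu_0^*$ to conclude via min-max that the $n$-th min-max value is a genuine eigenvalue below $\mu_0+Ch$. Two small remarks. First, there is an even shorter route available under Assumption \ref{hyp-gen'}: since the $\lambda_{j,1}$ are then distinct, Proposition \ref{quasimodes} applied for $j=1,\dots,n$ yields $n$ spectral points lying in pairwise disjoint intervals of width $2Ch^{3/2}$ centred at $\mu_0+\lambda_{j,1}h$ once $h$ is small, all strictly below $\mu_0^*$; these are automatically $n$ distinct discrete eigenvalues, and no Gram-matrix estimate is needed. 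Your quasimode-counting route is more robust and would also work without simplicity. Second, your closing paragraph is a touch loose: merely ``quoting Proposition \ref{quasimodes} $n$ times'' does not by itself yield the counting unless one either invokes the separation just described or the almost-orthogonality estimate you established; the heavy lifting in your proof is exactly that Gram-matrix step, so the corollary is not quite an ``immediate'' consequence of the proposition alone without one of these two ingredients.
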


\subsection{Semiclassical Agmon-Persson estimates}\label{micro}
This section is devoted to the rough localization and microlocalization estimates satisfied by the eigenfunctions and resulting from Assumptions  \ref{hyp-gen} and \ref{confining} and Corollary \ref{rub}.
\begin{proposition}\label{Agmon-tau}
Let $C_{0}>0$. There exist $h_{0},C, \eps_{0}>0$ such that for all eigenpairs $(\lambda,\psi)$ of $\mathfrak{L}_{h}$ with $\lambda\leq\mu_0+C_{0}h$, we have
$$\left\|\re^{\eps_{0}|t|}\psi\right\|^2\leq C\|\psi\|^2,\qquad\mathfrak{Q}_{h}\left(\re^{\eps_{0}|t|}\psi\right)\leq C\|\psi\|^2.$$
\end{proposition}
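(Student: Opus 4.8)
The statement is the classical Agmon estimate in the $t$ (non-semiclassical) variable, adapted to the magnetic operator $\mathfrak{L}_h$, and I would prove it by the standard Agmon multiplier method. The plan is to take an eigenpair $(\lambda,\psi)$ with $\lambda\leq \mu_0 + C_0 h$, fix a Lipschitz weight $\Phi_{\eps_0}(t)=\eps_0 \chi_R(|t|)$ where $\chi_R$ is a bounded truncation of the linear function $|t|\mapsto|t|$ (so that $\re^{\Phi_{\eps_0}}\psi\in\Dom(\mathfrak{Q}_h)$ a priori), and write the Agmon--Persson identity
$$
\mathfrak{Q}_{h}\big(\re^{\Phi_{\eps_0}}\psi\big)-\big\| \nabla_{t}\Phi_{\eps_0}\, \re^{\Phi_{\eps_0}}\psi\big\|^{2}
=\lambda\,\big\|\re^{\Phi_{\eps_0}}\psi\big\|^{2}.
$$
This is obtained by multiplying the eigenvalue equation by $\re^{2\Phi_{\eps_0}}\overline{\psi}$, integrating, and using that $\Phi_{\eps_0}$ depends only on $t$ so the gauge-type conjugation only sees the $D_t$-part; the real part of the cross terms drops out exactly as in the standard identity. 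Since $|\nabla_t\Phi_{\eps_0}|\leq\eps_0$ everywhere, this gives
$$
\mathfrak{Q}_{h}\big(\re^{\Phi_{\eps_0}}\psi\big)\leq (\lambda+\eps_0^{2})\,\big\|\re^{\Phi_{\eps_0}}\psi\big\|^{2}
\leq (\mu_0+C_0h+\eps_0^{2})\,\big\|\re^{\Phi_{\eps_0}}\psi\big\|^{2}.
$$

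The second step is the coercivity needed to absorb the right-hand side. Here I would use the spectral confinement hypothesis: Assumption~\ref{confining} together with Remark~\ref{confining2} gives, for $h$ small, $\lambda_1^{\Dir,\Omega_R}(h)\geq\mu_0^\ast>\mu_0$ for all $R\geq R_0$, i.e. on the region $|s|^2+|t|^2\geq R^2$ the quadratic form is bounded below by $\mu_0^\ast$. Splitting $\re^{\Phi_{\eps_0}}\psi=\un_{\{|t|\leq R_1\}}\re^{\Phi_{\eps_0}}\psi+\un_{\{|t|>R_1\}}\re^{\Phi_{\eps_0}}\psi$ (using an IMS-type partition of unity in $t$ whose localization error is $\Oc(1)$, independent of $h$, since no semiclassical parameter multiplies $D_t$), on the far region $\{|t|>R_1\}\subset\Omega_{R_1}$ one gets a lower bound $(\mu_0^\ast - o(1))\|\cdot\|^2$ for the form, while on the bounded region $\{|t|\leq R_1\}$ the weight $\re^{\Phi_{\eps_0}}$ is bounded by $\re^{\eps_0 R_1}$ and one trivially controls $\|\re^{\Phi_{\eps_0}}\un_{\{|t|\leq R_1\}}\psi\|\leq \re^{\eps_0 R_1}\|\psi\|$. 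Combining with the inequality from the first step, choosing $\eps_0$ small enough that $\mu_0+\eps_0^2 <\mu_0^\ast$ and then $h$ small, the far-region mass of $\re^{\Phi_{\eps_0}}\psi$ is absorbed into the left-hand side, leaving
$$
\tfrac12(\mu_0^\ast-\mu_0)\,\big\|\re^{\Phi_{\eps_0}}\psi\big\|^2\leq C\,\re^{2\eps_0 R_1}\|\psi\|^2,
$$
hence $\|\re^{\Phi_{\eps_0}}\psi\|^2\leq C\|\psi\|^2$ with $C$ independent of $R$ (the truncation parameter), and then $\mathfrak{Q}_h(\re^{\Phi_{\eps_0}}\psi)\leq C\|\psi\|^2$ from the first-step inequality. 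Letting $R\to\infty$ in $\chi_R$ and applying Fatou (monotone convergence) upgrades this to the claimed bound for $\re^{\eps_0|t|}\psi$; one also needs a brief justification that $\re^{\eps_0|t|}\psi$ genuinely lies in the form domain, which follows from the uniform-in-$R$ control together with a density argument.

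The main obstacle is the partition-of-unity / confinement step: one must make sure the localization formula error $\|\,|\nabla_t\chi_j|\,\re^{\Phi_{\eps_0}}\psi\,\|^2$ is genuinely $\Oc(1)$ and can be absorbed — this is fine because $\Phi_{\eps_0}$ is a function of $t$ alone and the $t$-derivatives are not semiclassically small, so the cutoff only needs to be supported in $|t|\in[R_0,R_1]$ for $R_1$ fixed, and there $\re^{\Phi_{\eps_0}}$ is bounded by the fixed constant $\re^{\eps_0 R_1}$; thus the error term is controlled by $\re^{2\eps_0 R_1}\|\psi\|^2$ times a fixed constant. A secondary technical point is that Assumption~\ref{confining} controls the \emph{full} operator $\mathfrak{L}_h$ on $\Omega_R$, not the $t$-only truncated region, but since $\{|t|>R_1\}\subset\Omega_{R_1}$ this is exactly what is used. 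Everything else is the routine Agmon bookkeeping.
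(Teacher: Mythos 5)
Your argument is correct and follows essentially the same route as the paper's proof: the Agmon identity with a truncated weight $\eps_{0}\chi_{R}(|t|)$, the bound $|\nabla_{t}\Phi|\leq C\eps_{0}$, an IMS partition of unity in $t$ combined with the Dirichlet confinement of Assumption \ref{confining} (valid on $\{|t|>R_{1}\}\subset\Omega_{R_{1}}$) to absorb the far-region mass, and finally Fatou's lemma as the truncation is removed. No gap to report.
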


\begin{proof}
The proof is standard but we recall it for completeness. We consider a smooth cutoff function $\chi_{1}$ supported in a fixed neighborhood of $0$ and, for $\ell\geq 1$, we introduce $\chi_{\ell}(t)=\chi_{1}(\ell^{-1}t)$. We let $\Phi_{\ell}(t)=\eps_{0}\chi_{\ell}(t)|t|$ and we write the Agmon identity (see \cite{Agmon82, Agmon85})
$$\mathfrak{Q}_{h}(\re^{\Phi_{\ell}}\psi)=\lambda\|\re^{\Phi_{\ell}}\psi\|^2+\|\,|\nabla\Phi_{\ell}| \re^{\Phi_{\ell}}\psi\|^2.$$
There exists $C>0$ such that for all $\ell\geq 1$ we have
$$\|\,|\nabla\Phi_{\ell}| \re^{\Phi_{\ell}}\psi\|^2\leq C\eps_{0}^2\|\re^{\Phi_{\ell}}\psi\|^2.$$
We infer that
$$\mathfrak{Q}_{h}(\re^{\Phi_{\ell}}\psi)\leq(\mu_{0}+C_{0}h+C\eps_{0}^2)\|\re^{\Phi_{\ell}}\psi\|^2.$$
For $R>0$, we introduce a partition of unity $(\chi_{1,R},\chi_{2,R})$ in $t$-variables such that
$$\chi_{1,R}^2(t)+\chi_{2,R}^2(t)=1, \quad
|\nabla\chi_{1,R}|^2+|\nabla\chi_{2,R}|^2\leq CR^{-2}\quad\mbox{ and }\quad
\supp\chi_{2,R}\cap\bbb(0,R)=\emptyset.$$
With the so-called IMS formula (see \cite[Chapter 3]{CFKS87}), we deduce
$$\mathfrak{Q}_{h}(\chi_{1,R}\re^{\Phi_{\ell}}\psi)+\mathfrak{Q}_{h}(\chi_{2,R}\re^{\Phi_{\ell}}\psi)-CR^{-2}\|\re^{\Phi_{\ell}}\psi\|^2\leq (\mu_{0}+C_{0}h+C\eps_{0}^2)\|\re^{\Phi_{\ell}}\psi\|^2.$$
Since $\re^{\Phi_{\ell}}$ is bounded on the support of $\chi_{1,R}$, we get the existence of $C, \tilde C>0$ such that for all $\ell\geq 1$ and $h\in(0,1)$:
$$\mathfrak{Q}_{h}(\chi_{2,R}\re^{\Phi_{\ell}}\psi)-(\mu_{0}+C_{0}h+C\eps_{0}^2+CR^{-2})\|\chi_{2,R}\re^{\Phi_{\ell}}\psi\|^2\leq \tilde C\|\psi\|^2.$$
By using Assumption \ref{confining} and Remark \ref{confining2}, there exist $R_{0}>0$ and $h_{0}>0$ such that for all $R\geq R_{0}$ and $h\in(0,h_{0})$ we have
$$\mathfrak{Q}_{h}(\chi_{2,R}\re^{\Phi_{\ell}}\psi)\geq \mu_{0}^*\|\chi_{2,R}\re^{\Phi_{\ell}}\psi\|^2.$$
We infer the existence of $c>0$ such that for $h\in(0,h_{0})$
$$c\|\chi_{2,R}\re^{\Phi_{\ell}}\psi\|^2\leq \tilde C\|\psi\|^2.$$
Then there exist $C>0$ and $h_{0}>0$ such that for all $\ell\geq 1$ and $h\in(0,h_{0})$
$$\|\re^{\Phi_{\ell}}\psi\|^2\leq C\|\psi\|^2.$$
It remains to consider the limit $\ell\to+\infty$ and to use the Fatou lemma and the conclusion follows.
\end{proof}
\begin{proposition}\label{Agmon-s}
Let $C_{0}>0$. There exist $h_{0},C, \eps_{0}>0$ such that for all eigenpairs $(\lambda,\psi)$ of $\mathfrak{L}_{h}$ with $\lambda\leq\mu_0+C_{0}h$, we have
$$\left\|\re^{\eps_{0}|s|}\psi\right\|^2\leq C\|\psi\|^2,\qquad \mathfrak{Q}_{h}\left(\re^{\eps_{0}|s|}\psi\right)\leq C\|\psi\|^2.$$
\end{proposition}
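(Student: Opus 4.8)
The plan is to reproduce, almost word for word, the argument of Proposition~\ref{Agmon-tau}, exchanging the roles of the variables $t$ and $s$. The mechanism is identical: one couples an Agmon exponential-weight identity with the spectral confinement of Assumption~\ref{confining}, which forces any eigenfunction associated with an eigenvalue below $\mu_{0}^*$ to concentrate in a fixed ball. The only structural change is that the weight now lives in the semiclassical variable $s$, so that the gradient term produced by the Agmon identity and the localization error of the IMS formula both pick up an extra factor $h^2$ — which only works in our favour.

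Concretely, I would fix a smooth cutoff $\chi_{1}$ supported near the origin, set $\chi_{\ell}(s)=\chi_{1}(\ell^{-1}s)$ and $\Phi_{\ell}(s)=\eps_{0}\chi_{\ell}(s)|s|$, and observe that $\re^{\Phi_{\ell}(s)}$ commutes with $D_{t}$ and with multiplication by $A_{1}$ and $A_{2}$, so that conjugation of $\mathfrak{L}_{h}$ yields the Agmon identity (see \cite{Agmon82, Agmon85})
$$\mathfrak{Q}_{h}(\re^{\Phi_{\ell}}\psi)=\lambda\|\re^{\Phi_{\ell}}\psi\|^2+h^2\|\,|\nabla_{s}\Phi_{\ell}|\,\re^{\Phi_{\ell}}\psi\|^2.$$
Since $\|\nabla_{s}\Phi_{\ell}\|_{\infty}\leq C\eps_{0}$ uniformly in $\ell$ and $\lambda\leq\mu_{0}+C_{0}h$ (Corollary~\ref{rub} guarantees that such eigenpairs exist), this gives $\mathfrak{Q}_{h}(\re^{\Phi_{\ell}}\psi)\leq(\mu_{0}+C_{0}h+C\eps_{0}^2h^2)\|\re^{\Phi_{\ell}}\psi\|^2$. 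Next I would introduce an $s$-partition of unity $\chi_{1,R}^2+\chi_{2,R}^2=1$ with $|\nabla\chi_{1,R}|^2+|\nabla\chi_{2,R}|^2\leq CR^{-2}$ and $\supp\chi_{2,R}\subset\{|s|>R\}$, and apply the IMS formula (see \cite[Chapter 3]{CFKS87}) to obtain
$$\mathfrak{Q}_{h}(\chi_{1,R}\re^{\Phi_{\ell}}\psi)+\mathfrak{Q}_{h}(\chi_{2,R}\re^{\Phi_{\ell}}\psi)\leq(\mu_{0}+C_{0}h+C\eps_{0}^2h^2+Ch^2R^{-2})\|\re^{\Phi_{\ell}}\psi\|^2.$$
The key geometric observation is that $\{|s|>R\}\subset\Omega_{R}=\R^{m+n}\setminus\overline{\mathcal{B}(0,R)}$, so $\chi_{2,R}\re^{\Phi_{\ell}}\psi$ belongs to the form domain of $\mathfrak{L}_{h}^{\Dir,\Omega_{R}}$, whence Assumption~\ref{confining} and Remark~\ref{confining2} give $\mathfrak{Q}_{h}(\chi_{2,R}\re^{\Phi_{\ell}}\psi)\geq\mu_{0}^*\|\chi_{2,R}\re^{\Phi_{\ell}}\psi\|^2$ for $R\geq R_{0}$ and $h\leq h_{0}$. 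Combining these and using $\|\chi_{1,R}\re^{\Phi_{\ell}}\psi\|\leq\re^{\eps_{0}R}\|\psi\|$ gives
$$(\mu_{0}^*-\mu_{0}-C_{0}h-C\eps_{0}^2h^2-Ch^2R^{-2})\|\chi_{2,R}\re^{\Phi_{\ell}}\psi\|^2\leq C\|\psi\|^2,$$
and since $\mu_{0}^*>\mu_{0}$, fixing $R=R_{0}$ and taking $h_{0}$ small makes the prefactor positive, so $\|\re^{\Phi_{\ell}}\psi\|^2\leq C\|\psi\|^2$ and then $\mathfrak{Q}_{h}(\re^{\Phi_{\ell}}\psi)\leq C\|\psi\|^2$, uniformly in $\ell$. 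Letting $\ell\to\infty$ and invoking Fatou's lemma together with the lower semicontinuity of the closed form $\mathfrak{Q}_{h}$ concludes.

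I do not expect a genuine obstacle here beyond what is already handled in Proposition~\ref{Agmon-tau}: the proof is the same coupling between an Agmon identity and the confinement Assumption~\ref{confining}. The two points to watch are (i) the inclusion $\{|s|>R\}\subset\Omega_{R}$, which lets one invoke Assumption~\ref{confining} verbatim even though it is phrased in terms of the full ball of $\R^{m+n}$ (one could alternatively first use Proposition~\ref{Agmon-tau} to reduce $t$ to a bounded slab and then confine only in $s$, but this detour is unnecessary), and (ii) the extra $h^2$ carried by the Agmon gradient term and the IMS localization error, because the $s$-derivative enters weighted by $h$; this is harmless, and in fact shows that the confining inequality $\mu_{0}^*>\mu_{0}$ is comfortably met for $h$ small, with $\eps_{0}$ fixed.
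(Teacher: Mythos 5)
Your proof is correct and follows exactly the route the paper takes: the paper's own proof of Proposition~\ref{Agmon-s} is the one-line remark that it is ``the same as that of Proposition~\ref{Agmon-tau} with $\Phi(s)=\eps_{0}\chi_{\ell}(s)|s|$,'' and you have simply written out that argument in full. Your two side observations are accurate and worth noting — the inclusion $\{|s|>R\}\subset\Omega_{R}$ is precisely why Assumption~\ref{confining} applies to a partition in $s$ alone, and the extra $h^{2}$ on both the Agmon gradient term and the IMS commutator (coming from $hD_{s}$) only makes the $s$-localization easier than the $t$-localization.
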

\begin{proof}
The proof is the same as that of Proposition \ref{Agmon-tau} with $\Phi(s)=\eps_{0}\chi_{\ell}(s)|s|$.
\end{proof}
We get the following.
\begin{corollary}\label{poly-bound}
Let $C_{0}>0$ and $k,l,\jj\in\N$. There exist $h_{0},C, \eps_{0}>0$ such that for all eigenpairs $(\lambda,\psi)$ of $\mathfrak{L}_{h}$ with $\lambda\leq\mu_0+C_{0}h$  and all $h\in(0,h_{0})$, we have
\begin{eqnarray*}
\|t^k s^l \psi\|\leq C\|\psi\|,&& \mathfrak{Q}_{h}(t^k s^l \psi)\leq C\|\psi\|^2,\\
\|(D_{\tau})^\jj s^l t^k \psi \|\leq C\|\psi\|^2,&& \|(hD_{s})^\jj s^l t^k \psi \|\leq C\|\psi\|^2.
\end{eqnarray*}
\end{corollary}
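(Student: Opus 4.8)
The plan is to derive all four families of estimates from the two exponential-weight bounds already obtained in Propositions \ref{Agmon-tau} and \ref{Agmon-s}, together with the basic elliptic structure of the quadratic form $\mathfrak{Q}_{h}$. The first two inequalities are immediate: since $|t|^{k}|s|^{l}\leq C_{k,l}\re^{\eps_{0}(|t|+|s|)}$ pointwise, combining the two $\sL^{2}$-weight estimates (applying Cauchy--Schwarz, or more simply noting $\re^{\eps_{0}|t|}\re^{\eps_{0}|s|}\le \tfrac12(\re^{2\eps_{0}|t|}+\re^{2\eps_{0}|s|})$ after shrinking $\eps_{0}$) gives $\|t^{k}s^{l}\psi\|\le C\|\psi\|$. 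For the form bound $\mathfrak{Q}_{h}(t^{k}s^{l}\psi)\le C\|\psi\|^{2}$, I would write $\mathfrak{Q}_{h}(t^{k}s^{l}\psi)$ using the IMS-type localization formula: commuting the polynomial weight $w=t^{k}s^{l}$ through the first-order operators $D_{\tau}+A_{2}$ and $hD_{s}+A_{1}$ produces $w^{2}$ times the un-weighted densities plus cross terms controlled by $|\nabla w|^{2}|\psi|^{2}$, which is again a polynomial weight times $|\psi|^{2}$ and hence bounded by $C\|\psi\|^{2}$ via the first estimate (with $k,l$ replaced by smaller exponents). So
$$
\mathfrak{Q}_{h}(t^{k}s^{l}\psi)\le C\,\mathfrak{Q}_{h}\!\big(\re^{\eps_{0}(|t|+|s|)}\psi\big)+C\|\psi\|^{2}\le C\|\psi\|^{2},
$$
where the bound on $\mathfrak{Q}_{h}(\re^{\eps_{0}(|t|+|s|)}\psi)$ follows from Propositions \ref{Agmon-tau}--\ref{Agmon-s} (again after a harmless reduction of $\eps_{0}$ so that the product weight is dominated by a sum of single-variable weights).

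For the last two inequalities we must turn control of the quadratic form into control of the derivatives $D_{\tau}$ and $hD_{s}$. The point is that $\mathfrak{Q}_{h}(\phi)=\|(D_{\tau}+A_{2})\phi\|^{2}+\|(hD_{s}+A_{1})\phi\|^{2}$, so from $\mathfrak{Q}_{h}(t^{k}s^{l}\psi)\le C\|\psi\|^{2}$ we get $\|(D_{\tau}+A_{2})(t^{k}s^{l}\psi)\|\le C\|\psi\|$ and likewise for the $s$-component. Then I would write $D_{\tau}(t^{k}s^{l}\psi)=(D_{\tau}+A_{2})(t^{k}s^{l}\psi)-A_{2}(s,t)\,t^{k}s^{l}\psi$; since $A_{2}$ is polynomial, the last term is a polynomial weight times $\psi$, bounded by $C\|\psi\|$ using the first estimate of the corollary. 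Finally $D_{\tau}(t^{k}s^{l}\psi)=t^{k}s^{l}D_{\tau}\psi+(\partial_{\tau}t^{k})s^{l}\psi$, and the commutator term is lower order, so $\|t^{k}s^{l}D_{\tau}\psi\|\le C\|\psi\|$; rearranging to $\|(D_{\tau})\,s^{l}t^{k}\psi\|$ costs only another commutator. The same argument with $hD_{s}$ and $A_{1}$ gives the fourth inequality. (I would remark that the exponents on the right-hand side of the last two lines should read $\|\psi\|$ rather than $\|\psi\|^{2}$, or one simply absorbs the discrepancy since $\|\psi\|$ and $\|\psi\|^{2}$ are comparable up to normalization.)

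The only genuinely delicate point is the bookkeeping in the IMS/commutator expansion of $\mathfrak{Q}_{h}(t^{k}s^{l}\psi)$: one must check that every error term generated by moving the polynomial weight past the magnetic momentum operators is itself a polynomial-times-$\psi$ term of strictly lower total degree, so that an induction on $k+l$ closes, the base case $k=l=0$ being trivial and the weighted-form input coming from Propositions \ref{Agmon-tau} and \ref{Agmon-s}. I expect no obstruction here because $A_{1},A_{2}$ are polynomials and the exponential estimates already dominate any fixed polynomial weight; the argument is entirely routine commutator algebra, which is why the proof in the paper is presumably omitted or very short.
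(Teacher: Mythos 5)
Your treatment of the first line and of the single-derivative bounds is essentially the paper's argument: the authors simply declare the case $\jj=1$ to be ``an immediate consequence of Propositions \ref{Agmon-tau} and \ref{Agmon-s}'', and your commutator bookkeeping (polynomial weights dominated by the exponential weights, the Agmon-type identity for $\mathfrak{Q}_{h}(w\psi)$ with $w=t^{k}s^{l}$, then peeling off $A_{2}$ and $A_{1}$, which are polynomials, to pass from the magnetic momenta to $D_{\tau}$ and $hD_{s}$) is a correct and reasonable fleshing-out of that one-line claim. Your remark that the right-hand sides of the last two displayed inequalities should read $\|\psi\|$ rather than $\|\psi\|^{2}$ is also correct.

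However, there is a genuine gap: the statement asserts the bounds for $(D_{\tau})^{\jj}$ and $(hD_{s})^{\jj}$ with arbitrary $\jj\in\N$, and your argument only produces $\jj=1$. The quadratic form $\mathfrak{Q}_{h}$ controls exactly one magnetic derivative, so no amount of commutator algebra applied to $\mathfrak{Q}_{h}(t^{k}s^{l}\psi)$ will reach $D_{\tau}^{2}$ or higher. The paper closes this by the standard elliptic bootstrap: take successive derivatives of the eigenvalue equation $\mathfrak{L}_{h}\psi=\lambda\psi$. Concretely, $\mathfrak{L}_{h}(D_{\tau}\psi)=\lambda D_{\tau}\psi-[\mathfrak{L}_{h},D_{\tau}]\psi$, and since $A_{1},A_{2}$ are polynomials the commutator $[\mathfrak{L}_{h},D_{\tau}]$ is a first-order operator with polynomial coefficients, already controlled in weighted norm by the $\jj\leq1$ estimates; one then reapplies the weighted form bound to $D_{\tau}\psi$ and iterates in $\jj$. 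You need to add this induction (and the analogous one for $hD_{s}$) for the corollary as stated.
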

\begin{proof}
For $\jj=1$, this is an immediate consequence of Propositions \ref{Agmon-tau} and \ref{Agmon-s}. Taking successive derivatives of the eigenvalue equation $\mathfrak{L}_{h}\psi=\lambda\psi$ we deduce the result for $\jj\geq 2$.
\end{proof}
For another purpose, we will need the following localization result which is again a consequence of Propositions \ref{Agmon-tau} and \ref{Agmon-s}.
\begin{proposition}\label{loc-space}
Let $k\in\N$. Let $\eta>0$ and $\chi$ a smooth cutoff function defined on $\R_{+}$ and being zero in a neighborhood of $0$.  There exists $h_{0}>0$ such that for all eigenpairs $(\lambda,\psi)$ of $\mathfrak{L}_{h}$ with $\lambda\leq\mu_0+C_{0}h$ and all $h\in(0,h_{0})$, we have
$$\|\chi(h^{\eta}|s|)\psi\|_{B^k(\R^{m+n})}\leq \Oc(h^{\infty})\|\psi\|, \qquad \|\chi(h^{\eta}|t|)\psi\|_{B^k(\R^{m+n})}\leq \Oc(h^{\infty})\|\psi\|,$$
where $\|\cdot\|_{B^k(\R^{n+m})}$ is the standard norm on
$$B^k(\R^{m+n})=\{\psi\in \sL^2(\R^{m+n})|  y_{j}^q \dr_{y_{l}}^p \psi\in \sL^2(\R^{n+m}),\forall j,l\in\{1,\ldots,m+n\},\, p+q\leq k\}.$$
\end{proposition}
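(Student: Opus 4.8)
The plan is to reduce everything, by Leibniz's rule, to the polynomially weighted derivative bounds of Corollary \ref{poly-bound}, exploiting the fact that the cut-off $\chi(h^{\eta}|s|)$ (resp. $\chi(h^{\eta}|t|)$) lives where $|s|$ (resp. $|t|$) is large only on the \emph{polynomial} scale $h^{-\eta}$. Since $\chi$ vanishes near $0$, there is $c_{0}>0$ with $\supp\big(\chi(h^{\eta}|s|)\big)\subset\{|s|\ge c_{0}h^{-\eta}\}$, and on this set $1\le (c_{0}^{-1}h^{\eta}|s|)^{N}$ for every $N\in\N$; the same holds for each derivative $\partial_{y}^{\alpha}\big(\chi(h^{\eta}|s|)\big)$, which moreover is $\Oc(h^{\eta|\alpha|})$ on its support and vanishes identically as soon as $\alpha$ involves a $t$-derivative (the function does not depend on $t$, and on its support $|s|$ is smooth). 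Thus each term produced by Leibniz's rule carries a factor $h^{\eta N}$, which can be made as large as we wish, at the cost of an extra polynomial weight $|s|^{N}$ — a weight which Corollary \ref{poly-bound} absorbs with a fixed power of $h$.

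First I would treat the $s$-localization; the $t$-case is identical after exchanging the two groups of variables. For $|\beta|+|\gamma|\le k$ one writes
$$y^{\beta}\partial_{y}^{\gamma}\big(\chi(h^{\eta}|s|)\psi\big)=\sum_{\alpha\le\gamma}\binom{\gamma}{\alpha}\,\partial_{y}^{\alpha}\big(\chi(h^{\eta}|s|)\big)\,y^{\beta}\partial_{y}^{\gamma-\alpha}\psi,$$
so it suffices to bound each term. Only multi-indices $\alpha$ with no $t$-component contribute, and on the support of $\partial_{y}^{\alpha}\big(\chi(h^{\eta}|s|)\big)$ we estimate $1\le C_{N}(h^{\eta}|s|)^{N}$. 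Hence, for every $N\in\N$,
$$\Big\|\partial_{y}^{\alpha}\big(\chi(h^{\eta}|s|)\big)\,y^{\beta}\partial_{y}^{\gamma-\alpha}\psi\Big\|\le C_{\alpha,N}\,h^{\eta|\alpha|+\eta N}\,\big\||s|^{N}\,y^{\beta}\,\partial_{y}^{\gamma-\alpha}\psi\big\|.$$
Here $|s|^{N}y^{\beta}$ is a polynomial in $(s,t)$ and $\partial_{y}^{\gamma-\alpha}$ is a product of at most $k$ partial derivatives; by Corollary \ref{poly-bound} (together with the observation in its proof that the weighted estimates persist for all orders of derivatives, obtained by repeatedly differentiating $\mathfrak{L}_{h}\psi=\lambda\psi$), there is an exponent $N_{k}$, \emph{independent of $N$}, with $\big\||s|^{N}y^{\beta}\partial_{y}^{\gamma-\alpha}\psi\big\|\le C_{N,k}\,h^{-N_{k}}\|\psi\|$ for $h$ small: the only $h$-loss comes from rewriting each $D_{s}$ as $h^{-1}(hD_{s})$, so $N_{k}$ depends on $k$ but not on the order $N$ of the polynomial weight. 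Consequently each term is $\le C_{N,k}\,h^{\eta N-N_{k}}\|\psi\|$, and letting $N\to\infty$ shows it is $\Oc(h^{\infty})\|\psi\|$. Summing the finitely many contributions yields $\|\chi(h^{\eta}|s|)\psi\|_{B^{k}(\R^{m+n})}=\Oc(h^{\infty})\|\psi\|$; using instead the $t$-polynomial weights of Corollary \ref{poly-bound} gives the bound for $\chi(h^{\eta}|t|)\psi$.

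The computation above is bookkeeping; the one substantive ingredient — and hence the point requiring care — is the extension of Corollary \ref{poly-bound} to arbitrary products of higher-order derivatives of $\psi$, with an $h$-loss depending only on the number of $s$-derivatives. This is proved exactly as Corollary \ref{poly-bound} itself: differentiate the eigenvalue equation, solve for the top-order derivative, and feed in the already established weighted $\sL^{2}$ and quadratic-form bounds of Propositions \ref{Agmon-tau} and \ref{Agmon-s}; since $A_{1}$ and $A_{2}$ are polynomials, their derivatives contribute only polynomial weights and never extra powers of $h^{-1}$, which is precisely what keeps $N_{k}$ independent of the weight $N$. Granted this, no further difficulty arises.
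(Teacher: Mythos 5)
Your proof is correct and follows essentially the route the paper intends: Proposition \ref{loc-space} is stated there as a direct consequence of the Agmon estimates of Propositions \ref{Agmon-tau} and \ref{Agmon-s}, and your argument — polynomial weights via Corollary \ref{poly-bound} (itself an immediate consequence of those estimates), Leibniz on the cutoff, and differentiation of the eigenvalue equation to reach the $B^k$ norm with an $h$-loss independent of the weight degree — is the same mechanism. The only cosmetic difference is that one can use the exponential weight directly (on $\supp\chi(h^{\eta}|s|)$ one has $\re^{\eps_{0}|s|}\geq\re^{\eps_{0}c_{0}h^{-\eta}}$, which beats any power of $h$), slightly shortening the bookkeeping, but your passage through arbitrary polynomial weights yields the same $\Oc(h^{\infty})$ conclusion.
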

By using a rough pseudo-differential calculus jointly with the space localization of Proposition \ref{loc-space} and standard elliptic estimates, we get
\begin{proposition}\label{loc-phase}
Let $k\in\N$. Let $\eta>0$ and $\chi$ a smooth cutoff function being zero in a neighborhood of $0$. There exists $h_{0}>0$ such that for all eigenpairs $(\lambda,\psi)$ of $\mathfrak{L}_{h}$ with $\lambda\leq\mu_0+C_{0}h$, we have
$$\|\chi(h^\eta hD_{s})\psi\|_{B^k(\R^{m+n})}\leq \Oc(h^{\infty})\|\psi\|,\qquad \|\chi(h^{\eta}D_{t})\psi\|_{B^k(\R^{m+n})}\leq \Oc(h^{\infty})\|\psi\|.$$
\end{proposition}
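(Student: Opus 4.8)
The statement microlocalizes the eigenfunctions in the frequency variables, and the plan is to deduce it from the spatial localization already obtained in Proposition \ref{loc-space}, the a priori polynomial bounds of Corollary \ref{poly-bound}, and an elementary elliptic estimate in phase space, run through a bootstrap. I treat the multiplier $P_{s}=\chi(h^{\eta}hD_{s})=\chi(h^{1+\eta}D_{s})$ in detail; the case of $P_{t}=\chi(h^{\eta}D_{t})$ is entirely similar (and slightly easier, since there no magnetic potential has to be differentiated). Both $P_{s}$ and $P_{t}$ are Fourier multipliers acting only in $s$, resp.\ $t$, hence commute with $hD_{s}$, resp.\ $D_{t}$. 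Fix $c_{0}>0$ with $\supp\chi\subset\{|\cdot|\geq c_{0}\}$, so that on the support of the symbol of $P_{s}$ one has $|h\xi|\geq c_{0}h^{-\eta}$.

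First I would use Proposition \ref{loc-space}: choosing $\eta'>0$ small, a smooth spatial cutoff $\zeta=\zeta(h^{\eta'}s,h^{\eta'}t)$ equal to $1$ near the origin satisfies $\psi=\zeta\psi+\Oc(h^{\infty})\|\psi\|$ in every $B^{k}$ norm, while on $\supp\zeta$ the polynomials $A_{1},A_{2}$ and their derivatives up to any fixed order are $\Oc(h^{-N\eta'})$ for some $N=N(k)$. It therefore suffices to estimate $P_{s}\zeta\psi$. On the region $\{|h\xi|\geq c_{0}h^{-\eta}\}\cap\supp\zeta$ the symbol of $\mathfrak{L}_{h}-\lambda$ is bounded below by $\tfrac12 c_{0}^{2}h^{-2\eta}-Ch^{-2N\eta'}-C_{0}-\mu_{0}$, which for $\eta'<\eta/(2N)$ and $h$ small is $\geq\tfrac14 c_{0}^{2}h^{-2\eta}$; a rough parametrix microlocalized to this region then produces an operator $R_{h}$ with $R_{h}(\mathfrak{L}_{h}-\lambda)\zeta=P_{s}\zeta+\Oc(h^{\infty})$ and $\|R_{h}\|_{\sL^{2}\to B^{k}}=\Oc(h^{2\eta-k})$ (only polynomial losses, coming from turning $D_{s}$ into $hD_{s}$ before invoking Corollary \ref{poly-bound}).

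Next, since $\mathfrak{L}_{h}\psi=\lambda\psi$, we have $(\mathfrak{L}_{h}-\lambda)P_{s}\psi=[P_{s},\mathfrak{L}_{h}]\psi$, and because $P_{s}$ is a multiplier in $s$ the commutator only involves terms of the form $[P_{s},A_{j}]$ and $[P_{s},\dr_{s}A_{j}]$; symbol calculus gives $[P_{s},A_{j}]=\Oc(h^{1+\eta})\times(\text{a derivative of }A_{j})$ composed with a multiplier microlocalized where $\chi'(h^{1+\eta}\cdot)\neq0$, hence to $\{|h\xi|\sim h^{-\eta}\}$. Thus $[P_{s},\mathfrak{L}_{h}]\psi=[P_{s},\mathfrak{L}_{h}]\tilde P_{s}\psi$ with $\tilde P_{s}=\tilde\chi(h^{1+\eta}D_{s})$, $\tilde\chi$ another cutoff vanishing near $0$ and equal to $1$ on $\supp\chi'$, and inserting $\zeta$ and using Corollary \ref{poly-bound} gives $\|[P_{s},\mathfrak{L}_{h}]\psi\|\lesssim h^{1+\eta-N\eta'}\|\tilde P_{s}\psi\|_{B^{k}}$. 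Combining with the parametrix bound,
\[
\|P_{s}\psi\|_{B^{k}}\lesssim h^{2\eta-k}\,\|[P_{s},\mathfrak{L}_{h}]\psi\|+\Oc(h^{\infty})\|\psi\|\lesssim h^{1+3\eta-N\eta'-2k}\,\|\tilde P_{s}\psi\|_{B^{k}}+\Oc(h^{\infty})\|\psi\|.
\]
Since $\tilde P_{s}$ has the same structure as $P_{s}$, the inequality can be iterated: each step gains a fixed positive power of $h$ (by the choice of $\eta'$), whence $\|P_{s}\psi\|_{B^{k}}=\Oc(h^{\infty})\|\psi\|$. Running the same scheme with $P_{t}=\chi(h^{\eta}D_{t})$, the ellipticity now coming from the $(D_{t}+A_{2})^{2}$ part of $\mathfrak{L}_{h}$, yields the $t$-frequency statement.

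The main obstacle is that the symbols involved are unbounded (the magnetic potentials being polynomials), so the pseudodifferential manipulations have no global meaning: the argument genuinely relies on first confining $\psi$ to a polynomially growing box via Proposition \ref{loc-space}, and then on a careful bookkeeping of the competing powers of $h$ --- the gains $h^{1+\eta}$ from each commutator and $h^{2\eta}$ from ellipticity against the losses $h^{-N\eta'}$ from the coefficients and $h^{-k}$ from trading $D_{s}$ for $hD_{s}$. Making the ``rough'' calculus on the relevant region legitimate and uniform in $h$, so that both the parametrix construction and the commutator expansion are valid there, is the technical heart of the proof.
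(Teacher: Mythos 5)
Your overall strategy — use Proposition~\ref{loc-space} to confine $\psi$ to a box of polynomial size where the polynomial coefficients $A_j$ are controlled, then exploit the ellipticity of $\mathfrak{L}_h-\lambda$ in the high-frequency region $\{|h\xi_s|\gtrsim h^{-\eta}\}$ through a rough parametrix, close via commutators and a bootstrap — is exactly the set of ingredients the paper invokes, and your emphasis that the calculus must be made \enquote{rough but legitimate} after the spatial cutoff is the right technical concern. However, as written your bootstrap does not close for $k\geq 1$, and there is a smaller quantitative slip in the commutator count.

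The slip first: in $[P_s,\mathfrak{L}_h]$ the terms $(hD_s+A_1)[P_s,A_1]$ carry an extra factor of $hD_s$, and on the microlocal support of $\chi'(h^{1+\eta}\cdot)$ one has $\|hD_s\chi'(h^{1+\eta}D_s)\|\lesssim h^{-\eta}$, so the net commutator gain is $h^{1-N\eta'}$, not $h^{1+\eta-N\eta'}$. This alone is harmless. The real problem is the $B^k$ iteration: you claim $\|R_h\|_{\sL^2\to B^k}=\Oc(h^{2\eta-k})$ and conclude a per-step gain of $h^{1+3\eta-N\eta'-2k}$. For $k\geq 1$ this exponent is \emph{negative} no matter how small $\eta'$ is chosen, so iterating the inequality makes the bound worse, not better; the argument as stated proves nothing beyond $k=0$. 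In the $k=0$ case the exponent is $1+2\eta-N\eta'>0$ (for $\eta'$ small) and the bootstrap does close, giving $\|\chi(h^{1+\eta}D_s)\psi\|=\Oc(h^\infty)\|\psi\|$. To then pass to $B^k$ one should not re-run the parametrix in $B^k$; instead note that $(hD_s)^p\chi(h^{1+\eta}D_s)=h^{-p\eta}\chi_p(h^{1+\eta}D_s)$ with $\chi_p(\xi)=\xi^p\chi(\xi)$ again a cutoff vanishing near $0$, so the $\sL^2$ result applied to $\chi_p$ handles all $s$-derivatives at the price of a fixed power of $h^{-\eta}$; the $D_\tau^p$, $s^l$, $t^m$ factors are handled by Corollary~\ref{poly-bound} and Proposition~\ref{loc-space} after commuting the multiplier past $s^l$ (each commutation producing another multiplier of the same type, with a gain $h^{1+\eta}$). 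With this restructuring the $\Oc(h^\infty)$ gain absorbs all the finitely many polynomial losses and the proposition follows; as written, your proof has a genuine gap at the $B^k$ step.
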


\subsection{Microlocalization and coherent states}\label{coherent}
In this section we follow the same philosophy as in \cite{Ray13}.
\subsubsection{Formalism and application}
Let us recall the formalism of coherent states (see for instance \cite{Fol89} and \cite{CR12}).
We define
$$g_{0}(\sigma)=\pi^{-m/4}\re^{-|\sigma|^2/2},$$
and the usual creation and annihilation operators
$$\ga_{j}=\tfrac{1}{\sqrt{2}}(\sigma_{j}+\dr_{\sigma_{j}}),\qquad \ga_{j}^*=\tfrac{1}{\sqrt{2}}(\sigma_{j}-\dr_{\sigma_{j}}),$$
which satisfy the commutator relations
$$[\ga_{j},\ga_{j}^*]=1,\quad\qquad [\ga_{j}, \ga_{k}^*]=0\quad \mbox{ if } k\neq j.$$
We notice that
\begin{equation}\label{eq.sdsaa*}
\sigma_{j}=\tfrac{1}{\sqrt{2}}(\ga_{j}+\ga_{j}^*),\qquad
\dr_{\sigma_{j}}=\tfrac{1}{\sqrt{2}}(\ga_{j}-\ga_{j}^*),\qquad
\ga_{j}\ga_{j}^*=\tfrac{1}{2}(D_{\sigma_{j}}^2+\sigma_{j}^2+1).
\end{equation}
For $(u,p)\in\R^m\times \R^m$, we introduce the coherent state
$$f_{u,p}(\sigma)=\re^{ip\cdot \sigma} g_{0}(\sigma-u),$$
and the associated projection, defined for $\psi\in \sL^2(\R^m\times\R^n)$ by
$$\Pi_{u,p}\psi=\langle\psi, f_{u,p}\rangle_{\sL^2(\R^m, \dx \sigma)} f_{u,p}=\psi_{u,p}f_{u,p}.$$
We have the identity resolution formula
\begin{equation}\label{eq.coherent-dec}
\psi=\int_{\R^{2m}}\Pi_{u,p}\psi\dx u\dx p,
\end{equation}
and the Parseval-type formula
$$\|\psi\|^2=\int_{\R^n}\int_{\R^{2m}}|\psi_{u,p}|^2\dx u\dx p\dx\tau.$$
Let us emphasize here that the coherent states decomposition \eqref{eq.coherent-dec} is performed only with respect to the variable $\sigma$. We recall that
$$\ga_{j}f_{u,p}=\frac{u_{j}+ip_{j}}{\sqrt{2}}f_{u,p}$$
and
\begin{equation}\label{eq.exact-quant}
(\ga_{j})^\ell (\ga_{k}^*)^q\psi=\int_{\R^{2m}}\left(\frac{u_{j}+ip_{j}}{\sqrt{2}}\right)^\ell\left(\frac{u_{k}-ip_{k}}{\sqrt{2}}\right)^q \Pi_{u,p}\psi\dx u\dx p.
\end{equation}
We recall that (see \eqref{Lch})
$$\mathcal{L}_{h}=\big(D_{\tau}+A_{2}(x_{0}+h^{1/2}\sigma,\tau)\big)^2+\big(\xi_{0}+h^{1/2}D_{\sigma}+A_{1}(x_{0}+h^{1/2}\sigma,\tau)\big)^2$$
and, since $A_{1}$ and $A_{2}$ are polynomials, we get an expansion in the form
$$\mathcal{L}_{h}=\mathcal{L}_{0}+h^{1/2}\mathcal{L}_{1}+h\mathcal{L}_{2}+\ldots +h^{M/2}\mathcal{L}_{M}.$$
Let us now replace $\sigma_{j}$ and $\partial_{\sigma_{j}}$ by their expressions in terms of $\ga_{j}$ and $\ga^*_{j}$ (see \eqref{eq.sdsaa*}).
Then, we write the anti-Wick ordered operator. In other words, we commute $\ga$ and $\ga^*$ to put all the $\ga$ on the left (in order to apply Formula \eqref{eq.exact-quant}) and we deduce
\begin{equation}\label{ordering}
\mathcal{L}_{h}=\underbrace{\mathcal{L}_{0}+h^{1/2}\mathcal{L}_{1}+h\mathcal{L}^\W_{2}+\ldots +(h^{1/2})^{M}\mathcal{L}^\W_{M}}_{\mathcal{L}_{h}^\W}+\underbrace{hR_{2}+\ldots+(h^{1/2})^{M}R_{M}}_{\mathcal{R}_{h}},
\end{equation}
where the $R_{\jj}$ are the remainders in the anti-Wick ordering and satisfy, for $\jj\geq 2$,
\begin{equation}\label{remainders}
h^{\jj/2}R_{\jj}=h^{\jj/2}\mathcal{O}_{\jj-2}(\sigma, D_{\sigma}),
\end{equation}
where the notation $\mathcal{O}_{\jj}(\sigma,D_{\sigma})$ stands for a polynomial operator with total degree in $(\sigma,D_{\sigma})$ less than $\jj$. We recall that
$$\mathcal{L}_{h}^\W=\int_{\R^{2m}}\mathcal{M}_{x_{0}+h^{1/2}u, \xi_{0}+h^{1/2}p}\dx u \dx p.$$
\begin{proposition}
 There exist $h_{0},C>0$ such that for all eigenpairs $(\lambda,\psi)$ of $\mathcal{L}_{h}$ with $\lambda\leq\mu_0+C_{0}h$  and all $h\in(0,h_{0})$, we have
\begin{equation}\label{rlb}
\mathcal{Q}_{h}(\psi)\geq \int_{\R^{2m}} Q_{h,u,p}(\psi_{u,p})\dx u \dx p-Ch\|\psi\|^2\geq (\mu_{0}-Ch)\|\psi\|^2,
\end{equation}
where $Q_{h,u,p}$ is the quadratic form associated with the operator $\mathcal{M}_{x_{0}+h^{1/2}u,\xi_{0}+h^{1/2}p}$.
\end{proposition}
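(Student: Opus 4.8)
The plan is to establish the lower bound \eqref{rlb} by combining the anti-Wick ordering \eqref{ordering} with the coherent state decomposition \eqref{eq.coherent-dec}, and then to control the Weyl part from below by the operator valued symbol $\mathcal{M}_{x,\xi}$, whose bottom of spectrum is $\mu(x,\xi)\geq\mu_0$. The first step is to write $\mathcal{Q}_h(\psi)=\langle\mathcal{L}_h\psi,\psi\rangle=\langle\mathcal{L}_h^\W\psi,\psi\rangle+\langle\mathcal{R}_h\psi,\psi\rangle$. For the remainder term $\mathcal{R}_h$, I would use \eqref{remainders}: each $h^{\jj/2}R_\jj$ is $h^{\jj/2}$ times a polynomial operator in $(\sigma,D_\sigma)$ of total degree at most $\jj-2$. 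Rescaling back to the original variables (recall $\sigma=h^{-1/2}(s-x_0)$, $D_\sigma=h^{1/2}D_s$), each monomial $\sigma^\alpha D_\sigma^\beta$ with $|\alpha|+|\beta|\leq\jj-2$ carries a prefactor $h^{(\jj-2)/2}\cdot h^{-|\alpha|/2}\cdot h^{|\beta|/2}$ times $(s-x_0)^\alpha (hD_s)^\beta$, hence the whole term $h^{\jj/2}R_\jj$ is bounded by $h\cdot h^{(\jj-2)/2-|\alpha|/2+|\beta|/2}$ acting on bounded-in-$L^2$ quantities; using the polynomial a priori bounds from Corollary \ref{poly-bound} (which apply since $\lambda\leq\mu_0+C_0h$) one gets $|\langle\mathcal{R}_h\psi,\psi\rangle|\leq Ch\|\psi\|^2$. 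This yields $\mathcal{Q}_h(\psi)\geq\langle\mathcal{L}_h^\W\psi,\psi\rangle-Ch\|\psi\|^2$.

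The second step is to identify $\langle\mathcal{L}_h^\W\psi,\psi\rangle$ with the fibered integral. Since $\mathcal{L}_h^\W$ is anti-Wick ordered, applying \eqref{eq.exact-quant} monomial by monomial gives the exact quantization identity $\mathcal{L}_h^\W=\int_{\R^{2m}}\mathcal{M}_{x_0+h^{1/2}u,\xi_0+h^{1/2}p}\,(\cdot)_{u,p}\,f_{u,p}\dx u\dx p$ as stated just before the proposition; pairing with $\psi$ and using the Parseval formula $\|\psi\|^2=\int_{\R^n}\int_{\R^{2m}}|\psi_{u,p}|^2\dx u\dx p\dx\tau$ together with the fact that the coherent states decomposition is only in $\sigma$ (so $\mathcal{M}_{x_0+h^{1/2}u,\xi_0+h^{1/2}p}$ acts on the $\tau$-dependence of $\psi_{u,p}$), one obtains
$$\langle\mathcal{L}_h^\W\psi,\psi\rangle=\int_{\R^{2m}}Q_{h,u,p}(\psi_{u,p})\dx u\dx p,$$
where $Q_{h,u,p}$ is the quadratic form of $\mathcal{M}_{x_0+h^{1/2}u,\xi_0+h^{1/2}p}$ acting in the $\tau$ variable. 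This establishes the first inequality in \eqref{rlb}.

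The third and final step is the fiberwise lower bound: for each $(u,p)$, by Definition \ref{def.mu} the bottom of the spectrum of $\mathcal{M}_{x_0+h^{1/2}u,\xi_0+h^{1/2}p}$ is $\mu(x_0+h^{1/2}u,\xi_0+h^{1/2}p)\geq\mu_0$ by Assumption \ref{hyp-gen}, so $Q_{h,u,p}(\psi_{u,p})\geq\mu_0\|\psi_{u,p}(\cdot)\|_{\sL^2(\R^n_\tau)}^2$; integrating over $(u,p)$ and using Parseval gives $\int_{\R^{2m}}Q_{h,u,p}(\psi_{u,p})\dx u\dx p\geq\mu_0\|\psi\|^2$, whence $\mathcal{Q}_h(\psi)\geq(\mu_0-Ch)\|\psi\|^2$. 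The main obstacle I expect is the careful bookkeeping in the first step: tracking the exact powers of $h$ produced when converting the polynomial operators $\mathcal{O}_{\jj-2}(\sigma,D_\sigma)$ back to original variables and verifying that the a priori bounds of Corollary \ref{poly-bound} are strong enough to absorb every resulting monomial into an $\Oc(h)\|\psi\|^2$ error — in particular one must check that no monomial produces a negative net power of $h$, which is exactly why the condition $\jj\geq 2$ in \eqref{remainders} (i.e. that $\mathcal{L}_0$ and $\mathcal{L}_1$ are already in anti-Wick form) is essential.
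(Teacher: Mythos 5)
Your proposal follows the paper's proof step for step: split $\mathcal{L}_h=\mathcal{L}_h^\W+\mathcal{R}_h$ using \eqref{ordering}, identify $\langle\mathcal{L}_h^\W\psi,\psi\rangle$ exactly as the fibered integral via \eqref{eq.exact-quant} and Parseval, bound $\langle\mathcal{R}_h\psi,\psi\rangle$ via \eqref{remainders} and Corollary~\ref{poly-bound}, and close with the pointwise spectral bound $\mu\geq\mu_0$. One sign slip in the rescaling bookkeeping: since $\sigma=h^{-1/2}(s-x_0)$ and $D_\sigma=h^{-1/2}(hD_s)$, the correct conversion is $\sigma^\alpha D_\sigma^\beta=h^{-(|\alpha|+|\beta|)/2}(s-x_0)^\alpha(hD_s)^\beta$, so the exponent accompanying $|\beta|$ should be $-|\beta|/2$, not $+|\beta|/2$; the net power of $h$ on a term $h^{\jj/2}\sigma^\alpha D_\sigma^\beta$ with $|\alpha|+|\beta|\leq\jj-2$ is then $h^{\jj/2-(|\alpha|+|\beta|)/2}\geq h$. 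The slip is harmless (your exponent is also $\geq 1$ under the same constraint), but the minus sign is precisely what makes the degree loss of two in the anti-Wick remainders the essential ingredient, as you rightly flagged.
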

\begin{proof}
We use \eqref{ordering}. Then the terms of $\mathcal{R}_{h}$ (see \eqref{remainders}) are in the form $h h^{\jj/2} \sigma ^l D_{\sigma}^q \tau ^\alpha D^\beta_{\tau}$ with $l+q\leq\jj$ and $\beta=0,1$. With Corollary \ref{poly-bound} and the rescaling \eqref{rescaling}, we have
$$\|h^{\jj/2} \sigma ^l D_{\sigma}^q \tau ^\alpha D^\beta_{\tau}\psi\|\leq C\|\psi\|$$
and the conclusion follows.
\end{proof}

\subsubsection{Localization in the phase space}
This section is devoted to elliptic regularity properties (both in space and frequency) satisfied by the eigenfunctions.
We will use the following generalization of the \enquote{IMS} formula the proof of which can be found in \cite{Ray13}.
\begin{lemma}[\enquote{Localization} of $P^2$ with respect to $\gA$]\label{lem.IMS-formula}
Let $\mathsf{H}$ be a Hilbert space and $P$ and $\gA $ be  two unbounded operators defined on a domain $\mathsf{D}\subset \mathsf{H}$. We assume that $P$ is symmetric and that $P(\mathsf{D})\subset \mathsf{D}$, $\gA(\mathsf{D})\subset \mathsf{D}$, $\gA ^*(\mathsf{D})\subset \mathsf{D}$.
Then, for $\psi\in \mathsf{D}$, we have
\begin{multline}\label{IMS-formula}
\Re\langle P^2\psi, \gA \gA ^*\psi \rangle=\|P(\gA ^*\psi)\|^2-\|[\gA ^*,P]\psi\|^2+\Re\langle P\psi, [[P, \gA], \gA ^*]\psi\rangle\\
+\Re\left(\langle P\psi, \gA ^*[P, \gA]\psi \rangle-\overline{\langle P\psi, \gA[P, \gA ^*]\psi \rangle}\right).
\end{multline}
\end{lemma}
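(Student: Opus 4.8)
The identity \eqref{IMS-formula} is purely algebraic: it follows by a bookkeeping of commutators, using only that $P$ is symmetric on $\mathsf{D}$ and that $P$, $\gA$ and $\gA^*$ all preserve $\mathsf{D}$, so that every term below is well defined and every manipulation of the form $\langle u,\gA v\rangle=\langle\gA^* u,v\rangle$ or $\langle Pu,v\rangle=\langle u,Pv\rangle$ (for $u,v\in\mathsf{D}$) is legitimate. The plan is to start from $\langle P^2\psi,\gA\gA^*\psi\rangle$, bring one factor $\gA^*$ to the left, commute it past $P^2$, and then sort the resulting terms according to their type.

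Concretely, I would first write $\langle P^2\psi,\gA\gA^*\psi\rangle=\langle\gA^* P^2\psi,\gA^*\psi\rangle$ and then use $\gA^* P^2=P^2\gA^*+P[\gA^*,P]+[\gA^*,P]P$ (which is just $[\gA^*,P^2]=[\gA^*,P]P+P[\gA^*,P]$) to split this into three scalar products. The first one, $\langle P^2\gA^*\psi,\gA^*\psi\rangle$, equals $\|P(\gA^*\psi)\|^2$ by symmetry of $P$. For the second one, $\langle P[\gA^*,P]\psi,\gA^*\psi\rangle=\langle[\gA^*,P]\psi,P\gA^*\psi\rangle$, I would expand $P\gA^*\psi=\gA^* P\psi+[P,\gA^*]\psi$ and use $[P,\gA^*]=-[\gA^*,P]$: the diagonal piece contributes $-\|[\gA^*,P]\psi\|^2$ and the other piece gives $\langle\gA[\gA^*,P]\psi,P\psi\rangle=-\overline{\langle P\psi,\gA[P,\gA^*]\psi\rangle}$. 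For the third one, I would use the (one-line) adjoint identity $(\,[\gA^*,P]\,)^*=[P,\gA]$ on $\mathsf{D}$ to rewrite $\langle[\gA^*,P]P\psi,\gA^*\psi\rangle=\langle P\psi,[P,\gA]\gA^*\psi\rangle$, and then $[P,\gA]\gA^*=\gA^*[P,\gA]+[[P,\gA],\gA^*]$ splits it into $\langle P\psi,\gA^*[P,\gA]\psi\rangle+\langle P\psi,[[P,\gA],\gA^*]\psi\rangle$.

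Taking real parts of these four contributions and using $\Re\bar z=\Re z$ then yields exactly \eqref{IMS-formula}. There is no genuine analytic obstacle: the only points needing (routine) verification are the adjoint identity $(\,[\gA^*,P]\,)^*=[P,\gA]$, which follows from symmetry of $P$ and $\langle u,\gA v\rangle=\langle\gA^* u,v\rangle$, and the fact that all intermediate vectors ($P\psi$, $\gA^*\psi$, $P\gA^*\psi$, $[\gA^*,P]\psi$, etc.) lie in $\mathsf{D}$, which is precisely what the hypotheses $P(\mathsf{D})\subset\mathsf{D}$, $\gA(\mathsf{D})\subset\mathsf{D}$, $\gA^*(\mathsf{D})\subset\mathsf{D}$ guarantee; everything else is the commutator rearrangement described above.
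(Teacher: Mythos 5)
Your proof is correct, and since the paper only cites \cite{Ray13} for this lemma, a direct algebraic verification like yours is exactly what is called for. The decomposition via $[\gA^*,P^2]=[\gA^*,P]P+P[\gA^*,P]$, the use of the form identity $([\gA^*,P])^*=[P,\gA]$ on $\mathsf{D}$, and the final regrouping all check out, and the domain hypotheses are invoked at precisely the places where they are needed to keep every intermediate vector in $\mathsf{D}$. In fact your argument establishes the identity at the level of the (complex) scalar products, before taking real parts, which is slightly stronger than the stated formula.
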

The following lemma is a straightforward consequence of Assumption \ref{hyp-gen}.
\begin{lemma}\label{min-non-deg}
Under Assumption \ref{hyp-gen}, there exist $\eps_{0}>0$ and $c>0$ such that
$$\mun(x_{0}+x,\xi_{0}+\xi)-\mun(x_{0},\xi_{0})\geq c(|x|^2+|\xi|^2),\qquad \forall (x,\xi)\in{\cal B}(\varepsilon_{0}),$$
and 
$$\mun(x_{0}+x,\xi_{0}+\xi)-\mun(x_{0},\xi_{0})\geq c,\qquad \forall (x,\xi)\in\complement{\cal B}(\varepsilon_{0}),$$
where ${\cal B}(\varepsilon_{0})=\{(x,\xi): |x|+|\xi|\leq \eps_{0}\}$ and $\complement {\cal B}(\varepsilon_{0})$ is its complement.
\end{lemma}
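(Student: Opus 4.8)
\textbf{Plan for the proof of Lemma \ref{min-non-deg}.}
The statement is purely about the scalar function $\mun$ near its minimum, so the plan is to combine the non-degeneracy of the Hessian with the behaviour at infinity coming from Assumption \ref{hyp-gen}. First I would exploit the non-degeneracy of the minimum: by Assumption \ref{hyp-gen}, $\mun$ is analytic (hence at least $C^2$) near $(x_{0},\xi_{0})$, the point $(x_{0},\xi_{0})$ is a critical point, and $\Hess\,\mun(x_{0},\xi_{0})$ is positive definite (it is a non degenerate minimum). A Taylor expansion at second order then gives, for some $c_{1}>0$ and some small radius $r_{0}>0$,
$$\mun(x_{0}+x,\xi_{0}+\xi)-\mun(x_{0},\xi_{0})\geq c_{1}(|x|^2+|\xi|^2),\qquad \forall\,|x|+|\xi|\leq r_{0}.$$
This proves the first inequality with $\eps_{0}:=r_{0}$ and $c:=c_{1}$, at least on ${\cal B}(\eps_{0})$.

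For the second inequality I would split $\complement{\cal B}(\eps_{0})$ into a bounded annulus and the region near infinity. On the annulus $A:=\{r_{0}\leq |x|+|\xi|\leq M\}$ (for any fixed large $M$), $\mun$ is continuous and $(x_{0},\xi_{0})$ is its unique minimizer, so $\mun-\mun(x_{0},\xi_{0})$ is a continuous strictly positive function on the compact set $A$; hence it attains a positive minimum $c_{2}>0$ there. On the complementary region $\{|x|+|\xi|>M\}$, the hypothesis $\liminf_{|x|+|\xi|\to+\infty}\mun(x,\xi)>\mun_{0}$ provides, for $M$ large enough, a constant $c_{3}>0$ with $\mun(x,\xi)-\mun_{0}\geq c_{3}$ there. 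Taking $c:=\min(c_{1},c_{2},c_{3})$ (and shrinking the $c$ in the first inequality to this common value) yields both inequalities simultaneously.

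The proof is essentially routine, so I do not expect a genuine obstacle; the only point requiring a little care is the interface between the three regimes — one must make sure the same $\eps_{0}$ works in the quadratic lower bound and in the definition of $\complement{\cal B}(\eps_{0})$, which is why it is cleanest to first fix $r_{0}$ from the Taylor estimate, then set $\eps_{0}=r_{0}$, and only afterwards choose $M$ and extract $c_{2},c_{3}$. One should also note that the analyticity in Assumption \ref{hyp-gen} is more than needed here: $C^2$ regularity near the minimum together with continuity and the behaviour at infinity suffices.
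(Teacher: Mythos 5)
Your argument is correct and is exactly the standard three-regime argument (Taylor expansion at the non-degenerate minimum, compactness on the intermediate annulus, and the $\liminf$ condition at infinity) that the paper has in mind: the paper states Lemma \ref{min-non-deg} as a ``straightforward consequence of Assumption \ref{hyp-gen}'' and omits the proof entirely. Your remark that analyticity of $\mun$ (guaranteed by Kato's theory for the simple eigenvalue of the holomorphic type (A) family) is more than needed is also accurate.
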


\begin{notation}
In what follows we will denote by $\tilde\eta>0$ all the quantities which are multiples of $\eta>0$,i.e. in the form $j\eta$ for $j\in\N\setminus\{0\}$. We recall that $\eta>0$ can be chosen arbitrarily small.
\end{notation}

\begin{proposition}\label{sigma-dsigma}
There exist $h_{0},C, \eps_{0}>0$ such that for all eigenpairs $(\lambda,\psi)$ of $\mathcal{L}_{h}$ with $\lambda\leq\mu_0+C_{0}h$, we have
$$\|\sigma\psi\|^2+\|\nabla_{\sigma}\psi\|^2\leq C\|\psi\|^2.$$
\end{proposition}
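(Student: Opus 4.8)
\emph{Plan.} The plan is to read the quantity $\|\sigma\psi\|^2+\|\nabla_{\sigma}\psi\|^2$ in the coherent states formalism of Section~\ref{coherent} and to compare it with the effective symbol $\mun$ through the rough lower bound \eqref{rlb}. First I would combine the relations $\eqref{eq.sdsaa*}$ (so that $\ga_{j}\ga_{j}^*=\frac12(D_{\sigma_{j}}^2+\sigma_{j}^2+1)$) with the exact quantization identity $\eqref{eq.exact-quant}$ taken with $\ell=q=1$ and $k=j$, which yields the elementary identity
$$\|\nabla_{\sigma}\psi\|^2+\|\sigma\psi\|^2+m\|\psi\|^2=\int_{\R^{2m}}(|u|^2+|p|^2)\,\|\psi_{u,p}\|^2_{\sL^2(\R^n_{\tau})}\dx u\dx p,$$
so that it suffices to bound the right-hand side by $C\|\psi\|^2$. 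Since $\psi$ is an eigenfunction with $\lambda\leq\mu_{0}+C_{0}h$, we have $\mathcal{Q}_{h}(\psi)=\lambda\|\psi\|^2\leq(\mu_{0}+C_{0}h)\|\psi\|^2$; inserting this into \eqref{rlb}, bounding from below $Q_{h,u,p}(\psi_{u,p})\geq\mun(x_{0}+h^{1/2}u,\xi_{0}+h^{1/2}p)\,\|\psi_{u,p}\|^2_{\sL^2(\R^n_{\tau})}$ by the min-max principle applied to $\mathcal{M}_{x_{0}+h^{1/2}u,\xi_{0}+h^{1/2}p}$, and using the Parseval identity $\int_{\R^{2m}}\|\psi_{u,p}\|^2_{\sL^2(\R^n_{\tau})}\dx u\dx p=\|\psi\|^2$, we obtain
$$\int_{\R^{2m}}\big(\mun(x_{0}+h^{1/2}u,\xi_{0}+h^{1/2}p)-\mu_{0}\big)\,\|\psi_{u,p}\|^2_{\sL^2(\R^n_{\tau})}\dx u\dx p\leq Ch\|\psi\|^2.$$

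Next I would split the $(u,p)$-integral according to the size of $h^{1/2}(|u|+|p|)$. On the set $\{h^{1/2}(|u|+|p|)\leq\eps_{0}\}$, the first inequality of Lemma~\ref{min-non-deg} gives $\mun(x_{0}+h^{1/2}u,\xi_{0}+h^{1/2}p)-\mu_{0}\geq ch(|u|^2+|p|^2)$, and the estimate above yields at once $\int_{\{h^{1/2}(|u|+|p|)\leq\eps_{0}\}}(|u|^2+|p|^2)\,\|\psi_{u,p}\|^2_{\sL^2(\R^n_{\tau})}\dx u\dx p\leq c^{-1}C\|\psi\|^2$. On the complementary set $\{h^{1/2}(|u|+|p|)>\eps_{0}\}$, the second inequality of Lemma~\ref{min-non-deg} ($\mun(\cdot)-\mu_{0}\geq c$) together with the same estimate shows that the mass $\int_{\{h^{1/2}(|u|+|p|)>\eps_{0}\}}\|\psi_{u,p}\|^2_{\sL^2(\R^n_{\tau})}\dx u\dx p$ is $\Oc(h)\|\psi\|^2$.

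The remaining task is to control $\int_{\{h^{1/2}(|u|+|p|)>\eps_{0}\}}(|u|^2+|p|^2)\,\|\psi_{u,p}\|^2_{\sL^2(\R^n_{\tau})}\dx u\dx p$, and this is the only real difficulty: there the weight $|u|^2+|p|^2$ may be of order $h^{-1}$ times a large factor, while only the $\Oc(h)$ bound on the mass is available a priori, so simply multiplying the two is not good enough. I would handle this by splitting the far region once more. On the fixed annulus $\{\eps_{0}<h^{1/2}(|u|+|p|)\leq R_{1}\}$, with $R_{1}$ a (large) constant independent of $h$, the weight is bounded by $R_{1}^2h^{-1}$ and the mass by $\Oc(h)\|\psi\|^2$, so this piece contributes $\lesssim R_{1}^2\|\psi\|^2$. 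On $\{h^{1/2}(|u|+|p|)>R_{1}\}$, undoing the rescaling \eqref{rescaling} and the gauge transform identifies this set with a region of phase space where $|s-x_{0}|$ or $|hD_{s}-\xi_{0}|$ is bounded below by a fixed constant; choosing $R_{1}$ larger than the confinement radius $R_{0}$ of Assumption~\ref{confining}, the Agmon--Persson estimates of Propositions~\ref{Agmon-tau}--\ref{Agmon-s}, the phase space localization of Propositions~\ref{loc-space}--\ref{loc-phase}, and the polynomial a priori bounds of Corollary~\ref{poly-bound} should show that both the mass and the weighted integral over this set are $\Oc(h^{\infty})\|\psi\|^2$. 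Adding the three contributions gives $\int_{\R^{2m}}(|u|^2+|p|^2)\|\psi_{u,p}\|^2_{\sL^2(\R^n_{\tau})}\dx u\dx p\leq C\|\psi\|^2$, hence the proposition. The step I expect to be the main obstacle is precisely this last one: reconciling the polynomial weight with the quantitative decay of the eigenfunction in the semiclassical variable $s$, for which the confinement assumption and the earlier localization estimates are essential.
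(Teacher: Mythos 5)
Your decomposition correctly identifies the crux -- the control of $\int_{\CBhe}(|u|^2+|p|^2)\|\psi_{u,p}\|^2$, the very point the paper flags with "the first inequality is not enough" -- but the way you propose to handle it does not work, and it differs fundamentally from the paper's argument. The gap is in the far region $\{h^{1/2}(|u|+|p|) > R_1\}$, which you claim to be $\Oc(h^\infty)\|\psi\|^2$ using Propositions~\ref{Agmon-tau}--\ref{Agmon-s}, \ref{loc-space}--\ref{loc-phase} and Corollary~\ref{poly-bound}. None of these tools yields $\Oc(h^\infty)$ there. The cutoff estimates of Propositions~\ref{loc-space}--\ref{loc-phase} are for cutoffs of the form $\chi(h^\eta|s|)$ and $\chi(h^\eta hD_s)$, i.e.\ at scales $|s|,|hD_s|\gtrsim h^{-\eta}$, which after the rescaling $s=x_0+h^{1/2}\sigma$ correspond to $|\sigma|,|D_\sigma|\gtrsim h^{-1/2-\eta}$ -- strictly farther out than your cutoff $R_1 h^{-1/2}$ when $R_1$ is fixed. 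The Agmon estimates give decay $\re^{-\eps_0|s|}=\re^{-\eps_0 h^{1/2}|\sigma|}$, so on $\{h^{1/2}|u|>R_1\}$ they produce only the factor $\re^{-\eps_0 R_1}$, a fixed constant depending on $R_1$, not a power of $h$; moreover there is no Agmon-type exponential decay proved in the frequency variable at all, so on $\{h^{1/2}|p|>R_1\}$ the best available bounds are the polynomial ones of Corollary~\ref{poly-bound}, namely $\|D_\sigma^k\psi\|\lesssim h^{-k/2}\|\psi\|$. Putting these together, the contribution of the far region is at best of order $h^{-1}\re^{-cR_1}\|\psi\|^2$ -- the factor $h^{-1}$ comes from the weight $|u|^2+|p|^2\sim R_1^2 h^{-1}$ at the boundary of the region and cannot be removed by taking higher moments -- so it is not bounded uniformly in $h$ for fixed $R_1$. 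If instead one lets $R_1=R_1(h)\sim|\log h|$ to kill the $h^{-1}$, then the annulus term $R_1^2\|\psi\|^2$ blows up like $|\log h|^2\|\psi\|^2$. The two pieces cannot be balanced.

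What the paper does instead is a genuine bootstrap. Rather than bound the weighted integral directly, it tests the quadratic form on $\ga_j^*\psi$ and computes $\mathcal{Q}_h(\ga_j^*\psi)$ via the ``localization of $P^2$'' formula of Lemma~\ref{lem.IMS-formula}. The remainders produced by that formula and by the anti-Wick ordering are estimated using Propositions~\ref{loc-space}--\ref{loc-phase} (which are sharp enough for this milder purpose), and the outcome is
$$\mathcal{Q}_h(\ga_j^*\psi)\leq\mu_0\|\ga_j^*\psi\|^2+Ch\|\psi\|^2+Ch^{\frac12-\tilde\eta}\big(\|\nabla_\sigma\psi\|^2+\|\sigma\psi\|^2\big),$$
where the unknown quantity reappears on the right with the small prefactor $h^{\frac12-\tilde\eta}$. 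Splitting the coherent-state integral into $\Bhe$ and $\CBhe$ exactly as in \eqref{eq.decoup}, one then gets, via Parseval and the identity $\sum_j 2\|\ga_j^*\psi\|^2=\|D_\sigma\psi\|^2+\|\sigma\psi\|^2+m\|\psi\|^2$, a self-improving inequality that is closed by absorbing the $h^{\frac12-\tilde\eta}$ term for $\tilde\eta<\frac12$ and $h$ small. This absorption is precisely the mechanism that beats the polynomial blow-up of the weight in the far region, and your proposal has no analogue of it.
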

\begin{proof}
We recall that \eqref{rlb} holds. We have
$$\mathcal{Q}_{h}(\psi)=\lambda\|\psi\|^2\leq (\mu_{0}+C_{0}h)\|\psi\|^2.$$
We deduce that
$$\int_{\R^{2m}} Q_{h,u,p}(\psi_{u,p})-\mu_{0}|\psi_{u,p}|^2\dx u \dx p\leq Ch\|\psi\|^2$$
and thus by the min-max principle
$$\int_{\R^{2m}} \left(\mun(x_{0}+h^{1/2}u,\xi_{0}+h^{1/2}p)-\mu_{0}\right)|\psi_{u,p}|^2\dx u \dx p\leq Ch\|\psi\|^2.$$
Let $\eps_{0}>0$ be given in Lemma \ref{min-non-deg}. We split the integral into two parts and find
\begin{eqnarray}\label{eq.decoup}
\int_{\Bhe} (|u|^2+|p|^2)|\psi_{u,p}|^2\dx u\dx p&\leq& C\|\psi\|^2,\\ 
\int_{\CBhe} |\psi_{u,p}|^2\dx u\dx p&\leq &Ch\|\psi\|^2. 
\end{eqnarray}
The first inequality is not enough to get the conclusion. We also need a control of momenta in the region $\CBhe$.
For that purpose, we write:
\begin{equation}\label{Qa*}
\mathcal{Q}_{h}(\ga_{j}^*\psi)=\int_{\R^{2m}} Q_{h,u,p}\left(\frac{u_{j}-ip_{j}}{\sqrt{2}}\psi_{u,p}\right)\dx u \dx p+\langle\mathcal{R}_{h}\ga_{j}^*\psi,\ga_{j}^*\psi\rangle.
\end{equation}
Up to lower order terms we must estimate terms in the form:
$$h\langle  h^{\jj/2} \sigma ^l D_{\sigma}^q \tau ^\alpha D^\beta_{\tau}\ga_{j}^*\psi  ,\ga_{j}^*\psi\rangle,$$
with $l+q=\jj$, $\alpha\in\N$ and $\beta=0,1$.
By using the \textit{a priori} estimates of Propositions \ref{loc-space} and \ref{loc-phase}, we have
$$\| h^{\jj/2} \sigma ^l D_{\sigma}^q  \tau ^\alpha D^\beta_{\tau} \ga_{j}^*\psi\|\leq Ch^{-\tilde\eta}\|\ga_{j}^*\psi\|.$$
The remainder is controlled by
$$|\langle\mathcal{R}_{h}\ga_{j}^*\psi,\ga_{j}^*\psi\rangle|\leq Ch^{1-\tilde \eta}(\|D_{\sigma}\psi\|^2+\|\sigma\psi\|^2).$$
Then we analyze $\mathcal{Q}_{h}(\ga_{j}^*\psi)$ by using Lemma \ref{lem.IMS-formula} with $\gA =\ga_{j}$. We need to estimate the different remainder terms. We notice that
\begin{eqnarray*}
\|[\ga_{j}^*, P_{k, r, h}]\psi\|&\leq& C h^{1/2}\|\psi\|,\\
|\langle P_{k, r, h}\psi, \ga_{j}^*[ P_{k, r, h},\ga_{j}]\psi\rangle|&\leq& \| P_{k, r , h}\psi\|\ \| \ga_{j}^*[ P_{k, r, h},\ga_{j}]\psi\|,\\
|\langle P_{k, r, h}\psi, \ga_{j}[ P_{k, r, h},\ga_{j}^*]\psi\rangle|&\leq& \| P_{k, r , h}\psi\|\ \|\ga_{j}[ P_{k, r, h},\ga_{j}^*]\psi\|,\\
|\langle P_{k, r, h}\psi, [[ P_{k, r, h},\ga_{j}],\ga_{j}^*]\psi\rangle|&\leq& \| P_{k, r , h}\psi\|\ \|[[ P_{k, r, h},\ga_{j}],\ga_{j}^*]\psi\|,
\end{eqnarray*}
where $P_{1, r, h}$ denotes  the magnetic momentum $h^{1/2}D_{\sigma_{r}}+A_{1,r}(x_{0}+h^{1/2}\sigma, \tau)$ and $P_{2, r, h}$ denotes $D_{\tau_{r}}+A_{2,r}(x_{0}+h^{1/2}\sigma, \tau)$ and $A_{k,r}$ is the $r$-th component of $A_{k}$ ($k=1,2$).
We have
$$\| P_{k, r, h}\psi\|\leq C\|\psi\|$$
and
$$\| \ga_{j}^*[ P_{k, r, h},\ga_{j}]\psi\|\leq Ch^{1/2}\|\ga^*_{j} Q(h^{1/2}\sigma,\tau)\psi\|,$$
where $Q$ is polynomial. The other terms are bounded in the same way.
We apply the estimates of Propositions \ref{loc-space} and \ref{loc-phase} to get
$$\|\ga^*_{j} Q(h^{1/2}\sigma,\tau)\psi\|\leq Ch^{-\tilde \eta}\|\ga_{j}^*\psi\|.$$
We have
$$\mathcal{Q}_{h}(\ga_{j}^*\psi)=\lambda\|\ga_{j}^*\psi\|^2+\Oc(h)\|\psi\|^2+\Oc(h^{\demi-\tilde \eta})(\|\nabla_{\sigma}\psi\|^2+\|\sigma\psi\|^2),$$
so that
$$\mathcal{Q}_{h}(\ga_{j}^*\psi)\leq \mu_{0}\|\ga_{j}^*\psi\|^2+Ch\|\psi\|^2+Ch^{\demi-\tilde \eta}(\|\nabla_{\sigma}\psi\|^2+\|\sigma\psi\|^2).$$
By using \eqref{Qa*} and splitting again the integral into two parts, it follows
\begin{eqnarray*}
\int_{\Bhe} (|u|^2+|p|^2)|(u_{j}-ip_{j})\psi_{u,p}|^2\dx u\dx p&\leq& C\|\psi\|^2+Ch^{-\demi-\tilde \eta}(\|\nabla_{\sigma}\psi\|^2+\|\sigma\psi\|^2),\\
\int_{\CBhe} |(u_{j}-ip_{j})\psi_{u,p}|^2\dx u\dx p&\leq &Ch\|\psi\|^2+Ch^{\demi-\tilde \eta}(\|\nabla_{\sigma}\psi\|^2+\|\sigma\psi\|^2).
\end{eqnarray*}
Choosing $\tilde\eta$ small enough so that $\tilde\eta<\frac{1}{2}$, combining the last inequality with the first one of \eqref{eq.decoup} and the Parseval formula we get the conclusion.
\end{proof}

\begin{proposition}\label{sigma2-dsigma2}
Let $P\in\mathbb{C}_{2}[X_{1},\ldots,X_{2m}]$. There exist $h_{0},C, \eps_{0}>0$ such that for all eigenpairs $(\lambda,\psi)$ of $\mathfrak{L}_{h}$ with $\lambda\leq\mu_0+C_{0}h$, we have
$$\|P(\sigma,D_{\sigma})\psi\|^2\leq Ch^{-\demi-\tilde \eta}\|\psi\|^2.$$
\end{proposition}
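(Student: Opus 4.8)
The plan is to bootstrap from Proposition~\ref{sigma-dsigma}, which already controls $\|\sigma\psi\|^2+\|\nabla_\sigma\psi\|^2$, up to the second-order polynomial symbols $P(\sigma,D_\sigma)$, following exactly the same coherent-states scheme as in the previous proof but applied to the operators $\ga_j^*\ga_k^*$ (and, after commuting, to monomials of degree two in $\ga^*$) instead of the single $\ga_j^*$. The key identity is again the quadratic-form lower bound \eqref{rlb}, now applied to $P(\sigma,D_\sigma)\psi$ written via \eqref{eq.exact-quant} as an integral over the phase space of $\Pi_{u,p}\psi$ with a degree-two polynomial weight in $(u,p)$. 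Concretely, I would fix a monomial $\ga_j^*\ga_k^*$, compute
$$\mathcal{Q}_h(\ga_j^*\ga_k^*\psi)=\int_{\R^{2m}}Q_{h,u,p}\!\left(\tfrac{(u_j-ip_j)(u_k-ip_k)}{2}\,\psi_{u,p}\right)\dx u\dx p+\langle\mathcal{R}_h\ga_j^*\ga_k^*\psi,\ga_j^*\ga_k^*\psi\rangle,$$
and then estimate the energy $\mathcal{Q}_h(\ga_j^*\ga_k^*\psi)$ from above by iterating Lemma~\ref{lem.IMS-formula}: one applies it first with $\gA=\ga_k$ to peel off one creation operator, which produces the term $\mathcal{Q}_h(\ga_j^*(\cdot))$ plus commutator remainders, and then again with $\gA=\ga_j$.

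The commutator remainders are handled just as in the proof of Proposition~\ref{sigma-dsigma}: commutators of the magnetic momenta $P_{k,r,h}$ with $\ga_j,\ga_j^*$ carry a factor $h^{1/2}$ (since the vector potentials are evaluated at $x_0+h^{1/2}\sigma$), and the resulting polynomial-in-$(h^{1/2}\sigma,\tau)$ prefactors, when hit by an extra $\ga^*$, are controlled by Propositions~\ref{loc-space} and \ref{loc-phase} at the cost of $h^{-\tilde\eta}$. The anti-Wick remainder $\mathcal{R}_h$ contributes, via Corollary~\ref{poly-bound} and the $h^{-\tilde\eta}$ losses from the phase-space localization, a term of size $h^{1-\tilde\eta}$ times a norm of $P(\sigma,D_\sigma)\psi$ of total degree at most two. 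Collecting everything, one obtains a self-improving inequality of the schematic form
$$\mathcal{Q}_h(\ga_j^*\ga_k^*\psi)\le \mu_0\|\ga_j^*\ga_k^*\psi\|^2+Ch\|\psi\|^2+Ch^{\demi-\tilde\eta}\big(\|\sigma\psi\|^2+\|\nabla_\sigma\psi\|^2\big)+Ch^{\demi-\tilde\eta}\sum_{|\alpha|\le 2}\|(\sigma,D_\sigma)^\alpha\psi\|^2,$$
after which one subtracts $\mu_0\|\ga_j^*\ga_k^*\psi\|^2$, uses Lemma~\ref{min-non-deg} to turn the left-hand side (split over $\Bhe$ and $\CBhe$) into a weighted control of $(|u|^2+|p|^2)|(u_j-ip_j)(u_k-ip_k)\psi_{u,p}|^2$, feeds in the already-established bound of Proposition~\ref{sigma-dsigma} for the lower-order terms on the right, and closes the estimate by choosing $\tilde\eta<\demi$ so that the quadratic-in-$P$ terms on the right can be absorbed into the left. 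Transcribing back through \eqref{eq.exact-quant} and the Parseval formula yields $\|P(\sigma,D_\sigma)\psi\|^2\le Ch^{-\demi-\tilde\eta}\|\psi\|^2$ for every monomial of degree two, hence for every $P\in\mathbb{C}_2[X_1,\dots,X_{2m}]$.

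The step I expect to be the main obstacle is the careful bookkeeping of the iterated IMS commutators in Lemma~\ref{lem.IMS-formula} when two creation operators are present: the double commutator terms $[[P_{k,r,h},\ga_j],\ga_\ell^*]$ and the mixed terms $\ga_j^*\ga_k^*[P_{k,r,h},\ga_\ell]$ must each be shown to carry enough powers of $h$ (and only a $\tilde\eta$-loss) so that, after using the a priori estimates, they do not destroy the absorption argument — in particular one must verify that the worst term is genuinely $O(h^{\demi-\tilde\eta})$ times a degree-$\le 2$ quantity and not of order one. Everything else is a routine repetition of the structure of the proof of Proposition~\ref{sigma-dsigma}.
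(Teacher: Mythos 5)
Your proposal is correct and follows essentially the same route as the paper: coherent-state decomposition, an IMS-type energy estimate for a second-order creation monomial applied to $\psi$, splitting over $\Bhe$ and its complement via Lemma \ref{min-non-deg}, feeding in Proposition \ref{sigma-dsigma}, and absorbing the quadratic terms thanks to $\tilde\eta<\demi$. The only cosmetic difference is that the paper applies Lemma \ref{lem.IMS-formula} once with $\gA=\ga_{j}\ga_{j}$ and only needs the diagonal monomials, deducing the general degree-two case from the fourth-moment bound $\int(|u|^4+|p|^4)|\psi_{u,p}|^2\dx u\dx p\leq Ch^{-\demi-\tilde\eta}\|\psi\|^2$ and the Parseval formula, whereas you iterate the lemma and treat all mixed monomials $\ga_{j}^*\ga_{k}^*$ directly.
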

\begin{proof}
From the proof of Proposition \ref{sigma-dsigma}, we infer
\begin{eqnarray}
\label{estim2-a}
\int_{\Bhe} (|u|^2+|p|^2)|(u_{j}-ip_{j})\psi_{u,p}|^2\dx u\dx p&\leq& Ch^{-\demi-\tilde \eta}\|\psi\|^2,\\
\int_{\CBhe} |(u_{j}-ip_{j})\psi_{u,p}|^2\dx u\dx p&\leq &Ch^{\demi-\tilde \eta}\|\psi\|^2.
\end{eqnarray}
Then we use Lemma \ref{lem.IMS-formula} with $\gA =\ga_{j}\ga_{j}$. The worst remainders in \eqref{IMS-formula} are
\begin{eqnarray*}
\|[P_{k,r,h},\ga_{j}^* \ga_{j}^*]\psi\|^2 &\leq & Ch\|\psi\|^2,\\
|\langle P_{k, r, h}\psi, \ga_{j}^* \ga_{j}^*[P_{k, r, h},\ga_{j} \ga_{j}]\psi \rangle| &\leq &Ch^{\demi-\tilde \eta}\|\psi\|\|\ga_{j}^* \ga_{j}^*\psi\|.
\end{eqnarray*}
We infer
$$\mathcal{Q}_{h}(\ga_{j}^* \ga_{j}^*\psi)\leq \mu_{0}\|\ga_{j}^* \ga_{j}^*\psi\|^2+Ch^{\demi-\tilde \eta}\|\psi\|^2+Ch^{\demi-\tilde \eta}\|\ga_{j}^* \ga_{j}^*\psi\|^2.$$
We deduce
\begin{eqnarray}
&\displaystyle\int_{\Bhe} (|u|^2+|p|^2)|(u_{j}-ip_{j})^2\psi_{u,p}|^2\dx u\dx p\leq Ch^{-\frac 12-\tilde \eta}\|\psi\|^2
+Ch^{-\frac 12-\tilde \eta}\|\ga_{j}^* \ga_{j}^*\psi\|^2,\nonumber\\
&\displaystyle\int_{\CBhe} |(u_{j}-ip_{j})^2\psi_{u,p}|^2\dx u\dx p\leq Ch^{\frac 12-\tilde \eta}\|\psi\|^2+Ch^{\frac 12-\tilde \eta}\|\ga_{j}^* \ga_{j}^*\psi\|^2.
\label{estim2-b}
\end{eqnarray}
Jointly with Proposition \ref{sigma-dsigma}, estimates \eqref{estim2-a} and \eqref{estim2-b} imply that
$$\int_{\R^{2m}} (|u|^4+|p|^4)|\psi_{u,p}|^2 \dx u \dx p\leq Ch^{-\demi-\tilde \eta}\|\psi\|^2.$$
The conclusion follows from the Parseval formula and Proposition \ref{sigma-dsigma}.
\end{proof}

\subsection{Spectral gap}\label{projection}
We introduce the projection
$$\Psi_{0}=\Pi_{0}\psi=\langle\psi,u_{x_{0},\xi_{0}}\rangle_{\sL^2(\R^n,\dx \tau)} u_{x_{0},\xi_{0}}$$
and, inspired by \eqref{psi1} where $f_{0}$ is replaced by $\langle\psi,u_{x_{0},\xi_{0}}\rangle_{\sL^2(\R^n,\dx \tau)}$ and $f_{1}$ by $0$,
\begin{equation}
\Psi_{1}=\sum_{j=1}^m (\dr_{x_{j}}u)_{x_{0},\xi_{0}}\, \sigma_{j} \langle\psi,u_{x_{0},\xi_{0}}\rangle_{\sL^2(\R^n,\dx \tau)}+\sum_{j=1}^m (\dr_{\xi_{j}}u)_{x_{0},\xi_{0}}\, D_{\sigma_{j}} \langle\psi,u_{x_{0},\xi_{0}}\rangle_{\sL^2(\R^n,\dx \tau)}.
\end{equation}
This leads to define the corrected Feshbach projection
\begin{equation}
\Pi_{h}\psi=\Psi_{0}+h^{1/2}\Psi_{1}
\end{equation}
and
$$R_{h}=\psi-\Pi_{h}\psi$$
Note that the functions $\Psi_{0}$ and $\Psi_{1}$ will be {\it a priori} $h$-dependent. By the $\sL^2$-normalization of $u_{x,\xi}$ (when $\xi\in\R^m$), $\Psi_{1}$ and $R_{h}$ are orthogonal (with respect to the $\tau$-variable) to $u_{0}$ (and $\Psi_{0}$). Let us recall that the $\mathcal{L}_{j}$ were defined in \eqref{eq.Lj}. Furthermore, we have by construction and Proposition \ref{FH}
\begin{equation}\label{L1Psi0}
(\mathcal{L}_{0}-\mu_{0})\Psi_{1}=-\mathcal{L}_{1}\Psi_{0}
\end{equation}
and, by the Fredholm alternative,
$$\langle\mathcal{L}_{1}\Psi_{0},\Psi_{0}\rangle_{\sL^2(\R^n,\dx \tau)}=0.$$

\subsubsection{Approximation results}\label{subsubsec.approx}

We can prove a first approximation.
\begin{proposition}
There exist $h_{0},C>0$ such that for all eigenpairs $(\lambda,\psi)$ of $\mathcal{L}_{h}$ with $\lambda\leq\mu_0+C_{0}h$, we have
$$\|\psi-\Pi_{0}\psi\|\leq Ch^{\frac12-\tilde\eta}\|\psi\|.$$
\end{proposition}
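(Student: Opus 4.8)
The plan is to show that the orthogonal projection $\Pi_0\psi$ of an eigenfunction onto the model fiber ground state captures $\psi$ up to an error of size $h^{1/2-\tilde\eta}$, exploiting the fiberwise spectral gap of $\mathcal{M}_{x,\xi}$ near the minimum together with the phase-space localization estimates already established. First I would write $\psi = \Pi_0\psi + R$ with $R = \psi - \Pi_0\psi$, so that, for $\sigma$ fixed, $R(\sigma,\cdot)$ is orthogonal in $\sL^2(\R^n_\tau)$ to $u_0 = u_{x_0,\xi_0}$. The eigenvalue equation $\mathcal{L}_h\psi = \lambda\psi$ together with the decomposition \eqref{ordering}, $\mathcal{L}_h = \mathcal{L}_h^\W + \mathcal{R}_h$, and the fiber representation $\mathcal{L}_h^\W = \int_{\R^{2m}}\mathcal{M}_{x_0+h^{1/2}u,\xi_0+h^{1/2}p}\dx u\dx p$ lets us test the quadratic form $\mathcal{Q}_h(\psi) = \lambda\|\psi\|^2 \le (\mu_0+C_0h)\|\psi\|^2$ and compare to $(\mu_0+c(|x-x_0|^2+|\xi-\xi_0|^2))$ fiberwise via Lemma \ref{min-non-deg}. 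Indeed this is exactly what was extracted in \eqref{rlb}: $\mathcal{Q}_h(\psi) \ge \int Q_{h,u,p}(\psi_{u,p})\dx u\dx p - Ch\|\psi\|^2$.

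The key point is to get a gap lower bound not just for the scalar $\mu$ but for the full fiber operator acting on functions orthogonal to $u_0$. I would use the spectral gap of $\mathcal{M}_{x_0,\xi_0}$ above $\mu_0$, which holds by Assumption \ref{hyp-gen} (simplicity of $\mu_0$) and persists after a small perturbation: there exist $c_1>0$ and a neighborhood of $(x_0,\xi_0)$ such that for all $v\perp u_0$ in $\sL^2(\R^n_\tau)$, $Q_{h,u,p}(v) \ge (\mu_0+c_1)\|v\|^2$ when $(h^{1/2}u,h^{1/2}p)\in\Bhe$, and the same with a possibly smaller constant (and an $\Oc(h^{1/2})$ correction from replacing $u_0$ by $u_{x_0+h^{1/2}u,\xi_0+h^{1/2}p}$) uniformly; outside $\Bhe$ one uses instead Proposition \ref{essential} / the confinement of Assumption \ref{confining} to bound $Q_{h,u,p}$ from below by $\mu_0^* > \mu_0$. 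Combining this fiberwise gap with \eqref{rlb} applied to the decomposition $\psi_{u,p} = \Pi_0\psi_{u,p} + R_{u,p}$ (noting $\Pi_0$ commutes with the $(u,p)$-integration since it acts only in $\tau$), and using that $\Pi_0\psi$ realizes exactly energy $\mu_0$ on the $\mathcal{L}_0$ part, I would obtain $c_1\|R\|^2 \le \mathcal{Q}_h(\psi) - \mu_0\|\psi\|^2 + (\text{cross terms}) + Ch\|\psi\|^2 \le C_0 h\|\psi\|^2 + (\text{cross terms})$.

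The main obstacle is controlling the cross terms. When one expands $\mathcal{Q}_h(\psi)$ around the decomposition $\psi = \Pi_0\psi + R$, the operator $\mathcal{L}_h$ is \emph{not} block-diagonal with respect to $\Pi_0$: the terms $h^{1/2}\mathcal{L}_1, h\mathcal{L}_2,\ldots$ and the remainders $\mathcal{R}_h$ produce cross terms of the form $h^{j/2}\langle \mathcal{L}_j \Pi_0\psi, R\rangle$ and $h^{j/2}\langle \mathcal{L}_j R, R\rangle$, each carrying polynomial weights $\sigma^l D_\sigma^q \tau^\alpha D_\tau^\beta$. These are precisely the quantities bounded by the phase-space localization and polynomial estimates of Propositions \ref{loc-space}, \ref{loc-phase}, \ref{sigma-dsigma} and \ref{sigma2-dsigma2}: each weighted norm costs at most $h^{-\tilde\eta}\|\psi\|$, so a term $h^{1/2}\langle\mathcal{L}_1\Pi_0\psi,R\rangle$ is bounded by $h^{1/2}h^{-\tilde\eta}\|\psi\|\,\|R\|$, and higher-order terms are even smaller. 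Absorbing the $\|R\|$ into the left-hand side by Young's inequality (say $\|R\|\cdot X \le \tfrac{c_1}{2}\|R\|^2 + \tfrac{1}{2c_1}X^2$) then yields $c_1\|R\|^2 \le Ch^{1-\tilde\eta}\|\psi\|^2$, i.e. $\|R\| = \|\psi - \Pi_0\psi\| \le Ch^{1/2-\tilde\eta}\|\psi\|$. The bookkeeping of which weighted estimate applies to which monomial, and checking that the worst exponent is indeed $1-\tilde\eta$ rather than something weaker, is the part requiring care; everything else follows the template of \cite{Ray13}.
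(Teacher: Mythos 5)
Your route is genuinely different from the paper's, and as written it has a gap at its central step. The paper's proof is a short operator--norm argument: from $\mathcal{L}_{h}\psi=\lambda\psi$ one writes $(\mathcal{L}_{0}-\mu_{0})\psi=(\lambda-\mu_{0})\psi-h^{1/2}\mathcal{L}_{1}\psi-\dots-h^{M/2}\mathcal{L}_{M}\psi$, bounds the right-hand side in norm by $Ch^{\frac12-\tilde\eta}\|\psi\|$ (the dominant contribution being $h^{1/2}\|\mathcal{L}_{1}\psi\|\leq Ch^{\frac12-\tilde\eta}\|\psi\|$ via Propositions \ref{loc-space}, \ref{loc-phase} and \ref{sigma-dsigma}, the terms of order $j\geq2$ being $\Oc(h^{\frac34-\tilde\eta})\|\psi\|$ as in \eqref{control-reste}), and then uses that $(\mathcal{L}_{0}-\mu_{0})\psi=(\mathcal{L}_{0}-\mu_{0})(\psi-\Pi_{0}\psi)$ together with the spectral gap of $\mathcal{M}_{x_{0},\xi_{0}}$ above the simple isolated eigenvalue $\mu_{0}$ on $u_{0}^{\perp}$. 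No cancellation is needed there, because one takes a norm rather than a quadratic form.

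In your energy version that same first-order term is precisely the obstruction. Expanding $\mathcal{Q}_{h}(\psi)-\mu_{0}\|\psi\|^2$ along $\psi=\Pi_{0}\psi+R$, the terms you list --- $h^{j/2}\langle\mathcal{L}_{j}\Pi_{0}\psi,R\rangle$ and $h^{j/2}\langle\mathcal{L}_{j}R,R\rangle$ --- omit the diagonal contributions $h^{j/2}\langle\mathcal{L}_{j}\Pi_{0}\psi,\Pi_{0}\psi\rangle$. For $j=1$ this is a priori only $\Oc(h^{1/2})\|\psi\|^2$, too large by a factor $h^{1/2}$: fed into the gap inequality it yields merely $\|R\|\leq Ch^{1/4}\|\psi\|$. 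It in fact vanishes identically, since $\langle(\dr_{x_{k}}\mathcal{M})_{x_{0},\xi_{0}}u_{0},u_{0}\rangle=\dr_{x_{k}}\mun(x_{0},\xi_{0})=0$ (and likewise in $\xi$) by Feynman--Hellmann at the critical point --- this is the identity $\langle\mathcal{L}_{1}\Psi_{0},\Psi_{0}\rangle=0$ recorded in Section \ref{projection} and used in Proposition \ref{prop.lbQ} --- but you must invoke it. The fiberwise variant you also sketch hides the same difficulty in the claim of a ``uniform'' $\Oc(h^{1/2})$ correction when replacing $u_{0}$ by the fiber ground state: the mismatch between $\Pi_{0}$ and the fiber projection is $\Oc\big(h^{1/2}(|u|+|p|)\big)$, hence $\Oc(1)$ near the boundary of $\Bhe$, and it becomes harmless only after integrating against the second-moment bound in \eqref{eq.decoup} and discarding the region where $h^{1/2}(|u|+|p|)\geq\delta$, whose mass is $\Oc(h)\|\psi\|^2$. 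With these two points repaired your argument closes, but it then amounts to the heavier machinery the paper reserves for the sharper lower bound of Proposition \ref{prop.lbQ}.
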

\begin{proof}
We can write
$$(\mathcal{L}_{0}-\mu_{0})\psi=(\lambda-\mu_{0})\psi-h^{1/2}\mathcal{L}_{1}\psi-h\mathcal{L}_{2}\psi-\ldots-h^{M/2}\mathcal{L}_{M}\psi.$$
By using the rough microlocalization given in Propositions \ref{loc-space} and \ref{loc-phase} and Proposition \ref{sigma2-dsigma2}, we infer that for $\jj\geq 2$
\begin{equation}\label{control-reste}
h^{\jj/2}\|\tau^\alpha D_{\tau} ^\beta \sigma^l D_{\sigma}^q \psi\|
\leq C h^{\frac \jj2-\frac{\jj-2}2-\frac14-\tilde \eta}\|\psi\|=Ch^{\frac{3}{4}-\tilde\eta}\|\psi\|,
\end{equation}
and thanks to Proposition \ref{sigma-dsigma}
$$\|\mathcal{L}_{1}\psi\|\leq Ch^{-\tilde\eta}\|\psi\|,$$
so that
$$\|(\mathcal{L}_{0}-\mu_{0})\psi\|\leq Ch^{\demi-\tilde\eta}\|\psi\|,$$
and the conclusion follows.
\end{proof}

\begin{corollary}\label{cor.approx0}
There exist $h_{0},C>0$ such that for all eigenpairs $(\lambda,\psi)$ of $\mathcal{L}_{h}$ with $\lambda\leq\mu_0+C_{0}h$, we have
$$\|\sigma(\psi-\Pi_{0}\psi)\|\leq Ch^{\frac14-\tilde\eta}\|\psi\|,\qquad
\|D_{\sigma}(\psi-\Pi_{0}\psi)\|\leq Ch^{\frac14-\tilde\eta}\|\psi\|.$$
\end{corollary}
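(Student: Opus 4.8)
The target statement, Corollary~\ref{cor.approx0}, should follow from the preceding Proposition (the estimate $\|\psi-\Pi_{0}\psi\|\leq Ch^{1/2-\tilde\eta}\|\psi\|$) together with the a priori control on momenta already obtained in Propositions~\ref{sigma-dsigma} and~\ref{sigma2-dsigma2}. The underlying mechanism is a quantitative version of the fact that $\Pi_{0}$ almost commutes with $\sigma$ and $D_{\sigma}$: since $\Pi_{0}$ acts only in the $\tau$-variable (it is the spectral projection of $\mathcal{M}_{x_{0},\xi_{0}}$ onto $u_{0}$), it commutes \emph{exactly} with multiplication by $\sigma$ and with $D_{\sigma}$. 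Hence, writing $R_{0}=\psi-\Pi_{0}\psi$, we have $\sigma R_{0}=(I-\Pi_{0})(\sigma\psi)$ and $D_{\sigma}R_{0}=(I-\Pi_{0})(D_{\sigma}\psi)$, which by itself only gives the bound $\Oc(1)\|\psi\|$ via Proposition~\ref{sigma-dsigma}. To gain the extra power $h^{1/4-\tilde\eta}$ one must redo the argument of the previous Proposition on the vectors $\sigma_{j}\psi$ and $D_{\sigma_{j}}\psi$ rather than on $\psi$ itself.

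Concretely, first I would apply $\sigma_{j}$ (resp. $D_{\sigma_{j}}$) to the resolvent-type identity used in the previous proof,
\[
(\mathcal{L}_{0}-\mu_{0})\psi=(\lambda-\mu_{0})\psi-h^{1/2}\mathcal{L}_{1}\psi-h\mathcal{L}_{2}\psi-\cdots-h^{M/2}\mathcal{L}_{M}\psi,
\]
and commute $\sigma_{j}$ through $\mathcal{L}_{0}=\mathcal{M}_{x_{0},\xi_{0}}$. The commutator $[\mathcal{M}_{x_{0},\xi_{0}},\sigma_{j}]$ vanishes since $\mathcal{M}_{x_{0},\xi_{0}}$ involves only $\tau$-derivatives and $\tau$-multiplications, so one gets
\[
(\mathcal{L}_{0}-\mu_{0})(\sigma_{j}\psi)=(\lambda-\mu_{0})\sigma_{j}\psi-\sigma_{j}\big(h^{1/2}\mathcal{L}_{1}\psi+\cdots+h^{M/2}\mathcal{L}_{M}\psi\big).
\]
Now I estimate the right-hand side in $\sL^2$: the term $(\lambda-\mu_{0})\sigma_{j}\psi$ is $\Oc(h)\|\sigma_j\psi\|=\Oc(h)\|\psi\|$ by Proposition~\ref{sigma-dsigma}; the term $h^{1/2}\sigma_{j}\mathcal{L}_{1}\psi$ is $\Oc(h^{1/2-\tilde\eta})\|\psi\|$ because $\sigma_{j}\mathcal{L}_{1}$ is a polynomial operator of degree $2$ in $(\sigma,D_{\sigma})$ with $\tau$-polynomial coefficients, which Proposition~\ref{sigma2-dsigma2} (together with the Agmon decay of Propositions~\ref{Agmon-tau}--\ref{Agmon-s}, i.e. Corollary~\ref{poly-bound}) controls by $Ch^{-1/4-\tilde\eta}\|\psi\|$, giving a net $h^{1/4-\tilde\eta}\|\psi\|$; the higher-order terms $h^{\jj/2}\sigma_{j}\mathcal{L}_{\jj}\psi$ with $\jj\geq 2$ carry total degree $\leq \jj+1$ in the phase-space variables, and the same estimate \eqref{control-reste}-type bookkeeping shows they are $\Oc(h^{\jj/2-(\jj-1)/2-1/4-\tilde\eta})\|\psi\|=\Oc(h^{1/4-\tilde\eta})\|\psi\|$ (worst case $\jj=2$). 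Therefore $\|(\mathcal{L}_{0}-\mu_{0})(\sigma_{j}\psi)\|\leq Ch^{1/4-\tilde\eta}\|\psi\|$.

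From this, I would project onto the orthogonal complement of $u_{0}$ in $\sL^2(\R^n,\dx\tau)$ and use the spectral gap of $\mathcal{M}_{x_{0},\xi_{0}}$ above $\mu_{0}$: the operator $(\mathcal{L}_{0}-\mu_{0})$ restricted to $(I-\Pi_{0})\sL^2$ is invertible with bounded inverse (for $\sigma$ fixed), so
\[
\|(I-\Pi_{0})(\sigma_{j}\psi)\|\leq C\|(\mathcal{L}_{0}-\mu_{0})(I-\Pi_{0})(\sigma_{j}\psi)\|=C\|(I-\Pi_{0})(\mathcal{L}_{0}-\mu_{0})(\sigma_{j}\psi)\|\leq Ch^{1/4-\tilde\eta}\|\psi\|.
\]
Since $[\sigma_{j},\Pi_{0}]=0$ we have $(I-\Pi_{0})(\sigma_{j}\psi)=\sigma_{j}(I-\Pi_{0})\psi=\sigma_{j}R_{0}$, which is exactly the first claimed bound; the $D_{\sigma_{j}}$ case is identical, using $[D_{\sigma_{j}},\Pi_{0}]=0$. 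Summing over $j$ and over the two families gives the corollary.

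\textbf{Main obstacle.} The delicate point is the bookkeeping of powers of $h$ in the higher-order remainders $h^{\jj/2}\sigma_{j}\mathcal{L}_{\jj}\psi$: one must check that for every $\jj\geq 2$ the loss coming from applying Proposition~\ref{sigma2-dsigma2} to an operator of total phase-space degree up to $\jj+1$ (which costs at worst $h^{-1/4-\tilde\eta}$ per application, after reducing degree to $\leq 2$ at the price of the Agmon/elliptic estimates of Propositions~\ref{loc-space}--\ref{loc-phase} and Corollary~\ref{poly-bound}) never beats the gain $h^{\jj/2}$, the tight case being $\jj=2$. This is precisely the same type of estimate already carried out for \eqref{control-reste}, shifted by one degree, so no genuinely new idea is needed — only care that the exponent $\tilde\eta$ (an arbitrarily small multiple of $\eta$) absorbs all the polynomial-in-$h^{\eta}$ losses from the rough microlocalization.
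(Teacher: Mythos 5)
Your proof is correct; since the paper prints no proof for this corollary, a verbatim comparison is impossible, but re-running the resolvent-identity argument of the preceding Proposition on $\sigma_j\psi$ and $D_{\sigma_j}\psi$ (using that $\mathcal{L}_0=\mathcal{M}_{x_0,\xi_0}$ acts only in $\tau$ and so commutes with $\sigma_j$, $D_{\sigma_j}$, and $\Pi_0$) followed by the spectral gap is exactly the mechanism the paper has just set up, and your bookkeeping is \eqref{control-reste} shifted by one degree in $(\sigma,D_\sigma)$, tight at $\jj=2$ and yielding $h^{1/4-\tilde\eta}$. It is worth noting that the superficially simpler interpolation $\|\sigma_j(\psi-\Pi_0\psi)\|^2\le\|\sigma_j^2(\psi-\Pi_0\psi)\|\,\|\psi-\Pi_0\psi\|$ combined with Proposition~\ref{sigma2-dsigma2} and the preceding Proposition produces only $h^{1/8-\tilde\eta}$, so the heavier argument you carry out is genuinely required to reach the claimed rate.
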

We can now estimate $\psi-\Pi_{h}\psi$.

\begin{proposition}\label{control-Rh}
There exist $h_{0},C>0$ such that for all eigenpairs $(\lambda,\psi)$ of $\mathcal{L}_{h}$ with $\lambda\leq\mu_0+C_{0}h$, we have
$$\|R_{h}\psi\|=\|\psi-\Pi_{h}\psi\|\leq Ch^{\frac34-\tilde \eta}\|\psi\|.$$
\end{proposition}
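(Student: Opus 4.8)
The plan is to push the expansion of the first approximation result and of Corollary~\ref{cor.approx0} one half-power further, the gain coming from the cancellation built into $\Psi_{1}$ via \eqref{L1Psi0}.

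\textbf{Reduction.} Since $R_{h}$ is, by construction, orthogonal to $u_{0}:=u_{x_{0},\xi_{0}}$ with respect to the $\tau$-variable, and since $\mathcal{L}_{0}=\mathcal{M}_{x_{0},\xi_{0}}$ preserves the orthogonal complement of $u_{0}$ while $\mathcal{L}_{0}-\mu_{0}$ is boundedly invertible there (as used repeatedly above), we have $\|R_{h}\|\leq C\|(\mathcal{L}_{0}-\mu_{0})R_{h}\|$, so it suffices to estimate the right-hand side. Using $\mathcal{L}_{h}=\sum_{j=0}^{M}h^{j/2}\mathcal{L}_{j}$ (valid because $A_{1},A_{2}$ are polynomials), the eigenvalue equation, the identity $(\mathcal{L}_{0}-\mu_{0})\Psi_{0}=0$ and relation \eqref{L1Psi0}, one computes
$$(\mathcal{L}_{0}-\mu_{0})R_{h}=(\lambda-\mu_{0})\psi-h^{1/2}\mathcal{L}_{1}(\psi-\Psi_{0})-\sum_{j=2}^{M}h^{j/2}\mathcal{L}_{j}\psi.$$

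\textbf{The three contributions.} By the assumption $\lambda\leq\mu_{0}+C_{0}h$ together with the lower bound \eqref{rlb}, we have $|\lambda-\mu_{0}|\leq Ch$, so $\|(\lambda-\mu_{0})\psi\|\leq Ch\|\psi\|$. For the sum over $j\geq2$, the operators $\mathcal{L}_{j}$ (see \eqref{eq.Lj}) are polynomial of total degree $\leq j$ in $(\sigma,D_{\sigma})$ with polynomial coefficients in $\tau$ and of first order in $D_{\tau}$, and exactly the estimate \eqref{control-reste} — i.e.\ the rough microlocalization of Propositions \ref{loc-space} and \ref{loc-phase} combined with Proposition \ref{sigma2-dsigma2} — gives $h^{j/2}\|\mathcal{L}_{j}\psi\|\leq Ch^{3/4-\tilde\eta}\|\psi\|$. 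The only genuinely new point is the middle term: we must show $\|\mathcal{L}_{1}(\psi-\Psi_{0})\|\leq Ch^{1/4-\tilde\eta}\|\psi\|$, which is precisely the improvement over the first approximation result above, where only $\|\mathcal{L}_{1}\psi\|\leq Ch^{-\tilde\eta}\|\psi\|$ was used.

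\textbf{The crucial term and the main obstacle.} Recall $\mathcal{L}_{1}=\sum_{k}(\dr_{x_{k}}\mathcal{M})_{x_{0},\xi_{0}}\sigma_{k}+\sum_{k}(\dr_{\xi_{k}}\mathcal{M})_{x_{0},\xi_{0}}D_{\sigma_{k}}$, whose coefficients act on the $\tau$-variable only and are polynomial in $\tau$, of first order in $D_{\tau}$. By Corollary \ref{cor.approx0}, $\|\sigma_{k}(\psi-\Psi_{0})\|$ and $\|D_{\sigma_{k}}(\psi-\Psi_{0})\|$ are $\mathcal{O}(h^{1/4-\tilde\eta})\|\psi\|$; since $\sigma_{k}$ and $D_{\sigma_{k}}$ commute both with the $\tau$-coefficients of $\mathcal{L}_{1}$ and with the $\tau$-weights, the argument proving Corollary~\ref{cor.approx0} can be rerun with any polynomial weight in $\tau$ and one factor $D_{\tau}$ inserted, the $\tau$-Agmon estimates of Proposition \ref{Agmon-tau} and the derivative bounds of Corollary \ref{poly-bound} absorbing these weights without loss of any power of $h$. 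This yields the $\tau$-weighted strengthening of Corollary \ref{cor.approx0} at the same rate $h^{1/4-\tilde\eta}$, whence $\|\mathcal{L}_{1}(\psi-\Psi_{0})\|\leq Ch^{1/4-\tilde\eta}\|\psi\|$ and, collecting the three bounds, $\|(\mathcal{L}_{0}-\mu_{0})R_{h}\|\leq Ch^{3/4-\tilde\eta}\|\psi\|$, so the Reduction step concludes the proof. I expect the delicate point to be exactly this last step: verifying that the improved $\sigma$-localization of $\psi-\Psi_{0}$ survives the application of the $\tau$-differential coefficients of $\mathcal{L}_{1}$, that is, that carrying the coherent-states and Agmon localization with $\tau$-weights costs no additional power of $h$.
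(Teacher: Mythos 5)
Your proposal is correct and follows essentially the same route as the paper: the same identity $(\mathcal{L}_{0}-\mu_{0})R_{h}=(\lambda-\mu_{0})\psi-h^{1/2}\mathcal{L}_{1}(\psi-\Psi_{0})-\sum_{j\geq2}h^{j/2}\mathcal{L}_{j}\psi$ obtained from \eqref{L1Psi0}, the same $h^{3/4-\tilde\eta}$ control of the $j\geq2$ terms via \eqref{control-reste}, and the same use of Corollary \ref{cor.approx0} for the middle term, followed by inversion of $\mathcal{L}_{0}-\mu_{0}$ on the orthogonal complement of $u_{0}$. If anything, you are more explicit than the paper about the need to propagate the improved $\sigma$-localization through the $\tau$-dependent coefficients of $\mathcal{L}_{1}$ (a weighted version of Corollary \ref{cor.approx0}), a point the paper leaves implicit.
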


\begin{proof}
Let us write
$$\mathcal{L}_{h}\psi=\lambda\psi.$$
We have
$$(\mathcal{L}_{0}+h^{1/2}\mathcal{L}_{1})\psi=(\mu_{0}+\Oc(h))\psi-h\mathcal{L}_{2}\psi-\ldots-h^{M/2}\mathcal{L}_{M}\psi.$$
Let us notice that, as in \eqref{control-reste}, we have, for $\jj\geq 2$,
$$h^{\jj/2}\|\mathcal{L}_{\jj}\psi\|\leq C h^{\frac 3 4-\tilde\eta}\|\psi\|.$$
We get
$$(\mathcal{L}_{0}-\mu_{0})R_{h}=-h^{1/2}\mathcal{L}_{1}(\psi-\Psi_{0})+\Oc(h)\psi-h\mathcal{L}_{2}\psi-\ldots-h^{M/2}\mathcal{L}_{M}\psi$$
It remains to apply Corollary \ref{cor.approx0} and obtain
$$h^{1/2}\|\mathcal{L}_{1}(\psi-\Psi_{0})\|\leq  \tilde C h^{\frac34-\tilde\eta}\|\psi\|.$$
\end{proof}

\subsubsection{Proof of Theorem \ref{theorem-simple-well}}

Let us introduce a subspace of dimension $P\geq 1$. For $j\in\{1,\ldots,P\}$ we can consider an $\sL^2$-normalized eigenfunction of $\mathcal{L}_{h}$ denoted by $\psi_{j,h}$ and so that the family $(\psi_{j,h})_{j\in\{1,\ldots,P\}}$ is orthogonal. We let
$$\mathfrak{E}_{P}(h)=\underset{j\in\{1,\ldots,P\}}\spann\psi_{j,h}.$$
\begin{remark}
We can extend the estimates of Propositions \ref{sigma-dsigma} and \ref{sigma2-dsigma2} as well as the approximations proved in Section \ref{subsubsec.approx} to $\psi\in\mathfrak{E}_{P}(h)$.
\end{remark}
We can now prove the following proposition.
\begin{proposition}\label{prop.lbQ}
For all $n\geq 1$, there exists $h_{0}>0$ such that, for all $h\in(0, h_{0})$, we have
$$\lambda_{n}(h)\geq \mu_{0}+\lambda_{n, 1}h+o(h),$$
where we recall that $\lambda_{n, 1}$ is the $n$-th eigenvalue of $\frac{1}{2}\Hess\,\mun(x_{0},\xi_{0})(\sigma, D_{\sigma})$.
\end{proposition}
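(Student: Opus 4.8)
The matching upper bound being essentially contained in Proposition~\ref{quasimodes} (combined with Proposition~\ref{essential} and the fact that the values $\mu_{0}+\lambda_{n,1}h$ are well separated), only the lower bound for $\lambda_{n}(h)$ is at stake. I would obtain it from a min--max argument fed by a sharp lower bound for the quadratic form $\mathcal{Q}_{h}$ on the $n$-dimensional space $\mathfrak{E}_{n}(h)$ spanned by the first $n$ eigenfunctions. Write $\mathcal{H}_{0}=\tfrac12\Hess\mun(x_{0},\xi_{0})(\sigma,D_{\sigma})$ for the effective harmonic oscillator on $\sL^2(\R^m_{\sigma})$ --- a nonnegative operator whose spectrum is simple by Assumption~\ref{hyp-gen'} and whose $n$-th eigenvalue is, by definition, $\lambda_{n,1}$ --- and, for $\psi\in\mathfrak{E}_{n}(h)$, put $f=\langle\psi,u_{x_{0},\xi_{0}}\rangle_{\sL^2(\R^n_{\tau})}$, so that $\Pi_{0}\psi=f\otimes u_{x_{0},\xi_{0}}$. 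The core estimate is the uniform bound
$$
\mathcal{Q}_{h}(\psi)\ \geq\ \mu_{0}\|\psi\|^2+h\,\langle\mathcal{H}_{0}f,f\rangle_{\sL^2(\R^m_{\sigma})}+o(h)\|\psi\|^2,\qquad \psi\in\mathfrak{E}_{n}(h).
$$
Granting it, one finishes in the usual way: the approximation results of Section~\ref{subsubsec.approx}, which extend to $\mathfrak{E}_{n}(h)$ as noted in the remark preceding the statement, give $\|\psi-\Pi_{0}\psi\|\leq Ch^{1/2-\tilde\eta}\|\psi\|$, hence $\|f\|=(1+o(1))\|\psi\|$ uniformly; in particular $\psi\mapsto f$ is injective on $\mathfrak{E}_{n}(h)$ for $h$ small and its image $V_{h}\subset\sL^2(\R^m_{\sigma})$ is $n$-dimensional. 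By the min--max principle for $\mathcal{H}_{0}$ there is $f^{*}=\Pi_{0}\psi^{*}\in V_{h}\setminus\{0\}$ with $\langle\mathcal{H}_{0}f^{*},f^{*}\rangle\geq\lambda_{n,1}\|f^{*}\|^2$; since $\psi^{*}$ lies in the span of eigenfunctions with eigenvalue $\leq\lambda_{n}(h)$ we have $\mathcal{Q}_{h}(\psi^{*})\leq\lambda_{n}(h)\|\psi^{*}\|^2$, and combining this with the displayed bound, with $\|f^{*}\|^2=(1+o(1))\|\psi^{*}\|^2$ and with $\lambda_{n,1}>0$ yields $\lambda_{n}(h)\geq\mu_{0}+\lambda_{n,1}h+o(h)$.

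For the quadratic form bound I would use the coherent-state decomposition of Section~\ref{coherent}. Starting from $\mathcal{L}_{h}=\mathcal{L}_{h}^{\W}+\mathcal{R}_{h}$ as in \eqref{ordering}, one has $\mathcal{Q}_{h}(\psi)=\int_{\R^{2m}}Q_{h,u,p}(\psi_{u,p})\dx u\dx p+\langle\mathcal{R}_{h}\psi,\psi\rangle$. First, using the polynomial estimates of Propositions~\ref{sigma-dsigma}--\ref{sigma2-dsigma2} (and, for the top-degree terms, iterations of the same arguments to higher degree) together with the microlocalization of Propositions~\ref{loc-space}--\ref{loc-phase}, one shows that $\langle\mathcal{R}_{h}\psi,\psi\rangle$ reduces to its order-$h$ contribution $h\langle R_{2}\psi,\psi\rangle$ modulo $o(h)\|\psi\|^2$. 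Second, by the min--max principle $Q_{h,u,p}(\psi_{u,p})\geq\mun(x_{0}+h^{1/2}u,\xi_{0}+h^{1/2}p)\|\psi_{u,p}\|^2_{\sL^2(\R^n_{\tau})}$; a second-order Taylor expansion of $\mun$ at $(x_{0},\xi_{0})$, valid for $h^{1/2}(|u|+|p|)\leq\eps_{0}$ --- the cubic error $O(h^{3/2}(|u|^3+|p|^3))$ being $o(h)\|\psi\|^2$ after integration, since $\int(|u|^3+|p|^3)\|\psi_{u,p}\|^2\dx u\dx p\leq Ch^{-1/4-\tilde\eta}\|\psi\|^2$ by Cauchy--Schwarz between the degree-two and degree-four momenta controlled in Section~\ref{coherent} --- together with the trivial bound $\mun\geq\mu_{0}$ and $\int_{\CBhe}\|\psi_{u,p}\|^2\dx u\dx p\leq Ch\|\psi\|^2$ from \eqref{eq.decoup} in the outer region, gives $\int_{\R^{2m}}Q_{h,u,p}(\psi_{u,p})\dx u\dx p\geq\mu_{0}\|\psi\|^2+\tfrac h2\langle q^{\mathrm{aW}}\psi,\psi\rangle+o(h)\|\psi\|^2$, where $q(u,p)=\Hess\mun(x_{0},\xi_{0})(u,p)$ and $q^{\mathrm{aW}}$ is its anti-Wick quantization in the $\sigma$-variables. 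Third, combining $\tfrac h2\langle q^{\mathrm{aW}}\psi,\psi\rangle$ with $h\langle R_{2}\psi,\psi\rangle$ and invoking the Feynman--Hellmann formulas of Proposition~\ref{FH} --- exactly the computation carried out in the proof of Proposition~\ref{quasimodes} --- one finds that this order-$h$ part equals $h\langle\mathcal{H}_{0}\psi,\psi\rangle$ modulo $o(h)\|\psi\|^2$. Finally $\mathcal{H}_{0}$ acts only on $\sigma$ and is nonnegative, so $\langle\mathcal{H}_{0}\psi,\psi\rangle=\langle\mathcal{H}_{0}f,f\rangle+\langle\mathcal{H}_{0}(1-\Pi_{0})\psi,(1-\Pi_{0})\psi\rangle\geq\langle\mathcal{H}_{0}f,f\rangle$, and the displayed bound follows.

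The main obstacle is precisely this quantitative analysis. The rough lower bound \eqref{rlb} loses a full $O(h)\|\psi\|^2$, whereas here one must retain the \emph{exact} coefficient of $h$; this forces one to track the passage from the anti-Wick to the Weyl quantization of the Hessian symbol (the two differing by an $h$-order constant, compensated precisely by the remainder $hR_{2}$, a point where the Feynman--Hellmann structure is essential) and, at the same time, to check that every other contribution --- the higher anti-Wick remainders $h^{j/2}R_{j}$, the cubic Taylor error, the outer region $\CBhe$, and the replacement of $\psi$ by $\Pi_{0}\psi$ --- is genuinely $o(h)\|\psi\|^2$ and not merely $O(h)\|\psi\|^2$. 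This is exactly why the sharp momentum estimates of Propositions~\ref{sigma-dsigma}--\ref{sigma2-dsigma2} and the phase-space microlocalization of Propositions~\ref{loc-space}--\ref{loc-phase} were established: the crude polynomial bounds of Corollary~\ref{poly-bound} would only give $O(h)$ and would not close the argument.
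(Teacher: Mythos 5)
Your overall architecture (a uniform lower bound for $\mathcal{Q}_{h}$ on $\mathfrak{E}_{n}(h)$ followed by a min--max argument for the effective oscillator on the image of $\psi\mapsto f$) matches the paper's, and the peripheral reductions (cubic Taylor error, outer region, higher anti-Wick remainders) are sound. The gap is in the central order-$h$ computation. The scalar min--max bound $Q_{h,u,p}(\psi_{u,p})\geq\mun(x_{0}+h^{1/2}u,\xi_{0}+h^{1/2}p)\|\psi_{u,p}\|^{2}$ discards the nonnegative quantity $\int\langle(\mathcal{M}_{x_{0}+h^{1/2}u,\xi_{0}+h^{1/2}p}-\mun)\psi_{u,p},\psi_{u,p}\rangle\dx u\dx p$, which for the actual eigenfunctions is of size exactly $O(h)\|\psi\|^{2}$, because $\psi_{u,p}$ is aligned with $u_{x_{0},\xi_{0}}$ and not with $u_{x_{0}+h^{1/2}u,\xi_{0}+h^{1/2}p}$. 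As a consequence, the cancellation you assert between the anti-Wick-to-Weyl constant and $h\langle R_{2}\psi,\psi\rangle$ fails. Indeed $\tfrac{h}{2}\,\mathrm{Op}^{\mathrm{aW}}\big(\Hess\mun(u,p)\big)=h\mathcal{H}_{0}+\tfrac{h}{4}\mathrm{tr}\,\Hess\mun(x_{0},\xi_{0})$, while the anti-Wick ordering of $\mathcal{L}_{2}$ gives $R_{2}=-\tfrac14\sum_{j}\big((\dr_{x_{j}}^{2}\mathcal{M})_{x_{0},\xi_{0}}+(\dr_{\xi_{j}}^{2}\mathcal{M})_{x_{0},\xi_{0}}\big)$; after replacing $\psi$ by $\Pi_{0}\psi$ the sum of the two order-$h$ constants is
$$\tfrac{h}{4}\sum_{j}\Big[\dr_{x_{j}}^{2}\mun+\dr_{\xi_{j}}^{2}\mun-\big\langle(\dr_{x_{j}}^{2}\mathcal{M}+\dr_{\xi_{j}}^{2}\mathcal{M})u_{0},u_{0}\big\rangle\Big]\|f\|^{2}
=\tfrac{h}{2}\sum_{\eta\in\{x_{j},\xi_{j}\}}\big\langle\dr_{\eta}\mathcal{M}\,(\dr_{\eta}u)_{x_{0},\xi_{0}},u_{0}\big\rangle\,\|f\|^{2},$$
by the very Feynman--Hellmann identity \eqref{FH2} you invoke. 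Since $\dr_{\eta}\mun(x_{0},\xi_{0})=0$, formula \eqref{FH1} gives that each summand equals $-\langle(\mathcal{M}_{x_{0},\xi_{0}}-\mu_{0})(\dr_{\eta}u)_{x_{0},\xi_{0}},(\dr_{\eta}u)_{x_{0},\xi_{0}}\rangle\leq0$, with strict inequality unless $\dr_{\eta}\mathcal{M}_{x_{0},\xi_{0}}u_{0}=0$ (e.g.\ $\dr_{\xi_{j}}\mathcal{M}u_{0}=2(\xi_{0,j}+A_{1,j}(x_{0},t))u_{0}$ does not vanish in general). Your argument therefore only yields $\lambda_{n}(h)\geq\mu_{0}+\lambda_{n,1}h-\delta h+o(h)$ with some $\delta>0$, which is not the claimed statement.

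The missing positive $O(h)$ is exactly the second-order perturbative contribution that the paper retains by \emph{not} applying the scalar min--max at order $h$: it expands $\mathcal{Q}_{h}(\psi)=\sum_{j}h^{j/2}\langle\mathcal{L}_{j}\psi,\psi\rangle$ directly and substitutes the corrected Feshbach projection $\Pi_{h}\psi=\Psi_{0}+h^{1/2}\Psi_{1}$, whose corrector $\Psi_{1}$ is built from $(\dr_{x}u)_{x_{0},\xi_{0}}$ and $(\dr_{\xi}u)_{x_{0},\xi_{0}}$; the cross term $h\langle\mathcal{L}_{1}\Psi_{1},\Psi_{0}\rangle$ then supplies precisely the quantities $\langle\dr_{\eta}\mathcal{M}(\dr_{\theta}u)_{x_{0},\xi_{0}},u_{0}\rangle$ needed to reconstitute $\Hess\mun$ from $\langle\dr^{2}\mathcal{M}u_{0},u_{0}\rangle$. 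To salvage your route you would have to fiber the projection over $(u,p)$, i.e.\ decompose each $\psi_{u,p}$ along $u_{x_{0}+h^{1/2}u,\xi_{0}+h^{1/2}p}$ before bounding below --- which amounts to reintroducing the corrected projection rather than the scalar eigenvalue bound.
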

\begin{proof}
Since we want to establish a lower bound for the eigenvalues, let us prove a lower bound for the quadratic form on $\mathfrak{E}_{P}(h)$, for $P\geq 1$. By definition of the $\mathcal{L}_{j}$ in \eqref{eq.Lj}, we have, for $\psi\in\mathfrak{E}_{P}(h)$,
$$\mathcal{Q}_{h}(\psi)=\langle\mathcal{L}_{0}\psi,\psi\rangle+h^{1/2}\langle\mathcal{L}_{1}\psi,\psi\rangle+h\langle\mathcal{L}_{2}\psi,\psi\rangle+\ldots+h^{p/2}\langle\mathcal{L}_{p}\psi,\psi\rangle+\ldots+h^{M/2}\langle\mathcal{L}_{M}\psi,\psi\rangle.$$
Using Propositions \ref{sigma-dsigma}, \ref{sigma2-dsigma2}, \ref{loc-space} and \ref{loc-phase}, we have, for $\jj\geq 3$
$$|h^{\jj/2}\langle\mathcal{L}_{\jj}\psi,\psi\rangle|\leq Ch^{\frac \jj2-\frac{\jj-3}2-\tilde\eta-\frac14}\|\psi\|^2=Ch^{\frac 5 4-\tilde\eta}\|\psi\|^2.$$
We infer
$$\mathcal{Q}_{h}(\psi)\geq\langle\mathcal{L}_{0}\psi,\psi\rangle+h^{1/2}\langle\mathcal{L}_{1}\psi,\psi\rangle+h\langle\mathcal{L}_{2}\psi,\psi\rangle-Ch^{\frac 5 4-\tilde\eta}\|\psi\|^2.$$
It remains to analyze the different terms.
We have
$$\langle\mathcal{L}_{0}\psi,\psi\rangle=\langle\mathcal{L}_{0}(\Psi_{0}+h^{1/2}\Psi_{1}+R_{h}),\Psi_{0}+h^{1/2}\Psi_{1}+R_{h}\rangle.$$
The orthogonality (with respect to $\tau$) cancels the terms $\langle\mathcal{L}_{0}\Psi_{1},\Psi_{0}\rangle$ and $\langle R_{h},\Psi_{0}\rangle$. Moreover, we have, with Propositions \ref{loc-space} and \ref{loc-phase},
$$h^{1/2}|\langle\mathcal{L}_{0}R_{h},\Psi_{1}\rangle|\leq h^{1/2-\tilde\eta}\|R_{h}\|\|\Psi_{1}\|,$$
and we use Proposition \ref{sigma-dsigma} to get
$$\|\Psi_{1}\|\leq C\|\psi\|,$$
so that, with Proposition \ref{control-Rh},
$$\langle\mathcal{L}_{0}\psi,\psi\rangle=\mu_{0}\|\Psi_{0}\|^2+h\langle\mathcal{L}_{0}\Psi_{1},\Psi_{1}\rangle+\Oc(h^{\frac 5 4-\tilde\eta})\|\psi\|^2.$$
We have
$$\langle\mathcal{L}_{1}\psi,\psi\rangle=\langle\mathcal{L}_{1}\Psi_{0},\Psi_{0}\rangle+2h^{1/2}\langle\mathcal{L}_{1}\Psi_{0},\Psi_{1}\rangle+h\langle\mathcal{L}_{1}\Psi_{1},\Psi_{1}\rangle+2\langle\mathcal{L}_{1}\psi,R_{h}\rangle.$$
Then, a Feynman-Hellmann formula provides $\langle\mathcal{L}_{1}\Psi_{0},\Psi_{0}\rangle=0$. Using again Propositions \ref{loc-space}, \ref{loc-phase}, \ref{sigma-dsigma}, \ref{sigma2-dsigma2} and \ref{control-Rh}, we notice that
$$\langle\mathcal{L}_{1}\psi,\psi\rangle=2h^{1/2}\langle\mathcal{L}_{1}\Psi_{0},\Psi_{1}\rangle+\Oc(h^{\frac 34-\tilde\eta})\|\psi\|^2.$$
We notice
$$\langle\mathcal{L}_{2}\psi,\psi\rangle=\langle\mathcal{L}_{2}\Psi_{0},\Psi_{0}\rangle+\langle\mathcal{L}_{2}(\psi-\Psi_{0}),\psi\rangle+\langle\mathcal{L}_{2}\psi,\psi-\Psi_{0}\rangle.$$
Writing $\psi-\Psi_{0}=h^{1/2} \Psi_{1}+R_{h}$, we have the estimate
$$|\langle\mathcal{L}_{2}(\psi-\Psi_{0}),\psi\rangle+\langle\mathcal{L}_{2}\psi,\psi-\Psi_{0}\rangle|\leq Ch^{-\frac14-\tilde\eta}h^{\demi-\tilde\eta}\|\psi\|^2.$$
We infer the lower bound
$$\mathcal{Q}_{h}(\psi)\geq \mu_{0}\|\Psi_{0}\|^2+h\langle\mathcal{L}_{0}\Psi_{1},\Psi_{1}\rangle+h\langle\mathcal{L}_{1}\Psi_{0},\Psi_{1}\rangle+h\langle\mathcal{L}_{1}\Psi_{1},\Psi_{0}\rangle+h\langle\mathcal{L}_{2}\Psi_{0},\Psi_{0}\rangle-Ch^{\frac 5 4-\tilde\eta}\|\psi\|^2.$$
Using \eqref{L1Psi0}, we get
$$h\langle\mathcal{L}_{0}\Psi_{1},\Psi_{1}\rangle+h\langle\mathcal{L}_{1}\Psi_{0},\Psi_{1}\rangle=h\mu_{0}\|\Psi_{1}\|^2,$$
so that, by orthogonality,
$$\mathcal{Q}_{h}(\psi)\geq \mu_{0}\|\Psi_{0}+h^{1/2}\Psi_{1}\|^2+h\langle\mathcal{L}_{1}\Psi_{1},\Psi_{0}\rangle+h\langle\mathcal{L}_{2}\Psi_{0},\Psi_{0}\rangle-Ch^{\frac 5 4-\tilde\eta}\|\psi\|^2.$$
Since $\langle R_{h},\Psi_{0}\rangle=0$ we deduce that
$$\|\Psi_{0}+h^{1/2}\Psi_{1}\|^2=\|\Psi_{0}+h^{1/2}\Psi_{1}+R_{h}\|^2+\Oc(h^{\frac 5 4-\tilde\eta})\|\psi\|^2.$$
It follows that
$$\mathcal{Q}_{h}(\psi)- \mu_{0}\|\psi\|^2\geq h\langle\mathcal{L}_{1}\Psi_{1},\Psi_{0}\rangle+h\langle\mathcal{L}_{2}\Psi_{0},\Psi_{0}\rangle+\Oc(h^{\frac 5 4-\tilde\eta})\|\psi\|^2,$$
and, since $\mathcal{Q}_{h}(\psi)\leq \lambda_{P}(h)\|\psi\|^2$, we have
$$(\lambda_{P}(h)-\mu_{0})\|\psi\|^2\geq h\langle\mathcal{L}_{1}\Psi_{1},\Psi_{0}\rangle+h\langle\mathcal{L}_{2}\Psi_{0},\Psi_{0}\rangle+\Oc(h^{\frac 5 4-\tilde\eta})\|\psi\|^2.$$
Thus we get
$$(\lambda_{P}(h)-\mu_{0})\|\Psi_{0}\|^2\geq h\langle\mathcal{L}_{1}\Psi_{1},\Psi_{0}\rangle+h\langle\mathcal{L}_{2}\Psi_{0},\Psi_{0}\rangle+\Oc(h^{\frac 5 4-\tilde\eta})\|\psi\|^2.$$
We recall that (see \eqref{Fredholm} and below)
\begin{multline*}
\langle\mathcal{L}_{1}\Psi_{1},\Psi_{0}\rangle+\langle\mathcal{L}_{2}\Psi_{0},\Psi_{0}\rangle\\
=\left\langle\tfrac{1}{2}\Hess\, \mu(x_{0},\xi_{0}) (\sigma,D_{\sigma})(\langle\psi, u_{0}\rangle_{\sL^2(\R^n,\dx \tau)}),\langle\psi, u_{0}\rangle_{\sL^2(\R^n,\dx \tau)}\right\rangle_{\sL^2(\R^m,\dx \sigma)}.
\end{multline*}
Finally we apply the min-max principle to the $P$-dimensional space $\left\langle\mathfrak{E}_{P}(h),u_{0}\right\rangle_{\sL^2(\R^n,\dx \tau)}$ to get the wished lower bound.
\end{proof}
Theorem \ref{theorem-simple-well} is a consequence of Propositions \ref{prop.lbQ} and \ref{quasimodes}.

\section{Magnetic WKB constructions}\label{sec:WKB}

\subsection{Stable manifold and eikonal equation}\label{Sec.eikonale}

In this section we study the construction of WKB solutions in the general case
$$
\mathfrak{L}_{h} = (h D_s + A_1(s,t))^2 + (D_t + A_2(s,t))^2\qquad\mbox{ with }\quad D = -i \nabla.
$$
As mentioned in the introduction,
for $(x,\xi)\in\R^m \times\R^m$, we  are interested in
the following electro-magnetic Laplacian acting on $\sL^2(\R^n, \dx t)$, when looking at the partial semiclassical  symbol of $\mathfrak{L}_{h}$  in variable $s$,
\begin{equation} \label{mxxi}
\mathcal{M}_{x,\xi}=(D_t+A_{2}(x,t))^2+(\xi+A_{1}(x,t))^2.
\end{equation}
Denoting by $\mun(x,\xi)$ its lowest eigenvalue, we would like to replace (in spirit) $\mathfrak{L}_{h}$ by the $m$-dimensional pseudo-differential operator $\mun(x,hD_x)$.
In order to complete this program,  the main assumption on the operator $\mathcal{M}_{x,\xi}$ in variable $t$ concerns its lowest eigenvalue $\mun$ and is stated in Assumption \ref{hyp-gen} in the introduction.

In order to build suitable quasimodes for $\mathfrak{L}_{h}$, it will be of great use
to first study   the following eikonal equation
\begin{equation} \label{eikpsi}
\mun(x_0+x, \xi_0+ i\nabla\Phi(x)) = \mu_0,
\end{equation}
with unknown $\Phi$, where we recall that $(x_0,\xi_0)$ is the point where the minimum $\mu_0$ of $\mun$,  is attained (as a real function of $(x,\xi) \in \R^{2m}$). Although certainly well-known (see e.g. \cite{sjo74}), in particular in the context of Sj\"ostrand's theory of FIO with complex phases,  we recall in the next subsections this construction with elementary tools.
In order to simplify the notation, we denote in the following
$$
 p(x, \xi) =\mun^\natural(x,\xi) -\mu_0\qquad\mbox{ with }\quad \mun^\natural(x,\xi)= \mun(x_0+ x,\xi_0+ \xi).
$$
With these notations, we deal with a real analytic symbol $p$ defined at least in a neighborhood of $(0,0)$ in the complex plane, and such that
$$
p(0,0) = 0, \quad  \nabla p(0,0) = 0, \quad \textrm{ and }\quad\Hess p(0,0) \textrm{ is (real) positive definite}.
$$
The point $\rho_0 = (0,0)$ is then a so-called doubly characteristic point for $p$, and the  eikonal equation now reads
\begin{equation*}
p(x,i \nabla \Phi(x)) = 0.
\end{equation*}
In the next subsection, we introduce our general framework.

\subsubsection{Framework}
In order to stick to the standard theory (see e.g. \cite{DiSj99}), we introduce
$$
 q(x,\xi) = -p(x, i\xi)
$$
and the eikonal equation reads
\begin{equation} \label{eikphi}
q(x,\nabla \Phi(x)) = 0.
\end{equation}
We focus from now on \eqref{eikphi}. 
In general, the symbol $q$ is not necessarily real, so that we cannot  expect to get a real phase $\Phi$.
Anyway the classical construction in $\C^{2m}$ of the phase remains true as we shall shortly recall below (both in quadratic and general cases). We can look at the Hamiltonian vector field $H_q$, in a small neighborhood of $\rho_{0}=(0,0)$,
$$
H_q(x,\xi) = \frac{\partial q}{\partial\xi}(x, \xi)\cdot \nabla_x - \frac{\partial q}{\partial x}(x,\xi)\cdot \nabla_\xi,
$$
and its linearization $F_q $, at $\rho_{0}$, called the fundamental matrix, is
$$
F_q =J\Hess q(\rho_{0})= \sep{ \begin{matrix}
\frac{\partial^2 q}{\partial x\partial \xi}(\rho_0) & \frac{\partial^2 q}{\partial \xi^2}(\rho_0) \\[5pt]
                          -\frac{\partial^2 q}{\partial x^2}(\rho_0) & - \frac{\partial^2 q}{\partial x\partial \xi}(\rho_0)
                          \end{matrix} },\quad J=\left(\begin{matrix}
0&I\\
-I&0
\end{matrix}\right).
$$
Since $p$ is real, the Hessian of $p$ is real, and of the form
$$
M_p = \Hess\ p(\rho_0) = \sep{ \begin{matrix} A & B \\ B & C \end{matrix} },
$$
with $A$, $B$ and $C$ real symmetric matrices. If $M_{q}$ denotes the Hessian of $q$, we therefore get that
$$
M_q= \sep{ \begin{matrix} -A & -iB \\ -iB & C \end{matrix} }.
$$
Since $\rho_0$ is a non degenerate minimum of $p$, the Hessian of $p$ at $\rho_0$ is positive definite.
We directly check that $F_q= J M_q$ and we have
$$
F_q = \sep{ \begin{matrix} -iB & C \\ A & iB \end{matrix} }.
$$
Now using that $F_p= J M_p$, we also have
$$
F_p = F_0=  \sep{ \begin{matrix} B & C \\ -A & -B \end{matrix} },
$$
so that
\begin{equation} \label{relationFpq}
F_q= -i \left( \begin{matrix}I&0\\ 0&-i I \end{matrix} \right) F_p \left( \begin{matrix}I&0\\ 0&i I \end{matrix} \right).
\end{equation}
We now deal with the eikonal equation in the quadratic case. 
\subsubsection{The quadratic case}
In this subsection we recall basic facts from \cite{sjo74}  about the quadratic case  and also 
about symplectic geometry. Recall that
$T^*\R^{m}$ is endowed with the canonical symplectic 2-form which can be written as  $\omega = \sum_j \dx\xi_j \wedge \dx x_j$ in coordinates, and that this form naturally extends to a symplectic
2-form in $T^*\C^{m}$ with the same expression 
\begin{equation}\label{eq.oxxi}
\omega\big((x,\xi), (y,\eta)\big) = \xi \cdot y - x \cdot \eta, \qquad (x,\xi), (y,\eta) \in T^*\C^{m},
\end{equation}
where we denote $x\cdot y=\sum_{j=1}^m x_{j} y_{j}$. Note that
\begin{equation} \label{xxbar}
\frac{1}{i} \omega( X,\overline{X} ) = \frac{1}{i} (\xi \cdot \overline{x} - x\cdot\overline{\xi}) = 2 \Im (\xi \cdot \overline{x}) \in \R.
\end{equation}
  We recall that an endomorphism $\kappa^0$ of $T^*\R^{m}$ is said to be symplectic if $\omega(\kappa^0., \kappa^0.) = \omega(., .)$. In coordinates, $\omega$ is represented by the matrix $J$ so that $\omega(X,Y)={}^t Y J X$.
Let $X=(x,\xi) \in T^*\C^m$ and consider a quadratic form $p_{0}$ associated with the bilinear form $b_{0}$,
$$
X \mapsto b_0(X,X)=p_{0}(X)\quad\mbox{ on }T^*\C^{m},
$$
which is real positive definite when restricted to $T^*\R^m$. Note that this implies that
$$
b_0(X,\overline{X}) > 0,\qquad \forall X \in T^*\C^m\setminus\set{0}.
 $$
We first recall that the fundamental matrix $F_0$ of $p_0$ may be defined through the symplectic form via the following formula
$$
2 b_0(X,Y) = \omega(X, F_0 Y), \qquad\forall X,Y \in T^*\C^m.
$$
Let us recall some properties of $F_{0}$ (see \cite{DiSj99,Ma02}). Let $M$ be the matrix of the quadratic form $p_{0}$. We have $F_{0}=JM$. Since $M$ is positive, we may consider $M^{1/2}$ and $M^{1/2}F_{0}M^{-1/2}=M^{1/2}JM^{1/2}$ is a real antisymmetric matrix. Its eigenvalues are purely imaginary and conjugate (the matrices are real). We denote them by $\pm i\vartheta_{j}$ with $\vartheta_{j}>0$. Following \cite{HelSj84, DiSj99}, we introduce
$$
\Lambda_+^0 = \bigoplus_{j} E_{i\vartheta_j}, \qquad\Lambda_-^0 = \bigoplus_{j} E_{-i\vartheta_j},
$$
and note that $\Lambda_+^0$ and $\Lambda_-^0$ are Lagrangian vector spaces of $T^*\C^m$ (we recall that both families $(E_{i\vartheta_j})_{j}$, $(E_{-i\vartheta_j})_{j}$ are $\omega$-orthogonal).

The next step is to show a transversality property.

\begin{lemma} \label{lambdainterreel}
We have 
$\Lambda_+^0 \cap
T^* \R^m = \set{0}$.
\end{lemma}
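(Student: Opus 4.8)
The plan is to show that any real vector lying in $\Lambda_+^0$ must vanish, by combining the fact that the quadratic form $p_0$ is identically zero on $\Lambda_+^0$ with the fact that it is positive definite on the real phase space $T^*\R^m$.

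First I would record that $b_0$ (equivalently $p_0$) vanishes on $\Lambda_+^0$. Each eigenspace $E_{i\vartheta_j}$ is stable under $F_0$, so $\Lambda_+^0$ is $F_0$-invariant; moreover $\Lambda_+^0$ is Lagrangian, hence isotropic for $\omega$. Therefore, for all $X,Y\in\Lambda_+^0$, the vector $F_0Y$ still lies in $\Lambda_+^0$, and the identity $2b_0(X,Y)=\omega(X,F_0Y)$ forces $b_0(X,Y)=0$. In particular $p_0(X)=b_0(X,X)=0$ for every $X\in\Lambda_+^0$.

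Next, let $X\in\Lambda_+^0\cap T^*\R^m$. Since $X$ is real, $\overline{X}=X$, so $b_0(X,\overline{X})=b_0(X,X)=p_0(X)=0$ by the previous step. But the Hermitian form $X\mapsto b_0(X,\overline{X})$ is positive definite on $T^*\C^m$ (a consequence of the positive definiteness of $p_0$ on $T^*\R^m$, already noted in the excerpt), so $b_0(X,\overline{X})=0$ implies $X=0$, which is exactly the claim.

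I do not foresee any serious obstacle: everything rests on facts already available (the Lagrangian, hence isotropic, character of $\Lambda_+^0$; the formula $2b_0(X,Y)=\omega(X,F_0Y)$; and the positivity of $X\mapsto b_0(X,\overline{X})$), the only mild care being the bookkeeping that $\Lambda_+^0$ is $F_0$-invariant. As an alternative, one could argue purely spectrally: a real $X\in\Lambda_+^0$ satisfies $X=\overline{X}\in\Lambda_-^0$ as well, because $F_0$ is real so complex conjugation sends $E_{i\vartheta_j}$ onto $E_{-i\vartheta_j}$; and $\Lambda_+^0\cap\Lambda_-^0=\{0\}$ since $F_0$ is diagonalizable (being conjugate to the real antisymmetric matrix $M^{1/2}JM^{1/2}$) with no zero eigenvalue, so that $T^*\C^m=\Lambda_+^0\oplus\Lambda_-^0$.
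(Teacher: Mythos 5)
Your primary argument is essentially the paper's own: both use the $F_0$-invariance of $\Lambda_+^0$ and the Lagrangian (isotropic) property to get $2b_0(X,X)=\omega(X,F_0X)=0$ for $X\in\Lambda_+^0\cap T^*\R^m$, and then conclude from positivity (you invoke $b_0(X,\overline X)>0$ on nonzero vectors of $T^*\C^m$, the paper invokes positive definiteness of $p_0$ on $T^*\R^m$; for a real $X$ these are the same statement). Your alternative, purely spectral argument --- that a real $X\in\Lambda_+^0$ lies also in $\overline{\Lambda_+^0}=\Lambda_-^0$ because $F_0$ is real, while $\Lambda_+^0\cap\Lambda_-^0=\{0\}$ since $T^*\C^m=\Lambda_+^0\oplus\Lambda_-^0$ --- is correct as well and offers a genuinely different route that bypasses the symplectic form entirely, relying only on the realness of $F_0$ and the absence of zero (or repeated-sign) eigenvalues.
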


\begin{proof}
Let us take $X \in \Lambda_+^0 \cap T^* \R^m $. Then by stability of $\Lambda_+^0$, we also have $F_0 X \in \Lambda_+^0 $ and we get
$$
2 b_0(X,X) = \omega(X, F_0X) = 0,
$$
since $\Lambda_+^0 $ is Lagrangian. In addition we know that $p_0$ is positive definite, and this implies $X=0$.
\end{proof}
We now show that
\begin{lemma}\label{lem.oxx0}
For all $ X \in \Lambda_+^0\setminus\{0\}$,
we have $\frac{1}{i} \omega( X,\overline{X} ) >0$.
\end{lemma}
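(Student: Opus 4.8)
The plan is to diagonalize $F_0$ and argue eigenspace by eigenspace. First I would observe that the defining identity $2b_0(X,Y)=\omega(X,F_0Y)$, combined with the symmetry of $b_0$ and the antisymmetry of $\omega$, makes $F_0$ skew with respect to $\omega$: indeed $\omega(X,F_0Y)=2b_0(X,Y)=2b_0(Y,X)=\omega(Y,F_0X)=-\omega(F_0X,Y)$. The classical consequence is that two eigenspaces $E_\lambda$, $E_\mu$ of $F_0$ are $\omega$-orthogonal as soon as $\lambda+\mu\neq0$, since for $F_0Y=\lambda Y$ and $F_0Z=\mu Z$ one gets $\lambda\omega(Y,Z)=\omega(F_0Y,Z)=-\omega(Y,F_0Z)=-\mu\omega(Y,Z)$. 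I would also recall that $F_0$ is invertible ($M>0$) and diagonalizable, because $M^{1/2}F_0M^{-1/2}=M^{1/2}JM^{1/2}$ is real antisymmetric, hence normal; so its spectrum is exactly $\set{\pm i\vartheta_j}$ with $\vartheta_j>0$.

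Next I would exploit that $F_0$ is a real matrix: if $F_0X=i\vartheta_jX$, then $F_0\overline X=\overline{F_0X}=\overline{i\vartheta_jX}=-i\vartheta_j\overline X$, so conjugation maps $E_{i\vartheta_j}$ onto $E_{-i\vartheta_j}$. Given $X\in\Lambda_+^0$, I decompose $X=\sum_jX_j$ with $X_j$ the component of $X$ in the $i\vartheta_j$-eigenspace (grouping the indices so that the values $\vartheta_j$ are pairwise distinct). Then $\overline X=\sum_j\overline{X_j}$ with $\overline{X_j}\in E_{-i\vartheta_j}$, and the $\omega$-orthogonality just recalled annihilates every cross term $\omega(X_j,\overline{X_k})$ with $j\neq k$ (there $i\vartheta_j+(-i\vartheta_k)\neq0$), so that
\[
\tfrac1i\,\omega(X,\overline X)=\sum_j\tfrac1i\,\omega(X_j,\overline{X_j}).
\]

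Then I would evaluate each diagonal term through the fundamental matrix: $2b_0(X_j,\overline{X_j})=\omega(X_j,F_0\overline{X_j})=-i\vartheta_j\,\omega(X_j,\overline{X_j})$, hence $\tfrac1i\omega(X_j,\overline{X_j})=\tfrac{2}{\vartheta_j}\,b_0(X_j,\overline{X_j})$. Since $p_0$ is positive definite on $T^*\R^m$, we have $b_0(Z,\overline Z)>0$ for every $Z\neq0$ (this positivity is already recorded in the text), and $\vartheta_j>0$; thus each term is $\geq0$, vanishing only when $X_j=0$. Summing, $\tfrac1i\omega(X,\overline X)=\sum_j\tfrac{2}{\vartheta_j}b_0(X_j,\overline{X_j})\geq0$, with equality forcing $X_j=0$ for all $j$, i.e. $X=0$. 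This gives $\tfrac1i\omega(X,\overline X)>0$ for $X\in\Lambda_+^0\setminus\set{0}$, as claimed; the identity \eqref{xxbar} incidentally confirms the left-hand side is real.

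There is no genuine difficulty here — the argument is linear algebra. The only points that need a little care are the bookkeeping when several $\vartheta_j$ coincide (one must group the eigenspaces by the value of $\vartheta$ before discarding the cross terms) and the preliminary remark that $F_0$ is diagonalizable with no zero eigenvalue; both follow immediately from $M$ being real symmetric positive definite.
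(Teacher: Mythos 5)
Your argument is correct, and it is genuinely different from the one in the paper. You diagonalize $F_0$ (justified, as you note, by its similarity to the real antisymmetric matrix $M^{1/2}JM^{1/2}$, which also rules out the eigenvalue $0$), use the $\omega$-skewness $\omega(F_0X,Y)=-\omega(X,F_0Y)$ to kill the cross terms $\omega(X_j,\overline{X_k})$ between eigenspaces whose eigenvalues do not sum to zero, and reduce each diagonal term to $\tfrac1i\omega(X_j,\overline{X_j})=\tfrac{2}{\vartheta_j}b_0(X_j,\overline{X_j})$, which is positive for $X_j\neq 0$ by the positivity $b_0(Z,\overline Z)>0$ already recorded in the text. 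All steps check out, including the grouping of coincident $\vartheta_j$'s before discarding cross terms. The paper instead runs a deformation argument: it interpolates $p_t=(1-t)p_0+tp_1$ with $p_1$ the harmonic oscillator, computes the case $t=1$ explicitly via \eqref{xxbar}, shows $t\mapsto\Lambda_+^t$ is continuous through Riesz projections, and derives a contradiction from a first degenerate $t_0$ using Cauchy--Schwarz for the non-negative Hermitian form $\tfrac1i\omega(\,\cdot\,,\overline{\,\cdot\,})$ together with Lemma \ref{lambdainterreel}. Your route is shorter and yields the explicit identity $\tfrac1i\omega(X,\overline X)=\sum_j\tfrac{2}{\vartheta_j}b_0(X_j,\overline{X_j})$, and it does not invoke Lemma \ref{lambdainterreel} at all (indeed it reproves it); the paper's homotopy argument is the standard one in Sj\"ostrand's theory and has the advantage of not relying on an eigenspace decomposition, making it more robust under perturbation, in keeping with the stable-manifold arguments used later in the section.
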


\begin{proof} This is done by a perturbation argument. For $X=(x,\xi) \in T^*\C^m$, we denote $p_1(X) =x^2+ \xi^2$ the harmonic oscillator, and we introduce
$$
p_t = (1-t)p_0 + t p_1, \qquad \forall t\in [0,1],
$$
and we denote by $F_{t}$ the fundamental matrix of $p_{t}$. The eigenvalues of $F_{1}=J$ are $\pm i$ and the Lagrangian subspaces are given by
$$
\Lambda_+^1 = \set{ \xi = ix} , \qquad \Lambda_-^1 = \set{ \xi = -ix}.
$$
In particular we have from (\ref{xxbar}) that
for all $X = (x,ix) \in \Lambda_+^1\setminus\{0\}$,
\begin{equation}\label{eq.oxx1}
\frac{1}{i} \omega( X,\overline{X} ) = \frac{1}{i} \omega( (x,ix) ,\overline{(x,ix )}) =  2 \Im ( ix \cdot \overline{x}) = 2 |x|^2 >0.
\end{equation}
Now we can consider
$$
\Lambda_+^t = \bigoplus_{j} E_{i\vartheta_j^t}\, ,
$$
where $i\vartheta_j^t$ are the eigenvalues of $F_t$. 

For $R>0$, let $\gamma_{R}$ be the contour made of a segment $[-R, R]$ and the semi-circle $\ccc(0,R) \cap \set{\Im z >0}$. Since $\vartheta_j^t >0$ for all $t \in [0,1]$ and $j\geq 1$, the open semi-disk surrounded by $\gamma_{R}$ contains all eigenvalues $i\vartheta_j^t$ as soon as $R$ is sufficiently large. We get
$$
\Lambda_+^t = \range \frac{1}{2i\pi} \int_{\gamma_{R}} (z-F_t)^{-1} \dx z 
$$
so that the application $t\mapsto\Lambda_+^t$ is a continuous family of subspaces.

Now we want to show that 
\begin{equation}\label{eq.oxxt}
\frac{1}{i} \omega( X,\overline{X} ) >0, \qquad \forall X \in \Lambda_+^t\setminus\{0\},\quad \forall t\in[0,1].
\end{equation}
This is already known for $t=1$ by \eqref{eq.oxx1}. Let us prove \eqref{eq.oxxt} by contradiction. Let us consider the largest $t_{0}\in[0,1]$ so that there exists $X_0 \in \Lambda^{t_0}_+\setminus\set{0}$ with
$$
\frac{1}{i} \omega( X_0,\overline{X_0} )=0.
$$
By definition of $t_{0}$ and continuity this implies that $(X,Y) \longrightarrow \frac{1}{i} \omega( X,\overline{Y} )$ is a non negative Hermitian form  on $\Lambda_+^{t_0}$.
By the Cauchy-Schwarz inequality, we get
$$
\Big|\frac{1}{i} \omega( X,\overline{Y} )\Big|^2 \leq \frac{1}{i} \omega( X,\overline{X} ) \frac{1}{i} \omega( Y,\overline{Y} ),\qquad  \forall X, Y \in \Lambda_+^{t_0}.
$$
Applying this to $Y=X_0$, we have
$$
  \omega( X,\overline{X_0} ) = 0,\quad \forall X \in \Lambda_+^{t_0},
$$
which implies $\overline{X_0} \in (\Lambda_+^{t_0})^{\perp_\omega} = \Lambda_+^{t_0}$, since $\Lambda_+^{t_0}$ is Lagrangian.
As a consequence we get that
$$
 \Re X_0 = \frac 12(X_0 + \overline{X_0}) \in \Lambda_+^{t_0} \cap T^* \R^m, \qquad  \Im X_0 = \frac{1}{2i}(X_0 - \overline{X_0}) \in \Lambda_+^{t_0} \cap T^* \R^m.
$$
From Lemma \ref{lambdainterreel}, we get that $X_0 = 0$, which gives a contradiction and proves \eqref{eq.oxxt}. Taking $t=0$ in \eqref{eq.oxxt} gives the lemma.
 \end{proof}
Let us now prove that $\Lambda_+^0$ is a graph. For this we establish first a transversality lemma.
\begin{lemma} \label{transp}
$\Lambda_+^0 \cap \set{ x=0} = \set{ 0}$.
\end{lemma}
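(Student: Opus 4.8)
The plan is to derive this transversality statement directly from Lemma~\ref{lem.oxx0}, using only the coordinate expression \eqref{xxbar} for the Hermitian form $X\mapsto\frac{1}{i}\omega(X,\overline{X})$. The point is that the ``vertical'' subspace $\set{x=0}$ is precisely a locus on which this form degenerates, so it cannot intersect the subspace $\Lambda_+^0$, on which the same form is positive definite, except at the origin.

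Concretely, I would argue as follows. Let $X=(x,\xi)\in\Lambda_+^0\cap\set{x=0}$, so that $X=(0,\xi)$. Plugging $x=0$ into \eqref{xxbar} gives
$$
\frac{1}{i}\omega(X,\overline{X})=\frac{1}{i}\bigl(\xi\cdot\overline{0}-0\cdot\overline{\xi}\bigr)=2\Im(\xi\cdot\overline 0)=0.
$$
On the other hand, Lemma~\ref{lem.oxx0} asserts that $\frac{1}{i}\omega(X,\overline{X})>0$ for every $X\in\Lambda_+^0\setminus\set{0}$. Hence the equality above forces $X=0$, which is the claim. There is really no obstacle here: the only nontrivial ingredient is Lemma~\ref{lem.oxx0} itself, whose proof (the homotopy $p_t=(1-t)p_0+tp_1$ to the harmonic oscillator together with the continuity/contradiction argument over $t\in[0,1]$) is already established.

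As a complement, I would note that since $\Lambda_+^0$ is Lagrangian in the $2m$-dimensional space $T^*\C^m$, one has $\dim_{\C}\Lambda_+^0=m=\dim_{\C}\set{x=0}$; combined with the transversality just proved, this shows $\Lambda_+^0$ is the graph of a linear map $x\mapsto\xi$, which (by the Lagrangian property and Lemma~\ref{lem.oxx0}) is symmetric with positive definite imaginary part. This is exactly the structural fact needed to produce the quadratic phase $\Phi$ solving \eqref{eikphi} in the quadratic case, and hence to initialize the general construction afterwards.
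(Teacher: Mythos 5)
Your proof is correct and is exactly the paper's argument: you evaluate the Hermitian form $\frac{1}{i}\omega(X,\overline{X})$ via \eqref{xxbar} at a point of the form $(0,\xi)$, find it vanishes, and then invoke the strict positivity of this form on $\Lambda_+^0\setminus\{0\}$ from Lemma~\ref{lem.oxx0} to force $X=0$. The closing remark about $\Lambda_+^0$ being a Lagrangian graph with symmetric, positive-imaginary-part slope is also exactly how the paper proceeds in Lemma~\ref{lem.B} and Proposition~\ref{prop.quad}.
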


\begin{proof} 
Let us consider $(0,\xi_0) \in \Lambda_+^0$. Applying \eqref{xxbar} gives $\frac{1}{i} \omega( (0, \xi_0),(0,\overline{\xi_0}) )=0$. Using Lemma \ref{lem.oxx0} with $X=(0,\xi_{0})$, we get $\xi_{0}=0$.
\end{proof}
Then we describe a parametrization of the graph.
\begin{lemma}\label{lem.B}
There exists a unique matrix $L \in M_m(\C)$ such that $\Lambda_+^0 = \set{ (x,Lx), \ x \in \C^m}$ and it satisfies ${}^t L = L$ and $\Im L >0$.
\end{lemma}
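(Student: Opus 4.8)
The plan is to produce $L$ as the matrix whose graph is $\Lambda_+^0$, and then to deduce its symmetry from the Lagrangian property and the positivity of its imaginary part from Lemma~\ref{lem.oxx0}.

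First I would observe that $\Lambda_+^0$ is a Lagrangian subspace of $T^*\C^m$, hence $\dim_\C\Lambda_+^0=m$. By Lemma~\ref{transp} we have $\Lambda_+^0\cap\set{x=0}=\set{0}$, so the projection $(x,\xi)\mapsto x$ restricted to $\Lambda_+^0$ is injective, and by equality of dimensions it is an isomorphism onto $\C^m$. Composing its inverse with the projection $(x,\xi)\mapsto\xi$ yields a (clearly unique) matrix $L\in M_m(\C)$ with $\Lambda_+^0=\set{(x,Lx):x\in\C^m}$.

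Next I would check the symmetry. Since $\Lambda_+^0$ is Lagrangian, $\omega\big((x,Lx),(y,Ly)\big)=0$ for all $x,y\in\C^m$; by the coordinate formula \eqref{eq.oxxi} this equals $Lx\cdot y-x\cdot Ly={}^tx({}^tL-L)y$, so letting $x,y$ vary freely gives ${}^tL=L$. For the positivity, fix $x\neq0$ and apply Lemma~\ref{lem.oxx0} to $X=(x,Lx)\in\Lambda_+^0\setminus\set{0}$: using \eqref{xxbar} one gets $0<\tfrac1i\omega(X,\overline X)=2\Im(Lx\cdot\overline x)=2\Im\langle Lx,x\rangle$, where $\langle\cdot,\cdot\rangle$ is the standard Hermitian product on $\C^m$. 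Because ${}^tL=L$ we have $L^*=\overline L$, so $\Im L:=\tfrac1{2i}(L-\overline L)$ is a Hermitian matrix and $\langle(\Im L)x,x\rangle=\Im\langle Lx,x\rangle>0$; hence $\Im L>0$.

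This lemma is essentially bookkeeping once Lemmas~\ref{transp} and \ref{lem.oxx0} are in hand: the only point demanding a little care is to keep the bilinear pairing $x\cdot y$ (in which $\omega$ is written) distinct from the Hermitian pairing $\langle\cdot,\cdot\rangle$ when passing from $\tfrac1i\omega(X,\overline X)>0$ to $\Im L>0$. I do not anticipate any genuine obstacle.
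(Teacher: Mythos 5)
Your proof is correct and follows essentially the same route as the paper: existence and uniqueness of $L$ from Lemma~\ref{transp} together with the Lagrangian (dimension) property, symmetry from $\omega$ vanishing on $\Lambda_+^0$ via \eqref{eq.oxxi}, and positivity of $\Im L$ from Lemma~\ref{lem.oxx0} combined with \eqref{xxbar}. The care you take in distinguishing the bilinear pairing from the Hermitian one is exactly the computation the paper carries out in the line $\tfrac{1}{i}(L-{}^t\overline{L})x\cdot\overline{x}=2\,\Im L\,x\cdot\overline{x}$.
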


\begin{proof} By Lemma \ref{transp}, we know that there exists a matrix $L $ such that $ \Lambda_+^0 = \{ (x,Lx): x \in \C^m\}$. We have to check that $L$ satisfies the required properties. Since $ \Lambda_+^0$  is Lagrangian, we have, with \eqref{eq.oxxi},
$$
0 = \omega( (x, Lx), ( y, L y)) = Lx \cdot y - Ly \cdot x = x ( {}^tL - L) y,\qquad \forall x,y \in \C^m.
$$
This implies ${}^tL = L$. From \eqref{xxbar}, we also have for all $x \in \C^m\setminus\{0\}$,
\begin{equation}
0  < \tfrac{1}{i} \omega( (x, Lx),(\overline{x},\overline{Lx}) )
 = \tfrac{1}{i} \sep{ Lx \cdot \overline{x} - \overline{L}\overline{x} \cdot x}
 = \tfrac{1}{i}(L- {}^t\overline{L}) x \cdot \overline{x}
 = 2 \Im L  x \cdot \overline{x}.
\end{equation}
This implies $\Im L >0$ which is the desired result.
\end{proof}
We can now solve the so-called eikonal equation in the quadratic case.
\begin{proposition}\label{prop.quad}
Let $L$ given by Lemma \ref{lem.B} and define $\Phi_0(x) = \frac{1}{2i}Lx\cdot x$.\\
Then $\Phi_{0}$ is a quadratic homogeneous polynomial with $\Re \Hess \Phi_0 >0$ and such that
$$
\Lambda_+^0 = \set{ \xi = i\nabla \Phi_0(x)},\qquad \Lambda_-^0 = \set{ \xi = -i\overline{\nabla \Phi_0(\overline{x})}}.
$$
Furthermore this function satisfies the eikonal equations 
$$ p_0(x, i\nabla \Phi_0(x)) = 0,\qquad p_0(x, -i\overline{\nabla \Phi_0(\overline{x})}) = 0, \qquad \forall x \in \R^m.$$
\end{proposition}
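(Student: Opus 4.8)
The plan is to read off every assertion directly from Lemma~\ref{lem.B}, which already supplies the symmetric matrix $L$ with $\Im L>0$ together with the graph description $\Lambda_+^0=\{(x,Lx):x\in\C^m\}$. Essentially no new analytic input is needed: the substance has all been accumulated in the passage from Lemma~\ref{lambdainterreel} through Lemma~\ref{lem.B}.

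First I would differentiate $\Phi_0(x)=\frac{1}{2i}Lx\cdot x$. Since ${}^tL=L$, one gets $\nabla\Phi_0(x)=\frac{1}{i}Lx$, hence $i\nabla\Phi_0(x)=Lx$, and the Hessian is the constant matrix $\Hess\Phi_0=\frac{1}{i}L=-iL$. As $\Re L$ is a real symmetric matrix, $-i\Re L$ is purely imaginary, so $\Re\Hess\Phi_0=\Re(-iL)=\Im L$, which is positive definite by Lemma~\ref{lem.B}; this also makes clear that $\Phi_0$ is a homogeneous quadratic polynomial. The identity $\Lambda_+^0=\{\xi=i\nabla\Phi_0(x)\}=\{\xi=Lx\}$ is then nothing but the graph description from Lemma~\ref{lem.B}.

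For the $\Lambda_-^0$ statement I would first note that $F_0=J\,\Hess p_0(\rho_0)$ is a \emph{real} matrix (the Hessian of the real quadratic form $p_0$ being real symmetric), so its spectrum is stable under complex conjugation and $\overline{E_{i\vartheta_j}}=E_{-i\vartheta_j}$; therefore $\Lambda_-^0=\overline{\Lambda_+^0}=\{(y,\overline{L}y):y\in\C^m\}$. A short bookkeeping with complex conjugates, using $\overline{1/i}=i$, gives $-i\,\overline{\nabla\Phi_0(\overline{x})}=-i\cdot i\,\overline{L}x=\overline{L}x$, so $\{\xi=-i\,\overline{\nabla\Phi_0(\overline{x})}\}=\Lambda_-^0$ as claimed.

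Finally, for the two eikonal equations I would invoke the defining relation $2b_0(X,Y)=\omega(X,F_0Y)$ together with the fact that $\Lambda_\pm^0$ are Lagrangian and $F_0$-stable: for $X\in\Lambda_+^0$ one has $F_0X\in\Lambda_+^0$, and the Lagrangian property forces $\omega(X,F_0X)=0$, whence $p_0(X)=b_0(X,X)=0$; applying this to $X=(x,Lx)=(x,i\nabla\Phi_0(x))$ with $x\in\R^m$ yields the first equation, and the same argument on $\Lambda_-^0$ with $X=(x,\overline{L}x)=(x,-i\,\overline{\nabla\Phi_0(\overline{x})})$ yields the second. I do not expect any genuine obstacle here; the only points to watch are the conjugation bookkeeping, the factor $1/i$ hidden in the normalization of $\Phi_0$, and the (immediate) fact that the quadratic form $p_0$ coincides with $b_0$ on the diagonal.
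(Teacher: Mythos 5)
Your proof is correct and follows essentially the same line as the paper: read off $\nabla\Phi_0=\tfrac{1}{i}Lx$ and $\Re\Hess\Phi_0=\Im L>0$ from ${}^tL=L$, then use $p_0(X)=b_0(X,X)=\omega(X,F_0X)=0$ on the $F_0$-stable Lagrangian $\Lambda_+^0$. The only difference is that you actually carry out the conjugation bookkeeping for $\Lambda_-^0$ (using the reality of $F_0$ to get $\Lambda_-^0=\overline{\Lambda_+^0}=\{\xi=\overline{L}x\}$), whereas the paper disposes of this with ``we can deal similarly''; your computation is a correct filling-in of that step.
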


\begin{proof} 
Since ${}^t L = L$, we have $\nabla \Phi_0(x) = \tfrac{1}{i} Lx$ and $\Re\Hess \Phi_{0}=\Im L>0$. For the eikonal equation, we check that
for $X=(x,Lx) \in \Lambda_+^0$,
$$
p_0(x, i\nabla \Phi_0(x)) = b_0(X,X) = \omega(X,F_{0}X) = 0,
$$
since $X$ and $F_{0}X \in \Lambda_+^0$, and $\Lambda_+^0$ is Lagrangian. We can deal similarly with $\Lambda_{-}^0$. This concludes the proof.
\end{proof}
Let us now explain the relation with the transport equation. We will later need some information about the transport operator
$$\mathscr{T}_{0}(x)=\tfrac12\left(\nabla_{\xi}q(x,\nabla\Phi_{0}(x))\cdot \nabla_{x}+\nabla_{x}\cdot\nabla_{\xi}q(x,\nabla\Phi_{0}(x))\right).$$
We let
$$
{\mathsf v}_{0}(x)
= \tfrac{1}{2i} \nabla_x\cdot \nabla_\xi q(x,i\nabla\Phi_{0}(x)) =
\tfrac{1}{2} \nabla_x\cdot \nabla_\xi q (x,\nabla \Phi_{0}(x)).
$$
Then the transport $\mathscr{T}_{0}$ is exactly the  projection on $T \R^m_x$ of the Hamiltonian vector field of $q(x,\xi)$ at the point $(x,\nabla \Phi_{0}(x)) \in \Lambda^0_+$,
$$
\mathscr{T}_{0}(x) = \pi_x (H_q (x,\nabla\Phi_{0}(x)))+\mathsf{v}_{0}(x),
$$
which reads in suitable coordinates
$$
\mathscr{T}_{0}(x)= \sum_{j=1}^m \vartheta_j y_j \nabla_{y_j} + {\mathsf v}_{0}(\kappa^{-1}(y)),
$$
where $(y,\eta) = (\kappa(x), ^t\dx\kappa(x)^{-1}\xi)$ is the corresponding symplectic change of variable (the existence of $\kappa$ is justified by the transversality of $\Lambda_+^0$ with $\{x=0\}$ stated in Lemma \ref{transp2}).
\begin{remark}\label{rem.transp}
We recall that the spectrum of the operator $\mathscr{T}_{0}$ acting on $\sL^2(\re^{-\Phi/h}\dx x)$ is nothing but the one of ${\mathsf{Op}}_{h}^{w}(p)$.
\end{remark}

\subsubsection{General case} 
Let us now deal with the general case. By Proposition \ref{prop.quad}, we have the following lemma.
 \begin{lemma}
The matrix $F_q$ is antisymmetric with respect to $\omega$. The eigenvalues of $F_q$ are of the form $(-\vartheta_j, +\vartheta_j)$, $j\in\{1,\ldots, m\}$, where the $\vartheta_j>0$ are counted with multiplicity. In addition, for $q_0$ the quadratic approximation of $q$ at $0$,  we have
 $$
 q_0(x, \nabla\Phi_0(x)) = 0,
 $$
 where $\Phi_0$ is defined in Proposition \ref{prop.quad} with the $p_{0}$ associated with $q_{0}$.
 \end{lemma}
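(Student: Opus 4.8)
The plan is to reduce all three assertions to the quadratic case already treated, using that the fundamental matrix of a symbol at a doubly characteristic point depends only on its Hessian there. Thus $F_q=J\Hess q(\rho_0)=JM_q$ is, by definition, the fundamental matrix of the quadratic part $q_0$ of $q$ at $\rho_0=(0,0)$, and likewise $F_p=F_0=JM_p$ is that of $p_0$, with the conjugation relation \eqref{relationFpq} holding between them. The $\omega$-antisymmetry of $F_q$ is then immediate from the defining property $2b_q(X,Y)=\omega(X,F_qY)$ of the fundamental matrix, where $b_q$ is the (complex-bilinear) symmetric polarization of $q_0$: one has $\omega(F_qX,Y)=-\omega(Y,F_qX)=-2b_q(Y,X)=-2b_q(X,Y)=-\omega(X,F_qY)$ for all $X,Y\in T^*\C^m$.

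For the spectrum, I would use \eqref{relationFpq}, which exhibits $F_q$ as similar to $-iF_p$, so that the spectrum of $F_q$ equals $-i$ times that of $F_p$. Since $M_p$ is real and positive definite, the analysis of the quadratic case applies to $p_0$: the matrix $M_p^{1/2}F_pM_p^{-1/2}=M_p^{1/2}JM_p^{1/2}$ is real, antisymmetric, and nonsingular (being a product of invertible matrices), so its eigenvalues split into conjugate purely imaginary pairs $\pm i\vartheta_j$, with $\vartheta_j>0$, $j=1,\dots,m$, counted with multiplicity. Hence $F_p$ has eigenvalues $\pm i\vartheta_j$ and, multiplying by $-i$, $F_q$ has eigenvalues $\pm\vartheta_j$, which is the claim; these $\vartheta_j$ are exactly those used to build $\Lambda_+^0$ and $\Phi_0$.

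The last identity is a transcription of Proposition \ref{prop.quad}. Because $p(\rho_0)=0$ and $\nabla p(\rho_0)=0$, the quadratic part of $q=-p(\cdot,i\cdot)$ at the origin is precisely $q_0(x,\xi)=-p_0(x,i\xi)$. Applying Proposition \ref{prop.quad} to $p_0$ produces $\Phi_0(x)=\frac1{2i}Lx\cdot x$, with $L$ the matrix of Lemma \ref{lem.B}, satisfying $\Re\Hess\Phi_0>0$ and $p_0(x,i\nabla\Phi_0(x))=0$ for $x\in\R^m$; therefore $q_0(x,\nabla\Phi_0(x))=-p_0(x,i\nabla\Phi_0(x))=0$.

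The argument is essentially formal once Lemma \ref{lem.B} and Proposition \ref{prop.quad} are available; the only point needing care — the closest thing to a genuine obstacle — is the bookkeeping of the substitution $\xi\mapsto i\xi$ relating $p$ and $q$ and of the conjugation \eqref{relationFpq}, namely checking that the scalar factor $-i$ sends the purely imaginary spectrum $\{\pm i\vartheta_j\}$ of $F_p$ onto the real spectrum $\{\pm\vartheta_j\}$ of $F_q$ with the correct signs and multiplicities.
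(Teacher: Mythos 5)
Your proof is correct, and it follows essentially the same route the paper intends: the paper gives no explicit proof of this lemma, merely prefacing it with ``By Proposition \ref{prop.quad}, we have the following lemma,'' and your write-up supplies exactly the reasoning being invoked. The three points you check — (i) $\omega$-antisymmetry from the defining relation $2b_q(X,Y)=\omega(X,F_qY)$ and the symmetry of $b_q$, (ii) the real spectrum $\{\pm\vartheta_j\}$ via the similarity relation \eqref{relationFpq} combined with the fact that $M_p^{1/2}JM_p^{1/2}$ is real antisymmetric and nonsingular, and (iii) the eikonal identity $q_0(x,\nabla\Phi_0(x))=0$ as a rewriting of $p_0(x,i\nabla\Phi_0(x))=0$ from Proposition \ref{prop.quad} — are exactly the ingredients the paper has set up in the preceding discussion of the quadratic case. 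The one step you should be careful to state (you do it, but it is the crux) is that $F_q=J\Hess q(\rho_0)$ depends only on the Hessian of $q$ at the doubly characteristic point, so all assertions reduce to the quadratic part $q_0$; this is what legitimates transporting the results of Proposition \ref{prop.quad} and Lemma \ref{lem.B} from $p_0$ to the general $q$. Nothing is missing.
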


Here we recall that $\Lambda_+^0 = \set{ (x,Lx) :  x\in\C^m } $ is the Lagrangian subspace associated with the eigenvalues $i\vartheta_j$ of $F_{p}$ ($\vartheta_j >0$). The transformation
$$
U : (x,\xi)  \mapsto (x, -i\xi)
$$
and \eqref{relationFpq} give directly that $\Lambda^{0,q}_+=U\Lambda_+^0 = \set{ (x,-iLx) : x\in\C^m }$ is also a Lagrangian subspace
associated to eigenvalues of positive real part $\set{\vartheta_j, 1\leq j\leq m}$.
We have another interpretation of the set $\Lambda^{0,q}_+$: For this we study the linearized flow at $\rho_0$ given by
$$
Z'(t)=F_q(Z)\cdot\nabla Z.
$$
This is clear that $\Lambda_+^{0,q}$ and $\Lambda_-^{0,q}$ are respectively the unstable and stable
manifolds  associated with the vector field $F_q(Z)\cdot\nabla$.
By this we mean that
\begin{equation}\label{limZ}
\forall Z_0 \in \Lambda_\mp^{0,q}, \qquad  \lim_{t \to \pm \infty } Z(t) = \rho_0.
\end{equation}
As already noted, these two spaces $\Lambda_\pm^{0,q}$ are Lagrangian. Now we show that the stable and unstable manifolds $\Lambda_{\pm}^q$ associated with the vector field $H_q$ are also Lagrangian.
Knowing the spectrum of the linearization of $H_q$, we just have to  apply the (complex) stable manifold theorem and we directly get that there exists one unstable holomorphic manifold  $\Lambda_+^q$ and one stable holomorphic manifold $\Lambda_-^q$, for which we have
\begin{equation} \label{tangent}
T_{\rho_0} \Lambda_\pm^q = \Lambda_\pm^{0,q}.
\end{equation}
Since $\Lambda_\pm^{0,q}$ are $m$ dimensional, so are $\Lambda_\pm^q$. Our next result is the following
\begin{lemma}\label{transp2}
The manifolds $\Lambda_\pm^q$ are transverse to $\set{ x=0}$ and $\set{\xi = 0}$.
\end{lemma}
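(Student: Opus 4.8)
The plan is to reduce the assertion to a linear-algebra fact about the tangent spaces at $\rho_0$, and then to read it off from the explicit description of $\Lambda_\pm^{0,q}$ obtained above. Since the manifolds $\Lambda_\pm^q$ are constructed near $\rho_0=(0,0)$, and since $\rho_0$ lies in both $\{x=0\}$ and $\{\xi=0\}$, it is enough to check transversality to these two $m$-dimensional linear subspaces at the single point $\rho_0$. By a dimension count in $\C^{2m}$, transversality of $\Lambda_\pm^q$ to $\{x=0\}$ at $\rho_0$ (resp.\ to $\{\xi=0\}$) is equivalent to $T_{\rho_0}\Lambda_\pm^q\cap\{x=0\}=\{0\}$ (resp.\ $T_{\rho_0}\Lambda_\pm^q\cap\{\xi=0\}=\{0\}$). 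Using \eqref{tangent}, $T_{\rho_0}\Lambda_\pm^q=\Lambda_\pm^{0,q}$, so it suffices to prove that the Lagrangian subspaces $\Lambda_\pm^{0,q}$ meet $\{x=0\}$ and $\{\xi=0\}$ only at the origin.

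I would then transport this to the spaces $\Lambda_\pm^0$ attached to $p_0$. The coordinate subspaces $\{x=0\}$ and $\{\xi=0\}$ are both stable under the linear isomorphism $U\colon(x,\xi)\mapsto(x,-i\xi)$, and by \eqref{relationFpq} one has $\Lambda_\pm^{0,q}=U\Lambda_\pm^0$ (the $+$ case is the identity recorded just above the lemma; the $-$ case is obtained in the same way, the diagonal matrix in \eqref{relationFpq} exchanging the eigenspaces of $F_q$ and $F_p$ of matching sign). Hence $\Lambda_\pm^{0,q}\cap\{x=0\}=U(\Lambda_\pm^0\cap\{x=0\})$, and likewise for $\{\xi=0\}$, so the task is reduced to showing $\Lambda_\pm^0\cap\{x=0\}=\{0\}$ and $\Lambda_\pm^0\cap\{\xi=0\}=\{0\}$.

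For $\Lambda_+^0$ this is immediate from Lemma~\ref{lem.oxx0}: if a vector of the form $X=(0,\xi)$ or $X=(x,0)$ lies in $\Lambda_+^0$, then $\tfrac1i\omega(X,\overline X)=0$ by \eqref{eq.oxxi} and \eqref{xxbar}, whence $X=0$ by Lemma~\ref{lem.oxx0}; equivalently, Lemma~\ref{lem.B} says $\Lambda_+^0=\{(x,Lx):x\in\C^m\}$ is a graph over $x$ (which settles $\{x=0\}$) while $\Im L>0$ forces $L$ to be invertible (which settles $\{\xi=0\}$). For $\Lambda_-^0$ one may either use that $F_0$ is real, so $\Lambda_-^0=\overline{\Lambda_+^0}$ and consequently $\tfrac1i\omega(X,\overline X)=-\tfrac1i\omega(\overline X,X)<0$ for $X\in\Lambda_-^0\setminus\{0\}$ (applying Lemma~\ref{lem.oxx0} to $\overline X\in\Lambda_+^0$), and then run the same intersection argument; or else observe that Proposition~\ref{prop.quad} exhibits $\Lambda_-^0$ as a graph $\{(x,\overline Lx):x\in\C^m\}$ with $\overline L$ invertible since $\Im\overline L=-\Im L<0$. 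Combining these facts with the two reductions above yields Lemma~\ref{transp2}.

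I do not expect a genuine obstacle: the whole argument lives at the level of the linearized flow and its Lagrangian eigenspaces $\Lambda_\pm^0$. The only places needing a little care are the justification that transversality of $\Lambda_\pm^q$ is a pointwise condition at $\rho_0$ — so that \eqref{tangent}, coming from the stable manifold theorem, really does reduce everything to the quadratic picture — and the bookkeeping of the action of $U$ on the two coordinate subspaces.
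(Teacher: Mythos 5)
Your proof is correct and follows essentially the same route as the paper's: reduce to the tangent spaces at $\rho_0$ via \eqref{tangent}, conjugate by $U$ to pass to $\Lambda_\pm^0$, and invoke Lemma \ref{transp} for $\set{x=0}$ and the positivity $\Im L>0$ (equivalently Lemma \ref{lem.oxx0}) for $\set{\xi=0}$. The paper's own proof is a one-line citation of Lemma \ref{transp} and \eqref{tangent}; you have simply written out the details it leaves implicit, including the $\set{\xi=0}$ and $\Lambda_-$ cases.
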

\begin{proof}
The first result is a direct consequence of Lemma \ref{transp} for $p_0$ and \eqref{tangent}.
\end{proof}
Now we can solve the eikonal equation, near $q_{0}$, in the general case.
\begin{proposition} There exists a holomorphic function $\Phi$ such that
$$\Lambda_+^q = \set{\xi = \nabla \Phi(x)}.$$ 
In addition $\Phi$ solves (locally) the eikonal equation $q(x, \nabla \Phi(x)) = 0$ and $\Lambda_-^q= \set{\xi = - \overline{\nabla\Phi (\overline{x})}}$, and $\Re \Hess \Phi(\rho_0)$  is positive definite.
\end{proposition}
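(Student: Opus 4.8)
The plan is to read $\Phi$ off the geometry of $\Lambda_+^q$, using three facts already established: $\Lambda_+^q$ is a holomorphic $m$-dimensional manifold (complex stable manifold theorem), it is Lagrangian for $\omega$, and by Lemma \ref{transp2} it is transverse to $\set{x=0}$. Since $\rho_0=(0,0)\in\Lambda_+^q$, transversality to $\set{x=0}$ means that near $\rho_0$ this manifold is a graph, $\Lambda_+^q=\set{(x,g(x)):x\in U}$ for some holomorphic $g:U\to\C^m$ with $U$ a ball around $0$ and $g(0)=0$. That $\Lambda_+^q$ is Lagrangian translates, after pulling back $\omega=\sum_j\dx\xi_j\wedge\dx x_j$ by $x\mapsto(x,g(x))$, into $\dx\big(\sum_j g_j(x)\,\dx x_j\big)=0$; on the simply connected ball $U$ this closed holomorphic $1$-form is exact, so there is a holomorphic $\Phi$ on $U$, normalized by $\Phi(0)=0$, with $\nabla\Phi=g$. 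This already yields $\Lambda_+^q=\set{\xi=\nabla\Phi(x)}$.

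For the eikonal equation I would use that $H_q$ is tangent to $\Lambda_+^q$ (an invariant manifold of the flow), so that for any $v\in T\Lambda_+^q$ one has $\dx q(v)=\omega(H_q,v)=0$ because $\Lambda_+^q$ is Lagrangian; hence $q$ is locally constant on $\Lambda_+^q$, equal to $q(\rho_0)=-p(0,0)=0$, i.e.\ $q(x,\nabla\Phi(x))=0$ on $U$. For the positivity of $\Re\Hess\Phi(\rho_0)$, I would linearize the graph parametrization at $0$: its differential has range $\set{(x,\Hess\Phi(\rho_0)x):x\in\C^m}$, which by \eqref{tangent} equals $T_{\rho_0}\Lambda_+^q=\Lambda_+^{0,q}=U\Lambda_+^0=\set{(x,-iLx):x\in\C^m}$, with $L$ as in Lemma \ref{lem.B}. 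Hence $\Hess\Phi(\rho_0)=-iL$ and $\Re\Hess\Phi(\rho_0)=\Im L$, which is positive definite by Lemma \ref{lem.B}.

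It remains to identify $\Lambda_-^q$. I would set $\Psi(x)=-\overline{\Phi(\overline x)}$, which is holomorphic with $\nabla\Psi(x)=-\overline{\nabla\Phi(\overline x)}$, so that $\set{\xi=\nabla\Psi(x)}=\set{\xi=-\overline{\nabla\Phi(\overline x)}}$. Since $p$ is real analytic, $\overline{p(\overline z)}=p(z)$, and from $q(x,\xi)=-p(x,i\xi)$ one gets the reflection identity $q(\overline x,-\overline\xi)=\overline{q(x,\xi)}$; together with the eikonal equation $q(\,\cdot\,,\nabla\Phi(\,\cdot\,))\equiv0$ this gives $q(x,\nabla\Psi(x))=q\big(x,-\overline{\nabla\Phi(\overline x)}\big)=\overline{q(\overline x,\nabla\Phi(\overline x))}=0$. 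Thus $\set{\xi=\nabla\Psi(x)}$ is again a gradient graph, hence Lagrangian, on which $q$ vanishes; as above $H_q$ is tangent to it, so it is a holomorphic $m$-dimensional invariant manifold through $\rho_0$, tangent there to $\set{(x,\Hess\Psi(\rho_0)x)}=\set{(x,-i\overline L x)}=\Lambda_-^{0,q}=U\Lambda_-^0$ (using $\Hess\Psi(\rho_0)=-\overline{\Hess\Phi(\rho_0)}=-i\overline L$). By uniqueness in the stable manifold theorem it must coincide with $\Lambda_-^q$ near $\rho_0$, which is the last assertion.

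I expect the only step needing genuine care to be the reflection symmetry for $\Lambda_-^q$: one has to be careful about which objects are holomorphic and which antiholomorphic, check that $(x,\xi)\mapsto(\overline x,-\overline\xi)$ is anti-symplectic and intertwines $q$ with $\overline q$, and then conclude by uniqueness of the stable manifold rather than by chasing individual trajectories. The remaining ingredients — a Lagrangian graph is a gradient graph, a first integral is constant on an invariant Lagrangian, and the linearization computing $\Hess\Phi(\rho_0)$ — are routine.
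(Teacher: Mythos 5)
Your proof is correct and follows essentially the same route as the paper's: Lagrangian graph over $\{x=0\}$ $\Rightarrow$ gradient graph, identification of $\Hess\Phi(\rho_0)$ with $-iL$ via the tangent space $\Lambda_+^{0,q}$, conservation of $q$ on the invariant Lagrangian for the eikonal equation, and the reflection $(x,\xi)\mapsto(\overline{x},-\overline{\xi})$ together with the stable manifold theorem for $\Lambda_-^q$. The only (harmless) variations are that you get $q\equiv 0$ on $\Lambda_+^q$ from $\dx q=\omega(H_q,\cdot)$ vanishing on the tangent spaces rather than by following a characteristic back to $\rho_0$, and that you conclude the identification of $\Lambda_-^q$ by uniqueness of the stable manifold rather than by the paper's convergence-plus-dimension count.
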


\begin{proof} The existence of $\Phi$ is a consequence of the fact that $\Lambda_+^q \cap \set{x=0} = \set{0}$ and that $\Lambda_+^q$ is Lagrangian. 
We first recall that at the linearization level (see Proposition \ref{prop.quad}),
we have
$$
\Lambda_+^{0,q} = \big\{ \xi =  \nabla \Phi_0(x)\big\},
$$
where
 $ \nabla \Phi_0(x) = -i Lx $ is exactly the linear part of the expansion of $\nabla \Phi(x)$ at $\rho_0$ and $\Re \Hess \Phi(\rho_0)$ is a real positive definite quadratic form.

Now for $x$ in a neighborhood of $\rho_0$ in $ \C^m$, and looking at the characteristic $X(t)$  of $H_q$ starting at  $X_0 =(x, \nabla\Phi(x))$, we get that
$$
q(x, \nabla \Phi(x)) = q(X_0) = \lim_{t\to -\infty} q(X(t)) = q(\rho_0) = 0,
$$
where we have used that $q$ is constant along the characteristics of $H_q$, \eqref{limZ} and the stable manifold theorem. The phase $\Phi$ therefore solves the eikonal equation. For the description of $\Lambda_-^q$, we first  check that for all $x$ in a neighboorhood  of $\rho_0$ in $\C^m$, $\overline{x}$ also belongs to  a  neighborhood of $\rho_0$ and therefore $q(\overline{x}, \nabla \Phi(\overline{x}))= 0$. Taking the complex conjugate and using that $p$ is real analytic we get
\begin{equation}\label{eq.qbar}
0  = \overline{ q(\overline{x}, \nabla \Phi(\overline{x}))} = \overline{ -p(\overline{x}, i\nabla \Phi(\overline{x}))} =
-p(x, -i \overline{\nabla \Phi(\overline{x})}) = q(x, - \overline{\nabla \Phi(\overline{x})})
\end{equation}
which gives that $q$ is zero on $\big\{ \xi = - \overline{\nabla \Phi(\overline{x})}\big\}$. This holomorphic manifold is of dimension $m$, is clearly Lagrangian and we only have to check that it coincides with $\Lambda_-^q$, with tangent space $\Lambda_{-}^{0,q}$. For this,  it  is sufficient to check that $(x,- \overline{\nabla \Phi(\overline{x})}) \in \Lambda_-^q$. If we look at the solution of $
X'(t) = H_q(X(t))$ with initial condition $X_0 = (x,- \overline{\nabla \Phi(\overline{x})})$, we get, by the stable manifold theorem, that $\lim_{t \to +\infty} X(t)= \rho_0$. From dimensional considerations, we get that
$\big\{  \xi = - \overline{\nabla \Phi(\overline{x})}\big\} = \Lambda_-^q$.
\end{proof}

\subsection{WKB expansions}
We assume that $A_{2}=0$ in \eqref{Lfh} (note that this was not the case in the previous subsection).
Before starting our magnetic WKB analysis we center the operator $\mathfrak{L}_{h}$ at $x_{0}$ and perform a change of gauge in order to center the phase at $\xi_{0}$. This means that we rather consider
$$\Lc_{h} =D_t^2 +(hD_{s}+A^\natural)^2,\qquad A^\natural(s,t)=\xi_{0}+A_{1}(x_{0}+s,t).$$
In order to lighten the notation, we introduce
$$\Mc^\natural_{x,\xi}=\Mc_{x+x_{0},\xi+\xi_{0}},\qquad u^\natural_{x,\xi}=u_{x+x_{0},\xi+\xi_{0}},\qquad \mu^\natural(x,\xi)=\mun(x+x_{0},\xi+\xi_{0}).$$
We always have
$$\left(\Mc^\natural_{x,\xi}\right)^*=\Mc^\natural_{x,\overline{\xi}},\qquad \forall (x,\xi)\in\R^m\times\C^m$$
and the assumption $A_{2}=0$ implies the fundamental property:
\begin{equation}\label{barbar}
\overline{u^\natural_{x,\xi}}=u^\natural_{x,\overline{\xi}},\qquad \forall (x,\xi)\in\R^m\times\C^m.
\end{equation}
We conjugate $\Lc_{h}$ via a weight function $\Phi=\Phi(s)$ and define
\begin{eqnarray*}
\Lc_{\Phi}&=& \re^{\Phi(s)/h}\ \Lc_{h}\ \re^{-\Phi(s)/h}\\
&=&D^2_{t}+(hD_{s}+i\nabla\Phi+A^\natural)^2\\
&=& \Lc_{0}+h \Lc_{1}+h^2 \Lc_{2},
\end{eqnarray*}
with
\begin{eqnarray*}
\hspace{-0.7cm}&&\Lc_{0}=D_{t}^2+(i\nabla\Phi+A^\natural)^2=\Mc^\natural_{s,i\nabla\Phi(s)},\\
\hspace{-0.7cm}&&\Lc_{1}=D_{s}\cdot(i\nabla\Phi+A^\natural)+(i\nabla\Phi+A^\natural)\cdot D_{s}
=\tfrac  1 2\Big(D_{s}\cdot(\nabla_{\xi}\Mc^\natural)_{s,i\nabla\Phi(s)}+(\nabla_{\xi}\Mc^\natural)_{s,i\nabla\Phi(s)}\cdot D_{s}\Big),\\
\hspace{-0.7cm}&&\Lc_{2}=D_{s}^2\Phi,
\end{eqnarray*}
where we have used
\begin{equation}\label{dxiM}
(\nabla_{\xi}\Mc^\natural)_{s,i\nabla\Phi(s)}=2(i\nabla\Phi(s)+A^\natural).
\end{equation}
We now look for a formal solution on the form
$$\lambda\sim \sum_{j\geq 0}\lambda_{j} h^{j}, \qquad
\an \sim \sum_{j\geq 0}\an_{j} h^j$$
such that $\Lc_{\Phi}\an = \lambda\an$.
We cancel each power of $h$ step by step.

\subsubsection{Term in $h^0$: Solving the operator valued eikonal equation}
We have to find $(\lambda_{0},\an_{0})$ such that
$$\Lc_{0}\an_{0}=\lambda_{0}\an_{0}.$$
According to Theorem \ref{theorem-simple-well}, we must choose
$$\lambda_{0}=\mu_{0}.$$
Thus we have to find $\an_{0}$ such that
\begin{equation}\label{h0}
\Lc_{0}\an_{0}=\mu_{0}\an_{0},
\end{equation}
that is to say
$$\Mc^\natural_{s,i\nabla\Phi(s)}\an_{0}=\mu_{0}\an_{0}.$$
To solve \eqref{h0}, we choose $\an_{0}$ in the form
\begin{equation}\label{eq.a0}
\an_{0}(s,t)=u^\natural_{s,i\nabla\Phi(s)}(t) \bn_{0}(s),
\end{equation}
where $\bn_{0}$ has to be determined and $\Phi$ is the solution constructed in Section \ref{Sec.eikonale} of the following  eikonal equation
$$\mun^\natural(s,i\nabla_{s}\Phi)=\mu_{0}.$$

\subsubsection{Term in $h^1$}
Collecting the terms in $h^1$, we obtain the first transport equation
$$(\Lc_{0}-\mu_{0})\an_{1}=-(\Lc_{1}-\lambda_{1})\an_{0}.$$
Pointwise in $s$, the Fredholm compatibility condition can be written as
\begin{equation}\label{eq.opvaltrans}
(\lambda_{1}-\Lc_{1})\an_{0}\in ({\rm Ker}(\Lc_{0}-\mu_{0})^*)^\perp= ({\rm Ker}({\Lc_{0}}^*-\mu_{0}))^\perp.
\end{equation}
\begin{lemma}\label{lem.transporteq}
The condition \eqref{eq.opvaltrans} is equivalent to 
$$
\mathscr{T} b_0 = \lambda_1 b_0,
$$
where 
$$
\mathscr{T} = \tfrac{1}{2}\left(\nabla_{\xi}\mun^\natural\cdot D_{s}+D_{s}\cdot\nabla_{\xi}\mun^\natural\right).
$$
\end{lemma}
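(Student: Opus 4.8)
The plan is to turn the pointwise-in-$s$ Fredholm condition \eqref{eq.opvaltrans} into a scalar transport equation for $b_{0}$, working locally near $s=0$. The first task is to pin down the one-dimensional space $\mathrm{Ker}(\Lc_{0}^{*}-\mu_{0})$. Since $\Lc_{0}=\Mc^{\natural}_{s,i\nabla\Phi(s)}$ and $(\Mc^{\natural}_{x,\xi})^{*}=\Mc^{\natural}_{x,\overline{\xi}}$, we have $\Lc_{0}^{*}=\Mc^{\natural}_{s,\overline{i\nabla\Phi(s)}}$, whose lowest eigenvalue is $\mun^{\natural}(s,\overline{i\nabla\Phi(s)})$ with eigenfunction $u^{\natural}_{s,\overline{i\nabla\Phi(s)}}$. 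Real analyticity of $\mun^{\natural}$ and the eikonal equation $\mun^{\natural}(s,i\nabla\Phi(s))=\mu_{0}$ give $\mun^{\natural}(s,\overline{i\nabla\Phi(s)})=\overline{\mun^{\natural}(s,i\nabla\Phi(s))}=\mu_{0}$, while \eqref{barbar} gives $u^{\natural}_{s,\overline{i\nabla\Phi(s)}}=\overline{u^{\natural}_{s,i\nabla\Phi(s)}}$. Writing $v:=u^{\natural}_{s,i\nabla\Phi(s)}$ and invoking the simplicity from Assumption~\ref{hyp-gen}, we conclude $\mathrm{Ker}(\Lc_{0}^{*}-\mu_{0})=\C\,\overline{v}$ (consistently with the description of $\Lambda_{-}^{q}$ in Section~\ref{Sec.eikonale}, cf.~\eqref{eq.qbar}).

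Consequently \eqref{eq.opvaltrans} means $\langle(\lambda_{1}-\Lc_{1})\an_{0},\overline{v}\rangle_{\sL^{2}(\R^{n},\dx t)}=0$; since $\an_{0}=v\,b_{0}$ and $\langle f,\overline{v}\rangle_{\sL^{2}(\R^{n},\dx t)}=\int_{\R^{n}}f\,v\dx t$, this reads
\[
\lambda_{1}\,b_{0}(s)\int_{\R^{n}}v^{2}\dx t=\int_{\R^{n}}\Lc_{1}(v\,b_{0})\,v\dx t.
\]
Here one uses the normalization \eqref{u^2}: for real $s$, combined with \eqref{barbar}, it gives $\int_{\R^{n}}v^{2}\dx t=1$, so the condition becomes $\lambda_{1}b_{0}=\int_{\R^{n}}\Lc_{1}(v\,b_{0})\,v\dx t$.

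It then remains to compute the right-hand side. Setting $W:=(\nabla_{\xi}\Mc^{\natural})_{s,i\nabla\Phi(s)}=2(i\nabla\Phi+A^{\natural})$ (see \eqref{dxiM}), so that $\Lc_{1}=\tfrac12(D_{s}\cdot W+W\cdot D_{s})$, and expanding the $s$-derivatives by Leibniz (no integration by parts in $t$ is needed, as $\Lc_{1}$ differentiates only in $s$), one gets
\[
\int_{\R^{n}}\Lc_{1}(v\,b_{0})\,v\dx t=-i\sum_{k}\Big(\int_{\R^{n}}W_{k}v^{2}\dx t\Big)\dr_{s_{k}}b_{0}-\frac{i}{2}\sum_{k}b_{0}\int_{\R^{n}}\big((\dr_{s_{k}}W_{k})v^{2}+2W_{k}(\dr_{s_{k}}v)v\big)\dx t.
\]
By the Feynman--Hellmann formula \eqref{FH2bis} together with \eqref{barbar} one has $\int_{\R^{n}}W_{k}v^{2}\dx t=(\dr_{\xi_{k}}\mun^{\natural})(s,i\nabla\Phi(s))$; differentiating this scalar identity in $s_{k}$ identifies the bracket in the second sum as $\dr_{s_{k}}\big[(\dr_{\xi_{k}}\mun^{\natural})(s,i\nabla\Phi(s))\big]$. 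On the other hand, expanding $\mathscr{T}b_{0}=\tfrac12(\nabla_{\xi}\mun^{\natural}\cdot D_{s}+D_{s}\cdot\nabla_{\xi}\mun^{\natural})b_{0}$ by the same Leibniz rule produces exactly the same two terms. Hence $\int_{\R^{n}}\Lc_{1}(v\,b_{0})\,v\dx t=\mathscr{T}b_{0}$, and \eqref{eq.opvaltrans} is equivalent to $\mathscr{T}b_{0}=\lambda_{1}b_{0}$.

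The delicate points are the precise identification of $\mathrm{Ker}(\Lc_{0}^{*}-\mu_{0})$, which genuinely relies on the reality relation \eqref{barbar} and on the conjugate branch of the eikonal solution constructed in Section~\ref{Sec.eikonale}, and the bookkeeping showing that the $s$-derivative of the scalar Feynman--Hellmann identity reproduces precisely the ``divergence'' term of $\mathscr{T}$; the remaining manipulations are routine.
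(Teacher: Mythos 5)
Your proof is correct and follows essentially the same route as the paper: identify $\mathrm{Ker}(\Lc_{0}^{*}-\mu_{0})$ via \eqref{barbar}, reduce the pairing to $\int\Lc_{1}(v b_{0})v\dx t$ using the normalization \eqref{u^2}, and match it with $\mathscr{T}b_{0}$ through the Feynman--Hellmann identity \eqref{dximu1} and its $s$-derivative. The only cosmetic difference is the direction of the final computation (you expand $\Lc_{1}$ by Leibniz and then invoke the differentiated Feynman--Hellmann identity, whereas the paper differentiates \eqref{dximu1} multiplied by $b_{0}$ and then recognizes $\Lc_{1}$), which yields the same identity.
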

\begin{proof}
From Assumption \ref{hyp-gen}, we have ${\rm Ker}({\Lc_{0}}^*-\mu_{0})=\spann(u^\natural_{s,-i\nabla\overline{\Phi}(s)})$, so that, with \eqref{eq.a0}, the compatibility condition is equivalent to
\begin{equation*}
\lambda_{1}\left\langle u^\natural_{s,i\nabla\Phi(s)}\bn_{0}(s),u^\natural_{s,-i\nabla\overline{\Phi}(s)}\right\rangle_{\sL^2(\R^m,\dx t)}=
\left\langle\Lc_{1}u^\natural_{s,i\nabla\Phi(s)}\bn_{0}(s),u^\natural_{s,-i\nabla \overline\Phi(s)}\right\rangle_{\sL^2(\R^m,\dx t)}, \ \forall s\in\R^m.
\end{equation*}
By \eqref{u^2}, we have
\begin{equation}
\int_{\R^n} u^\natural_{s,i\nabla\Phi(s)}(t)\overline{u^\natural_{s,-i\nabla\overline{\Phi}(s)}(t)}dt=1,\qquad \forall s\in\R^m.
\end{equation}
Let us rewrite the first term:
\begin{eqnarray} \label{simplb1}
\lambda_{1}\left\langle u^\natural_{s,i\nabla\Phi(s)}\bn_{0}(s),u^\natural_{s,-i\nabla \overline\Phi(s)}\right\rangle_{\sL^2(\R^m,\dx t)}
&=&\lambda_{1}\bn_{0}(s).
\end{eqnarray}
Let us deal with the second term.
Differentiating the relation $\Mc^\natural_{x,\xi}u^\natural_{x,\xi}=\mun^\natural(x,\xi) u^\natural_{x,\xi}$ with respect to (the complex variable) $\xi$ leads to (see Proposition \ref{FH})
\begin{equation}\label{eq.derMxi}
\Big(\Mc^\natural_{x,\xi}-\mun^\natural(x,\xi)\Big) \nabla_{\xi} u^\natural_{x,\xi}=\Big(\nabla_{\xi}\mun^\natural(x,\xi)-\nabla_{\xi}\Mc^\natural_{x,\xi}\Big)u^\natural_{x,\xi}.
\end{equation}
The Fredholm condition associated with \eqref{eq.derMxi} can be written as
$$\langle(\nabla_{\xi}\Mc^\natural_{x,\xi}-\nabla_{\xi}\mun^\natural(x,\xi))u^\natural_{x,\xi},u^\natural_{x,\overline\xi}\rangle_{\sL^2(\R^n,\dx t)}=0.$$
Consequently, taking $\xi=i\nabla\Phi(s)$ and using \eqref{barbar}, we get
\begin{equation}\label{dximu1}
\nabla_{\xi}\mun^\natural(s,i\nabla\Phi(s))=\int_{\R^n}(\nabla_{\xi}\Mc^\natural)_{s,i\nabla\Phi(s)}\ u^\natural_{s,i\nabla\Phi(s)}(t)u^\natural_{s,i\nabla\Phi(s)}(t)\dx t.
\end{equation}
Multiplying by $b_{0}$ and differentiating with respect to $s$, we infer
\begin{align*}
D_{s}\cdot(b_{0}(s)\nabla_{\xi}\mun^\natural(s,i\nabla\Phi(s)))=
&\int_{\R^n} D_{s}\cdot\left((\nabla_{\xi}\Mc^\natural)_{s,i\nabla\Phi(s)} b_{0}(s) u^\natural_{s,i\nabla\Phi(s)}(t)\right)u^\natural_{s,i\nabla\Phi(s)}(t)\dx t\\
&+\int_{\R^n} (\nabla_{\xi}\Mc^\natural)_{s,i\nabla\Phi(s)} b_{0}(s) u^\natural_{s,i\nabla\Phi(s)}(t)\cdot D_{s}u^\natural_{s,i\nabla\Phi(s)}(t)\dx t\\
=&\int_{\R^n} D_{s}\cdot\left((\nabla_{\xi}\Mc^\natural)_{s,i\nabla\Phi(s)} b_{0}(s) u^\natural_{s,i\nabla\Phi(s)}(t)\right)u^\natural_{s,i\nabla\Phi(s)}(t)\dx t\\
&+\int_{\R^n} (\nabla_{\xi}\Mc^\natural)_{s,i\nabla\Phi(s)}  u^\natural_{s,i\nabla\Phi(s)}(t) \cdot D_{s} \big(b_{0}(s)u^\natural_{s,i\nabla\Phi(s)}(t)\big)\dx t\\
&-D_{s}b_{0}(s)\cdot\int_{\R^n} (\nabla_{\xi}\Mc^\natural)_{s,i\nabla\Phi(s)}  u^\natural_{s,i\nabla\Phi(s)}(t) u^\natural_{s,i\nabla\Phi(s)}(t)\dx t,
\end{align*}
where we have used $b_{0}D_{s}u=D_{s}(b_{0}u)-u D_{s}b_{0}$. We recognize the expression of $\Lc_{1}$ and using \eqref{dximu1}, this yields
\begin{multline*}
\langle\Lc_{1}u^\natural_{s,i\nabla\Phi(s)}\bn_{0}(s),u^\natural_{s,-i\nabla\overline \Phi(s)}\rangle_{\sL^2(\R^m,\dx t)}\\
 =\tfrac{1}{2}\Big(\nabla_{\xi}\mun^\natural(s,i\nabla\Phi(s))\cdot D_{s}b_{0}(s)+D_{s}\cdot\Big(\nabla_{\xi}\mun^\natural(s,i\nabla\Phi(s))b_{0}(s)\Big)\Big).
\end{multline*}
Combining this last relation with \eqref{simplb1}, the compatibility condition gives the effective transport equation
$$\tfrac{1}{2}\left(\nabla_{\xi}\mun^\natural\cdot D_{s}+D_{s}\cdot\nabla_{\xi}\mun^\natural\right)b_{0}=\lambda_{1}b_{0},$$
where $\mun^\natural$ stands for $\mun^\natural(s,i\nabla \Phi(s))$ for short.
\end{proof}

\subsubsection{Solving the effective transport equation}
We recall now some notations from Section \ref{Sec.eikonale} in order to solve the standard transport equation given in Lemma \ref{eq.opvaltrans}.
We let
$$
q(s,\xi)=\mu_{0} -\mun^\natural(s,i\xi)
$$
so that with this notation
\begin{equation} \label{maintransport}
\mathscr{T}(s) = \tfrac{1}{2}\left(\nabla_{\xi}q (s,\nabla \Phi(s)) \cdot \nabla_{s}+\nabla_{s}\cdot\nabla_{\xi} q (s,\nabla \Phi(s)) \right).
\end{equation}
We let
$$
{\mathsf v}(s)
= \tfrac{1}{2i} \nabla_s\cdot (\nabla_\xi \mun^\natural{(s,i\nabla\Phi(x))}) =
\tfrac{1}{2} \nabla_s\cdot (\nabla_\xi q (s,\nabla \Phi(s)).
$$
Then the transport $\mathscr{T}$ is exactly the  projection on $T \C^m_s$ of the Hamiltonian of $q(s,\xi)$ at the point $(s,\nabla \Phi(s)) \in \Lambda^q_+$,
$$
\mathscr{T}(s) = \pi_s (H_q (s,\nabla\Phi(s)))+\mathsf{v}(s),
$$
which reads in suitable coordinates
$$
\mathscr{T}= \sum_{j=1}^m \vartheta_j y_j \nabla_{y_j} + \Oc(y^2)\nabla_{y_j} + {\mathsf v},
$$
where $(y,\eta) = (\kappa(s), ^t\dx\kappa(s)^{-1}\xi)$ is the corresponding symplectic change of variable (the existence of $\kappa$ is justified by the transversality of $\Lambda_+^q$ with $\{x=0\}$ stated in Lemma \ref{transp2}).

Using again this change of coordinates, we also directly get that
$$
{\mathsf v}(s) = \sum_j \tfrac{1}{2} \vartheta_j +  \Oc(s).
$$
We are  reduced to the study of a standard transport equation (see e.g. \cite{DiSj99})
and we can at least formally solve the equation $\mathscr{T} b_0 = \lambda_1 b_0$ in the space of formal series first, provided that
$$
\lambda_1 \in \Big\{ \sum_{j=1}^m (\tfrac{1}{2}+ \alpha_j)\vartheta_{j}, \quad \alpha \in \N^m\Big\}.
$$
By Remark~\ref{rem.transp}, these values are exactly the eigenvalues of $\frac{1}{2}\Hess\,\mun(x_{0},\xi_{0})(\sigma, D_{\sigma})$, with in particular
$\tfrac{1}{2} \sum_j \vartheta_j$ for the smallest one and recall that they were supposed to be simple by
Assumption \ref{hyp-gen'}. This is of course coherent with the statement of Theorem \ref{theorem-simple-well}. Following again e.g. \cite{DiSj99}, we can also solve this first transport equation in $\ccc^\infty $ in a neighborhood of $0$ and so determine $b_{0}$.

\subsubsection{Determination of $a_{1}$}
Let us now take $\alpha \in \N^m$ fixed and let $b_{0}$ be the corresponding solution of the transport equation. Let us come back to
$$(\Lc_{0}-\mu_{0})\an_{1}=-(\Lc_{1}-\lambda_{1})\an_{0},$$
where $\an_{0}$ is given in \eqref{eq.a0}. We look at a function $\an_{1}$ not necessary given in a tensor form, but with an
orthogonal decomposition in the form
\begin{equation} \label{splita1}
\an_{1}{(s,t)} = u^\natural_{s,i\nabla\Phi(s)}(t) \bn_{1}(s) + \tilde{\an}_{1} (s,t),
\end{equation}
with
$$
\tilde{\an}_1 \in ({\rm Ker}(\Lc_{0}-\mu_{0}))^\perp.
$$
From the transport equation, we see that we can directly find
$$
\tilde{\an}_{1} = -(\Lc_{0}-\mu_{0})^{-1} (\Lc_{1}-\lambda_{1})\an_{0},
$$
where by operator $(\Lc_{0}-\mu_{0})^{-1}$ we mean the inverse of the operator
$$
(\Lc_{0}-\mu_{0}) : \Dom(\Lc_{0}-\mu_{0}) \cap({\rm Ker}(\Lc_{0}-\mu_{0}))^\perp \longrightarrow ({\rm Ker}({\Lc_{0}}^*-\mu_{0}))^\perp
$$
where $\Dom(\Lc_{0}-\mu_{0})$ is the domain in $\sL^2(\R^m,\dx t)$ of operator $\Lc_{0}-\mu_{0}$, pointwise in $s$. Note that $a_0(s, .)$ belongs to $\Dom(\Lc_{0}-\mu_{0})$ because of the properties of $u^\natural$, and that we indeed get that $\tilde{\an}_1$ is smooth by elliptic regularity.

To summarize, at this point, and whatever $b_1$ is, we have been able to find
a  function
$$
\an^{[1]}(s,t) = \an_0(s,t) + h \an_1(s,t)
$$
such that
$$
(\Lc_h - \mu_0 -h\lambda_{1}) \an^{[1]} \re^{-\Phi/h} = r^{[1]}   \re^{-\Phi/h}.
$$
Furthermore, there exist a neighborhood $\mathcal{V}$ of $0\in\R^m$ and for all $\alpha\in\N^m, \beta \in \N^{n}$, a constant $C>0$ such that
$$|\D_{s}^\alpha\D_{t}^\beta r^{[1]}(s,t)|\leq Ch^2,\quad\forall s\in\mathcal{V}, \forall t\in\R^n.$$

\subsubsection{Full asymptotic expansion}
The first step is to find
the function $\bn_1$ built in the previous section. For this we look at the next
transport equation, which reads
$$
(\Lc_{0}-\mu_{0})\an_{2}=-(\Lc_{1}-\lambda_{1})\an_{1} - (\Lc_{2}- \lambda_2) \an_0.$$
We look at the compatibility Fredholm condition
which gives, pointwise in $s$,
$$
(\Lc_{1}-\lambda_{1})\an_{1} + (\Lc_2 -\lambda_2) \an_0\in ({\rm Ker}(\Lc_{0}-\mu_{0})^*)^\perp.
$$
This condition is equivalent to
\begin{equation*}
\begin{split}
& \left\langle (\Lc_1-\lambda_{1}) \an_1(s),u^\natural_{s,-i\nabla\overline{\Phi}(s)}
\right\rangle_{\sL^2(\R^m,\dx t)} \\ & =
- \left\langle \Lc_2 a_0(s) ,u^\natural_{s,-i\nabla\overline{\Phi}(s)}\right\rangle_{\sL^2(\R^m,\dx t)}  +
\lambda_2 \left\langle  a_0(s) ,u^\natural_{s,-i\nabla\overline{\Phi}(s)}
\right\rangle_{\sL^2(\R^m,\dx t)} \\
& =  - \left\langle \Lc_2 a_0(s) ,u^\natural_{s,-i\nabla\overline{\Phi}(s)}\right\rangle_{\sL^2(\R^m,\dx t)}  +
\lambda_2 b_0(s), \qquad\qquad \forall s\in\R^m,
\end{split}
\end{equation*}
where we used (\ref{simplb1}). Using the splitting \eqref{splita1} and the expression for $\an_0$ in \eqref{eq.a0}
we get that for all $s \in \R^m$,
 \begin{equation*}
\begin{split}
& \left\langle (\Lc_1-\lambda_{1}) u^\natural_{s,i\nabla\Phi(s)}\bn_{1}(s),u^\natural_{s,-i\nabla\overline{\Phi}(s)}
\right\rangle_{\sL^2(\R^m,\dx t)} \\
& = - \left\langle \Lc_2 a_0(s) ,u^\natural_{s,-i\nabla\overline{\Phi}(s)}\right\rangle_{\sL^2(\R^m,\dx t)}  +
\lambda_2 \left\langle  a_0(s) ,u^\natural_{s,-i\nabla\overline{\Phi}(s)}
\right\rangle_{\sL^2(\R^m,\dx t)} \\
 & \qquad \qquad \qquad \qquad \qquad \qquad \qquad \qquad \qquad + \left\langle (\Lc_1-\lambda_{1}) \tilde{\an}_1(s),u^\natural_{s,-i\nabla\overline{\Phi}(s)}\right\rangle_{\sL^2(\R^m,\dx t)} \\
& =  - \left\langle \Lc_2 a_0(s) ,u^\natural_{s,-i\nabla\overline{\Phi}(s)}\right\rangle_{\sL^2(\R^m,\dx t)}  
+ \lambda_2 b_0(s) 
+ \left\langle (\Lc_1-\lambda_{1}) \tilde{\an}_1(s),u^\natural_{s,-i\nabla\overline{\Phi}(s)}
\right\rangle_{\sL^2(\R^m,\dx t)}.
\end{split}
\end{equation*}
 Using the definition of the reduced transport introduced in (\ref{maintransport}), this equation reads
 $$
 \mathscr{T} b_1 = \lambda_{1} b_1 + \lambda_2 b_0+ R_1,
 $$
 where $\lambda_2$ has to be determined and $R_1$ is already known and defined by 
 $$
 R_1(s) = - \left\langle \Lc_2 a_0(s) ,u^\natural_{s,-i\nabla\overline{\Phi}(s)}\right\rangle_{\sL^2(\R^m,\dx t)} 
 + \left\langle (\Lc_1-\lambda_{1}) \tilde{\an}_1(s),u^\natural_{s,-i\nabla\overline{\Phi}(s)}\right\rangle_{\sL^2(\R^m,\dx t)}.
$$
Using again the theory of formal series, this completely determines the value of $\lambda_2$ as well as the Taylor expansion of $b_{1}$. Now, exactly as we did for $b_0$ in the previous section, we can also solve in $\ccc^\infty$ the transport equation on $b_1$.

We can look for the next  function $\an_{2}$ in the form
\begin{equation} \label{splita1bis}
\an_{2}(s,t) = u^\natural_{s,i\nabla\Phi(s)}(t) \bn_{2}(s) + \tilde{\an}_{2} (s,t)
\end{equation}
with
$$
\tilde{\an}_2 \in ({\rm Ker}(\Lc_{0}-\mu_{0}))^\perp.
$$
From the transport equation, we see that we can directly find
$$
\tilde{\an}_{2} = -(\Lc_{0}-\mu_{0})^{-1} (\Lc_{1}-\lambda_{1})\an_{1}.
$$
At this stage, whatever $b_2$ is, we have found a function
$$
\an^{[2]}(s,t) = \an_0(s,t) + h \an_1(s,t) + h^2\an_2(s,t)
$$
such that
$$
(\Lc_h - \mu_0 -h\lambda_{1}-h^2 \lambda_2) \an^{[2]} \re^{-\Phi/h} = r^{[2]}   \re^{-\Phi/h}
$$
where, for all $\alpha\in\N^m, \beta \in \N^{n}$,
$$|\D^\alpha_{s}\D^\beta_{t} r^{[2]}| = \Oc(h^3), \quad\mbox{ locally in } s.$$
The same procedure can be continued at any order and we are able to find a full family of functions $\an_{j }$ and $\lambda_{j}$ such that
the formal series
$$
\an \sim \sum_{j\geq 0}\an_{j} h^j \qquad \lambda\sim \sum_{j\geq 0}\lambda_{j} h^{j},
$$
solve the equation
$$(\Lc_h -\lambda)\an \re^{-\Phi/h} = \Oc( h^{\infty}) \re^{-\Phi/h}.$$
Then we consider a truncated (with respect to $s$) $\mathcal{C}^\infty$ realization of $\an$ and we apply the spectral theorem. Using Theorem \ref{theorem-simple-well}, this completes the proof of Theorem \ref{WKB-general}.

\section{Generalized Montgomery operators}\label{Sec:Mont}
We focus on the operator
$$\mathfrak{L}_{h}^{[k]}=D_{t}^2+\left(hD_{s}-\gamma(s)\frac{t^{k+1}}{k+1}\right)^2$$
obtained in (\ref{defgM}) after the rescaling described there. We denote by
$\lambda_n^{[k]}(h)$ its $n$-th eigenvalue if it exists. 

\subsection{Verifying assumptions}\label{S:verify-assumptions}
The aim of this section is to prove that the operator $\mathfrak{L}_{h}^{[k]}$
 fulfills the assumptions mentioned in Section~\ref{sec.mainresults}.
\begin{definition}
If $k\geq 0$ is an integer, we let $\Omega_{k}=\R$ for $k\geq 1$ and $\Omega_{k}=\R_{+}$ for $k=0$. Let us introduce the generalized Montgomery operators $\mathfrak{h}^{[k]}_{\zeta}$ as the self-adjoint realization, on $\sL^2(\Omega_{k},\dx \tau)$, of
\begin{equation}\label{Montgomery}
D_{\tau}^2+\left(\zeta-\frac{\tau^{k+1}}{k+1}\right)^2,
\end{equation}
with Neumann condition in the case $k=0$. Let $u^{[k]}_{\zeta}$ be an $\sL^2$-normalized eigenfunction depending analytically on $\zeta$ and associated with the first eigenvalue $\nun^{[k]}(\zeta)$. 

For $k=0$, $\mathfrak{h}^{[k]}_{\zeta}$ is nothing but the de Gennes operator (see \cite{DauHel} or \cite[Chapter 3]{FouHel10}) and in this case we omit the superscript $[0]$.
 \end{definition}

\begin{proposition}\label{verify-assumptions}
Let us assume that either $\gamma$ is polynomial and admits a unique minimum $\gamma_{0}>0$ at $s_{0}=0$ which is non degenerate, or $\gamma$ is analytic with  $\liminf_{x\to\pm\infty}\gamma=\gamma_{\infty}\in(\gamma_{0},+\infty)$.
For $k\in\N\setminus\{0\}$, the operator $\mathfrak{L}_{h}^{[k]}$ satisfies Assumptions \ref{hyp-gen}, \ref{hyp-gen'} and \ref{confining}. 
\end{proposition}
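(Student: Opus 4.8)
The plan is to verify each of the three assumptions in turn for the operator $\mathfrak{L}_{h}^{[k]}=D_{t}^2+(hD_{s}-\gamma(s)\frac{t^{k+1}}{k+1})^2$. The key observation is that the operator-valued symbol in the variable $s$ is
$$\mathcal{M}_{x,\xi}=D_{\tau}^2+\left(\xi-\gamma(x)\frac{\tau^{k+1}}{k+1}\right)^2,$$
and a dilation $\tau=\gamma(x)^{-1/(k+2)}\sigma$ (together with a rescaling of $\xi$) shows that $\mathcal{M}_{x,\xi}$ is unitarily equivalent to $\gamma(x)^{2/(k+2)}\,\mathfrak{h}^{[k]}_{\zeta}$ with $\zeta=\gamma(x)^{-1/(k+2)}\xi$, where $\mathfrak{h}^{[k]}_{\zeta}$ is the model Montgomery operator introduced above. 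Consequently $\mun(x,\xi)=\gamma(x)^{2/(k+2)}\,\nun^{[k]}\!\big(\gamma(x)^{-1/(k+2)}\xi\big)$, and the whole analysis reduces to properties of the one-parameter function $\nun^{[k]}$.

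First I would treat the analyticity/type-(A) statements in Assumption \ref{hyp-gen}. Since $\gamma$ is analytic (in the polynomial case trivially so) and does not vanish, the family $(\mathcal{M}_{x,\xi})$ depends analytically on $(x,\xi)$ with a domain independent of $(x,\xi)$ (the graph of $\sigma^{k+1}$ times the natural $\sL^2$-domain), hence it is a real-analytic family of type (A) which extends holomorphically to a complex neighborhood $\mathcal{V}_{0}$ of $(x_{0},\xi_{0})$; here $\xi_{0}=\zeta_{0}^{[k]}$ is the minimizer of $\nun^{[k]}$ and $x_{0}=0$. Simplicity of the bottom eigenvalue $\mun(x,\xi)$ is a standard one-dimensional ODE/Sturm--Liouville argument (the ground state of a Schr\"odinger operator on $\R$ with confining potential is nondegenerate), and it persists on the complex neighborhood by continuity of the Riesz spectral projection. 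The confinement at infinity, $\liminf_{|x|+|\xi|\to\infty}\mun(x,\xi)>\mu_{0}$, follows from the formula for $\mun$: as $|\xi|\to\infty$ with $x$ bounded, $\nun^{[k]}(\zeta)\to+\infty$; as $|x|\to\infty$, either $\gamma\to\gamma_{\infty}>\gamma_{0}$ (analytic case) so the infimum over $\xi$ of $\mun$ stays $\geq\gamma_{\infty}^{2/(k+2)}\min\nun^{[k]}>\mu_{0}$, or $\gamma$ is a polynomial tending to $+\infty$ and the same bound holds a fortiori. The uniqueness and nondegeneracy of the minimum of $\mun$ is the delicate point. In the polynomial case with $\gamma$ having a unique nondegenerate minimum $\gamma_{0}$ at $0$: the minimum of $\mun$ over $(x,\xi)$ is reached exactly when $\gamma(x)=\gamma_{0}$ (i.e. $x=0$) and $\zeta=\zeta_{0}^{[k]}$; nondegeneracy of the Hessian follows by computing $\Hess\mun(0,\xi_{0})$ in block form — the $\xi\xi$-block is $\gamma_{0}^{2/(k+2)}(\nun^{[k]})''(\zeta_{0}^{[k]})>0$ (strict convexity at the bottom of a Montgomery-type spectral function, a known fact, cf. \cite{HelPer10, FouPer13}), the $xx$-block is a positive multiple of $\gamma''(0)>0$, and the cross term vanishes by symmetry ($\nabla_{x}\mun(0,\xi_{0})=0$ and the structure of the mixed derivative).

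Next, Assumption \ref{hyp-gen'} is automatic when $m=1$ by Remark \ref{rem.m1}, and here $m=1$ since $s\in\R$; so nothing to prove. Finally, for Assumption \ref{confining} (spectral confinement), I would use Persson's theorem together with the lower bound $\mathfrak{L}_{h}^{[k]}\geq h^{2}D_{s}^{2}+\mun(s,hD_{s})$ in spirit, or more directly: on $\Omega_{R}=\R^{2}\setminus\overline{\mathcal{B}(0,R)}$, decompose into the region where $|s|$ is large and the region where $|t|$ is large. Where $|t|$ is large the potential $(hD_{s}-\gamma(s)t^{k+1}/(k+1))^{2}$ forces a large energy after an IMS localization argument (the effective one-dimensional operator in $t$, for frozen $s$ and frozen $hD_s$-value, has bottom eigenvalue $\gamma(s)^{2/(k+2)}\nun^{[k]}(\cdot)$ which, restricted to $|t|\geq R$ via Dirichlet bracketing, exceeds $\mu_{0}$ by a fixed amount); where $|s|$ is large, $\gamma(s)\geq\gamma_{\infty}-\epsilon>\gamma_{0}$ (analytic case) or $\gamma(s)\to\infty$ (polynomial case), so the bottom of the fibered operator is bounded below by $\mu_{0}^{*}>\mu_{0}$ uniformly. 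Combining the two regions with a partition of unity gives $\lambda_{1}^{\Dir,\Omega_{R_{0}}}(h)\geq\mu_{0}^{*}$ for $R_{0}$ large and $h$ small, which is exactly Assumption \ref{confining}. The main obstacle is the nondegeneracy of the minimum of $\mun$, which hinges on the strict convexity of $\nun^{[k]}$ at its bottom; for $k=0,1$ this is classical, and for $k\geq 2$ it requires the analysis of the generalized Montgomery model (as signalled in the remark after Proposition \ref{WKB-model}), so I would either cite it or prove it via a dedicated ODE/perturbation argument in a preliminary lemma.
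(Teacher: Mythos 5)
Your verification of Assumptions \ref{hyp-gen} and \ref{hyp-gen'} matches the paper's argument: you use the dilation $\tau\mapsto\gamma(x)^{-1/(k+2)}\tau$ to reduce $\mathcal{M}^{[k]}_{x,\xi}$ to $\gamma(x)^{2/(k+2)}\mathfrak{h}^{[k]}_{\zeta}$ with $\zeta=\gamma(x)^{-1/(k+2)}\xi$, invoke Fournais--Persson for uniqueness and nondegeneracy of the minimum of $\nun^{[k]}$, note that type-(A) analyticity extends holomorphically, and observe that Assumption~\ref{hyp-gen'} is vacuous since $m=1$. All of this is correct and is precisely what the paper does.

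The confinement part (Assumption~\ref{confining}) is where your argument has a genuine gap. For the region $|t|$ large you invoke a ``frozen $(s, hD_s)$'' Born--Oppenheimer reduction with Dirichlet bracketing, but that is more delicate than needed: the paper simply uses the diamagnetic/commutator lower bound $\mathfrak{Q}_h^{[k]}(\psi)\geq\int\gamma(s)|t|^k|\psi|^2\geq R^k\gamma_0\|\psi\|^2$, which works directly because $k\geq 1$ and requires no freezing. More seriously, for the region $|s|\geq R,\ |t|< R$ you assert that ``the bottom of the fibered operator is bounded below by $\mu_0^*$ uniformly,'' but the operator is not a direct integral over $s$; the symbol $\mun(s,\xi)$ being large there does not by itself give a lower bound on the quadratic form of the full pseudodifferential operator in $s$. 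The paper makes this rigorous with a normal-form step: the unitary change of variables $t=\gamma(\sigma)^{-1/(k+2)}\tau$, $s=\sigma$ (the operator $\mathfrak{L}^{[k],\new}_h$), which conjugates the $s$-dependence into the $D_\sigma$-part at the cost of an $\Oc(h)$ correction containing $\tau D_\tau+D_\tau\tau$, and then functional calculus for the self-adjoint operator $\Xi(\sigma,D_\sigma)=\gamma^{-1/(2k+4)}D_\sigma\gamma^{-1/(2k+4)}$ to bound the resulting form from below by $\min_\zeta\nun^{[k]}(\zeta)$ times a weight. Without this conjugation and functional-calculus step the ``fibered'' lower bound remains heuristic. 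You should either carry out this normal-form argument or give an alternative rigorous mechanism (e.g.\ a coherent-states or anti-Wick lower bound) to justify the claim in the $|s|$-large region.
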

This proposition is proved in the following two sections.

\subsubsection{Uniqueness and non-degeneracy}
The symbol of $\mathfrak{L}_{h}^{[k]}$ with respect to $s$ is
$$\mathcal{M}^{[k]}_{x,\xi}=D_{t}^2+V(x,\xi),\quad \mbox{ with } V(x,\xi)=\left(\xi-\gamma(x)\frac{t^{k+1}}{k+1}\right)^2.$$
The family $(\mathcal{M}^{[k]}_{x,\xi})_{(x,\xi)\in\R^{2m}}$ is clearly analytic of type (A). Given a point $(x_{1},\xi_{1})\in\R^{2m}$, there exist $c,C>0$ and a $\C$-neighborhood $\mathcal{V}$ of $(x_{1},\xi_{1})$ such that,
$$\Re V(x,\xi)\geq -C +cV(x_{1},\xi_{1})\geq -C,\qquad \forall (x,\xi)\in\mathcal{V}.$$
Therefore, for $(x,\xi)\in\mathcal{V}$, the operator $\mathcal{M}_{x,\xi}$ is well defined thanks to the Lax-Milgram theorem. This extension is holomorphic of type $(A)$.
The lowest eigenvalue of $\mathcal{M}^{[k]}_{x,\xi}$, denoted by $\mun^{[k]}(x,\xi)$, satisfies
$$\mun^{[k]}(x,\xi)=(\gamma(x))^{\frac{2}{k+2}}\nun^{[k]}\left((\gamma(x))^{-\frac{1}{k+2}}\xi\right).$$
It is proved in \cite[Theorem 1.3]{FouPer13} that $\R\ni\zeta\mapsto\nun^{[k]}(\zeta)$ admits a unique and non degenerate minimum.
Therefore Assumption \ref{hyp-gen} is satisfied. Note that Assumption~\ref{hyp-gen'} is satisfied since $m=1$ according to Remark~\ref{rem.m1}.
\begin{notation}
We denote by $\zeta_{0}^{[k]}$ the point $\zeta$ where the minimum of $\nun^{[k]}$ is reached.
\end{notation}

\subsubsection{Confinement}\label{ss-verify-conf}
This verification is a little more delicate. It is based on a normal form procedure. Let us denote by $\mathfrak{Q}_{h}^{[k]}$ the quadratic form associated with $\mathfrak{L}_{h}^{[k]}$. For $\psi\in\Dom(\mathfrak{Q}_{h}^{[k]})$, we have
$$\mathfrak{Q}_{h}^{[k]}(\psi)=\int_{\R^2} |D_{t}\psi|^2+\left|\left(hD_{s}-\gamma(s)\frac{t^{k+1}}{k+1}\right)\psi\right|^2\dx t \dx s.$$
We would like to prove a lower bound for $\mathfrak{Q}_{h}^{[k]}(\psi)$ when $\psi\in\mathcal{C}_{0}^\infty(\R^2)$ is supported away from the square $[-R_{0},R_{0}]^2$.
\paragraph{Magnetic confinement for the variable $t$.}
If $\psi$ is supported in $\{(s,t)\in\R^2: |t|>R\}$, we can use the standard inequality (see for instance \cite[Lemma 1.4.1]{FouHel10}):
\begin{equation}\label{min-loin-t}
\mathfrak{Q}_{h}^{[k]}(\psi)\geq\int_{\R^2}\gamma(s)|t|^k |\psi|^2\dx s \dx t\geq R^k\gamma_{0}\|\psi\|^2.
\end{equation}
\paragraph{Spectral confinement for the variable $s$.}
If $\psi$ is supported in the set $\{(s,t)\in\R^2: |t|<R,\ |s|\geq R\}$, we use the canonical transformation associated with the change of variables
\begin{equation}\label{can}
t=(\gamma(\sigma))^{-\frac{1}{k+2}}\tau,\quad s=\sigma,
\end{equation}
we deduce that  $\mathfrak{L}_{h}^{[k]}$ is unitarily equivalent to the operator on $\sL^2(\R^2,\dx \sigma\dx \tau)$
$$\mathfrak{L}_{h}^{[k],\new}=\gamma(\sigma)^{\frac{2}{k+2}}D_{\tau}^2+\left(hD_{\sigma}-\gamma(\sigma)^{\frac{1}{k+2}}\frac{\tau^{k+1}}{k+1}+\frac{h}{2(k+2)}\frac{\gamma'(\sigma)}{\gamma(\sigma)}(\tau D_{\tau}+D_{\tau}\tau)\right)^2.$$
Let us denote by $\psi^\new$ the function $\psi$ transported by the canonical transformation. In terms of the quadratic from, we have
$$\mathfrak{Q}_{h}^{[k]}(\psi)=\mathfrak{Q}_{h}^{[k],\new}(\psi^\new).$$
Let us notice that $\psi^\new$ is supported in $\left\{(\sigma,\tau)\in\R^2: |\sigma|\geq R, \ |\tau|\leq R \gamma(\sigma)^{\frac{1}{k+2}}\right\}$. We can write, for all $\eps\in(0,1)$
\begin{multline}\label{lb-normal}
\mathfrak{Q}_{h}^{[k],\new}(\psi^\new)\geq
(1-\eps)\int_{\R^2}\gamma(\sigma)^{\frac{2}{k+2}}\left(|D_{\tau}\psi^\new|^2+\left|\left(h\gamma(\sigma)^{-\frac{1}{k+2}}D_{\sigma}-\frac{\tau^{k+1}}{k+1}\right)\psi^\new\right|^2\right)\dx \sigma \dx \tau\\
-\frac{\eps^{-1}h^2}{(2(k+2))^2}\int_{\R^2}\left|\frac{\gamma'(\sigma)}{\gamma(\sigma)}(\tau D_{\tau}+D_{\tau}\tau)\psi^\new\right|^2 \dx \sigma\dx \tau.
\end{multline}
In the analytical case for $\gamma$,
there exists $\eta_{0}>0$ and for all $\eta\in(0,\eta_{0})$, $R_{0}>0$ such that,  on $\{|\sigma|\geq R_{0}\}$
\begin{equation}\label{min-gamma}
\gamma(\sigma)^{\frac{2}{k+2}}\min_{\zeta\in\R} \nun^{[k]}(\zeta)
\geq \left\{\gamma_{\infty}^{\frac{2}{k+2}}-\eta\right\}\min_{\zeta\in\R} \nun^{[k]}(\zeta)>\mun^{[k]}(x_{0},\xi_{0}).
\end{equation}
Note that in the polynomial case for $\gamma$, we can replace $\gamma_{\infty}$ by any positive constant larger than $\gamma_{0}$. 
Up to choosing $R_{0}$ larger, we may also assume that
\begin{equation}\label{min-gamma'}
\left(\frac{R_{0}}{2}\right)^k\gamma_{0}\geq
\left\{\gamma_{\infty}^{\frac{2}{k+2}}-\eta\right\}\min_{\zeta\in\R} \nun^{[k]}(\zeta).
\end{equation}
Moreover, we have
\begin{multline}\label{E:g'g}
\int_{\R^2}\left|\frac{\gamma'(\sigma)}{\gamma(\sigma)}(\tau D_{\tau}+D_{\tau}\tau)\psi^\new\right|^2 \dx \sigma\dx \tau\leq C\int_{\R^2}|\psi^\new|^2 \dx \sigma \dx\tau\\+4\int_{\R^2}\left(\frac{\gamma'(\sigma)}{\gamma(\sigma)}\right)^2|\tau D_{\tau}\psi^\new|^2 \dx \sigma \dx\tau.
\end{multline}
Using support considerations, we get
\begin{multline*}
\int_{\R^2}\left|\frac{\gamma'(\sigma)}{\gamma(\sigma)}(\tau D_{\tau}+D_{\tau}\tau)\psi^\new\right|^2 \dx \sigma\dx \tau\\
\leq C\int_{\R^2}|\psi^\new|^2 \dx \sigma \dx\tau+4R^2\int_{\R^2}\gamma(\sigma)^{\frac{2}{k+2}-2}\gamma'(\sigma)^2|D_{\tau}\psi^\new|^2 \dx \sigma \dx\tau\\
\leq C\int_{\R^2}|\psi^\new|^2 \dx \sigma \dx\tau+4CR^2\int_{\R^2}|D_{\tau}\psi^\new|^2 \dx \sigma \dx\tau.
\end{multline*}
We deduce
\begin{multline*}
\mathfrak{Q}_{h}^{[k],\new}(\psi^\new)\geq
(1-\eps)\left(\gamma_{\infty}^{\frac{2}{k+2}}-\eta\right) \int_{\R^2} \Big(|D_{\tau}\psi^\new|^2+\Big|\Big(h\gamma(\sigma)^{-\frac{1}{k+2}}D_{\sigma}-\frac{\tau^{k+1}}{k+1}\Big)\psi^\new\Big|^2\Big)\dx \sigma \dx \tau\\
-C\eps^{-1}h^2\int_{\R^2}\left|\psi^\new\right|^2 \dx \sigma\dx \tau-4CR^2\eps^{-1}h^2\int_{\R^2}\left|D_{\tau}\psi^\new\right|^2 \dx \sigma\dx \tau.
\end{multline*}
We choose $\eps=h$, we infer that
\begin{equation}\label{min-Q-q}
\mathfrak{Q}_{h}^{[k],\new}(\psi^\new)\geq
\left\{(1-h)\left(\gamma_{\infty}^{\frac{2}{k+2}}-\eta\right)-4CR^2 h \right\}\mathfrak{q}_{h}^{[k],\new}(\psi^\new)\\
-Ch\int_{\R^2}\left|\psi^\new\right|^2 \dx \sigma\dx \tau,
\end{equation}
where
$$\mathfrak{q}_{h}^{[k],\new}(\psi^\new)=\int_{\R^2}\left(|D_{\tau}\psi^\new|^2+\left|\left(h\gamma(\sigma)^{-\frac{1}{k+2}}D_{\sigma}-\frac{\tau^{k+1}}{k+1}\right)\psi^\new\right|^2\right)\dx \sigma \dx \tau.$$
We have
\begin{multline}\label{before-self}
\mathfrak{q}_{h}^{[k],\new}(\psi^\new)=\\
\int_{\R^2}\left(|D_{\tau}\psi^\new|^2+\left|\left(h\Xi(\sigma,D_{\sigma})-\frac{\tau^{k+1}}{k+1}-\frac{i h}{2k+4}\gamma'(\sigma)\gamma(\sigma)^{-\frac{k+3}{k+2}}\right)\psi^\new\right|^2\right)\dx \sigma \dx \tau,
\end{multline}
with $\Xi(\sigma,D_{\sigma})=\gamma(\sigma)^{-\frac{1}{2k+4}}D_{\sigma}\gamma(\sigma)^{-\frac{1}{2k+4}}$. We get the lower bound:
\begin{multline*}
\mathfrak{q}_{h}^{[k],\new}(\psi^\new)\geq \int_{\R^2}\left(|D_{\tau}\psi^\new|^2+\left|\left(h\Xi(\sigma,D_{\sigma})-\frac{\tau^{k+1}}{k+1}\right)\psi^\new\right|^2\right)\dx \sigma \dx \tau\\
+\frac{2h}{2k+4}\Re\int_{\R^2} i\gamma'(\sigma)\gamma(\sigma)^{-\frac{k+3}{k+2}}\left(h\Xi(\sigma,D_{\sigma})-\frac{\tau^{k+1}}{k+1}\right)\psi^\new\,\overline{\psi^\new} \dx\sigma \dx \tau.
\end{multline*}
This becomes
\begin{multline*}
\mathfrak{q}_{h}^{[k],\new}(\psi^\new)\geq \int_{\R^2}\left(|D_{\tau}\psi^\new|^2+\left|\left(h\Xi(\sigma,D_{\sigma})-\frac{\tau^{k+1}}{k+1}\right)\psi^\new\right|^2\right)\dx \sigma \dx \tau\\
+\frac{2h^2}{2k+4}\Re\int_{\R^2} i\gamma'(\sigma)\gamma(\sigma)^{-\frac{k+3}{k+2}}\Xi(\sigma,D_{\sigma})\psi^\new\,\overline{\psi^\new} \dx\sigma \dx \tau.
\end{multline*}
By using in particular that $2\Re\left(\dr_{\sigma}\psi^\new\, \overline{\psi^\new}\right)=\dr_{\sigma}|\psi^\new|^2$ and by integrating by parts, we infer
$$\left|\frac{2h^2}{2k+4}\Re\int_{\R^2} i\gamma'(\sigma)\gamma(\sigma)^{-\frac{k+3}{k+2}}\Xi(\sigma,D_{\sigma})\psi^\new\,\overline{\psi^\new} \dx\sigma \dx \tau\right|\leq Ch^2\int_{\R^2}|\psi^\new|^2 \dx\sigma\dx\tau.$$
By using the functional calculus, we get
\begin{equation}\label{min-q}
\mathfrak{q}_{h}^{[k],\new}(\psi^\new)\geq \left(\min_{\zeta\in\R} \nun^{[k]}(\zeta) -Ch^2\right)\int_{\R^2} |\psi^\new|^2\dx\sigma\dx\tau.
\end{equation}
Fixing $R=R_{0}$ and $\eta_{0}$ defined in \eqref{min-gamma} and \eqref{min-gamma'} and combining \eqref{min-Q-q} and \eqref{min-q}, we infer the existence of $h_{0}>0$ and $C>0$ such that for $h\in(0,h_{0})$ and all $\eta\in(0,\eta_{0})$
$$\mathfrak{Q}_{h}^{[k],\new}(\psi^\new)\geq\left(\gamma_{\infty}^{\frac{2}{k+2}}\min_{\zeta\in\R} \nun^{[k]}(\zeta)-\eta    -Ch \right)\int_{\R^2}|\psi^\new|^2\dx \sigma\dx\tau$$
or equivalently
$$\mathfrak{Q}_{h}^{[k]}(\psi)\geq \left(\gamma_{\infty}^{\frac{2}{k+2}}\min_{\zeta\in\R} \nun^{[k]}(\zeta)-\eta    -Ch \right) \int_{\R^2}|\psi|^2\dx s\dx t,$$
for all $\psi\in\mathcal{C}^\infty_{0}(\R^2)$ supported in $\{(s,t)\in\R^2: |t|<R_{0},\ |s|\geq R_{0}\}$.
\paragraph{Gluing the lower bounds.} Using \eqref{min-loin-t} (with $R=\frac{R_{0}}{2}$) and \eqref{min-gamma'}, we deduce
$$\mathfrak{Q}_{h}^{[k]}(\psi)\geq \left(\gamma_{\infty}^{\frac{2}{k+2}}\min_{\zeta\in\R} \nun^{[k]}(\zeta)-\eta    -Ch \right) \int_{\R^2}|\psi|^2\dx s\dx t,$$
for all $\psi\in\mathcal{C}^\infty_{0}(\R^2)$ supported either in $\{(s,t)\in\R^2: |t|<R_{0},\ |s|\geq R_{0}\}$ or in $\{(s,t)\in\R^2 : |t|>\frac{R_{0}}{2}\}$. We now use a standard partition of unity with respect to $t$ such that
$$\chi_{1,R_{0}}^2+\chi_{2,R_{0}}^2=1,\qquad
\chi_{1,R_{0}}=\begin{cases}1 \mbox{ for }|t|\leq\frac{R_{0}}{2},\\ 0\mbox{ for }|t|\geq R_{0},\end{cases}\quad\mbox{ and }
\left(\chi'_{1,R_{0}}\right)^2+\left(\chi'_{2,R_{0}}\right)^2\leq CR_{0}^{-2}.$$
The \enquote{IMS} formula provides, for all $\psi\in\mathcal{C}_{0}^{\infty}\left(\R^2\setminus [-R_{0},R_{0}]^2\right)$:
$$\mathfrak{Q}_{h}^{[k]}(\psi)\geq \mathfrak{Q}_{h}^{[k]}(\chi_{1,R_{0}}\psi)+\mathfrak{Q}_{h}^{[k]}(\chi_{2,R_{0}}\psi)-CR_{0}^{-2}\|\psi\|^2.$$
By supports considerations we have
$$\supp (\chi_{1,R_{0}}\psi)\subset \{(s,t)\in\R^2 : |t|<R_{0},\ |s|\geq R_{0}\},\quad \supp (\chi_{2,R_{0}}\psi)\subset \{(s,t)\in\R^2 : |t|\geq \tfrac{R_{0}}{2}\},$$
so that, for all $\psi\in\mathcal{C}_{0}^{\infty}\left(\R^2\setminus [-R_{0},R_{0}]^2\right)$, there holds
$$\mathfrak{Q}_{h}^{[k]}(\psi)\geq  \left(\gamma_{\infty}^{\frac{2}{k+2}}\min_{\zeta\in\R} \nun^{[k]}(\zeta)-\eta    -Ch-CR_{0}^{-2} \right) \int_{\R^2}|\psi|^2\dx s\dx t.$$
Therefore, for all $\mu_{0}^*\in\left(\mu_{0},\gamma_{\infty}^{\frac{2}{k+2}}\min \nun^{[k]}\right)$, up to choosing $\eta$, $h$ small enough and possibly $R_{0}$ larger, we have
$$\mathfrak{Q}_{h}^{[k]}(\psi)\geq \mu_{0}^* \int_{\R^2}|\psi|^2\dx s\dx t,\quad \forall\psi\in\mathcal{C}_{0}^{\infty}\left(\R^2\setminus [-R_{0},R_{0}]^2\right).$$

\subsection{Explicit WKB expansions, weak magnetic barrier}

\subsubsection{Renormalization}
The key point to perform the spectral analysis of $\mathfrak{L}_{h}^{[k]}$ is the normal form procedure introduced in \cite{Ray11b, DomRay12, PoRay12}, see also \cite{RVN13} which is pervaded by this spirit. Let us explain this basic idea. We again use the canonical transformation associated with the change of variables
$$t=(\gamma(\sigma))^{-\frac{1}{k+2}}\tau,\qquad s=\sigma.$$
We deduce that  $\mathfrak{L}_{h}^{[k]}$ is unitarily equivalent to the operator on $\sL^2(\dx \sigma\dx \tau)$
$$\mathfrak{L}_{h}^{[k],\new}=\gamma(\sigma)^{\frac{2}{k+2}}D_{\tau}^2+\left(hD_{\sigma}-\gamma(\sigma)^{\frac{1}{k+2}}\frac{\tau^{k+1}}{k+1}+\frac{h}{2(k+2)}\frac{\gamma'(\sigma)}{\gamma(\sigma)}(\tau D_{\tau}+D_{\tau}\tau)\right)^2.$$
In order to estimate the low lying eigenvalues, we may write the heuristic approximation
\begin{eqnarray*}
\mathfrak{L}_{h}^{[k],\new}&\approx&\gamma(\sigma)^{\frac{2}{k+2}}\left(D_{\tau}^2+\left(h\gamma(\sigma)^{-\frac{1}{k+2}}D_{\sigma}-\frac{\tau^{k+1}}{k+1}\right)^2\right)\\
                                               &\approx&\gamma(\sigma)^{\frac{2}{k+2}}\nun^{[k]}\left(h\gamma(\sigma)^{-\frac{1}{k+1}}D_{\sigma}\right).
\end{eqnarray*}
Let now make this approximation more rigorous. We may change the gauge
\begin{align*}
&\re^{-ig(\sigma)/h}\ \mathfrak{L}_{h}^{[k],\new}\ \re^{ig(\sigma)/h}\\
&=\gamma(\sigma)^{\frac{2}{k+2}}D_{\tau}^2+\left(hD_{\sigma}+\zeta^{[k]}_{0}\gamma(\sigma)^{\frac{1}{k+2}}-\gamma(\sigma)^{\frac{1}{k+2}}\frac{\tau^{k+1}}{k+1}+\frac{h}{2(k+2)}\frac{\gamma'(\sigma)}{\gamma(\sigma)}(\tau D_{\tau}+D_{\tau}\tau)\right)^2,
\end{align*}
with
$$g(\sigma)=\zeta_{0}^{[k]}\int_{0}^{\sigma}\gamma(\tilde\sigma)^{\frac{1}{k+2}}\dx\tilde\sigma.$$
For some function $\Phi=\Phi(\sigma)$ to be determined, we consider
$$\mathfrak{L}^{[k],\wgt}_{h}=\re^{\Phi/h}\re^{-ig(\sigma)/h} \mathfrak{L}_{h}^{[k],\new}\ \re^{ig(\sigma)/h}\re^{-\Phi/h}=\mathfrak{L}^{[k],\wgt,0}+h\mathfrak{L}^{[k],\wgt,1}+h^2\mathfrak{L}^{[k],\wgt,2},$$
with
\begin{align*}
&\mathfrak{L}^{[k],\wgt,0}=\gamma^{\frac{2}{k+2}}\left( D_{\tau}^2+\left(V_{\zeta^{[k]}_{0}}+i\gamma^{-\frac{1}{k+2}}\Phi'\right)^2\right),\\
&\mathfrak{L}^{[k],\wgt,1}=\left(\gamma^{\frac{1}{k+2}}V_{\zeta^{[k]}_{0}}+i\Phi'\right)D_{\sigma}+D_{\sigma}\left(\gamma^{\frac{1}{k+2}}V_{\zeta^{[k]}_{0}}+i\Phi'\right)+\mathfrak{R}_{1}(\sigma, \tau; D_{\tau}),\\
&\mathfrak{L}^{[k],\wgt,2}=D_{\sigma}^2+\mathfrak{R}_{2}(\sigma,\tau; D_{\sigma}, D_{\tau}),
\end{align*}
where
$$V_{\zeta}(\tau)=\zeta-\frac{\tau^{k+1}}{k+1},$$
and
where $\mathfrak{R}_{1}(\sigma, \tau; D_{\tau})$ is of order zero in $D_{\sigma}$ and cancels for $\sigma=0$ whereas $\mathfrak{R}_{2}(\sigma,\tau; D_{\sigma}, D_{\tau})$ is an operator of order one with respect to the variable $\sigma$.

Now, let us solve, as in the previous section, the eigenvalue equation
$$\mathfrak{L}^{[k],\wgt}_{h}\an=\lambda\an$$
in the sense of formal series in $h$,
$$\an\sim\sum_{j\geq 0}h^j\an_{j},\qquad \lambda\sim\sum_{j\geq 0}h^j\lambda_{j}.$$

\subsubsection{Solving the operator valued eikonal equation}
The first equation is
$$\mathfrak{L}^{[k],\wgt,0}\an_{0}=\lambda_{0}\an_{0}.$$
We must choose
$$\lambda_{0}=\gamma_{0}^{\frac{2}{k+2}}{\nun^{[k]}}\big(\zeta_{0}^{[k]}\big),$$
and we are led to take
\begin{equation}\label{psi0}
\an_{0}(\sigma,\tau)=f_{0}(\sigma)u^{[k]}_{w(\sigma)}(\tau),\qquad
\mbox{Êwith }\quad w(\sigma)=\zeta^{[k]}_{0}+i\gamma(\sigma)^{-\frac{1}{k+2}}\Phi'(\sigma).
\end{equation}
Recall that $u^{[k]}_{\zeta}$ (with $\zeta\in\R$) denotes an $\sL^2$-normalized eigenfunction associated with $\nun^{[k]}(\zeta)$ for the generalized Montgomery operator \eqref{Montgomery} and that it admits a holomorphic extension near $\zeta_{0}$. Then the equation becomes
$$\gamma(\sigma)^{\frac{2}{k+2}}\nun^{[k]}\left(\zeta^{[k]}_{0}+i\gamma(\sigma)^{-\frac{1}{k+2}}\Phi'(\sigma)\right)
=\gamma_{0}^{\frac{2}{k+2}}\nun^{[k]}\Big(\zeta_{0}^{[k]}\Big),$$
and this can be written in the form
$$\nun^{[k]}\left(\zeta^{[k]}_{0}+i\gamma(\sigma)^{-\frac{1}{k+2}}\Phi'(\sigma)\right)-\nun^{[k]}\Big(\zeta_{0}^{[k]}\Big)
=\left(\gamma_{0}^{\frac{2}{k+2}}\gamma(\sigma)^{-\frac{2}{k+2}}-1\right)\nun^{[k]}\Big(\zeta_{0}^{[k]}\Big).$$
Therefore we are in the framework of the following elementary lemma.
\begin{lemma}\label{lem.Morse}
For $r>0$, let us consider a holomorphic function $\nu : \mathcal B(0,r)\to\mathbb C$ such that $\nu(0)=\nu'(0)=0$ and $\nu''(0)\in\R_{+}$. Let us also introduce a smooth and real-valued function $F$ defined in a real neighborhood of $\sigma=0$ such that $\sigma=0$ is a non degenerate maximum. Then, there exists a neighborhood of $\sigma=0$ such that the equation
\begin{equation}\label{eikonal}
\nu(i\varphi(\sigma))=F(\sigma)
\end{equation}
admits a smooth solution $\varphi$  such that $\varphi(0)=0$ and $\varphi'(0)>0$.
\end{lemma}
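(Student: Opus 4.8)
The plan is to reduce \eqref{eikonal} to the local inversion of a biholomorphism by extracting a holomorphic square root on each side. First I would record the (implicit) normalization $F(0)=0$, which is forced by $\varphi(0)=0$ together with $\nu(0)=0$; in the application it holds since $\gamma$ is minimal at $\sigma=0$. Because $\nu$ has a double zero at $0$ and $\nu''(0)\in\R_{+}$, Taylor's formula with integral remainder gives $\nu(z)=z^{2}\tilde\nu(z)$ with $\tilde\nu(z)=\int_{0}^{1}(1-u)\nu''(uz)\dx u$ holomorphic near $0$ and $\tilde\nu(0)=\tfrac12\nu''(0)>0$. Since $\tilde\nu$ is holomorphic, nonvanishing near $0$ and $\tilde\nu(0)$ is a positive real, the principal branch of the square root yields a holomorphic $g=\sqrt{\tilde\nu}$ near $0$, so that $\nu=\psi^{2}$ with $\psi(z)=z\,g(z)$ holomorphic, $\psi(0)=0$ and $\psi'(0)=g(0)=\sqrt{\nu''(0)/2}>0$. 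By the holomorphic inverse function theorem, $\psi$ admits a holomorphic inverse $\psi^{-1}$ on a neighborhood of $0$ with $\psi^{-1}(0)=0$ and $(\psi^{-1})'(0)=\psi'(0)^{-1}$.

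On the $F$ side I would use the smooth Hadamard lemma: since $F$ is real, smooth and $F(0)=F'(0)=0$, one has $-F(\sigma)=\sigma^{2}k(\sigma)$ with $k(\sigma)=\int_{0}^{1}(1-u)(-F'')(u\sigma)\dx u$ smooth and $k(0)=-\tfrac12 F''(0)>0$ (this is where the non-degeneracy of the maximum enters). Hence $k>0$ near $0$, $m:=\sqrt{k}$ is smooth with $m(0)>0$, and $F(\sigma)=-(\sigma m(\sigma))^{2}=(i\sigma m(\sigma))^{2}$.

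With these factorizations, \eqref{eikonal} becomes $\psi(i\varphi(\sigma))^{2}=(i\sigma m(\sigma))^{2}$, and it suffices to solve the branch $\psi(i\varphi(\sigma))=i\sigma m(\sigma)$, that is
$$\varphi(\sigma)=-i\,\psi^{-1}\big(i\sigma m(\sigma)\big),$$
which is well defined for $\sigma$ small (because $\sigma\mapsto i\sigma m(\sigma)$ takes values in the domain of $\psi^{-1}$) and smooth, as the composition of the smooth map $\sigma\mapsto i\sigma m(\sigma)$ with the holomorphic, hence $C^{\infty}$, map $\psi^{-1}$. One then checks $\varphi(0)=0$, and differentiating, $\varphi'(0)=m(0)/\psi'(0)=\sqrt{-F''(0)/\nu''(0)}>0$; in particular $|i\varphi(\sigma)|<r$ for $\sigma$ near $0$, so $\nu(i\varphi(\sigma))$ is defined, and by construction it equals $F(\sigma)$.

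The only point to watch is the choice of branches: the positive branch of $\sqrt{\tilde\nu}$ at $0$ and the $+$ sign in $\psi(i\varphi(\sigma))=+i\sigma m(\sigma)$ are dictated by the requirement $\varphi'(0)>0$ (the opposite sign solves \eqref{eikonal} as well, but with $\varphi'(0)<0$). Beyond this, everything reduces to the standard holomorphic local inversion, so I do not expect a genuine obstacle; the statement is elementary precisely because a doubly characteristic point with positive-definite Hessian makes $\nu$ a holomorphic square.
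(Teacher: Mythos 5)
Your proof is correct and follows essentially the same route as the paper: both sides are written as squares (the Hadamard/Morse factorization of $F$ and a holomorphic square root $\psi$ of $\nu$), and the equation is solved by locally inverting the biholomorphism, with the branch fixed by the sign of $\varphi'(0)=\sqrt{-F''(0)/\nu''(0)}$. Your explicit remark that $F(0)=0$ is implicitly required (and holds in the application) is a fair observation, but it does not change the argument.
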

\begin{proof}
We can apply the Morse lemma to deduce that \eqref{eikonal} is equivalent to
$$\widetilde\nu(i\varphi(\sigma))^2=-f(\sigma)^2,$$
where $f$ is a non negative function such that $f'(0)=\sqrt{-\frac{F''(0)}{2}}$ and $F(\sigma)=-f(\sigma)^2$ and $\widetilde\nu$ is a holomorphic function in a neighborhood of $0$ such that $\widetilde\nu^2=\nu$ and $\widetilde\nu'(0)=\sqrt{\frac{\nu''(0)}{2}}$. This provides the equations
$$\widetilde\nu(i\varphi(\sigma))=if(\sigma),\quad \mbox{ or }\quad \widetilde\nu(i\varphi(\sigma))=-if(\sigma).$$
Since $\widetilde\nu$ is a local biholomorphism and $f(0)=0$, we can write the equivalent equations
$$\varphi(\sigma)=-i\widetilde\nu^{-1}(if(\sigma)),\quad\mbox{ or }\quad \varphi(\sigma)=-i\widetilde\nu^{-1}(-if(\sigma)).$$
The function $\varphi=-i\widetilde\nu^{-1}(if)$ satisfies our requirements since $\varphi'(0)=\sqrt{-\frac{F''(0)}{\nu''(0)}}$.
\end{proof}
We use Lemma \ref{lem.Morse} with $F(\sigma)=\left(\gamma_{0}^{\frac{2}{k+2}}\gamma(\sigma)^{-\frac{2}{k+2}}-1\right)\nun^{[k]}\big(\zeta_{0}^{[k]}\big)$ and, for the function $\varphi$ given by the lemma, we have (up to a translation by $\zeta_{0}^{[k]}$):
$$\Phi'(\sigma)=\gamma(\sigma)^{\frac{1}{k+2}}\varphi(\sigma)$$
and we take
$$\Phi(\sigma)=\int_{0}^\sigma \gamma(\tilde\sigma)^{\frac{1}{k+2}}\varphi(\tilde\sigma)\dx \tilde\sigma,$$
which is defined in a fixed neighborhood of $0$ and satisfies $\Phi(0)=\Phi'(0)=0$ and
\begin{equation}\label{Phi''}
\Phi''(0)=\gamma_{0}^{\frac{1}{k+2}}\sqrt{\frac{2}{k+2}\frac{\gamma''(0)\ \nun^{[k]}\big(\zeta_{0}^{[k]}\big)}{\gamma(0)\ \left(\nun^{[k]}\right)''\big(\zeta_{0}^{[k]}\big) }}>0.
\end{equation}
Therefore \eqref{psi0} is well defined in a neighborhood of $\sigma=0$.

\subsubsection{Solving the transport equation}
We can now deal with the operator valued transport equation
$$(\mathfrak{L}^{[k],\wgt,0}-\lambda_{0})\an_{1}=(\lambda_{1}-\mathfrak{L}^{[k],\wgt,1})\an_{0}.$$
For each $\sigma$ the Fredholm condition is
$$\left\langle(\lambda_{1}-\mathfrak{L}^{[k],\wgt,1})\an_{0}, \overline{u^{[k]}_{w(\sigma)}}\right\rangle_{\sL^2(\R,\dx\tau)}=0.$$
Using \eqref{u^2}, \eqref{barbar} and a Feynman-Hellmann formula (as in Section \ref{sec:WKB}), we get the transport equation
\begin{multline*}
\lambda_{1}f_{0}=\left\langle\mathfrak{L}^{[k],\wgt,1}\an_{0}, \overline{u^{[k]}_{w(\sigma)}}\right\rangle_{\sL^2(\R,\dx\tau)}\\
=\tfrac 1 2\left\{\gamma(\sigma)^{\frac{1}{k+2}}\left(\nun^{[k]}\right)'(w(\sigma))D_{\sigma}+D_{\sigma}\gamma(\sigma)^{\frac{1}{k+2}}\left(\nun^{[k]}\right)'(w(\sigma))\right\}f_{0}\\
+\left\langle\mathfrak{R}_{1}u^{[k]}_{w(\sigma)}, \overline{u^{[k]}_{w(\sigma)}}\right\rangle_{\sL^2(\R,\dx\tau)}f_{0}.
\end{multline*}
The term
$$\left\langle\mathfrak{R}_{1}u^{[k]}_{w(\sigma)}, \overline{u^{[k]}_{w(\sigma)}}\right\rangle_{\sL^2(\R,\dx\tau)}$$
is just a smooth function which cancels in $\sigma=0$ so that we have just to consider the linearization of the first part of the equation. The linearized operator is
$$\frac{1}{2}\left(\nun^{[k]}\right)''\big(\zeta_{0}^{[k]}\big)\Phi''(0)(\sigma\dr_{\sigma}+\dr_{\sigma}\sigma).$$
The eigenvalues of this operator in the corresponding weighted space are
\begin{equation}\label{evset}
\left\{\left(\nun^{[k]}\right)''\big(\zeta_{0}^{[k]}\big)\Phi''(0)\left(j+\tfrac{1}{2}\right),\quad j\in\N\right\}.
\end{equation}
Let us notice that
$$
\left(\nun^{[k]}\right)''\big(\zeta_{0}^{[k]}\big)\,\Phi''(0)=\gamma_{0}^{\frac{1}{k+2}}\sqrt{\frac{2}{k+2}\frac{\gamma''(0)\nun^{[k]}(\zeta_{0}^{[k]})\left(\nun^{[k]}\right)''\big(\zeta_{0}^{[k]}\big)}{\gamma(0)}}.
$$

\subsection{Estimates of Agmon in the normal form spirit}

\subsubsection{Weighted semiclassical elliptic estimates}
\begin{proposition}\label{weighted-Agmon}
Let us assume Assumption \ref{confining}. Let $C_{0}>0$ and $\Z:\R^m \to \R_{+}$ be a Lipschitzian function. There exist $\eps_{0}>0$, $\eps_{1}>0$, $h_{0}>0$ and $C>0$ such that for all eigenpairs $(\lambda,\psi)$ of $\mathfrak{L}_{h}$ satisfying $\lambda\leq \mu_{0}+C_{0}h$ we have, for all $p\geq 1$:
$$\left\| \re^{\eps_{1}|t|+\eps_{0}h^{-1}\chi_{p}\Z}\psi\right\|^2\leq C\|\re^{\eps_{0}h^{-1} \chi_{p} \Z}\psi\|^2,$$
$$\mathfrak{Q}_{h}\left(\re^{\eps_{1}|t|+\eps_{0}h^{-1}\chi_{p}\Z}\psi\right)\leq C\|\re^{\eps_{0}h^{-1} \chi_{p} \Z}\psi\|^2,$$
where $\chi_{p}(s)=\chi_{1}(p^{-1}s)$, with $0\leq \chi_{1}\leq 1$ a smooth cutoff function supported near $0$.
\end{proposition}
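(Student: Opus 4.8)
The plan is to run the Agmon-identity argument already used for Proposition~\ref{Agmon-tau}, now transporting the extra weight $\eps_{0}h^{-1}\chi_{p}\Z$ in the $s$-variable through the whole computation. Fix $C_{0}>0$ and $\Z$. For $\ell\geq 1$ set $\chi_{\ell}(t)=\chi_{1}(\ell^{-1}t)$ as in the proof of Proposition~\ref{Agmon-tau}, and put
$$\Phi_{\ell,p}(s,t)=\eps_{1}\chi_{\ell}(t)|t|+\eps_{0}h^{-1}\chi_{p}(s)\Z(s).$$
For fixed $p$ and $\ell$ this function is bounded and globally Lipschitz, hence an admissible Agmon weight, and for an eigenpair $(\lambda,\psi)$ of $\mathfrak{L}_{h}$ the Agmon identity (see \cite{Agmon82,Agmon85}) reads
$$\mathfrak{Q}_{h}(\re^{\Phi_{\ell,p}}\psi)=\lambda\|\re^{\Phi_{\ell,p}}\psi\|^2+\|\,|\nabla_{t}\Phi_{\ell,p}|\,\re^{\Phi_{\ell,p}}\psi\|^2+h^2\|\,|\nabla_{s}\Phi_{\ell,p}|\,\re^{\Phi_{\ell,p}}\psi\|^2,$$
with no cross term since the two summands of $\Phi_{\ell,p}$ involve disjoint variables. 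As in Proposition~\ref{Agmon-tau}, $|\nabla_{t}\Phi_{\ell,p}|\leq C\eps_{1}$ uniformly in $\ell$ (on $\supp\chi_{\ell}'$ one has $|t|\sim\ell$, so $\ell^{-1}|t|$ stays bounded), and, crucially, $h^2|\nabla_{s}\Phi_{\ell,p}|^2=\eps_{0}^2|\nabla_{s}(\chi_{p}\Z)|^2\leq C\eps_{0}^2$ with $C$ depending only on $\Z$: the piece $\chi_{p}\nabla\Z$ is bounded by the Lipschitz constant of $\Z$, while $(\nabla\chi_{p})\Z$ is $\Oc(1)$ because $|\nabla\chi_{p}|\leq Cp^{-1}$ on a set where $|s|\leq Cp$, on which $\Z\leq Cp$. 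This exact compensation of the $h^{-1}$ in the exponent by the $h^2$ in front of the $s$-gradient term is the heart of the scaling, and the (mild) main difficulty of the proof is precisely to check that none of the constants secretly depends on $p$.

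Granting these bounds, the identity gives $\mathfrak{Q}_{h}(\re^{\Phi_{\ell,p}}\psi)\leq(\mu_{0}+C_{0}h+C\eps_{0}^2+C\eps_{1}^2)\|\re^{\Phi_{\ell,p}}\psi\|^2$. Next introduce a $t$-partition of unity $\chi_{1,R}^2+\chi_{2,R}^2=1$ with $|\nabla_{t}\chi_{j,R}|\leq CR^{-1}$ and $\supp\chi_{2,R}\subset\{|t|\geq R\}$, and use the IMS formula (see \cite[Chapter 3]{CFKS87}):
$$\mathfrak{Q}_{h}(\chi_{1,R}\re^{\Phi_{\ell,p}}\psi)+\mathfrak{Q}_{h}(\chi_{2,R}\re^{\Phi_{\ell,p}}\psi)\leq(\mu_{0}+C_{0}h+C\eps_{0}^2+C\eps_{1}^2+CR^{-2})\|\re^{\Phi_{\ell,p}}\psi\|^2.$$
Since $\{|t|\geq R\}\subset\Omega_{R}$, Assumption~\ref{confining} and Remark~\ref{confining2} yield $\mathfrak{Q}_{h}(\chi_{2,R}\re^{\Phi_{\ell,p}}\psi)\geq\mu_{0}^*\|\chi_{2,R}\re^{\Phi_{\ell,p}}\psi\|^2$ for $R\geq R_{0}$ and $h\in(0,h_{0})$, while $\mathfrak{Q}_{h}(\chi_{1,R}\re^{\Phi_{\ell,p}}\psi)\geq 0$. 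Writing $\|\re^{\Phi_{\ell,p}}\psi\|^2=\|\chi_{1,R}\re^{\Phi_{\ell,p}}\psi\|^2+\|\chi_{2,R}\re^{\Phi_{\ell,p}}\psi\|^2$ and then choosing $R$ large, $\eps_{0},\eps_{1}$ small and $h_{0}$ small so that $\mu_{0}+C_{0}h+C\eps_{0}^2+C\eps_{1}^2+CR^{-2}<\mu_{0}^*$, we obtain $\|\chi_{2,R}\re^{\Phi_{\ell,p}}\psi\|^2\leq C\|\chi_{1,R}\re^{\Phi_{\ell,p}}\psi\|^2$, hence $\|\re^{\Phi_{\ell,p}}\psi\|^2\leq C\|\chi_{1,R}\re^{\Phi_{\ell,p}}\psi\|^2$. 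On $\supp\chi_{1,R}$ one has $|t|\leq R$, so $\re^{\eps_{1}\chi_{\ell}|t|}\leq\re^{\eps_{1}R}$, and therefore $\|\re^{\Phi_{\ell,p}}\psi\|^2\leq C\|\re^{\eps_{0}h^{-1}\chi_{p}\Z}\psi\|^2$ with $C$ independent of $\ell$ and $p$.

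Finally, letting $\ell\to+\infty$ and applying Fatou's lemma gives the first inequality $\|\re^{\eps_{1}|t|+\eps_{0}h^{-1}\chi_{p}\Z}\psi\|^2\leq C\|\re^{\eps_{0}h^{-1}\chi_{p}\Z}\psi\|^2$; feeding this back into the quadratic-form bound and using the lower semicontinuity of the closed form $\mathfrak{Q}_{h}$ as $\ell\to\infty$ gives $\mathfrak{Q}_{h}(\re^{\eps_{1}|t|+\eps_{0}h^{-1}\chi_{p}\Z}\psi)\leq C\|\re^{\eps_{0}h^{-1}\chi_{p}\Z}\psi\|^2$, which is the second inequality. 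Throughout, $\eps_{0}$ and $\eps_{1}$ are fixed once and for all (depending on $\Z$ and $C_{0}$ but not on $p$ or $h$), so the statement follows.
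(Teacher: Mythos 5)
Your argument is correct and follows essentially the same route as the paper's proof: the Agmon identity with the weight $\eps_{1}|t|+\eps_{0}h^{-1}\chi_{p}\Z$, the observation that $h\nabla_{s}$ of the weight is $\Oc(\eps_{0})$ uniformly in $p$ because $|\chi_{p}'|\lesssim p^{-1}$ while $\Z\lesssim p$ on $\supp\chi_{p}'$, then the IMS partition in $t$ combined with the confinement Assumption~\ref{confining} on $\{|t|\geq R\}$. The only difference is that you carry out the $\ell$-truncation in $t$ and the final lower-semicontinuity passage explicitly, which the paper leaves implicit; this is a welcome bit of extra care, not a deviation.
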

\begin{proof}
Let us consider an eigenpair $(\lambda,\psi)$ of $\mathfrak{L}_{h}$ such that $\lambda\leq \mu_{0}+C_{0}h$. We have the Agmon type formula
$$\mathfrak{Q}_{h}\left(\re^{\Phi}\psi\right)=\lambda\|\re^{\Phi}\psi\|^2+\|\dr_{t}\Phi \re^{\Phi}\psi\|^2+\|h\dr_{s}\Phi \re^{\Phi}\psi\|^2,$$
where $\Phi$ is bounded and Lipschitzian. We look for $\Phi$ in the form
$$\Phi(s,t)=\Phi_{p}(s,t)=\eps_{1}|t|+\eps_{0}h^{-1} \chi_{p}(s) \Z(s).$$
We get
$$\mathfrak{Q}_{h}\left(\re^{\Phi_{p}}\psi\right)\leq (\mu_{0}+C_{0}h+\eps_{1}^2)\|\re^{\Phi_{p}}\psi\|^2+2\eps_{0}^2\|\Z'\chi_{p}\re^{\Phi_{p}}\psi\|^2+2\eps_{0}^2\|\Z\chi_{p}' \re^{\Phi_{p}}\psi\|^2.$$
Since $\Z$ is Lipschitzian, there exists $K\geq 0$ such that for all $s\in\R^m$
$$|\Z(s)|\leq |c(0)|+K|s|.$$
Therefore there exists $\tilde C>0$ such that for all $p\geq 1$, $\eps_{0}>0$ and $\eps_{1}>0$ we have
$$\mathfrak{Q}_{h}\left(\re^{\Phi_{p}}\psi\right)\leq (\mu_{0}+C_{0}h+\eps_{1}^2+\tilde C\eps_{0}^2)\|\re^{\Phi_{p}}\psi\|^2,$$
where we have used that $s\mapsto \chi'_{p}(s) \Z(s)$ is uniformly bounded with respect to $p$.
We introduce the partition of the unity
$$\chi_{1,R}^2(t)+\chi_{2,R}^2(t)=1,$$
where $\chi_{2,R}$ is supported in $\{|t|\geq R\}$. We may assume that there exists $C>0$ such that for all $R>0$
$$\chi_{1,R}'^2+\chi_{1,R}'^2\leq CR^{-2}.$$
The so-called \enquote{IMS} formula implies that
$$\mathfrak{Q}_{h}(\chi_{1,R} \re^{\Phi_{p}}\psi)+\mathfrak{Q}_{h}(\chi_{2,R} \re^{\Phi_{p}}\psi)-CR^{-2}\|\re^{\Phi_{p}}\psi\|^2\leq (\mu_{0}+C_{0}h+\eps_{1}^2+C\eps_{0}^2)\|\re^{\Phi_{p}}\psi\|^2.$$
We choose $R$ large enough, $\eps_{0}>0$, $\eps_{1}>0$, $h_{0}>0$ small enough such that we have for $h\in(0,h_{0})$ and $p\geq 1$
$$\mathfrak{Q}_{h}(\chi_{2,R} \re^{\Phi_{p}}\psi)\geq \mu_{0}^*\|\chi_{2,R} \re^{\Phi_{p}}\psi\|^2$$
and
$$C_{0}h+\eps_{1}^2+\tilde C\eps_{0}^2+CR^{-2}<\frac{\mu_{0}^*-\mu_{0}}{2}.$$
For these choices of $R$, $\eps_{0}$, $\eps_{1}$ and $h_{0}$, we find $\hat c, \hat C>0$ such that for all $h\in(0,h_{0})$ and $p\geq 1$, we have
$$\hat c\|\chi_{2,R}\re^{\Phi_{p}}\psi\|^2\leq \hat C\|\re^{\eps_{0}\chi_{p} h^{-1} \Z}\psi\|^2,$$
and the conclusion easily follows.
\end{proof}

\subsubsection{Agmon estimates}\label{SS:Agmon}
\begin{proposition}\label{Agmon-puits-simple}
Let us assume that $\gamma$ is analytic with $\liminf_{\pm\infty}\gamma\in(\gamma_{0},+\infty)$ and a unique minimum which is non degenerate. 
We let
$$\displaystyle{\Z(s)=\left|\int_{s_{0}}^s \chi(s')\sqrt{\gamma(s')^{\frac{2}{k+2}}-\gamma_{0}^{\frac{2}{k+2}}}\dx s'\right|},$$
with $0\leq\chi\leq 1$ a smooth cutoff function whose support contains $s_{0}$. Let us consider $C_{0}>0$.
There exist $\eps_{0}>0$, $C>0$ and $h_{0}>0$ such that for all eigenpairs $(\lambda,\psi)$ of $\mathfrak{L}^{[k]}_{h}$ satisfying $\lambda\leq \mu_{0}+C_{0}h$ we have
$$\|\re^{\eps_{0}h^{-1}\Z}\psi\|\leq C\|\psi\|$$
and
$$\mathfrak{Q}_{h}^{[k]}(\re^{\eps_{0}h^{-1}\Z}\psi)\leq  C\|\psi\|^2.$$
\end{proposition}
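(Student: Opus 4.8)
Under the present hypotheses, $\mathfrak{L}_{h}^{[k]}$ satisfies Assumptions \ref{hyp-gen}, \ref{hyp-gen'} and \ref{confining} by Proposition \ref{verify-assumptions}, so Proposition \ref{weighted-Agmon} applies to it. Since $\chi$ has compact support and $\gamma$ is analytic with $\liminf_{\pm\infty}\gamma\in(\gamma_{0},+\infty)$, the function $\Z$ is bounded and Lipschitzian, with
$$\Z'(s)^{2}=\chi(s)^{2}\Big(\gamma(s)^{\frac{2}{k+2}}-\gamma_{0}^{\frac{2}{k+2}}\Big),$$
and $\Z$ behaves quadratically at $s_{0}$, i.e.\ $0\leq\Z(s)\leq c'|s-s_{0}|^{2}$ nearby. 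By Proposition \ref{weighted-Agmon} (applied with this $\Z$) and Fatou's lemma, it suffices to find $\eps_{0}>0$, $C>0$ and $h_{0}>0$ such that, for every eigenpair $(\lambda,\psi)$ with $\lambda\leq\mu_{0}+C_{0}h$, all $p\geq1$ and all $h\in(0,h_{0})$,
\begin{equation}\label{eq:plan-main}
\|\re^{\eps_{0}h^{-1}\chi_{p}\Z}\psi\|\leq C\|\psi\|;
\end{equation}
indeed, letting $p\to+\infty$ then gives $\|\re^{\eps_{0}h^{-1}\Z}\psi\|\leq C\|\psi\|$, and the bound on $\mathfrak{Q}_{h}^{[k]}(\re^{\eps_{0}h^{-1}\Z}\psi)$ follows from the Agmon identity together with $\lambda\leq\mu_{0}+C_{0}h$.

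\textbf{Agmon identity and effective lower bound.}
Put $u=\re^{\eps_{0}h^{-1}\chi_{p}\Z}\psi$. Conjugating $\mathfrak{L}_{h}^{[k]}$ by this weight (a function of $s$ only) merely replaces $hD_{s}$ by $hD_{s}+i\eps_{0}\dr_{s}(\chi_{p}\Z)$, so the standard Agmon identity reads
\begin{equation}\label{eq:plan-agmon}
\mathfrak{Q}_{h}^{[k]}(u)=\lambda\|u\|^{2}+\eps_{0}^{2}\|\dr_{s}(\chi_{p}\Z)\,u\|^{2}.
\end{equation}
For the lower bound I would run the normal form of Section \ref{S:verify-assumptions}: apply the canonical transformation $t=\gamma(\sigma)^{-\frac{1}{k+2}}\tau$, $s=\sigma$, the gauge change $\re^{-ig(\sigma)/h}$, and then conjugate the resulting operator by $\re^{\eps_{0}h^{-1}\chi_{p}\Z}$ in place of $\re^{\Phi/h}$. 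The principal part becomes $\gamma(\sigma)^{\frac{2}{k+2}}\big(D_{\tau}^{2}+(V_{\zeta_{0}^{[k]}}+i\eps_{0}\gamma(\sigma)^{-\frac{1}{k+2}}(\chi_{p}\Z)')^{2}\big)$; since $V_{\zeta_{0}^{[k]}}$ does not depend on $\sigma$, the real part of its quadratic form is
$$\int\gamma(\sigma)^{\frac{2}{k+2}}\big(|D_{\tau}\phi|^{2}+V_{\zeta_{0}^{[k]}}^{2}|\phi|^{2}\big)-\eps_{0}^{2}\int|(\chi_{p}\Z)'|^{2}|\phi|^{2}\ \geq\ \int\big(W(s)-\eps_{0}^{2}|(\chi_{p}\Z)'|^{2}\big)|\phi|^{2},$$
where $W(s)=\gamma(s)^{\frac{2}{k+2}}\nun^{[k]}(\zeta_{0}^{[k]})$ (so $\min W=\mu_{0}$, attained only at $s_{0}$) and we used that the bottom of the fixed generalized Montgomery operator \eqref{Montgomery} at $\zeta_{0}^{[k]}$ equals $\nun^{[k]}(\zeta_{0}^{[k]})$. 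The remainders $h\mathfrak{L}^{[k],\wgt,1}$ and $h^{2}\mathfrak{L}^{[k],\wgt,2}$ are of order one in $D_{\sigma}$ with $\tau$-polynomial coefficients; combining them with a Born--Oppenheimer type lower bound to restore the potential $W$, and controlling all $\tau$-weighted norms of $u$ through the a priori estimates of Proposition \ref{weighted-Agmon} (legitimate since the weight commutes with $\tau$ and $D_{\tau}$), their contribution is $\Oc(h)\|u\|^{2}$, uniformly in $p$. This yields
\begin{equation}\label{eq:plan-lb}
\mathfrak{Q}_{h}^{[k]}(u)\geq\int_{\R^{2}}\Big(W(s)-\eps_{0}^{2}|\dr_{s}(\chi_{p}\Z)|^{2}-Ch\Big)|u|^{2}\dx s\dx t.
\end{equation}

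\textbf{Choice of $\eps_{0}$ and conclusion.}
Inserting \eqref{eq:plan-lb} into \eqref{eq:plan-agmon} and using $\lambda\leq\mu_{0}+C_{0}h$ gives $\int(W-\mu_{0}-2\eps_{0}^{2}|\dr_{s}(\chi_{p}\Z)|^{2})|u|^{2}\leq Ch\|u\|^{2}$. The key point is that $|\dr_{s}(\chi_{p}\Z)|^{2}\leq2\chi_{p}^{2}\Z'^{2}+\Oc(1)\mathds{1}_{\supp\chi_{p}'}$, with $\Z'^{2}=\chi^{2}(\gamma^{\frac{2}{k+2}}-\gamma_{0}^{\frac{2}{k+2}})$ proportional to $W-\mu_{0}$, while $\supp\chi_{p}'$ stays in a region where $W-\mu_{0}$ is bounded below; hence, for $\eps_{0}$ small enough (uniformly in $p$), the integrand is $\geq\tfrac12(W-\mu_{0})\geq0$, vanishing only at $s_{0}$. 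Since $\gamma$ has a non-degenerate minimum at $s_{0}$ and $\liminf_{\pm\infty}\gamma>\gamma_{0}$, we have $W(s)-\mu_{0}\geq c\min(|s-s_{0}|^{2},1)$. A standard splitting argument then applies, \emph{but with the $h$-dependent radius} $r_{0}=Kh^{1/2}$: on $\{|s-s_{0}|<r_{0}\}$ one has $\chi_{p}\Z\leq\Z\leq c'K^{2}h$, so the weight is uniformly bounded there ($|u|\leq\re^{\eps_{0}c'K^{2}}|\psi|$), whereas outside $W-\mu_{0}\gtrsim K^{2}h$; taking $K$ large to absorb the $Ch$ term yields $\|u\|^{2}\leq2\int_{|s-s_{0}|<r_{0}}|u|^{2}\leq2\re^{2\eps_{0}c'K^{2}}\|\psi\|^{2}$, uniformly in $p$, which is \eqref{eq:plan-main}.

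\textbf{Main difficulty.}
The delicate step is \eqref{eq:plan-lb}: one must carry the normal form reduction of Section \ref{S:verify-assumptions} through with the surrogate weight $\re^{\eps_{0}h^{-1}\chi_{p}\Z}$ rather than the exact eikonal phase, and check that the conjugated remainders (carrying $\tau$-polynomial coefficients and one $\sigma$-derivative) are absorbed as $\Oc(h)\|u\|^{2}$ \emph{uniformly in the truncation parameter $p$}; this uses the weighted $\sL^{2}$ and elliptic estimates of Proposition \ref{weighted-Agmon}. The second, slightly non-standard, ingredient — forced by the \emph{quadratic} behaviour of $\Z$ at the well, which is precisely why no Agmon distance is exhibited — is the $h$-dependent splitting radius $r_{0}=Kh^{1/2}$, which keeps the weight bounded on the near-well region.
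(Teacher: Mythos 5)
Your proof follows essentially the same path as the paper's: Agmon identity with the truncated weight $\re^{\eps_0 h^{-1}\chi_p\Z}$, the canonical transformation and gauge change of Section \ref{ss-verify-conf} to obtain the operator–valued lower bound, control of the $\tau$-weighted remainders via Proposition \ref{weighted-Agmon}, choice of small $\eps_0$ using $\Z'^2=\chi^2\big(\gamma^{\frac{2}{k+2}}-\gamma_0^{\frac{2}{k+2}}\big)$, and a $p\to\infty$/Fatou limit. Your only departures are presentational: you bound the real part of the conjugated form (with $\chi_p\Z$ in place of the eikonal phase $\Phi$) rather than keeping the $h D_\sigma$-kinetic term and applying functional calculus to the self-adjoint $\gamma^{-\frac{1}{2k+4}}D_\sigma\gamma^{-\frac{1}{2k+4}}$ as the paper does — the two routes are equivalent since $\min_\zeta\nun^{[k]}(\zeta)=\nun^{[k]}(\zeta_0^{[k]})$ — and you spell out the paper's "standard arguments" concluding step as an explicit $h$-dependent splitting radius $r_0=Kh^{1/2}$, which is exactly the phenomenon forced by the quadratic vanishing of $\Z$ at the well. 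Both points are sound and match the paper's intent.

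Two small inaccuracies worth flagging, neither of which damages the argument. First, your display \eqref{eq:plan-lb} places $-\eps_0^2|\dr_s(\chi_p\Z)|^2$ on the right-hand side, but since the real part of the conjugated quadratic form equals $\mathfrak{Q}_h^{[k]}(u)-\eps_0^2\|\dr_s(\chi_p\Z)u\|^2$, that term cancels on both sides and the clean lower bound is $\mathfrak{Q}_h^{[k]}(u)\geq\int W\,|u|^2-Ch\|u\|^2$; your version is merely weaker and just produces an extra factor $2$ downstream. Second, the claim "$V_{\zeta_0^{[k]}}$ does not depend on $\sigma$" is what lets you drop the $\sigma$-kinetic term by positivity, but one must also check that the cross terms from $h\mathfrak{L}^{[k],\wgt,1}$ (which contain one $D_\sigma$ multiplied by $\tau$-polynomial coefficients) are indeed $\Oc(h)\|u\|^2$ after integration by parts in $\sigma$ and the weighted $\tau$-estimates; this is precisely the role played in the paper by the bound $\int|\tau D_\tau\psi^{\wgt}|^2\leq C\|\psi^{\wgt}\|^2$ from Proposition \ref{weighted-Agmon}, and it is worth stating this invocation explicitly rather than folding it into "Born–Oppenheimer type lower bound."
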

\begin{proof}
Let us consider an eigenpair $(\lambda,\psi)$ of $\mathfrak{L}^{[k]}_{h}$ such that $\lambda\leq \mu_{0}+C_{0}h$. We first use the Agmon estimate
\begin{multline}
\mathfrak{Q}_{h}^{[k]}(\re^{\eps_{0}\chi_{p} h^{-1}\Z}\psi)\leq\lambda\|\re^{\eps_{0}\chi_{p} h^{-1}\Z}\psi\|^2+\tilde C\eps_{0}^2\|\Z'\re^{\eps_{0}\chi_{p} h^{-1}\Z}\psi\|^2
+\tilde C\eps_{0}^2\|\chi_{p}' \Z\re^{\eps_{0}\chi_{p} h^{-1}\Z}\psi\|^2.
\end{multline}
In order to simplify the notation, we consider the weighted $\psi$:
$$\psi^{\wgt}=\re^{\eps_{0}\chi_{p} h^{-1}\Z}\psi.$$
Now, we shall establish a very fine lower bound of $\mathfrak{Q}_{h}^{[k]}(\psi^{\wgt})$. For that purpose we use the normal form already introduced in Section \ref{ss-verify-conf} and associated with the change of variables \eqref{can}. If we denote by $\psi^{\wgt,\new}$ the function $\psi^{\wgt}$ transported by the canonical transform, we get
$$\mathfrak{Q}_{h}^{[k]}(\psi^\wgt)=\mathfrak{Q}_{h}^{[k],\new}(\psi^{\wgt, \new}).$$
Using again \eqref{lb-normal} with $\eps=h$, we deduce with \eqref{E:g'g}
\begin{multline}\label{E:Qhknewmin}
\mathfrak{Q}_{h}^{[k],\new}(\psi^{\wgt, \new})\geq\\
(1-h)\int_{\R^2}\gamma(\sigma)^{\frac{2}{k+2}}\left(|D_{\tau}\psi^{\wgt, \new}|^2+\left|\left(h\gamma(\sigma)^{-\frac{1}{k+2}}D_{\sigma}-\frac{\tau^{k+1}}{k+1}\right)\psi^{\wgt, \new}\right|^2\right)\dx \sigma \dx \tau\\
-Ch\int_{\R^2}\left|\tau D_{\tau}\psi^{\wgt, \new}\right|^2 \dx \sigma\dx \tau-Ch\|\psi^{\wgt,\new}\|^2.
\end{multline}
But we have
$$\int_{\R^2}\left|\tau D_{\tau}\psi^{\wgt, \new}\right|^2 \dx \sigma\dx \tau=\int_{\R^2}\left|t D_{t}\psi^{\wgt}\right|^2 \dx s \dx t$$
and we apply Proposition \ref{weighted-Agmon} to get
$$\int_{\R^2}\left|t D_{t}\psi^{\wgt}\right|^2 \dx s \dx t\leq C\|\psi^{\wgt,\new}\|^2.$$
With \eqref{E:Qhknewmin}, we infer
\begin{multline}
\mathfrak{Q}_{h}^{[k],\new}(\psi^{\wgt, \new})\geq\\
(1-h)\int_{\R^2}\gamma(\sigma)^{\frac{2}{k+2}}\left(|D_{\tau}\psi^{\wgt, \new}|^2+\left|\left(h\gamma(\sigma)^{-\frac{1}{k+2}}D_{\sigma}-\frac{\tau^{k+1}}{k+1}\right)\psi^{\wgt, \new}\right|^2\right)\dx \sigma \dx \tau\\
-Ch\|\psi^{\wgt,\new}\|^2.
\end{multline}
We can write
\begin{align*}
&\int_{\R^2}\gamma(\sigma)^{\frac{2}{k+2}}\left(|D_{\tau}\psi^{\wgt, \new}|^2+\left|\left(h\gamma(\sigma)^{-\frac{1}{k+2}}D_{\sigma}-\frac{\tau^{k+1}}{k+1}\right)\psi^{\wgt, \new}\right|^2\right)\dx \sigma \dx \tau\\
=&\int_{\R^2}\left(|D_{\tau}\phi^{\wgt, \new}|^2+\left|\left(hD_{\sigma}\gamma(\sigma)^{-\frac{1}{k+2}}-\frac{\tau^{k+1}}{k+1}\right)\phi^{\wgt, \new}\right|^2\right)\dx \sigma \dx \tau\\
=&\int_{\R^2}\left(|D_{\tau}\phi^{\wgt, \new}|^2+\left|\left(h\Xi(\sigma,D_{\sigma})-\frac{\tau^{k+1}}{k+1}+\frac{i h}{2k+4}\gamma'(\sigma)\gamma(\sigma)^{-\frac{k+3}{k+2}}\right)\phi^{\wgt, \new}\right|^2\right)\dx \sigma \dx \tau,
\end{align*}
with
$$\phi^{\wgt, \new}(\sigma,\tau)=\gamma(\sigma)^{\frac{1}{k+2}}\psi^{\wgt, \new}(\sigma,\tau).$$
We are reduced to the same analysis as after \eqref{before-self}. We deduce that
$$\mathfrak{Q}_{h}^{[k],\new}(\psi^{\wgt, \new})\geq(1-h)\mathfrak{q}_{h}^{[k],\new}(\gamma^{\frac{1}{k+2}}\psi^{\wgt,\new})-Ch\|\psi^{\wgt,\new}\|^2.$$
We may use the functional calculus for the self-adjoint operator $\gamma^{-\frac{1}{2k+4}}D_{\sigma}\gamma^{-\frac{1}{2k+4}}$ and it follows that
$$\mathfrak{Q}_{h}^{[k],\new}(\psi^{\wgt, \new})\geq(1-h)\min_{\zeta\in\R} \nun^{[k]}(\zeta)\|\gamma^{\frac{1}{k+2}}\psi^{\wgt,\new}\|^2-Ch\|\psi^{\wgt,\new}\|^2.$$
We now come back in the variables $(s,t)$ and we have proved
$$\min_{\zeta\in\R} \nun^{[k]}(\zeta)\|\gamma^{\frac{1}{k+2}}\psi^{\wgt}\|^2-Ch\|\psi^{\wgt}\|^2 \leq\lambda\|\psi^{\wgt}\|^2+\tilde C\eps_{0}^2\|\Z'\psi^{\wgt}\|^2+\tilde C\eps_{0}^2\|\chi_{p}' \Z\psi^{\wgt}\|^2.$$
We deduce that
\begin{multline}
\min_{\zeta\in\R} \nun^{[k]}(\zeta)\int_{\R^2} \Big(\gamma^{\frac{2}{k+2}}-\gamma_{0}^{\frac{2}{k+2}}\Big)|\psi^{\wgt}|^2\dx s \dx t-\tilde Ch\|\psi^{\wgt}\|^2\\
\leq \tilde C\eps_{0}^2\|\Z'\psi^{\wgt}\|^2+\tilde C\eps_{0}^2\|\chi_{p}' \Z\psi^{\wgt}\|^2.
\end{multline}
We infer that there exist $c>0$, $\tilde C>0$, $h_{0}>0$ and $\eps_{0}>0$ such that for all $h\in(0,h_{0})$ and $p\geq 1$ we have:
\begin{equation}\label{energy-inequality}
c\int_{\R^2} \Big(\gamma^{\frac{2}{k+2}}-\gamma_{0}^{\frac{2}{k+2}}\Big)|\psi^{\wgt}|^2\dx s \dx t-\tilde Ch\|\psi^{\wgt}\|^2\leq\tilde C\eps_{0}^2\|\chi_{p}' \Z\psi^{\wgt}\|^2.
\end{equation}
We deduce by standard arguments that  there exist $\hat C>0$, $h_{0}>0$ and $\eps_{0}>0$ such that for all $h\in(0,h_{0})$ and $p\geq 1$ we have:
$$\|\psi^{\wgt}\|^2\leq \hat C\|\psi\|^2+h^{-1}\hat C\eps_{0}^2\|\chi_{p}' \Z\psi^{\wgt}\|^2.$$
Since $c$ is bounded and $|\chi'_{p}|\leq Cp^{-1}$, it remains to take the $\displaystyle{\liminf_{p\to+\infty}}$ and to apply the Fatou lemma.
\end{proof}
In fact, when $\gamma$ admits two non degenerate minima, we also have Agmon estimates. Let us now prove Proposition \ref{Agmon-puits-double}.
\begin{proof}
The proof is essentially the same as for Proposition \ref{Agmon-puits-simple}. The inequality \eqref{energy-inequality} becomes in this case
\begin{multline*}
c\int_{\R^2} \Big(\gamma^{\frac{2}{k+2}}-\gamma_{0}^{\frac{2}{k+2}}\Big)|\psi^{\wgt}|^2\dx s \dx t-\tilde Ch\|\psi^{\wgt}\|^2\\
\leq\tilde C\eps_{0}^2\|\chi_{p}' \Z\psi^{\wgt}\|^2+C\eps_{0}^2\|f_{-}\psi^{\wgt}\|^2+C\eps_{0}^2\|f_{+}\psi^{\wgt}\|^2,
\end{multline*}
with
$$f_{-}(s)=\chi'_{-,d}(s)\left|\int_{s_{-}}^s \tilde\chi(s')\sqrt{\gamma(s')^{\frac{2}{k+2}}-\gamma_{0}^{\frac{2}{k+2}}}\dx s'\right|,$$
$$f_{+}(s)=\chi'_{+,d}(s)\left|\int_{s_{-}}^s \tilde\chi(s')\sqrt{\gamma(s')^{\frac{2}{k+2}}-\gamma_{0}^{\frac{2}{k+2}}}\dx s'\right|.$$
We can deal with the last two terms by using the positivity of $\gamma^{\frac{2}{k+2}}-\gamma_{0}^{\frac{2}{k+2}}$ away from the minima and the conclusion standardly follows from an electric type case.
\end{proof}

\subsection{Tunnelling estimates}
Let us first state an elementary result.
\begin{proposition}
The asymptotic expansions (modulo $h^{\infty}$) of the $n$-th eigenvalue of $\mathfrak{H}_{h,-}^\Dir$ are the same as for $\mathfrak{L}^{[k]}_{h}$. In particular, the spectral gap between two consecutive eigenvalues is of order $h$. Moreover the eigenfunctions of $\mathfrak{H}_{h,-}^\Dir$ satisfy the same kind of Agmon estimates as in Proposition \ref{Agmon-puits-simple}.
\end{proposition}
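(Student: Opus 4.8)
The plan is to exploit the fact that $\mathfrak{H}_{h,-}^\Dir$ is a \emph{single} magnetic well sitting at $s_{-}$: the only genuinely non-compact end is $s\to-\infty$, and the Dirichlet wall $\{s=s_{-}+\delta\}$ lies at a fixed positive distance $\delta$ from the well. Consequently the whole machinery developed for the one-well operators in Sections~\ref{Sec.2}, \ref{sec:WKB} and~\ref{Sec:Mont} transposes, the only new point being to check repeatedly that the far Dirichlet condition perturbs everything by $\Oc(h^{\infty})$. First I would establish the rough a priori facts. The bottom of $\spe_{\ess}(\mathfrak{H}_{h,-}^\Dir)$ is $\geq\mu_{0}^{*}$: on the end $s\to-\infty$ one reproduces verbatim the magnetic/spectral confinement of Section~\ref{ss-verify-conf} (which is local in $s$ near infinity), while the Dirichlet boundary adds nothing to the essential spectrum. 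An upper bound $\lambda_{n}(\mathfrak{H}_{h,-}^\Dir)\leq\mu_{0}+Ch$ then follows by inserting into $\mathfrak{Q}_{h}^{[k]}$ the harmonic quasimodes of Proposition~\ref{quasimodes}, recentred at $s_{-}$ via $s=s_{-}+h^{1/2}\sigma$ and multiplied by a cutoff supported in $(-\infty,s_{-}+\delta)$; the commutator error is $\Oc(h^{\infty})$ since these quasimodes are Schwartz in $\sigma$.

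With these two facts in hand, the proofs of Proposition~\ref{weighted-Agmon} and then of Proposition~\ref{Agmon-puits-simple} apply word for word to $\mathfrak{H}_{h,-}^\Dir$, with $\Z$ replaced by the weight $\Z_{-}(s)=\big|\int_{s_{-}}^{s}\tilde\chi(s')\sqrt{\gamma(s')^{2/(k+2)}-\gamma_{0}^{2/(k+2)}}\,\dx s'\big|$ attached to $s_{-}$ and cut off before $s_{-}+\delta$; the Dirichlet condition only simplifies the Agmon identity and the IMS partitions. This yields the last assertion of the proposition.

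For the eigenvalue expansions I would carry out the WKB construction of Section~\ref{sec:WKB}, in its global normal-form version of Sections~\ref{ss-verify-conf}--\ref{SS:Agmon}, at the well $s_{-}$: this produces a phase $\Phi_{-}$ with $\Phi_{-}(s_{-})=\Phi_{-}'(s_{-})=0$, $\Phi_{-}''(s_{-})>0$, and formal series $a_{n}(\cdot\,;h)$, $\sum_{j}\lambda_{n,j}h^{j}$ solving the operator-valued eikonal and transport equations near $s_{-}$. Truncating $a_{n}\re^{-\Phi_{-}/h}$ with a cutoff equal to $1$ near $s_{-}$ and supported in $(-\infty,s_{-}+\delta)$ gives a genuine quasimode of $\mathfrak{H}_{h,-}^\Dir$ with $\Oc(h^{\infty})$ remainder (because $\Phi_{-}$ is bounded below by a positive constant on the support of the cutoff's derivative), so by the spectral theorem $\spe(\mathfrak{H}_{h,-}^\Dir)$ lies within $\Oc(h^{\infty})$ of each $\sum_{j}\lambda_{n,j}h^{j}$. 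For the matching lower bound, the Agmon localization just obtained shows that $\mathfrak{H}_{h,-}^\Dir$, localized near $s_{-}$, coincides modulo $\Oc(h^{\infty})$ with the corresponding single-well operator, so the coherent-states decomposition and the Feshbach--Grushin reduction of Sections~\ref{coherent}--\ref{projection} apply and yield $\lambda_{n}(\mathfrak{H}_{h,-}^\Dir)\geq\mu_{0}+\lambda_{n,1}h+o(h)$; since the $\lambda_{n,1}$ (eigenvalues of $\tfrac12\Hess\,\mun(x_{0},\xi_{0})(\sigma,D_{\sigma})$, simple by Assumption~\ref{hyp-gen'}, automatic here as $m=1$) are pairwise distinct, the low-lying spectrum of $\mathfrak{H}_{h,-}^\Dir$ breaks into clusters separated by gaps of size of order $h$, and a standard min--max dimension count combined with the quasimodes upgrades the two-sided estimate into $\lambda_{n}(\mathfrak{H}_{h,-}^\Dir)\sim\sum_{j}\lambda_{n,j}h^{j}$, with the full WKB series; in particular consecutive eigenvalues differ at order $h$. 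Finally, the coefficients $\lambda_{n,j}$ depend only on the germ of $\gamma$ at $s_{-}$, which by evenness is the mirror image of its germ at $s_{+}$; these are precisely the WKB coefficients attached to either well of $\mathfrak{L}_{h}^{[k]}$, which is the meaning of ``the same expansions as for $\mathfrak{L}_{h}^{[k]}$''.

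The main obstacle is this last matching step: producing the quasimodes is routine, but to be sure that the $n$-th eigenvalue of $\mathfrak{H}_{h,-}^\Dir$ is the $n$-th WKB value rather than a smaller ``parasitic'' one requires combining the Agmon localization with the quantitative lower bound of Section~\ref{Sec.2} (or, alternatively, with the elementary inclusion $\spe(\mathfrak{H}_{h,-}^\Dir)\subset\spe(\mathfrak{H}_{h})$ together with $\lambda_{n}(\mathfrak{H}_{h})\geq\lambda_{n}(\mathfrak{L}_{h}^{[k]})$) and then exploiting the $\Oc(h)$ separation of the $\lambda_{n,1}$; everything else is a transcription of Sections~\ref{Sec.2}--\ref{SS:Agmon} with the far Dirichlet wall accounted for.
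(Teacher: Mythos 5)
Your quasimode/upper-bound step is exactly the paper's: reuse the construction of Proposition \ref{quasimodes} centred at $s_{-}$, cut off before the Dirichlet wall, and absorb the commutator errors using the exponential (Gaussian in $\sigma$) decay of the explicit quasimodes. Where you diverge is on the lower bound and the Agmon estimates, and your primary route there has a gap. You propose to rerun Propositions \ref{weighted-Agmon} and \ref{Agmon-puits-simple} and then the coherent-state/Feshbach--Grushin machinery of Sections \ref{coherent}--\ref{projection} directly on the Dirichlet half-space. But that machinery lives after the rescaling $s=x_{0}+h^{1/2}\sigma$ and is built on the resolution of identity by Gaussian coherent states $f_{u,p}$ and on the operators $\ga_{j},\ga_{j}^{*}$, none of which respect the boundary $\{s=s_{-}+\delta\}$: $\ga_{j}^{*}\psi$ does not satisfy the Dirichlet condition, the IMS-type identity of Lemma \ref{lem.IMS-formula} picks up boundary terms, and the anti-Wick lower bound \eqref{rlb} is an identity on $\sL^2(\R^m\times\R^n)$, not on a half-space. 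Your sentence ``the Agmon localization just obtained shows that $\mathfrak{H}_{h,-}^\Dir$, localized near $s_{-}$, coincides modulo $\Oc(h^{\infty})$ with the corresponding single-well operator'' is precisely the assertion that needs a construction to back it up, and your fallback ($\spe(\mathfrak{H}_{h,-}^\Dir)\subset\spe(\mathfrak{H}_{h})$ plus $\lambda_{n}(\mathfrak{H}_{h})\geq\lambda_{n}(\mathfrak{L}_{h}^{[k]})$) is circular: in this section $\mathfrak{L}_{h}^{[k]}$ is the double-well operator, Assumption \ref{hyp-gen} fails for it, and no lower bound for its eigenvalues is available yet — obtaining one is the very purpose of introducing $\mathfrak{H}_{h}$.

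The paper's resolution is a one-line comparison that you should adopt: extend $\gamma|_{(-\infty,s_{-}+\delta)}$ to a smooth $\tilde\gamma$ on all of $\R$ with a unique non degenerate minimum at $s_{-}$ (and suitable behaviour at $+\infty$). Extension by zero of functions in the Dirichlet form domain and the min--max principle give
$$\lambda_{n}(\mathfrak{H}_{h,-}^\Dir)\ \geq\ \lambda_{n}\Big(D_{t}^2+\big(hD_{s}-\tilde\gamma(s)\tfrac{t^{k+1}}{k+1}\big)^2\ \text{on } \sL^2(\R^2)\Big),$$
and the right-hand operator is a genuine single-well operator on the whole space, so Theorem \ref{theorem-simple-well} (hence the $\Oc(h)$ gaps) and the confinement needed to run the Agmon argument of Proposition \ref{Agmon-puits-simple} apply to it verbatim; the Dirichlet boundary never has to be analyzed microlocally. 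This comparison, combined with your (correct) quasimodes, yields all three assertions of the proposition.
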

\begin{proof}
The construction of quasimodes is the same as for Proposition \ref{quasimodes}. We have just to add a suitable cutoff function and to use the exponential decay of the explicit quasimodes. In order to estimate the spectral gap between the lowest eigenvalues of $\mathfrak{H}_{h,-}^\Dir$, by the min-max principle, we have just to notice that $\mathfrak{H}_{h,-}^\Dir$ is bounded from below by the realization on $\sL^2(\R^2)$ of $D_{t}^2+\left(hD_{s}-\tilde\gamma(s)\frac{t^{k+1}}{k+1}\right)^2$ where the smooth function $\tilde\gamma$ coincides with $\gamma$ on $(-\infty,s_{-}+\delta)$ and admits a unique minimum at $s_{-}$. This lower bound is enough to deduce the Agmon estimates.
\end{proof}
Let us now prove that the low lying eigenvalues of $\mathfrak{L}_{h}^{[k]}$ are exponentially close to the eigenvalues of $\mathfrak{H}_{h}$.
\begin{proposition}\label{spec-loc}
Let us consider $C_{0}>0$. There exist $c>0$, $C>0$, $h_{0}>0$ such that for all $\mu\in\spe\left(\mathfrak{L}_{h}^{[k]}\right)$ with $\mu\leq \mu_{0}+C_{0}h$, we have, for all $h\in(0,h_{0})$,
$$\dist\left(\mu,\spe\left(\mathfrak{H}_{h}\right)\right)\leq C\re^{-c/h}.$$
\end{proposition}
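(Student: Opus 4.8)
The plan is the classical tunneling argument: localize an eigenfunction of $\mathfrak{L}_h^{[k]}$ onto each well, observe that the localization error is supported where the eigenfunction is exponentially small by the Agmon estimates of Proposition \ref{Agmon-puits-double}, and conclude with the spectral theorem for $\mathfrak{H}_h$. Let $(\mu,\psi)$ be an eigenpair of $\mathfrak{L}_h^{[k]}$ with $\mu\le\mu_0+C_0 h$; by elliptic regularity $\psi$ is smooth. I would fix two cutoff functions $\chi_-,\chi_+\in\ccc^\infty(\R)$, depending only on $s$, with $0\le\chi_-^2+\chi_+^2\le1$, $\chi_-\equiv1$ on $(-\infty,s_-+\delta/2]$ and $\supp\chi_-\subset(-\infty,s_-+\delta)$, and symmetrically $\chi_+\equiv1$ on $[s_+-\delta/2,+\infty)$ with $\supp\chi_+\subset(s_+-\delta,+\infty)$; since $\delta\in(0,s_+)$ these two supports are disjoint. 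The key geometric point is that, because $\gamma(s)^{2/(k+2)}-\gamma_0^{2/(k+2)}$ is strictly positive for $s\neq s_\pm$, one can choose $\delta$ and the cutoffs $\tilde\chi,\chi_{\delta,\pm}$ entering the definition of $\Z$ so that $\Z\ge c_0>0$ on the compact set $K:=\supp\chi_-'\cup\supp\chi_+'\cup\supp(1-\chi_-^2-\chi_+^2)$, which is bounded away from $s_-$ and $s_+$.

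Set $\Psi:=(\chi_-\psi,\chi_+\psi)$. Since $\chi_\pm$ vanishes near the artificial boundary, $\chi_\pm\psi$ lies in $\Dom(\mathfrak{H}_{h,\pm}^\Dir)$, so $\Psi\in\Dom(\mathfrak{H}_h)$, and on each summand $\mathfrak{H}_{h,\pm}^\Dir$ acts as $\mathfrak{L}_h^{[k]}$; hence
$$(\mathfrak{H}_{h,\pm}^\Dir-\mu)(\chi_\pm\psi)=[\mathfrak{L}_h^{[k]},\chi_\pm]\psi=\big(hD_s-\gamma\tfrac{t^{k+1}}{k+1}\big)(-ih\chi_\pm')\psi+(-ih\chi_\pm')\big(hD_s-\gamma\tfrac{t^{k+1}}{k+1}\big)\psi.$$
This is a first-order operator in $hD_s$ with coefficients supported in $K$ and carrying a factor $h$, so
$$\|(\mathfrak{H}_h-\mu)\Psi\|\le Ch\sum_{j\in\{-,+\}}\Big(\big\|\chi_j'\,\big(hD_s-\gamma\tfrac{t^{k+1}}{k+1}\big)\psi\big\|+\|\chi_j'\psi\|+h\|\chi_j''\psi\|\Big).$$

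Now every term on the right is $\Oc(\re^{-c/h})\|\psi\|$. For the zeroth-order terms, $\|\chi_j'\psi\|\le\re^{-\eps_0 c_0/h}\|\re^{\eps_0\Z/h}\psi\|\le C\re^{-\eps_0 c_0/h}\|\psi\|$ by the first bound of Proposition \ref{Agmon-puits-double}, and likewise for $\chi_j''$. For the first-order term I would use the companion quadratic-form bound $\mathfrak{Q}_h^{[k]}(\re^{\eps_0\Z/h}\psi)\le C\|\psi\|^2$, which controls $\big\|\big(hD_s-\gamma\tfrac{t^{k+1}}{k+1}\big)(\re^{\eps_0\Z/h}\psi)\big\|$; commuting $\re^{\eps_0\Z/h}$ through the operator costs only the bounded multiplier $\eps_0\Z'$, and restricting to $\supp\chi_j'\subset K$ then yields $\big\|\chi_j'\big(hD_s-\gamma\tfrac{t^{k+1}}{k+1}\big)\psi\big\|\le C\re^{-\eps_0 c_0/h}\|\psi\|$. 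Hence $\|(\mathfrak{H}_h-\mu)\Psi\|\le C\re^{-c/h}\|\psi\|$ for some $c>0$. On the other hand, by $\chi_-^2+\chi_+^2+(1-\chi_-^2-\chi_+^2)=1$ and the Agmon estimate on $K$, $\|\Psi\|^2=\|\psi\|^2-\|(1-\chi_-^2-\chi_+^2)^{1/2}\psi\|^2\ge\|\psi\|^2-C\re^{-2c/h}\|\psi\|^2\ge\tfrac12\|\psi\|^2$ for $h$ small. The spectral theorem applied to the self-adjoint operator $\mathfrak{H}_h$ then gives $\dist(\mu,\spe(\mathfrak{H}_h))\le\|(\mathfrak{H}_h-\mu)\Psi\|/\|\Psi\|\le C\re^{-c/h}$, as claimed.

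The step I expect to be the real obstacle is controlling the first-order commutator term: a weighted $\sL^2$ Agmon estimate by itself does not make it exponentially small, and it is precisely the quadratic-form part of Proposition \ref{Agmon-puits-double} — equivalently, the weighted control of $\big(hD_s-\gamma\tfrac{t^{k+1}}{k+1}\big)\psi$ — that is needed here. A subsidiary, purely geometric point is to verify that $c_0>0$ on all of $K$, which constrains the choice of $\delta$ and of the cutoffs $\tilde\chi,\chi_{\delta,\pm}$ defining $\Z$.
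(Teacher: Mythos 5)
Your argument is correct and follows the same route as the paper: cut the eigenfunction $\psi$ onto each well, bound the commutator error by the Agmon estimates of Proposition~\ref{Agmon-puits-double}, and invoke the spectral theorem for $\mathfrak{H}_h$. Your write-up usefully makes explicit the two points the paper leaves implicit — that the first-order commutator term $\chi_j'\big(hD_s-\gamma\tfrac{t^{k+1}}{k+1}\big)\psi$ really requires the weighted quadratic-form bound and not just the weighted $\sL^2$ bound, and that one must check $\|\chi_-\psi\|^2+\|\chi_+\psi\|^2\geq\tfrac12\|\psi\|^2$ so that the Rayleigh quotient in the spectral theorem is not degenerate — but the substance and mechanism are identical to the paper's proof.
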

\begin{proof}
The proof is based on the introduction of suitable quasimodes for the operator $\mathfrak{H}_{h}$ and on the Agmon estimates satisfied by the eigenfunctions of $\mathfrak{L}_{h}^{[k]}$. Let us consider an eigenpair $(\lambda,\psi)$ of $\mathfrak{L}_{h}^{[k]}$ such that $\lambda\leq \mu_{0}+C_{0}h$. One knows that $\psi$ satisfies the estimates of Proposition \ref{Agmon-puits-double}. In particular, we deduce that
$$\mathfrak{H}_{h,-}^{\Dir}(\chi_{-,d'}\psi)=\lambda (\chi_{-,d'}\psi)+\Oc(\re^{-c/h})\|\psi\|^2,\quad \mathfrak{H}_{h,+}^{\Dir}(\chi_{+,d'}\psi)=\lambda (\chi_{+,d'}\psi)+\Oc(\re^{-c/h})\|\psi\|^2.$$
The spectral theorem provides the conclusion.
\end{proof}
In fact, exponentially close to each eigenvalue of $\mathfrak{H}_{h}$ there are at least two eigenvalues of $\mathfrak{L}_{h}$.
\begin{proposition}\label{r>2}
Let us consider $C_{0}>0$. There exist $c>0$, $C>0$, $h_{0}>0$ such that for all $\mu\in\spe\left(\mathfrak{H}_{h}\right)$ satisfying $\mu\leq \mu_{0}+C_{0}h$, we have, for all $h\in(0,h_{0})$,
$$\range\left(\mathds{1}_{[\mu-C\re^{-c/h}, \mu+C\re^{-c/h}]}\big(\mathfrak{L}_{h}^{[k]}\big)\right)\geq 2.$$
\end{proposition}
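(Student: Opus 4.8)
The plan is to exploit the reflection symmetry of the double well to manufacture \emph{two} almost orthogonal quasimodes for $\mathfrak{L}_h^{[k]}$ at the energy $\mu$, and then to invoke the standard spectral-theorem argument that several almost orthonormal quasimodes force the spectral projection onto a small window to have rank at least as large. First I would observe that, since $\mathfrak{H}_h=\mathfrak{H}_{h,-}^\Dir\oplus\mathfrak{H}_{h,+}^\Dir$ and the two summands are isospectral (exchanged by $s\mapsto-s$), any $\mu\in\spe(\mathfrak{H}_h)$ with $\mu\leq\mu_0+C_0h$ is a genuine discrete eigenvalue of \emph{both} $\mathfrak{H}_{h,-}^\Dir$ and $\mathfrak{H}_{h,+}^\Dir$; I denote by $\phi_-$ and $\phi_+$ associated normalized eigenfunctions, $\phi_+$ being the reflection of $\phi_-$. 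By the Agmon estimates satisfied by the eigenfunctions of $\mathfrak{H}_{h,\mp}^\Dir$ (of the same type as in Proposition~\ref{Agmon-puits-simple}, both in $\sL^2$ and at the level of the quadratic form), $\phi_\mp$ decays like $\re^{-\eps_0\Z/h}$ away from the well $s_\mp$; in particular $\phi_-$ and its first derivatives are $\Oc(\re^{-c/h})$ near the artificial Dirichlet boundary $\{s=s_-+\delta\}$, and symmetrically for $\phi_+$ near $\{s=s_+-\delta\}$.

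Next I would truncate, exactly as in the proof of Proposition~\ref{spec-loc}. Choose cutoff functions $\chi_{\pm,d'}$ equal to $1$ in a fixed neighborhood of the corresponding well, supported strictly inside the half-line on which $\mathfrak{H}_{h,\pm}^\Dir$ is defined and away from the opposite well, and set $\tilde\phi_\pm=\chi_{\pm,d'}\phi_\pm$. Then $\tilde\phi_\pm\in\Dom(\mathfrak{L}_h^{[k]})$ (the cutoff kills the Dirichlet boundary), and since the commutator $[\mathfrak{L}_h^{[k]},\chi_{\pm,d'}]$ is a first order operator supported precisely where $\phi_\pm$ is exponentially small, the Agmon estimates give
$$\big\|(\mathfrak{L}_h^{[k]}-\mu)\tilde\phi_\pm\big\|\leq C\re^{-c/h},\qquad \big|\,\|\tilde\phi_\pm\|^2-1\,\big|\leq C\re^{-c/h}.$$
Moreover $\supp\tilde\phi_+$ and $\supp\tilde\phi_-$ meet only in a region where both functions are $\Oc(\re^{-c/h})$, so $|\langle\tilde\phi_+,\tilde\phi_-\rangle|\leq C\re^{-c/h}$; hence $(\tilde\phi_-,\tilde\phi_+)$ is an almost orthonormal pair of quasimodes of $\mathfrak{L}_h^{[k]}$ at level $\mu$ with residual $\Oc(\re^{-c/h})$.

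Finally I would conclude with the usual lemma: if a self-adjoint operator $A$ admits an almost orthonormal family of $N$ quasimodes at level $\mu$ with residual $\eps$, then $\range\big(\mathds 1_{[\mu-C\eps,\mu+C\eps]}(A)\big)\geq N$, the proof being that for $\psi$ in the span of the quasimodes one estimates $\|(A-\mu)\psi\|\leq C\eps\|\psi\|$ and then $\|(1-\mathds 1_{[\mu-C\eps,\mu+C\eps]}(A))\psi\|\leq\tfrac12\|\psi\|$, so the spectral projection is injective on that span. Applying this with $A=\mathfrak{L}_h^{[k]}$, $N=2$ and $\eps=C\re^{-c/h}$ yields the proposition. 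The only point requiring care — and it is already supplied by the hypotheses — is that the one-well Agmon estimates must hold not merely in $\sL^2$ but also for the quadratic form, so that the commutator term $[\mathfrak{L}_h^{[k]},\chi_{\pm,d'}]\phi_\pm$, which involves $hD_s\phi_\pm$, is genuinely $\Oc(\re^{-c/h})$; this is exactly the second inequality in Proposition~\ref{Agmon-puits-simple}. The remaining verifications are routine, and together with Proposition~\ref{spec-loc} this gives Theorem~\ref{tunnelling}.
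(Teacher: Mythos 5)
Your proposal is correct and follows essentially the same route as the paper: truncate the one-well eigenfunctions $\varphi_{\mp}$ of $\mathfrak{H}_{h,\mp}^{\Dir}$ by cutoffs $\chi_{\mp,d'}$, use the Agmon estimates to bound the commutator residuals by $\Oc(\re^{-c/h})$, note the (almost) orthogonality of the two resulting quasimodes, and conclude by the spectral theorem. The only cosmetic difference is that the paper arranges the cutoffs so that the two quasimodes are exactly orthogonal, whereas you settle for almost orthogonality, which is equally sufficient for the rank lemma.
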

\begin{proof}
Let us consider $\mu\in\spe\left(\mathfrak{H}_{h}\right)$ such that $\mu\leq \mu_{0}+C_{0}h$. We can find a corresponding normalized eigenfunction $(\varphi_{-},\varphi_{+})$ of $\mathfrak{H}_{h}$. The Agmon estimates imply that
$$\left(\mathfrak{L}_{h}^{[k]}-\mu\right)(\chi_{-,d'}\varphi_{-})=\Oc(\re^{-c/h}),\quad \left(\mathfrak{L}_{h}^{[k]}-\mu\right)(\chi_{+,d'}\varphi_{+})=\Oc(\re^{-c/h}),$$
and where $\chi_{-,d'}\varphi_{-}$ and $\chi_{+,d'}\varphi_{+}$ are orthogonal. The proof follows again from the spectral theorem.
\end{proof}
We have now all the elements for our tunnelling result. It remains to prove that, in Proposition \ref{r>2}, the range of the spectral projection is exactly $2$. By contradiction, if this range were at least $3$, then we could consider three eigenfunctions $(u_{j,h}^{[k]})$ of $\mathfrak{L}_{h}^{[k]}$ mutually orthogonal associated with an eigenvalue $\lambda$ such that there exists $\mu\in\spe\left(\mathfrak{H}_{h}\right)$ satisfying $|\mu-\lambda|\leq C\re^{-c/h}$. Then one could apply the same argument as in the proof of Proposition \ref{spec-loc} with $\psi=\psi_{j}$. The spectral theorem would imply that the multiplicity of $\mu$ is at least $3$ but this is impossible since the multiplicity of the lowest eigenvalues of $\mathfrak{H}_{h}$ is $2$.

\subsection{Numerical simulations}

\subsubsection{Method}\label{sec.methnum}
Let us now deal with numerical simulations for the model operator $\mathfrak L_{h}^{[k]}$ on $\R^2$ if $k=1$ and on the half-plane $\R^2_{+}$ if $k=0$.
In each case, we propose simulations for the simple and double well models. We analyze the convergence of the eigenvalues as $h\to0$ and give the first eigenfunctions for small $h$. This illustrates the localization of the modulus and the behavior of the phase.

To approximate the eigenpairs $(\lambda_{n}^{[k]}(h),u_{n,h}^{[k]})$ of the operator $\mathfrak L_{h}^{[k]}$ on $\Omega_{k}$ (with $\Omega_{0}=\R^2_{+}$ and $\Omega_{1}=\R^2$), we use the Finite Element Library {\sc M\'elina++} \cite{Melina++}. Since the domain $\Omega_{k}$ is unbounded, we use an artificial domain ${\cal R}_{k,a,b}=\Omega_{k}\cap (-a,a)\times (-b,b)$. We compute the eigenpairs $(\lambda_{n}^{[k]}(h,a,b),u_{n,h}^{[k]}(a,b))$ on ${\cal R}_{k,a,b}$ and impose Dirichlet conditions on the artificial boundaries $\{|x|=a\}\cup\{|y|=b\}$. We use quadrangular elements and polynomial approximation $\mathbb Q_{p}$ and a mesh $n_{x}\times n_{y}$. 

By this way, we obtain upper bounds for $\lambda_{n}^{[k]}(h)$ and we know that
$$\lambda_{n}^{[k]}(h,a,b)\to \lambda_{n}^{[k]}(h)\quad\mbox{ as }\quad\min(a,b)\to +\infty.$$
We compute the eigenpairs for several sets of parameters $(a,b)$ with several combinations of degree of approximation $p$ and of size of the meshes $n_{x}\times n_{y}$ until convergence is found.
We normalize the computed eigenfunctions so that $\|u_{n,h}^{[k]}(a,b))\|_{\infty}=1$.

\subsubsection{Simple well models}
For the numerical simulations, we take
$$\gamma(s)=1+4s^2,\qquad s\in\R.$$
Parameters used for the numerical simulations are given in Table~\ref{table1}.
Among the computations, we take, for each value of $h$, the smallest numerical eigenvalues for the different choices of $a$, $b$, $p$, $n_{x}$, $n_{y}$.

\begin{table}[h!t]
\begin{center}
\begin{tabular}{ccccccccc}
$a$ & $n_{x}$ & $b$ & $n_{y}$ & $p$ & $1/h$ \\
1 & 5 & 20 & 20 & 16 & $1:1:100$, $110:10:1000$\\
1 & 5 & 40 & 20 & 14 & $10:10:1000$\\
1.5 & 5 & 30 & 20 & 14 & $1:1:100$\\
2 & 5 & 40 & 20 & 14 & $1:1:100$\\
\end{tabular}
\caption{Simple well: Parameters of the numerical simulations for $\mathfrak L_{h}^{[k]}$, $k=0,1$.}\label{table1}
\end{center}
\end{table}
In Figure~\ref{fig.VP1puitsR2}, we analyze the convergence of the eigenvalues $\lambda_{n}^{[k]}(h)$ as $h\to 0$ for $1\leq n\leq 12$.
Figure~\ref{fig.1puitsVPh} gives an approximation of the first twelve eigenvalues $\lambda_{n}^{[k]}(h)$, $1/h\in [1,100]$ and corroborates the convergence 
$$\lambda_{n}^{[k]}(h)\to \underline\nu^{[k]}=\nun^{[k]}\big(\zeta_{0}^{[k]}\big)\qquad\mbox{ as }h\to 0.$$
Using \cite{BR13,Bon12}, we know that
$$\underline\nu^{[1]}\simeq 0.5698,\qquad
\underline\nu^{[0]}=\Theta_{0}\simeq 0.5901.
$$
\begin{figure}[h!t]
\begin{center}
\subfigure[$\lambda_{n}^{[k]}(h)$ vs. $\frac 1 h$, $\frac1h\in\{1,\ldots,100\}$.\label{fig.1puitsVPh}]{\begin{tabular}{cc}
\includegraphics[width=4.7cm]{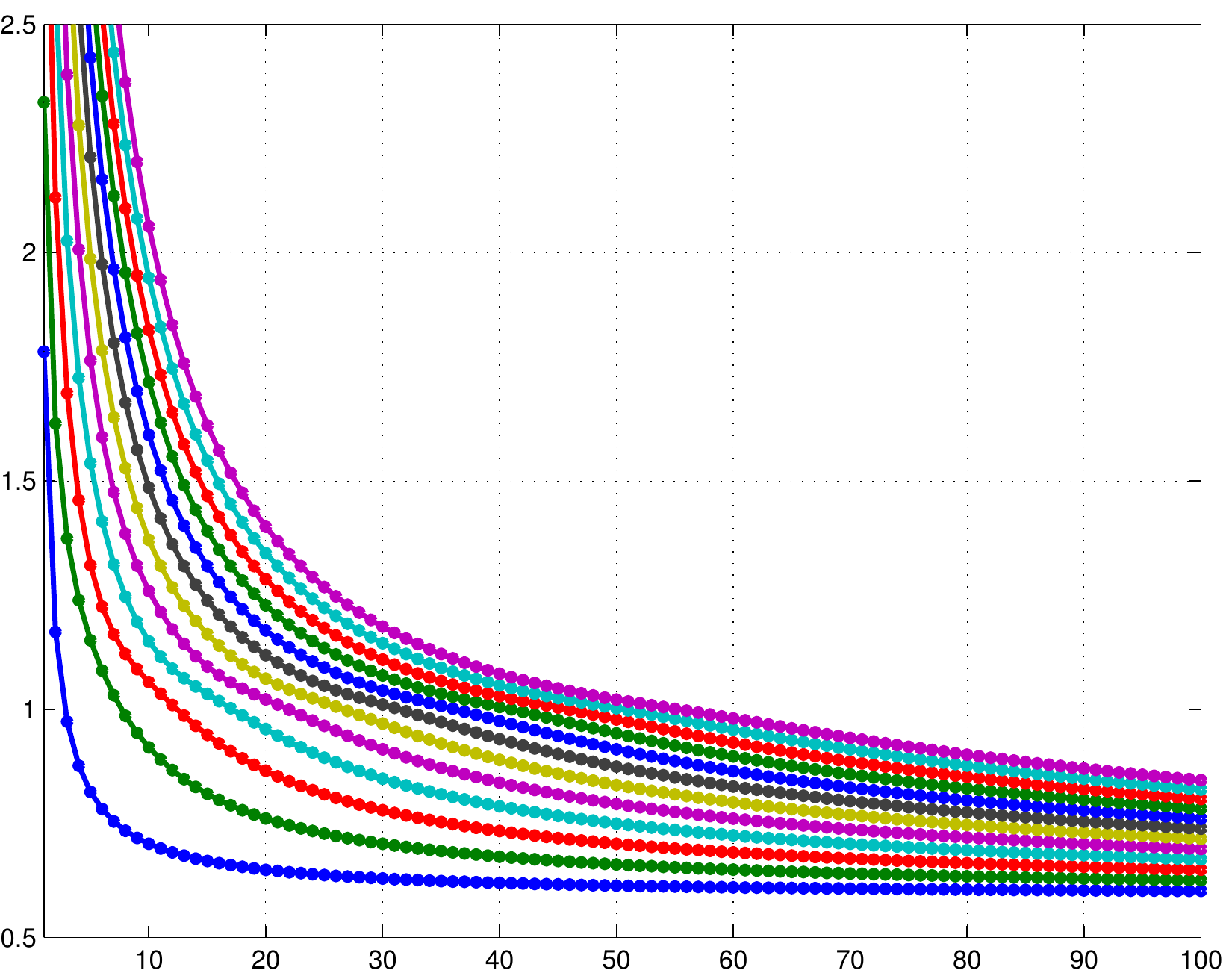}
&\includegraphics[width=4.7cm]{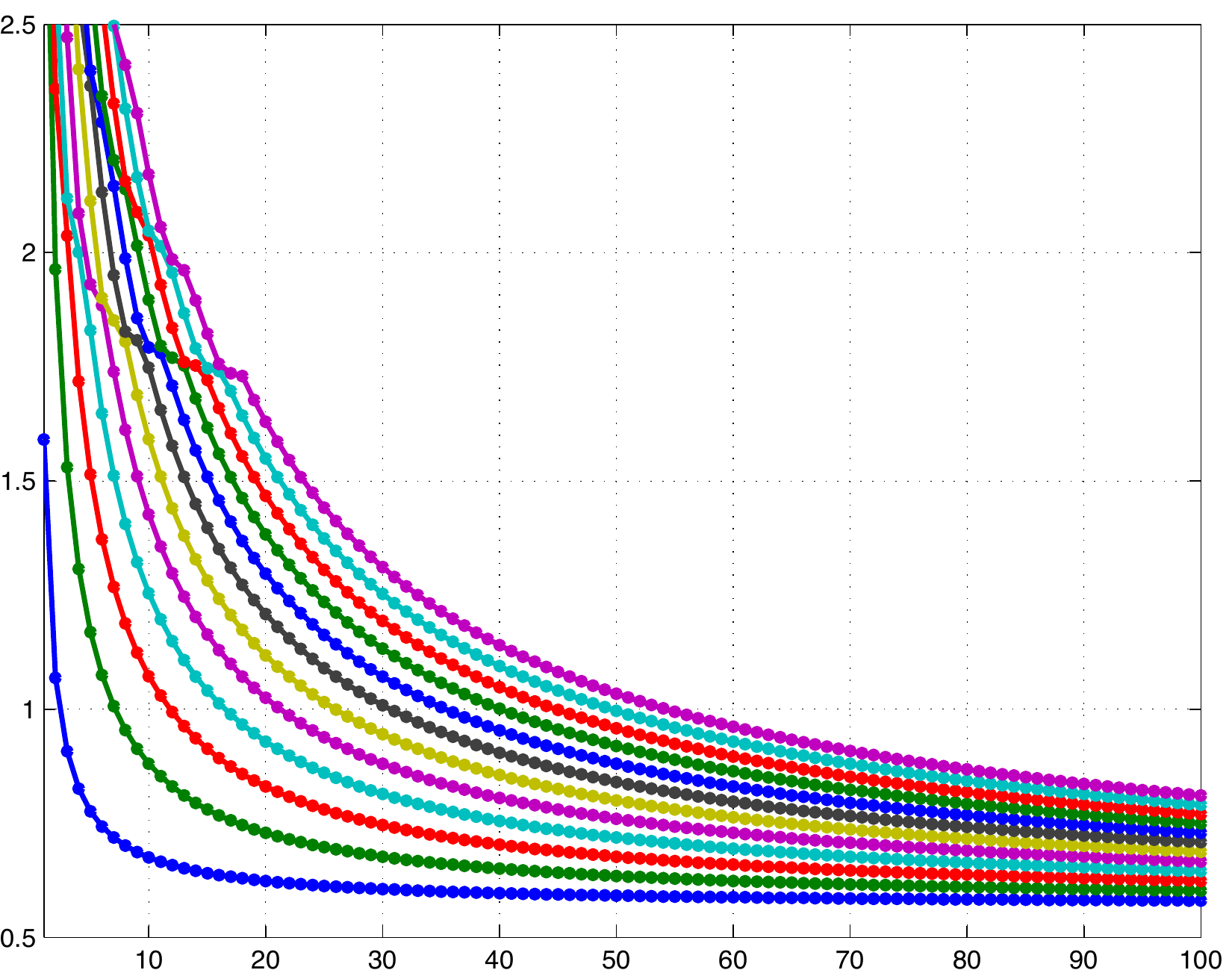}\\
$k=0$ & $k=1$
\end{tabular}}
\subfigure[$\ln\Big(\lambda_{n}^{[k]}(h)-\underline\nu^{[k]}\Big)$ vs. $\ln \frac1h$, $\frac1h\in\{1,\ldots,1000\}$.\label{fig.1puitsVPhlog}]{\begin{tabular}{cc}
\includegraphics[width=4.7cm]{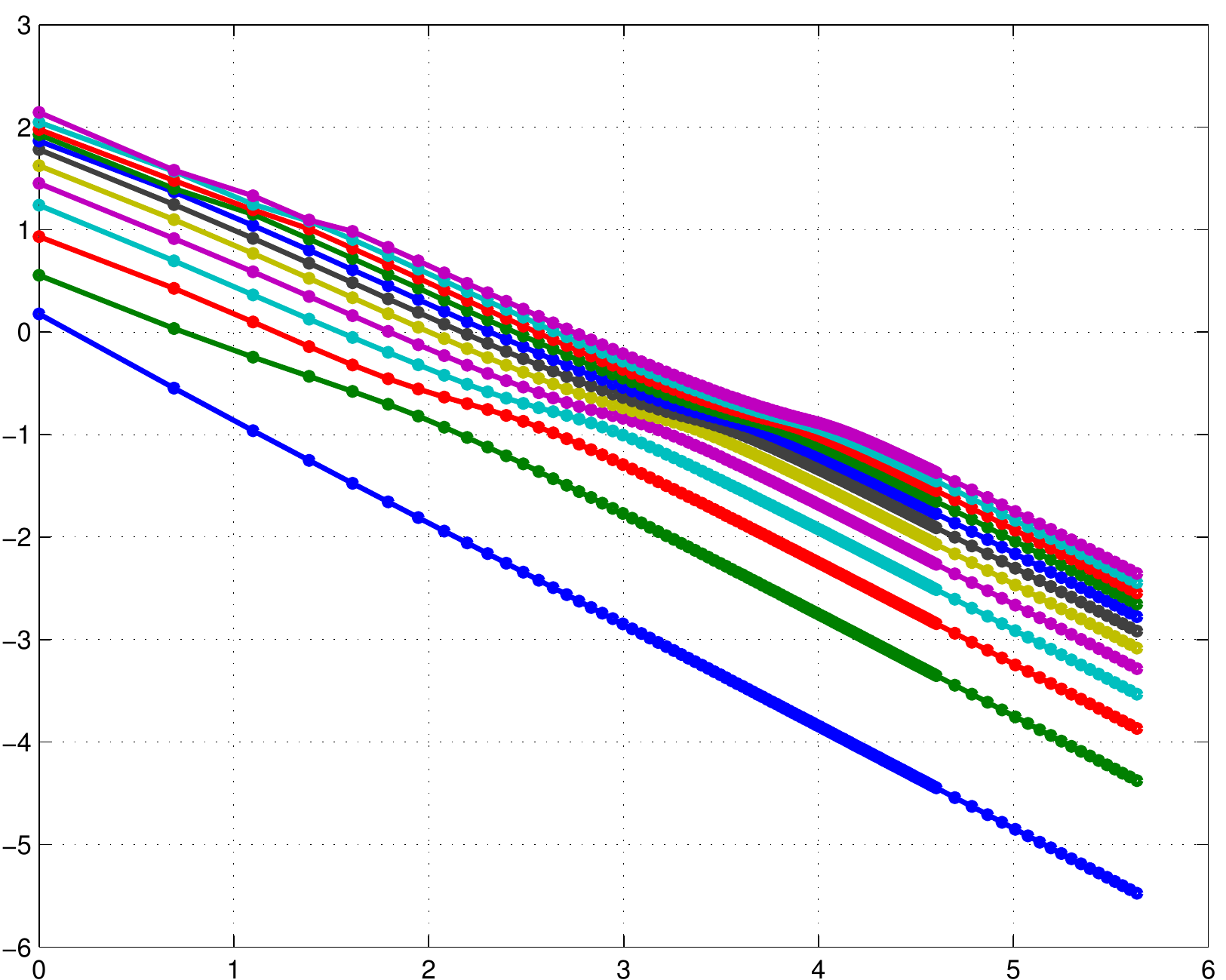}
&\includegraphics[width=4.7cm]{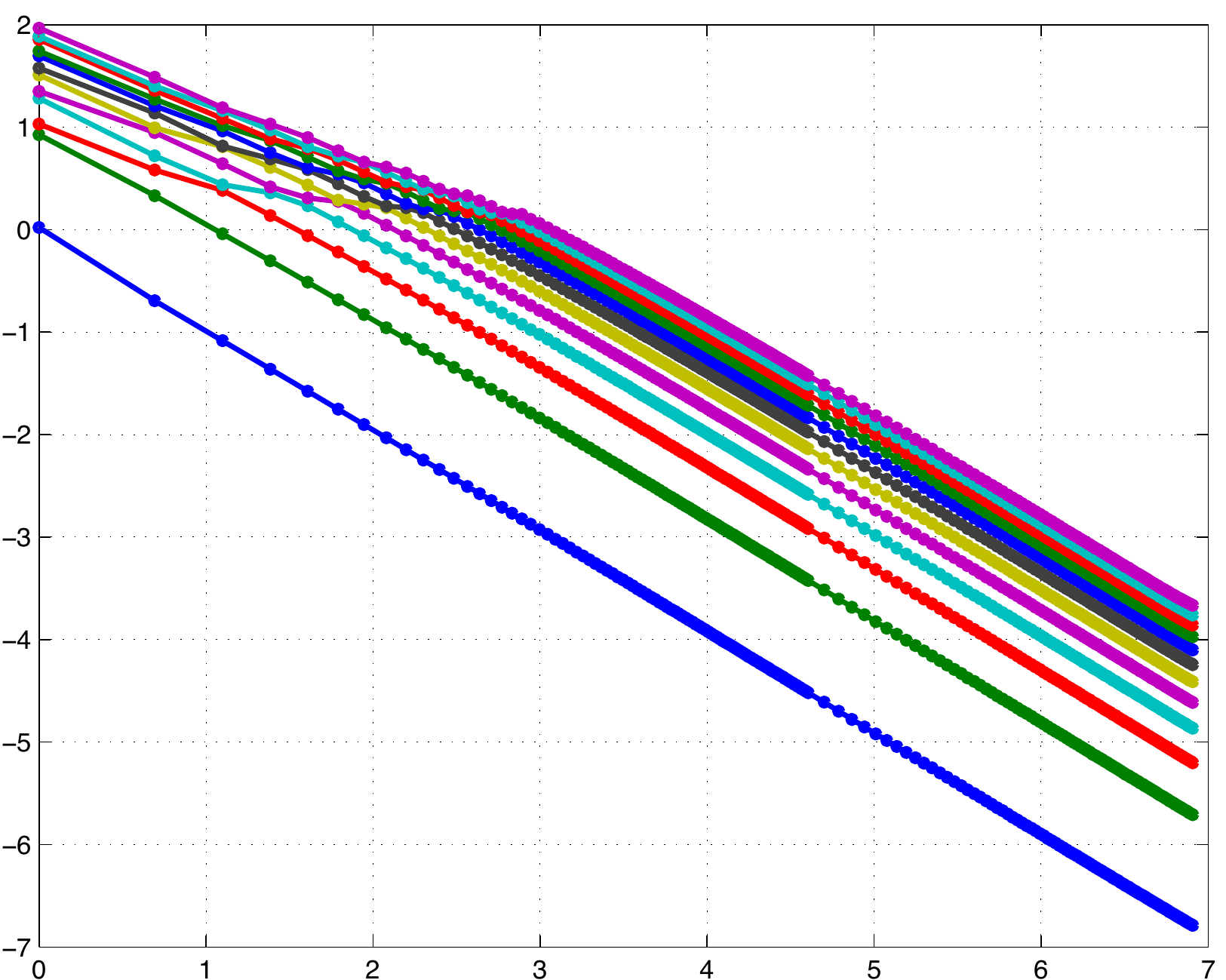}\\
$k=0$ & $k=1$
\end{tabular}}
\caption{Convergence of the eigenvalues $\lambda_{n}^{[k]}(h)$. \label{fig.VP1puitsR2}}
\end{center}
\end{figure}
\begin{figure}[h!t]
\begin{center}
\subfigure[First eigenfunction $u_{1,h}^{[0]}$, $h=\frac1{15}$]{\includegraphics[width=2.3cm]{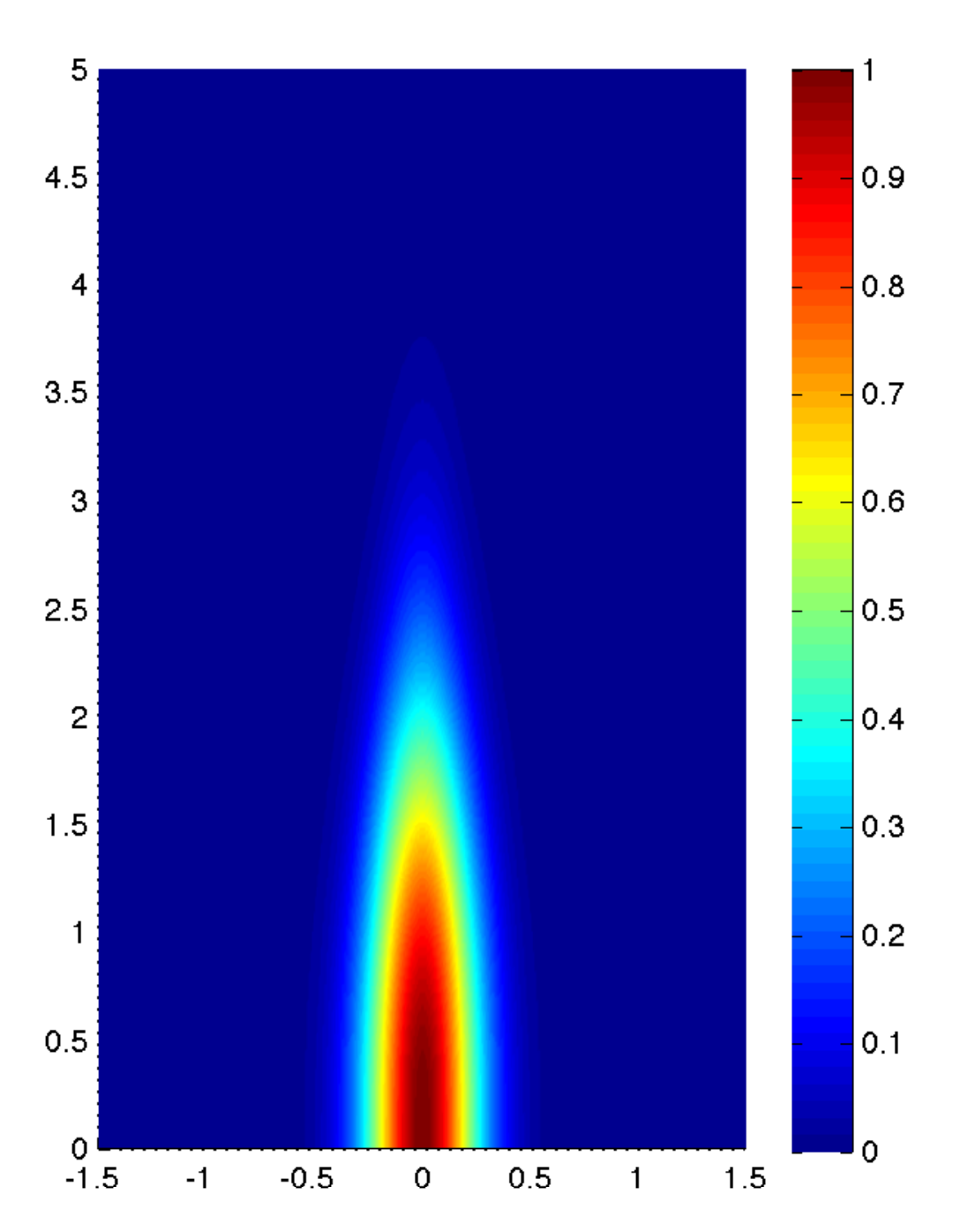}
\includegraphics[width=2.3cm]{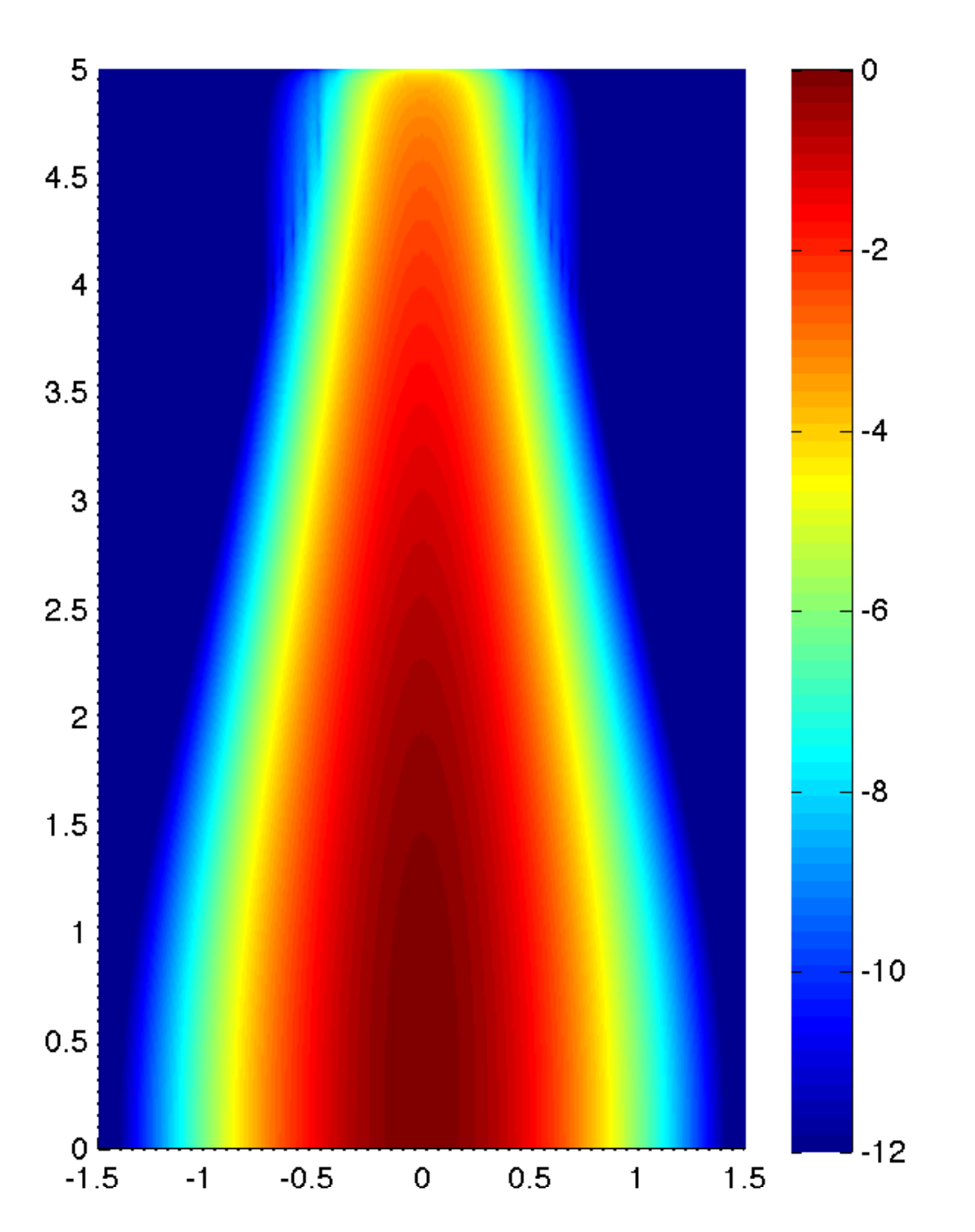}
\includegraphics[width=2.3cm]{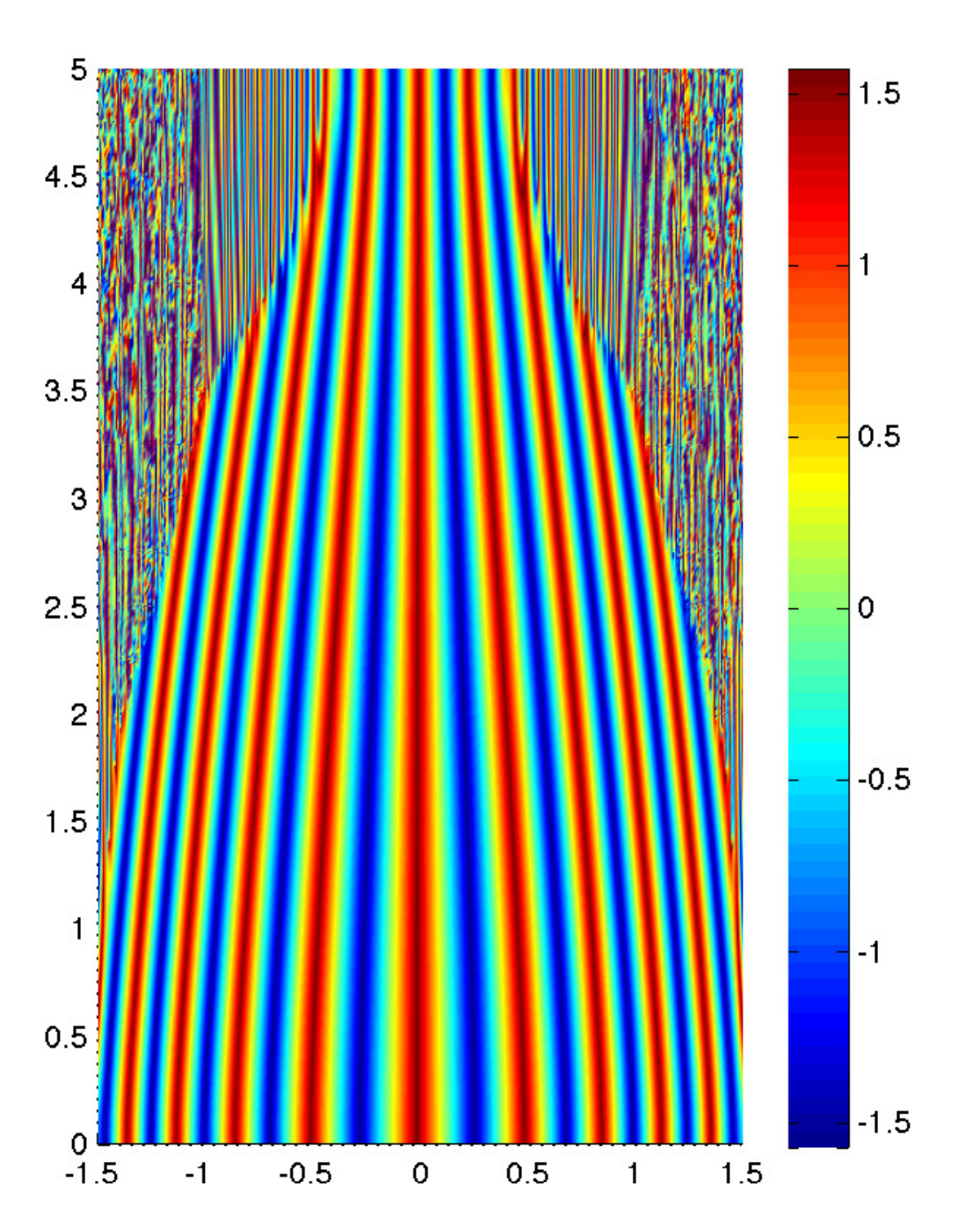}}
\subfigure[Second eigenfunction $u_{2,h}^{[0]}$, $h=\frac1{15}$]{\includegraphics[width=2.3cm]{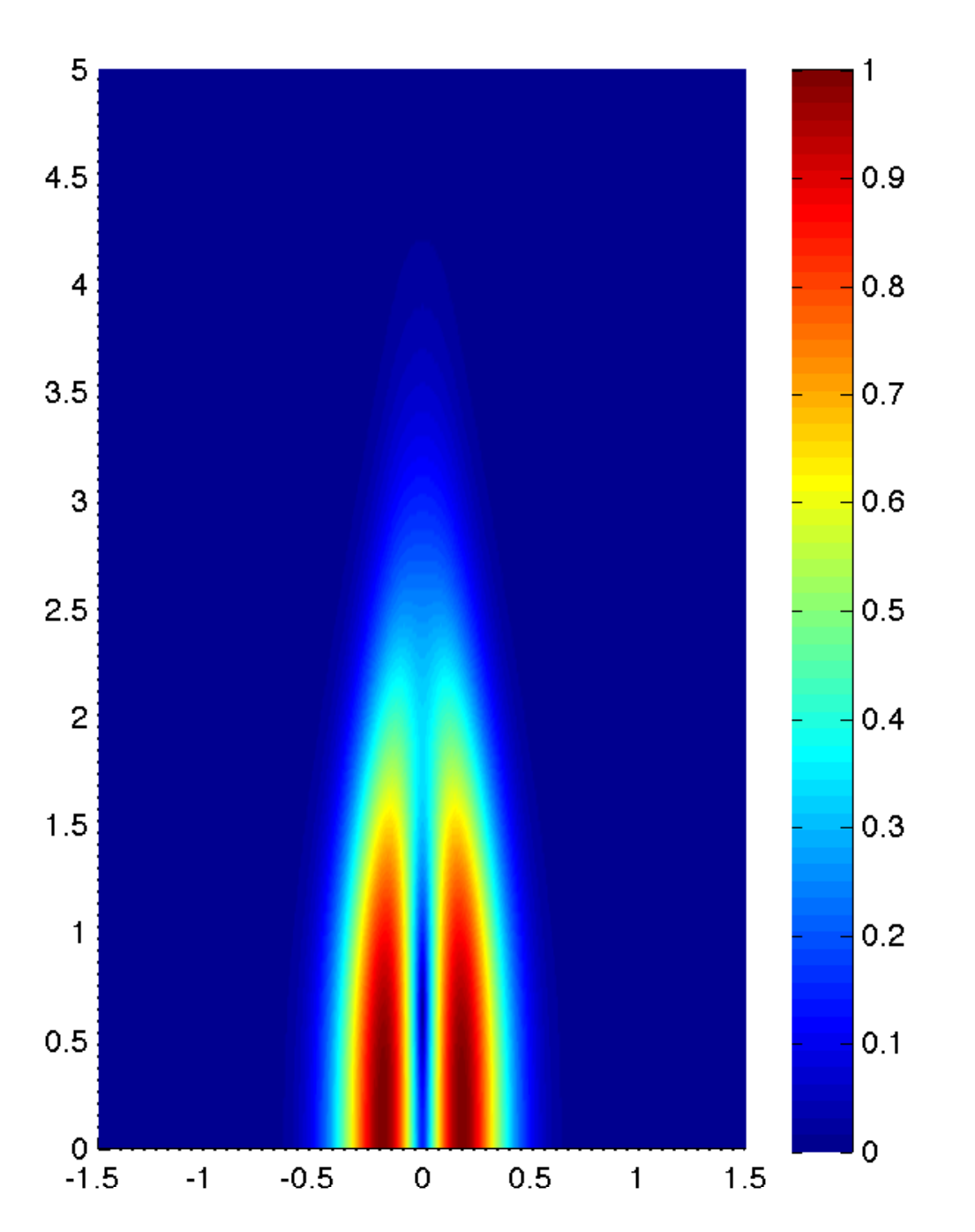}
\includegraphics[width=2.3cm]{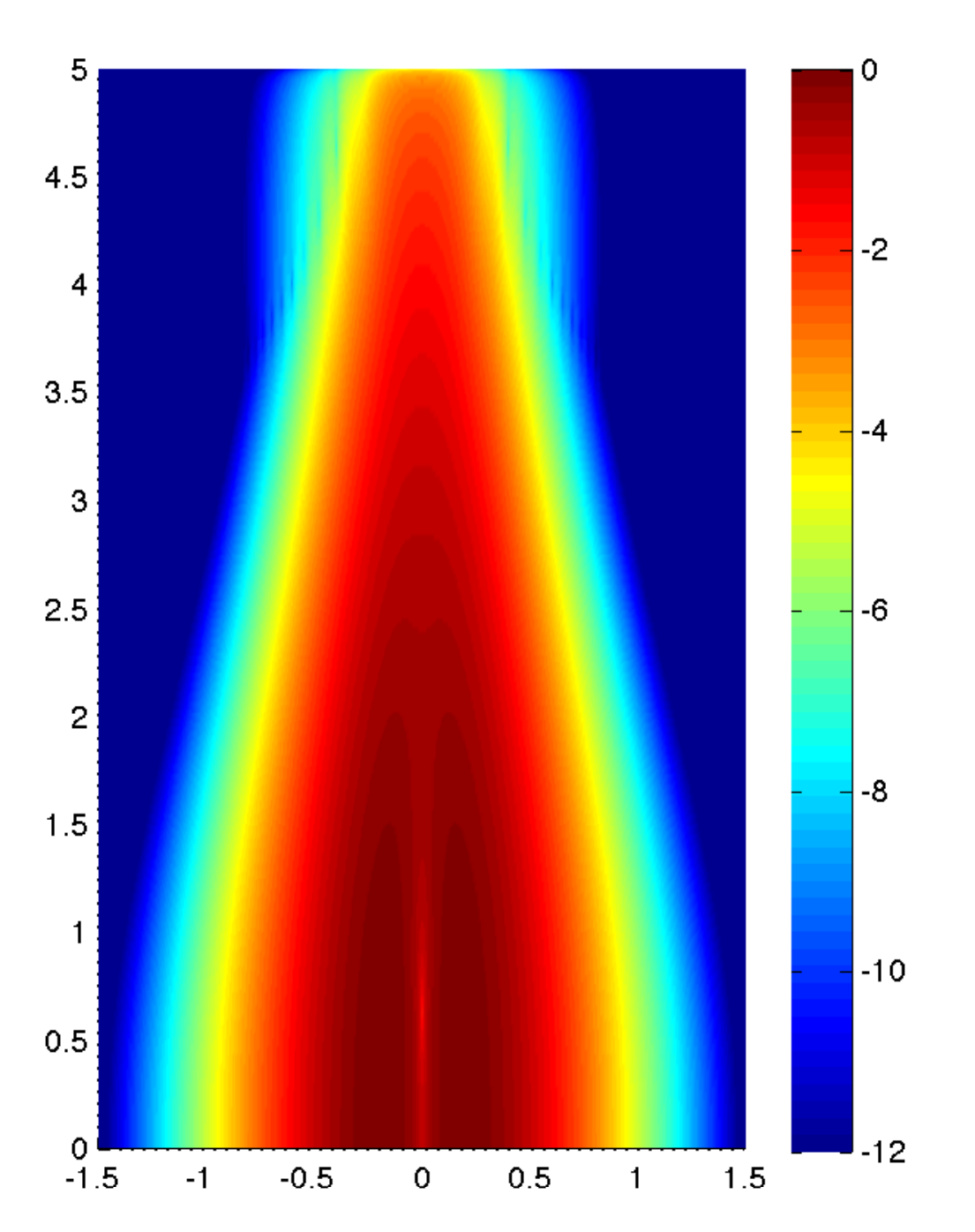}
\includegraphics[width=2.3cm]{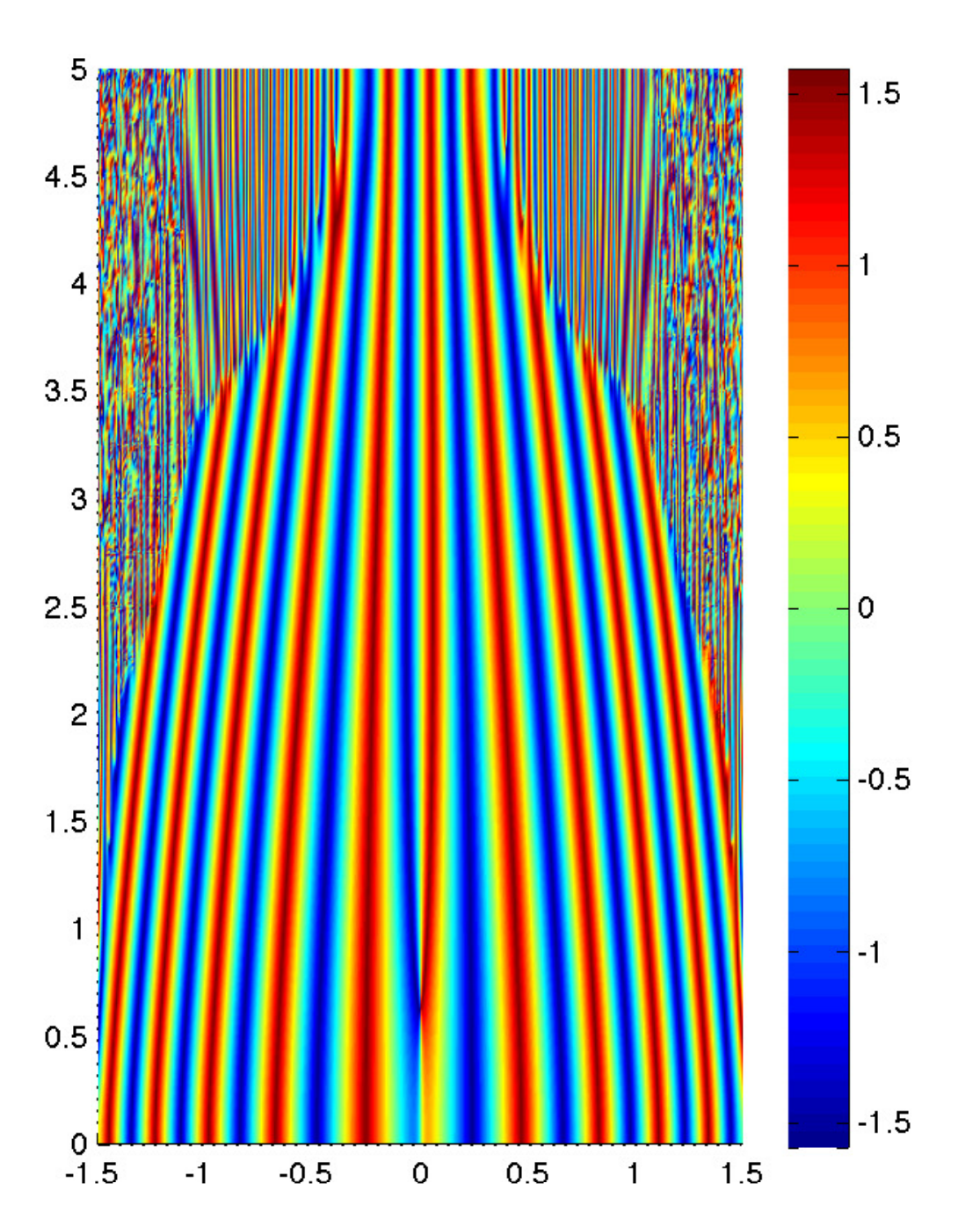}}
\subfigure[First eigenfunction $u_{1,h}^{[1]}$, $h=\frac1{20}$]{\includegraphics[width=2.3cm]{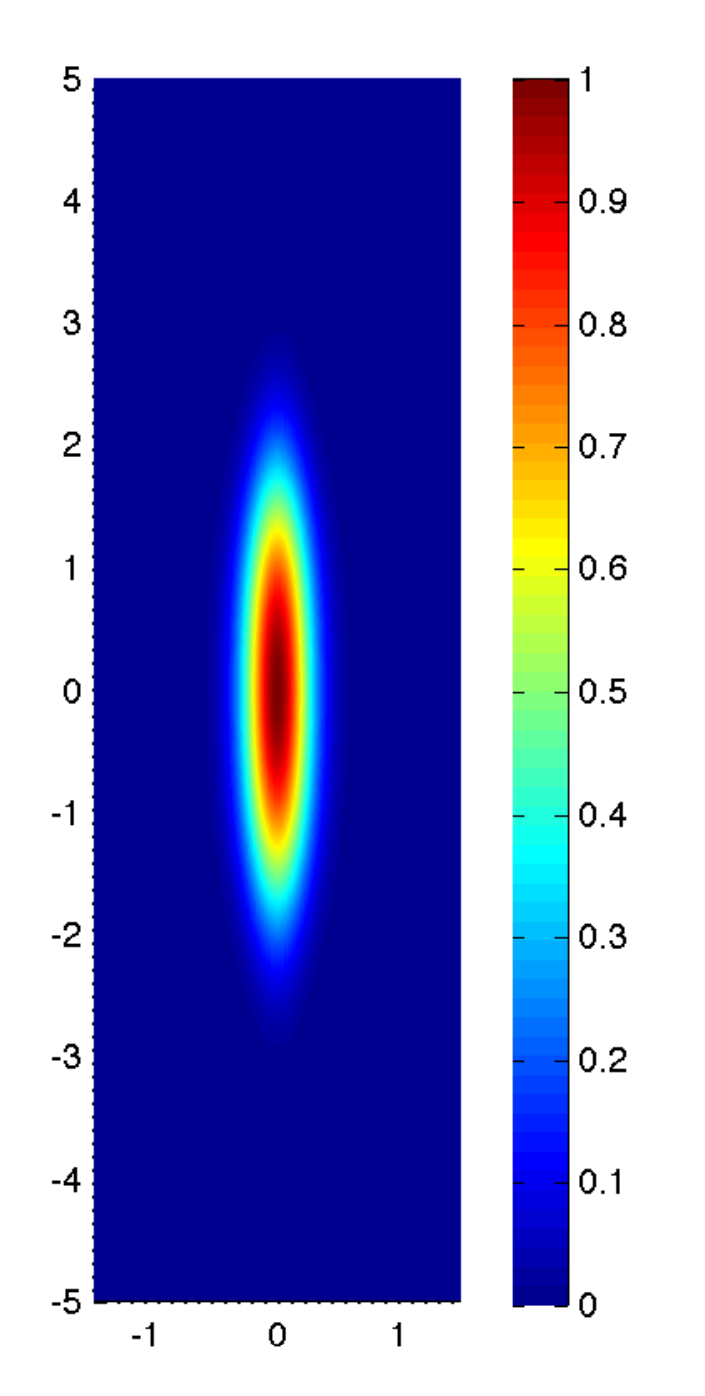}
\includegraphics[width=2.3cm]{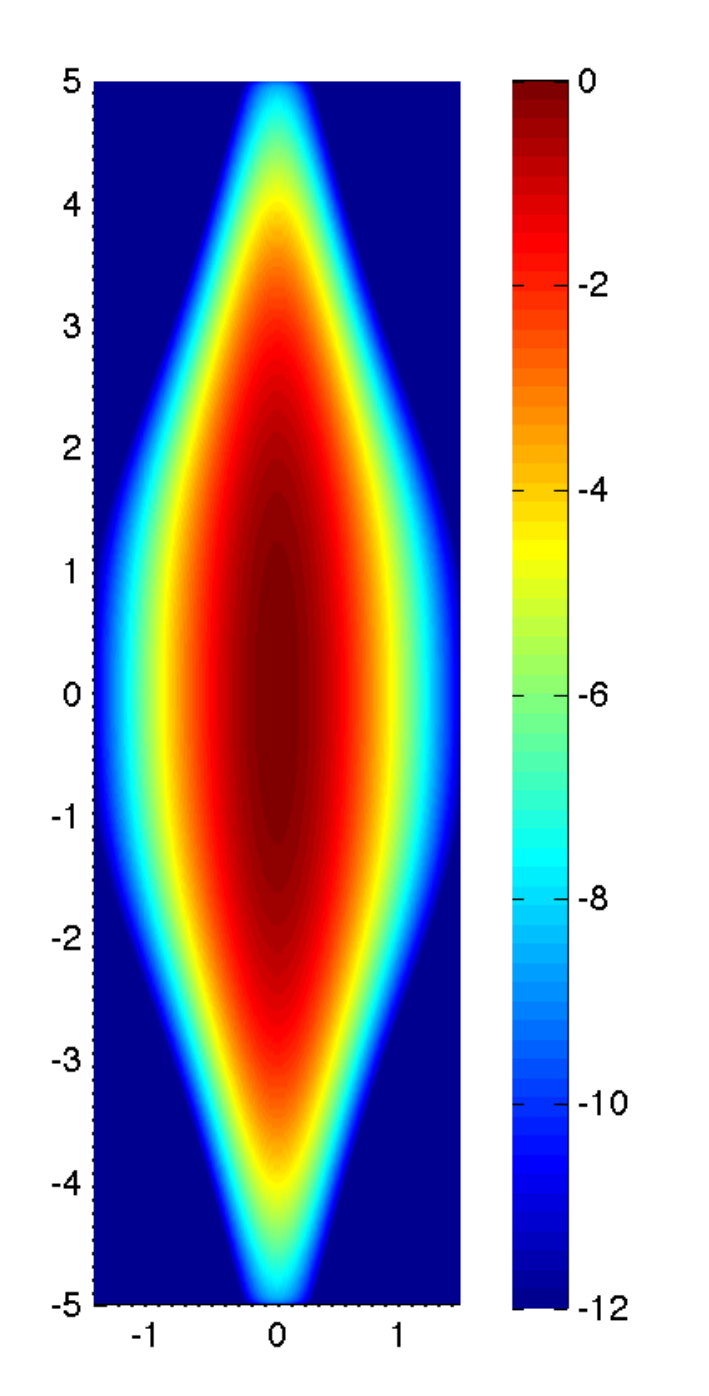}
\includegraphics[width=2.3cm]{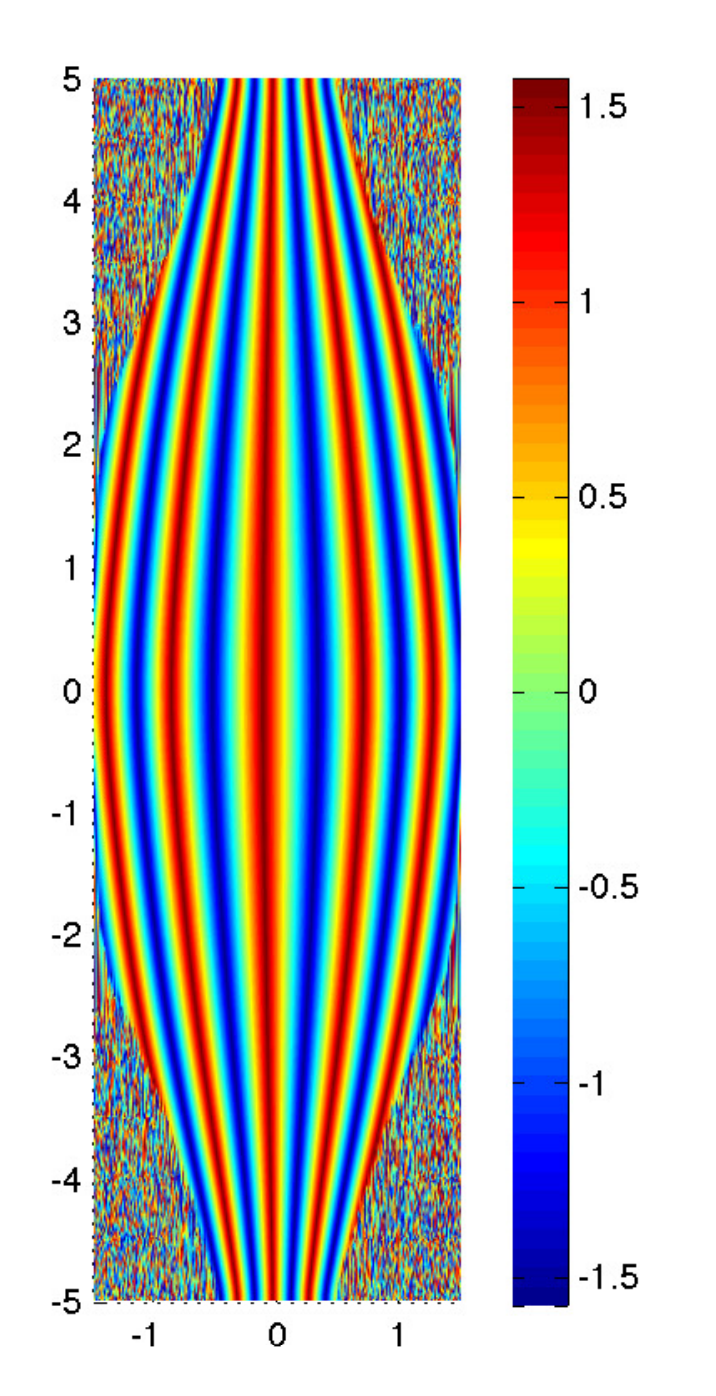}}
\subfigure[Second eigenfunction $u_{2,h}^{[1]}$, $h=\frac1{20}$]{\includegraphics[width=2.3cm]{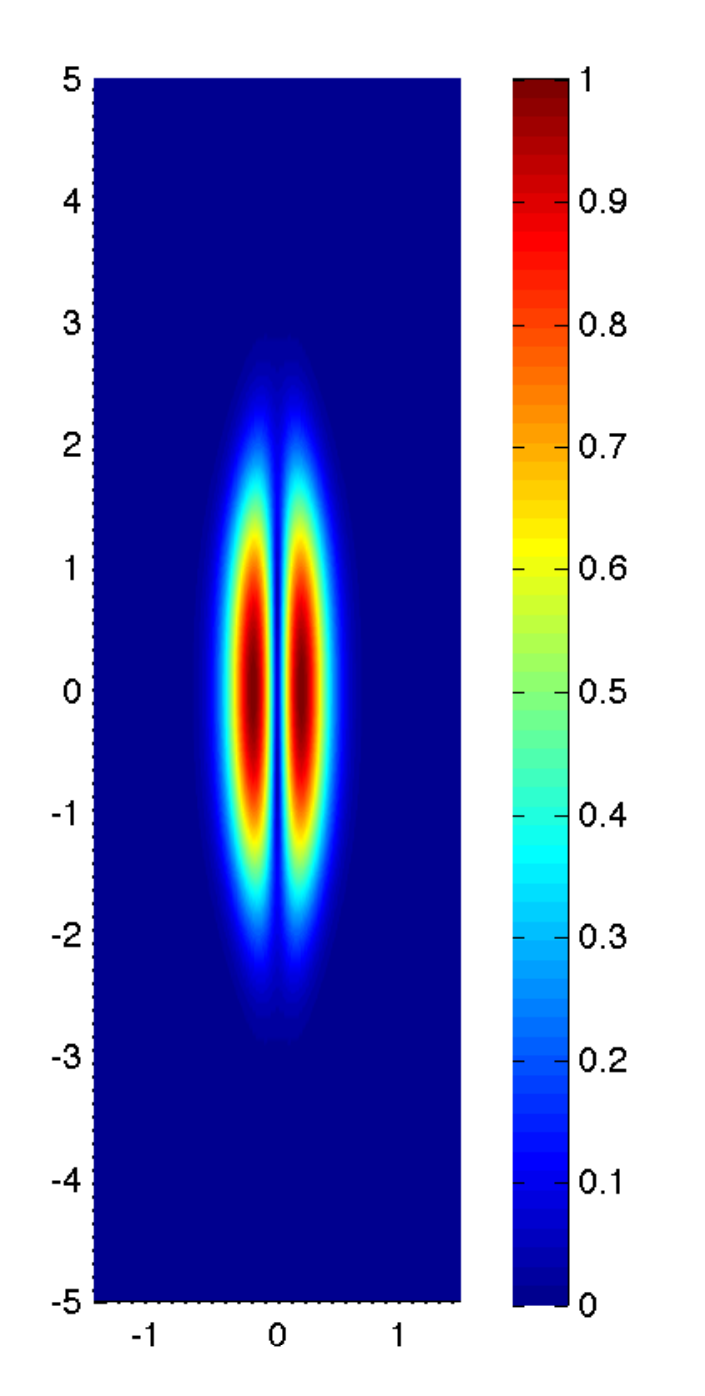}
\includegraphics[width=2.3cm]{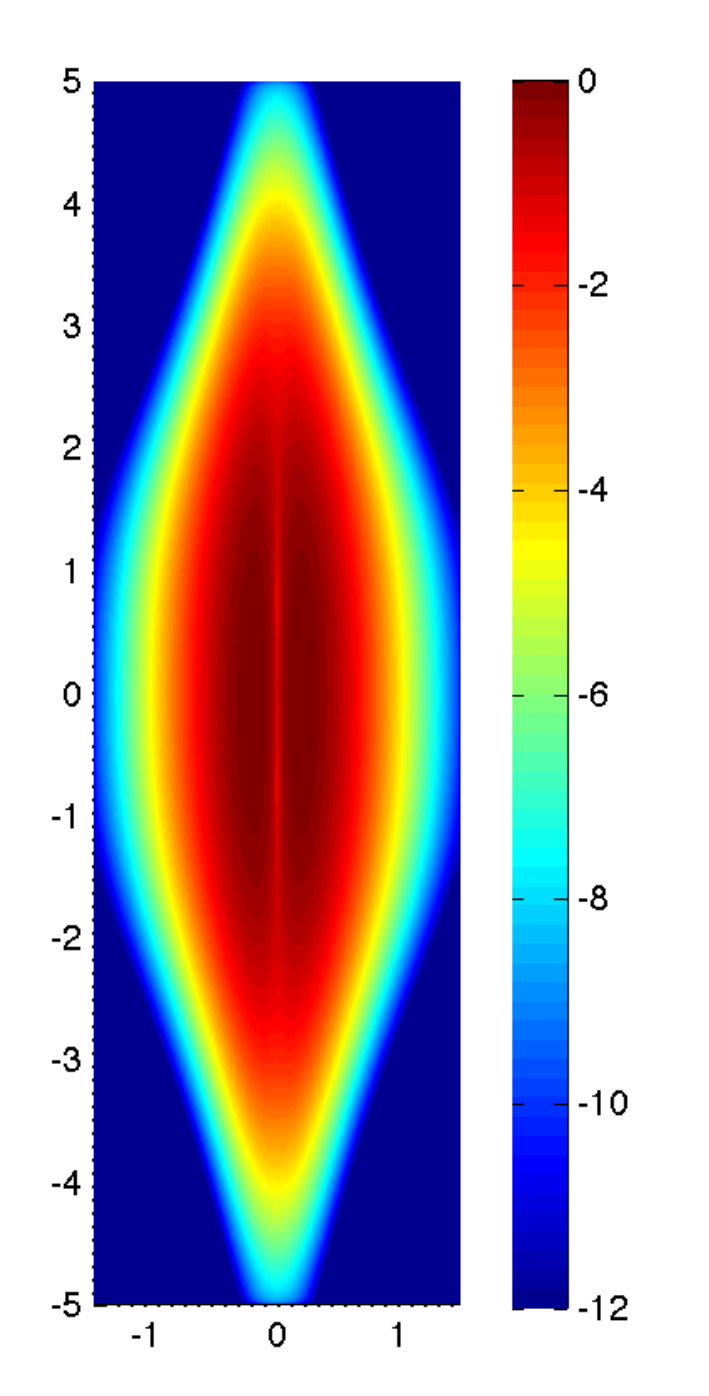}
\includegraphics[width=2.3cm]{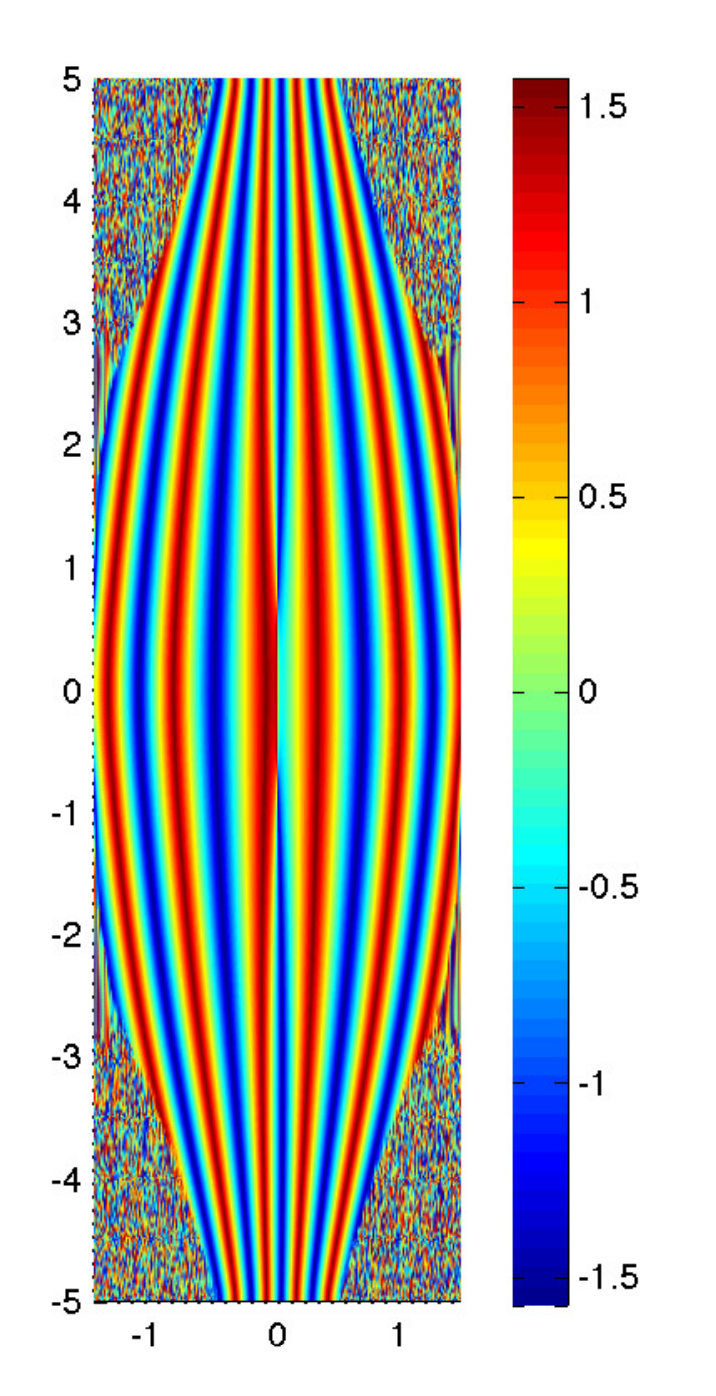}}
\caption{Moduli, log$_{10}$(moduli) and phases of the first two eigenfunctions, $u_{n,h}^{[k]}$. \label{fig.VecPR21puits}} 
\end{center}
\end{figure}
To catch the next term in the expansion of the eigenvalues, we plot in Figure~\ref{fig.1puitsVPhlog}
$$\ln\frac1h\mapsto \ln\frac{\lambda_{n}^{[k]}(h)-\underline\nu^{[k]}}{h}.$$
We observe a linear convergence: the slope $r$ illustrates the behavior
$$\lambda_{n}^{[k]}(h)= \underline\nu^{[k]}+ C_{n} h^{r}+ o (h^r).$$

In Figure~\ref{fig.VecPR21puits}, we give the approximation of the first two eigenfunctions $u_{n,h}^{[k]}$ for $h=1/20$ if $k=1$ and $h=1/15$ if $k=0$. We draw the modulus, the logarithm of the modulus and the phase.

\subsubsection{Double well models}
Let us now consider the double well model and take
$$\gamma(s)=1+(s^2-1)^2,\qquad s\in\R.$$
Parameters used for the numerical simulations are given in Table~\ref{table2}.
\begin{table}[h!t]
\begin{center}
\begin{tabular}{ccccccccc}
$a$ & $n_{x}$ & $b$ & $n_{y}$ & $p$ & $1/h$ \\
2 & 5 & 40 & 20 & 12 & $1:0.1:500$\\
2 & 5 & 40 & 20 & 16 & $10:1:1000$\\
3 & 5 & 60 & 20 & 14 & $10 :1: 300$\\
4 & 5 & 40 & 20 & 14 & $10 :1: 300$\\
5 & 5 & 40 & 20 & 14 & $10 :1: 200$\\
10 & 10 & 50 & 50 & 10 & $10 :1: 200$
\end{tabular}
\caption{Double well: Parameters of the numerical simulations for $\mathfrak L_{h}^{[k]}$, $k=0,1$.}\label{table2}
\end{center}
\end{table}
\begin{figure}[h!t]
\begin{center}
\begin{tabular}{p{.7cm} c p{2.5cm} c p{1cm} cc}
&$\lambda_{n}^{[k]}(h)$ &
& $\lambda_{2n}^{[k]}(h)-\lambda_{2n-1}^{[k]}(h)$&
& $-h\ln(\lambda_{2}^{[k]}(h)-\lambda_{1}^{[k]}(h))$
\end{tabular}\\
\vspace{-.2cm}
\subfigure[$k=0$] {\includegraphics[width=5cm]{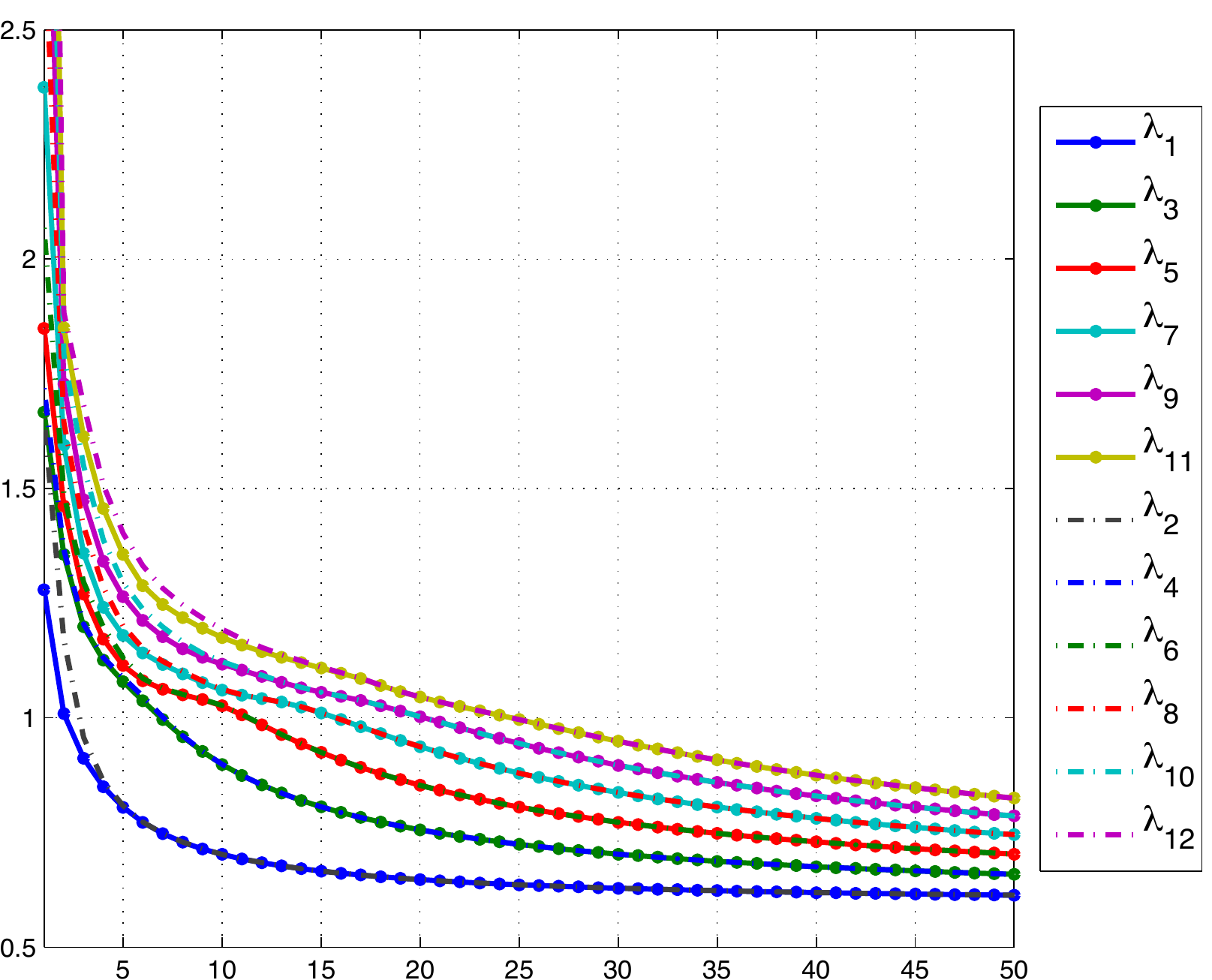}
\includegraphics[width=5cm]{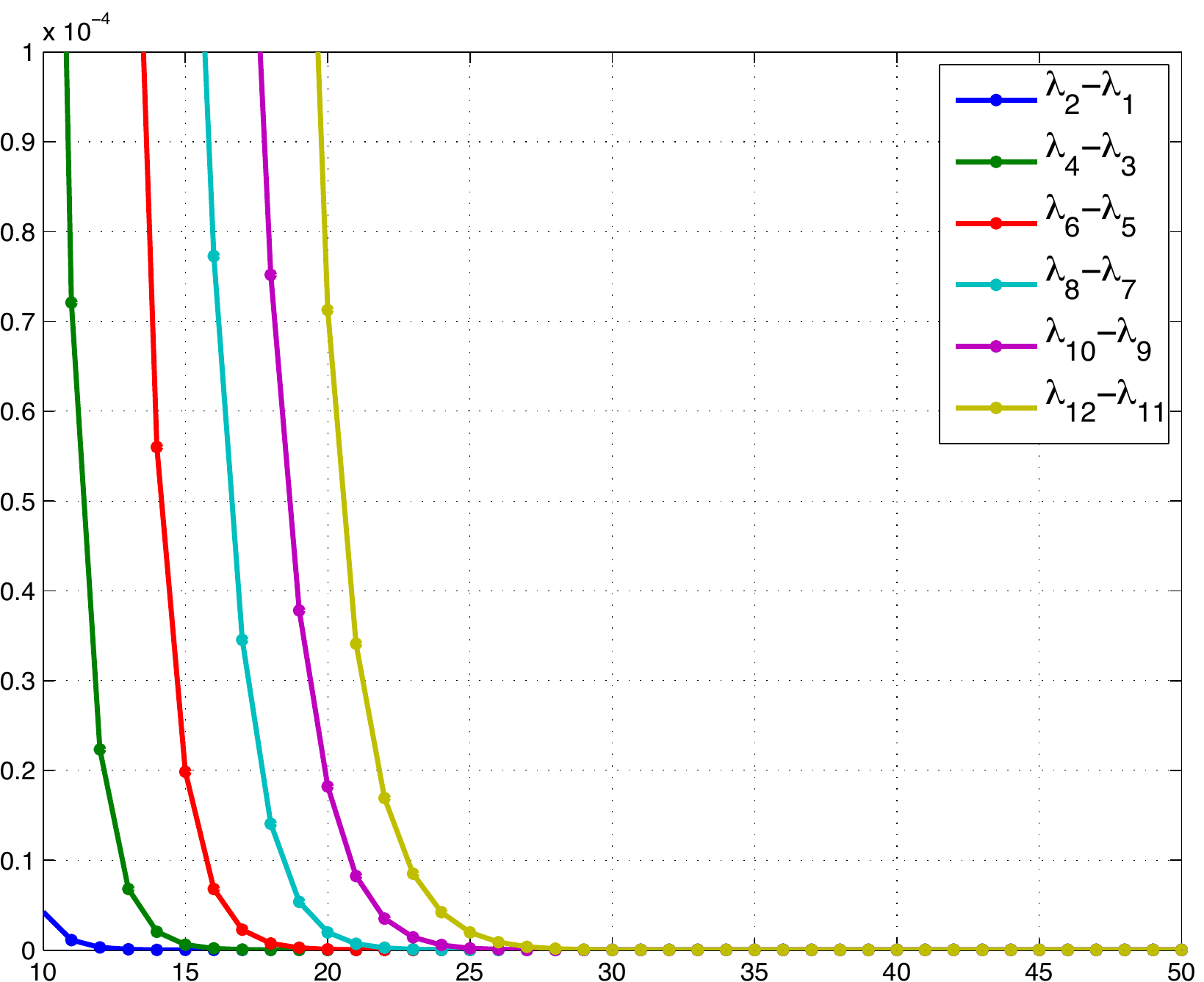}
\includegraphics[width=5cm]{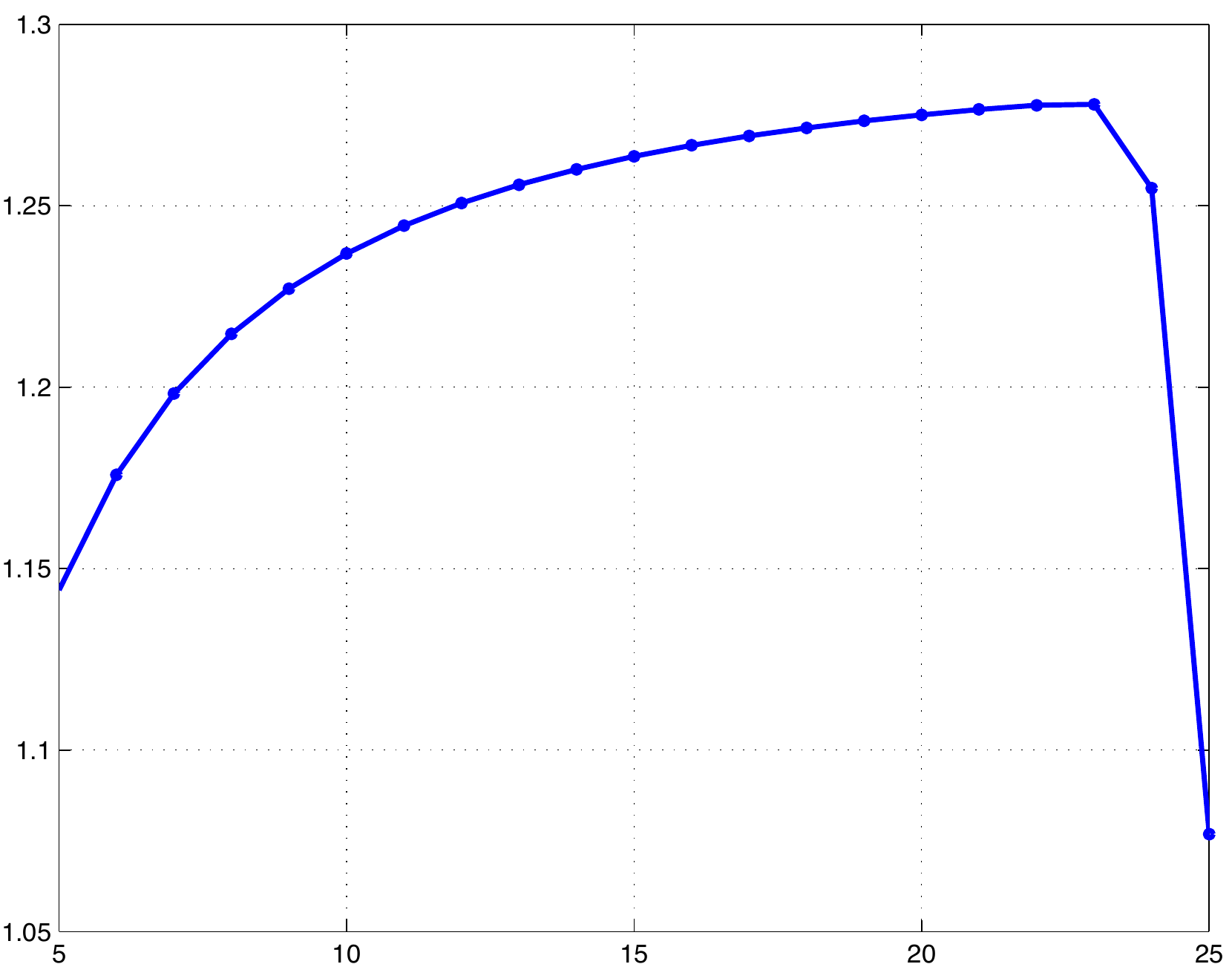}}
\subfigure[$k=1$] {\includegraphics[width=5cm]{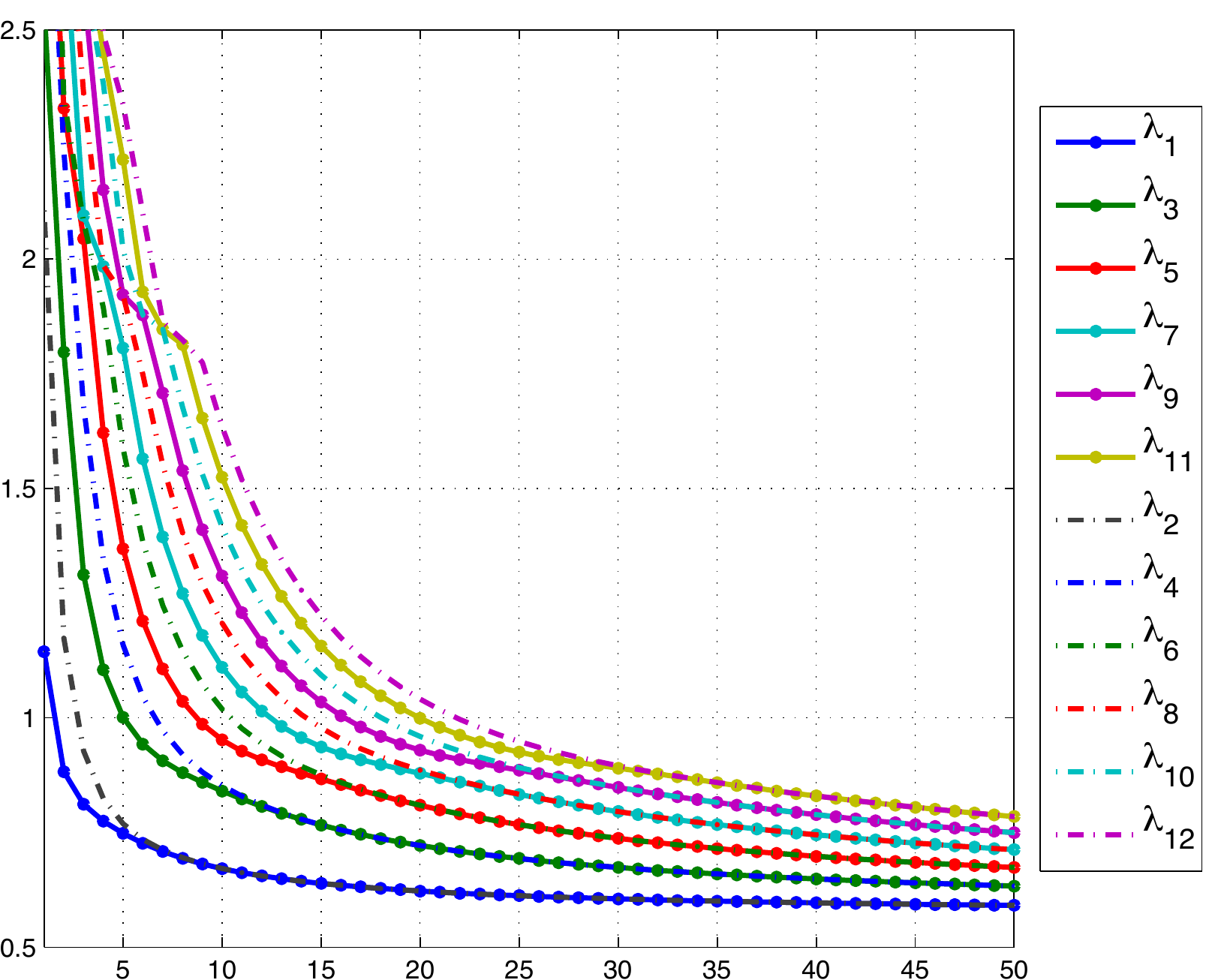}
\includegraphics[width=5cm]{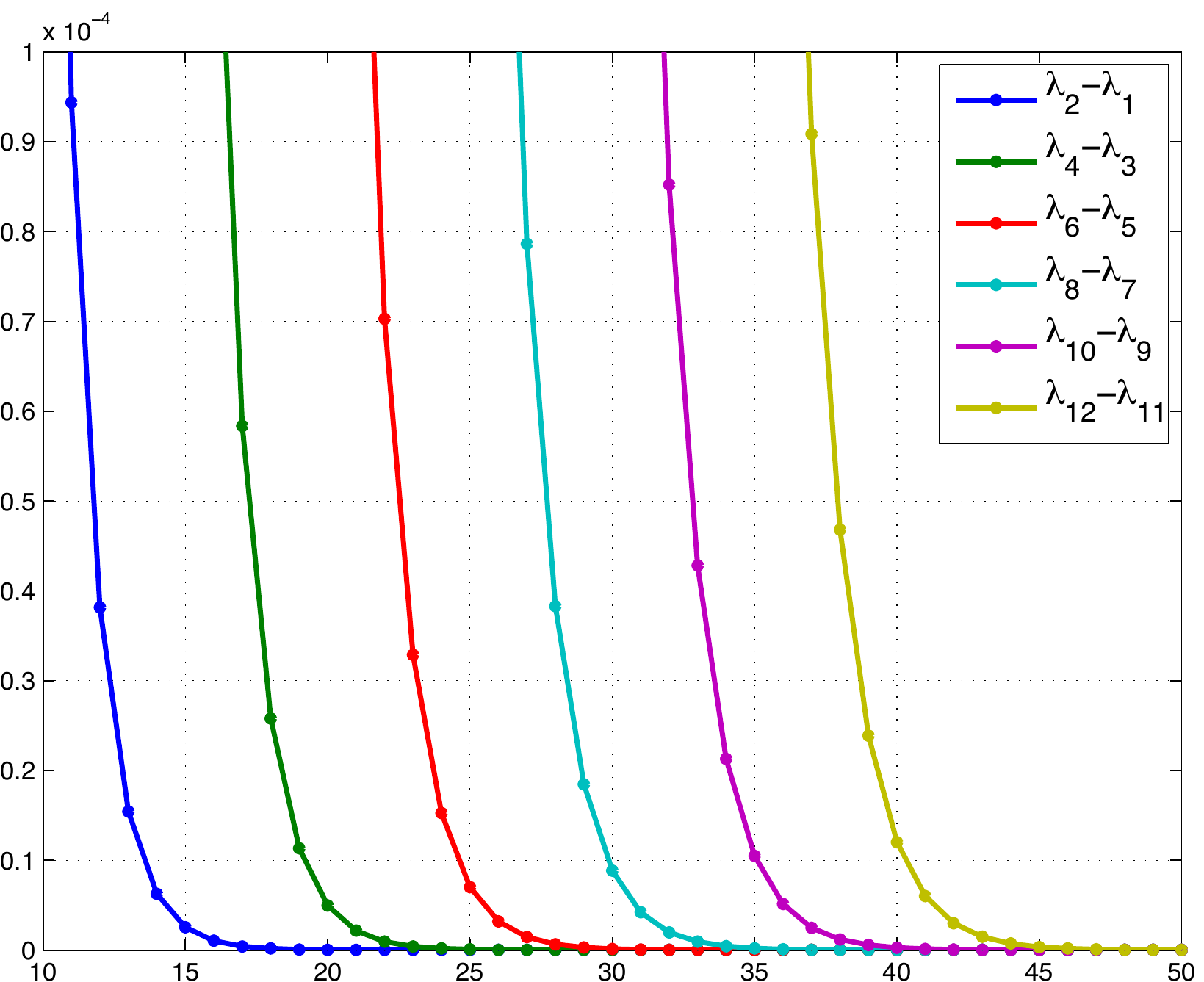}
\includegraphics[width=5cm]{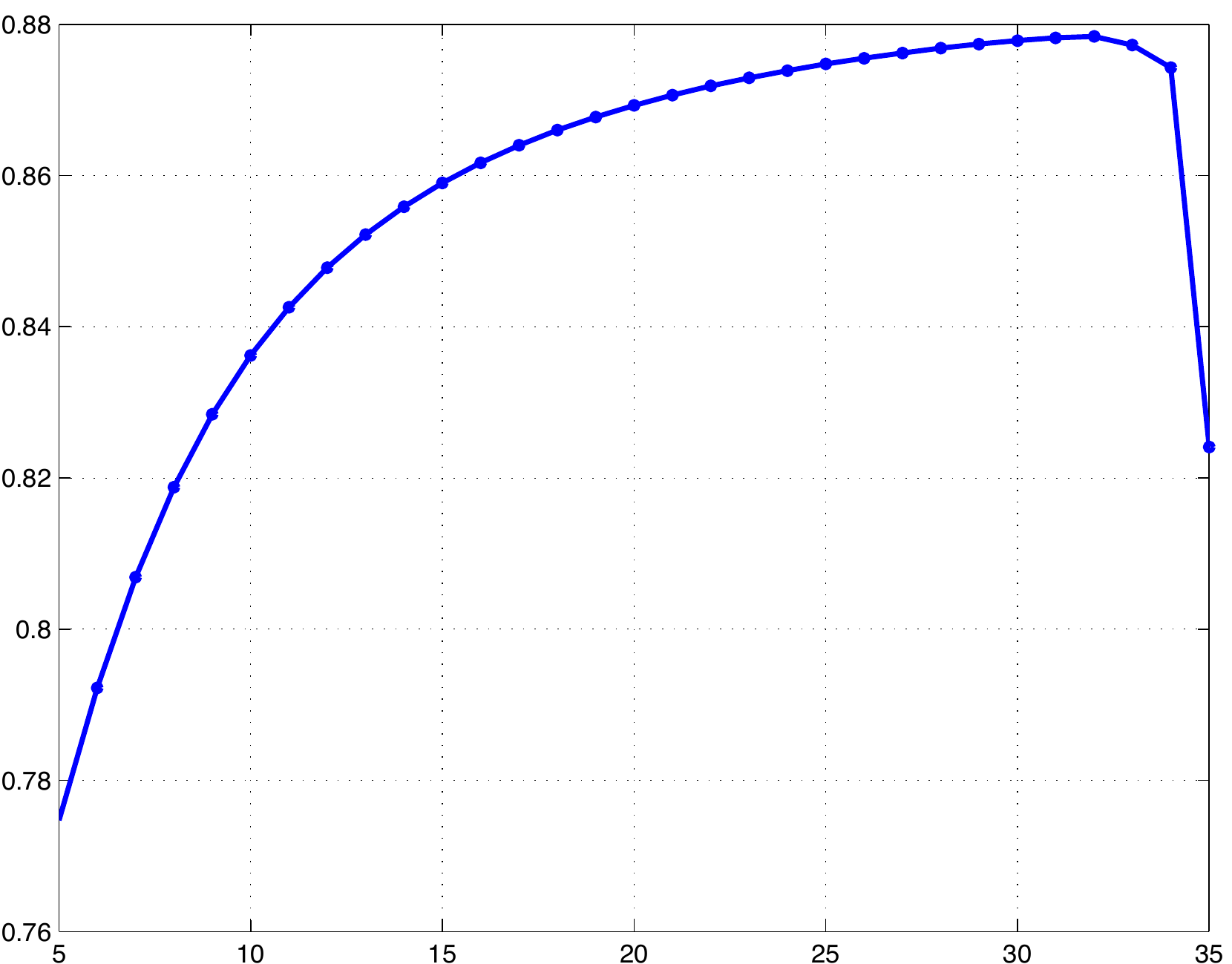}}
\caption{First eigenvalues $\lambda_{n}^{[k]}(h)$ vs. $1/h$, $k=0,1$. \label{fig.VPR2R2p}}
\end{center}
\end{figure}

Figure~\ref{fig.VPR2R2p} illustrates Corollary~\ref{cor.convMont}, Theorem \ref{tunnelling} and Remark~\ref{rem.gap} in the scale $h$ instead of $\hbar$. The first line concerns the low eigenvalues $\lambda_{n}^{[k]}(h)$ with $k=0$ (on the half-plane) and the second line with $k=1$ (on the plane). The first column illustrates the convergence
$$
\lambda_{n}^{[k]}(h) \to \underline\nu^{[k]}\qquad\mbox{ as }h\to 0.
$$
The second column represents the splitting $\lambda_{2n}^{[k]}(h)-\lambda_{2n-1}^{[k]}(h)$ according to $1/h\in\{1,\ldots, 50\}$. We recover the exponential decay of Remark~\ref{rem.gap}. This decay is faster when $k=0$.
In the last column, we aim at catching the exponential decay rate and we plot
$$1/h\mapsto -h\ln\Big(\lambda_{2}^{[k]}(h)-\lambda_{1}^{[k]}(h)\Big).$$
Let us discuss this last column a little more. We observe a break of the curve when $1/h$ becomes too large ($1/h\geq24$ for $k=0$ and $1/h\geq 33$ for $k=1$).
For smaller $h$, the gap between the first two eigenvalues is very small: $\lambda_{2}^{[k]}(h)-\lambda_{1}^{[k]}(h)< 3\ 10^{-12}$, which is the accuracy of the computations. So the gap is no more significant when $h$ becomes too small: the error due to the computations and the splitting is at the same order $\simeq 10^{-12}$. We try here to catch two scales: a polynomial scale for the convergence of the eigenvalues as $h\to 0$ and an exponential scale for the splitting. Thus the range of $h$ to have the two convergences is small. Let us recall that, from Remark~\ref{rem.gap}, we have
$$\lambda_{2}^{[k]}(h)-\lambda_{1}^{[k]}(h) ={\Oc}(\re^{-c_{k}/h}).$$
The third column in Figure~\ref{fig.VPR2R2p} gives the following estimates
$$1.25\leq c_{0}\leq 1.3,\quad 0.86\leq c_{1}\leq 0.9.$$
\begin{figure}[h!t]
\begin{center}
\subfigure[First eigenfunction $h=\frac 1{15}$]{\includegraphics[width=2.38cm]{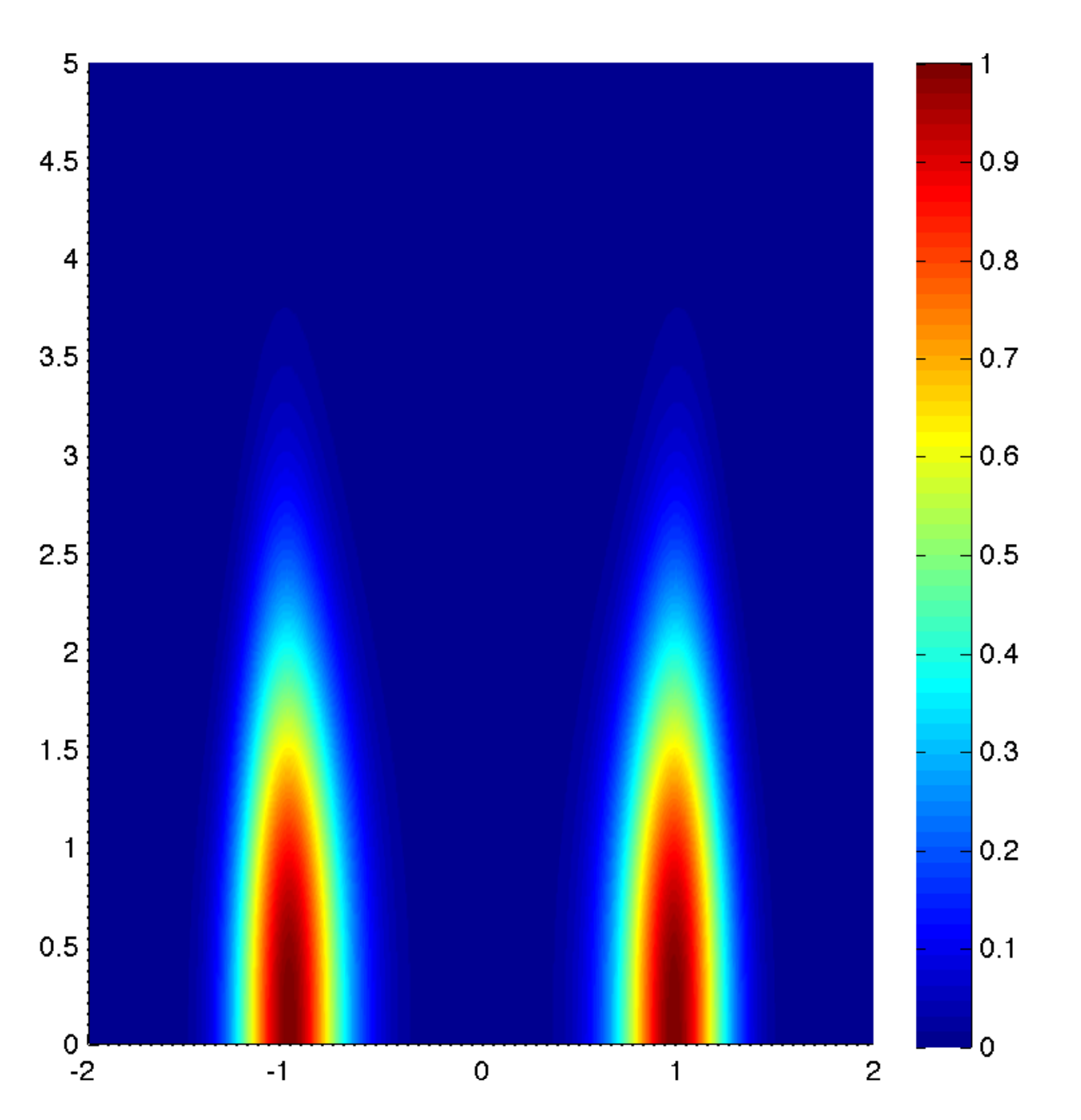}
\includegraphics[width=2.38cm]{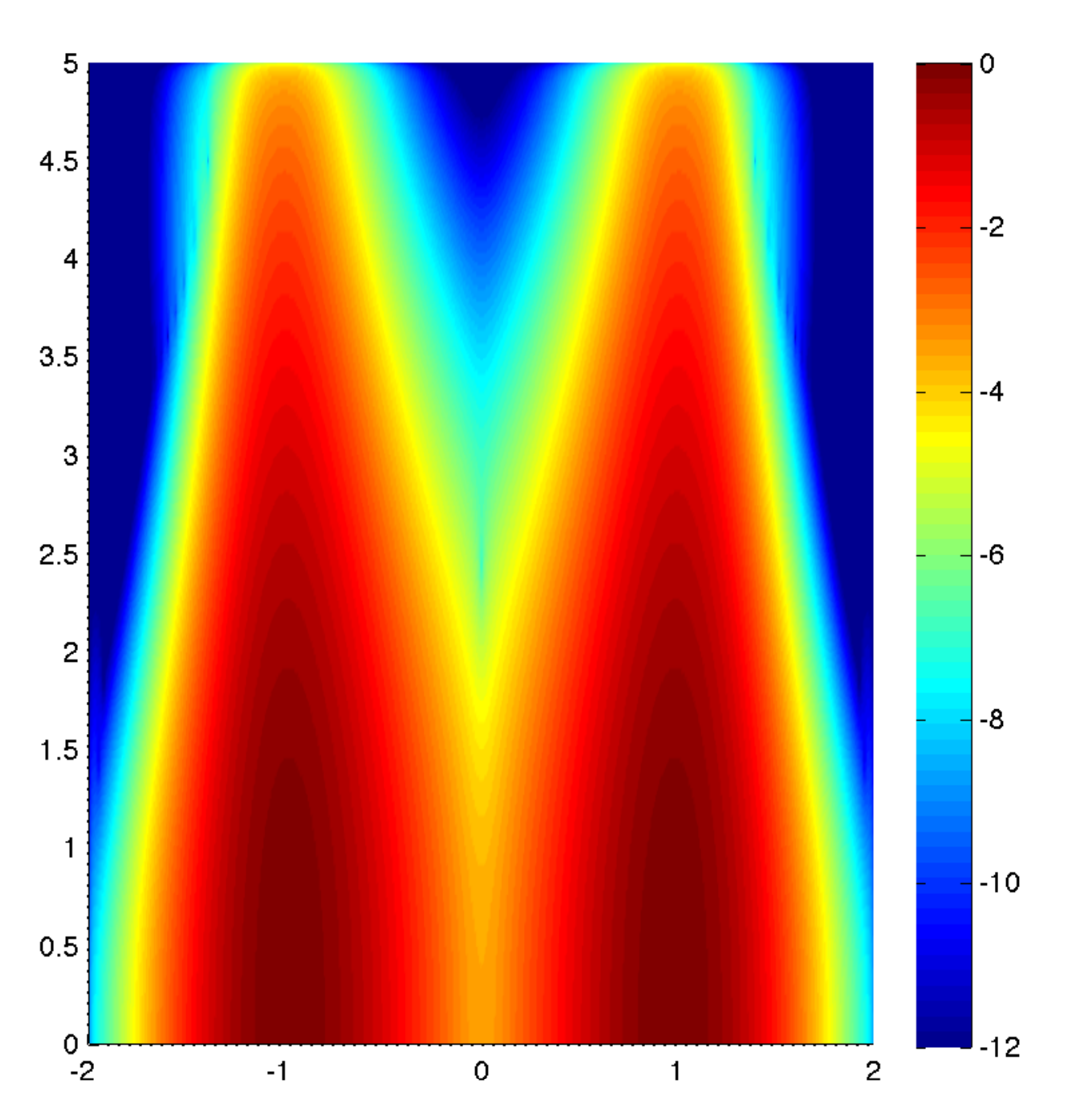}
\includegraphics[width=2.38cm]{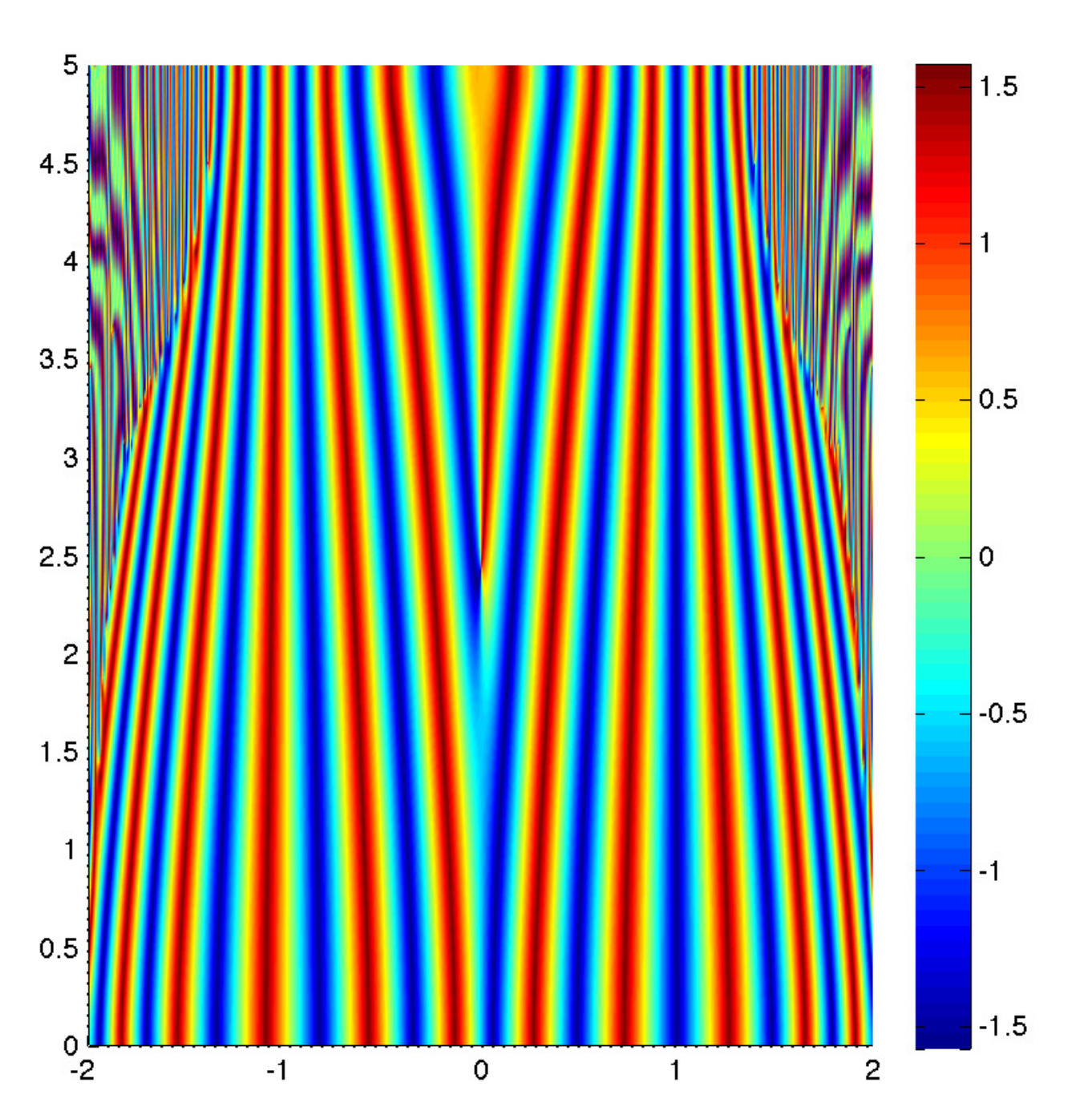}}
\hfill
\subfigure[Second eigenfunction $h=\frac 1{15}$]{\includegraphics[width=2.38cm]{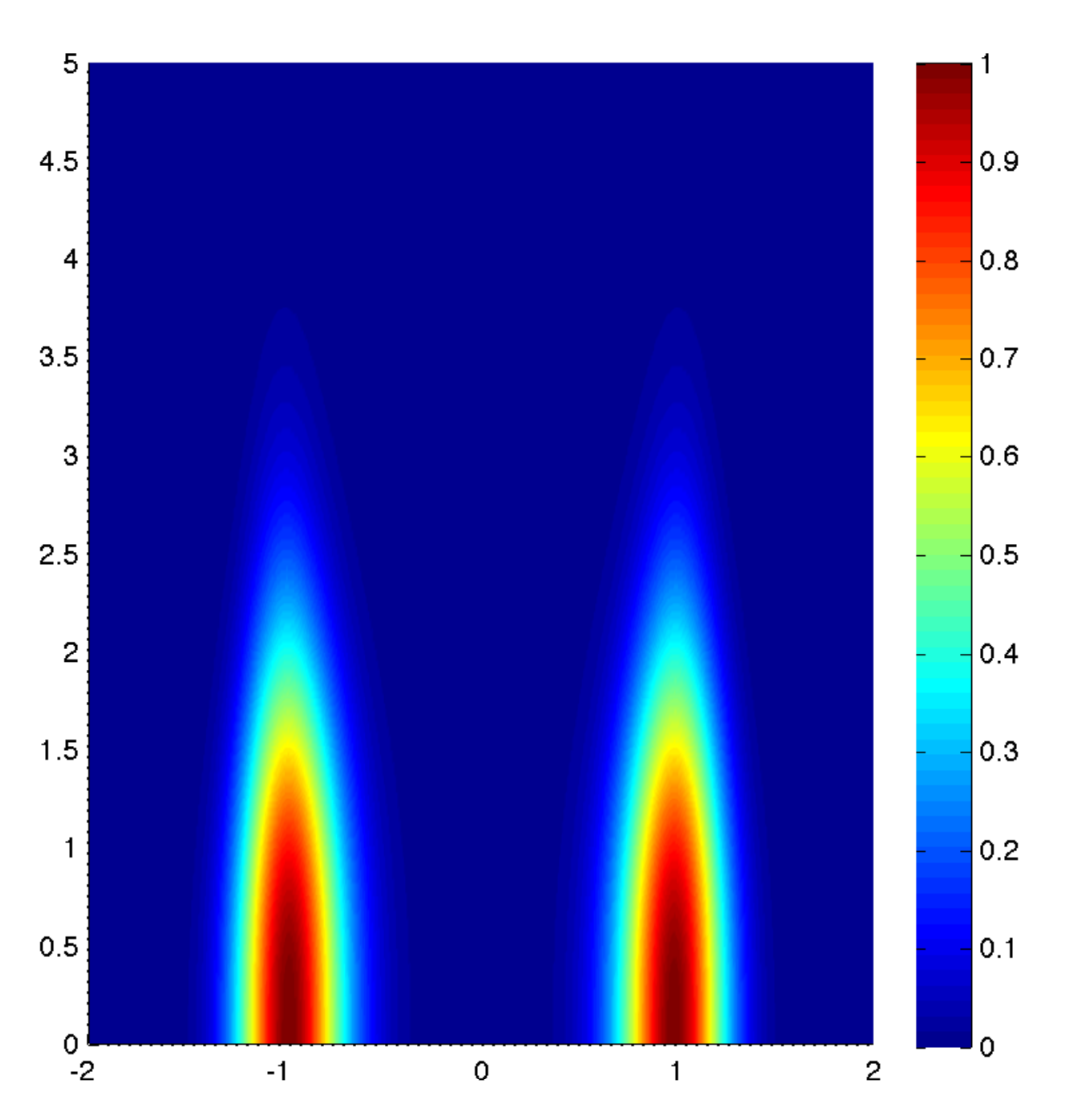}
\includegraphics[width=2.38cm]{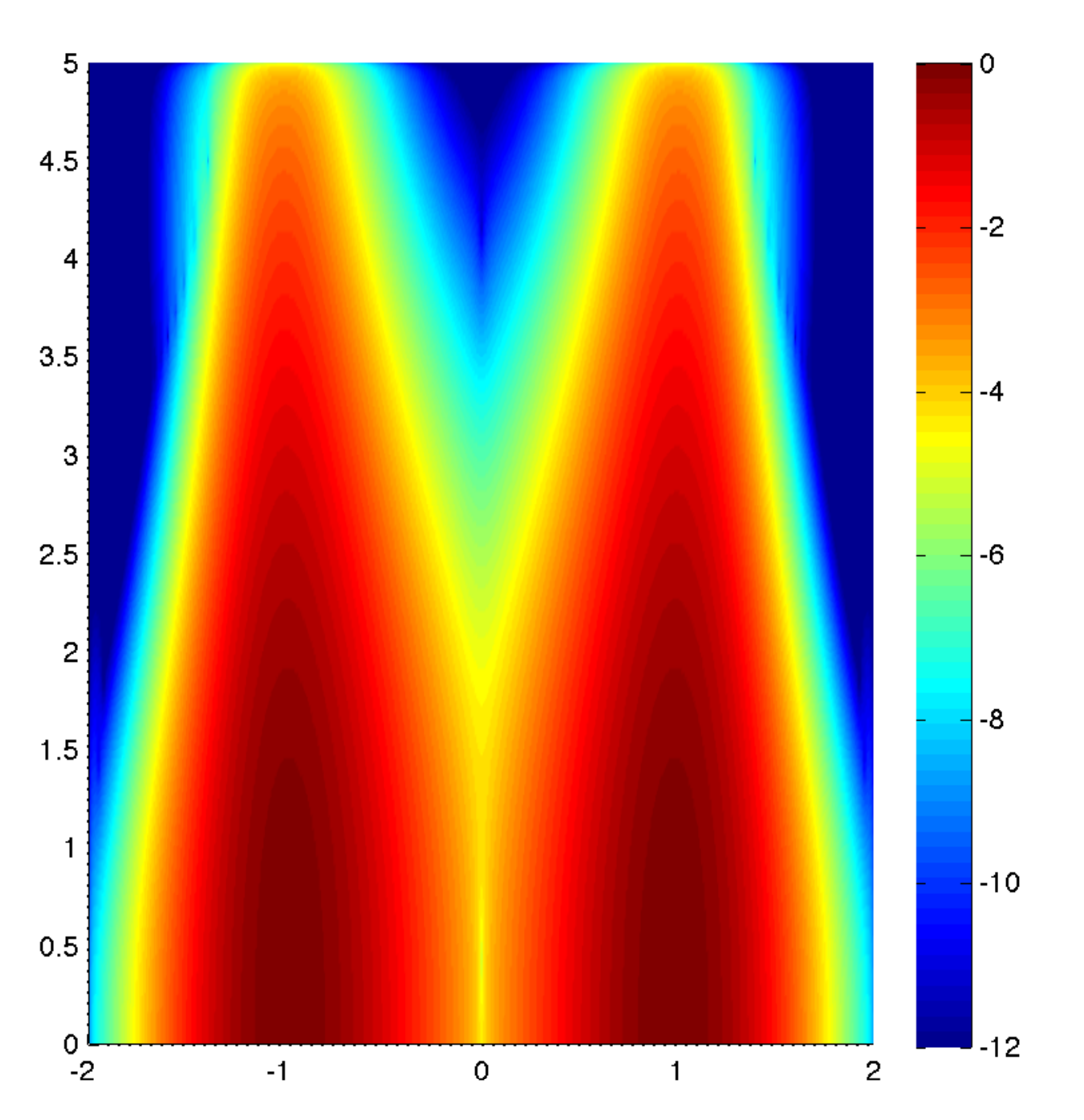}
\includegraphics[width=2.38cm]{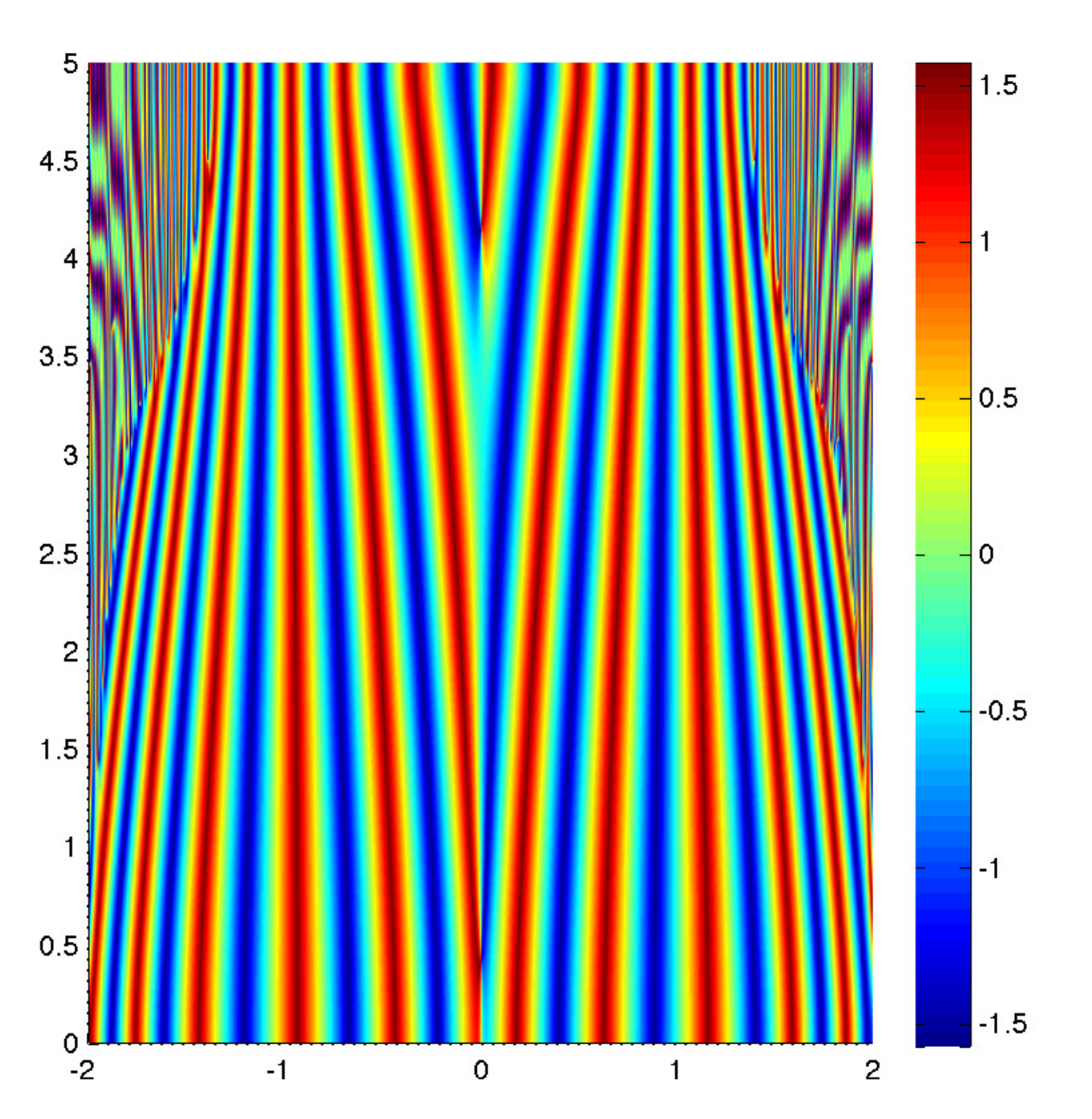}}
\subfigure[First eigenfunction $h=\frac 1{20}$]{\includegraphics[width=2.38cm]{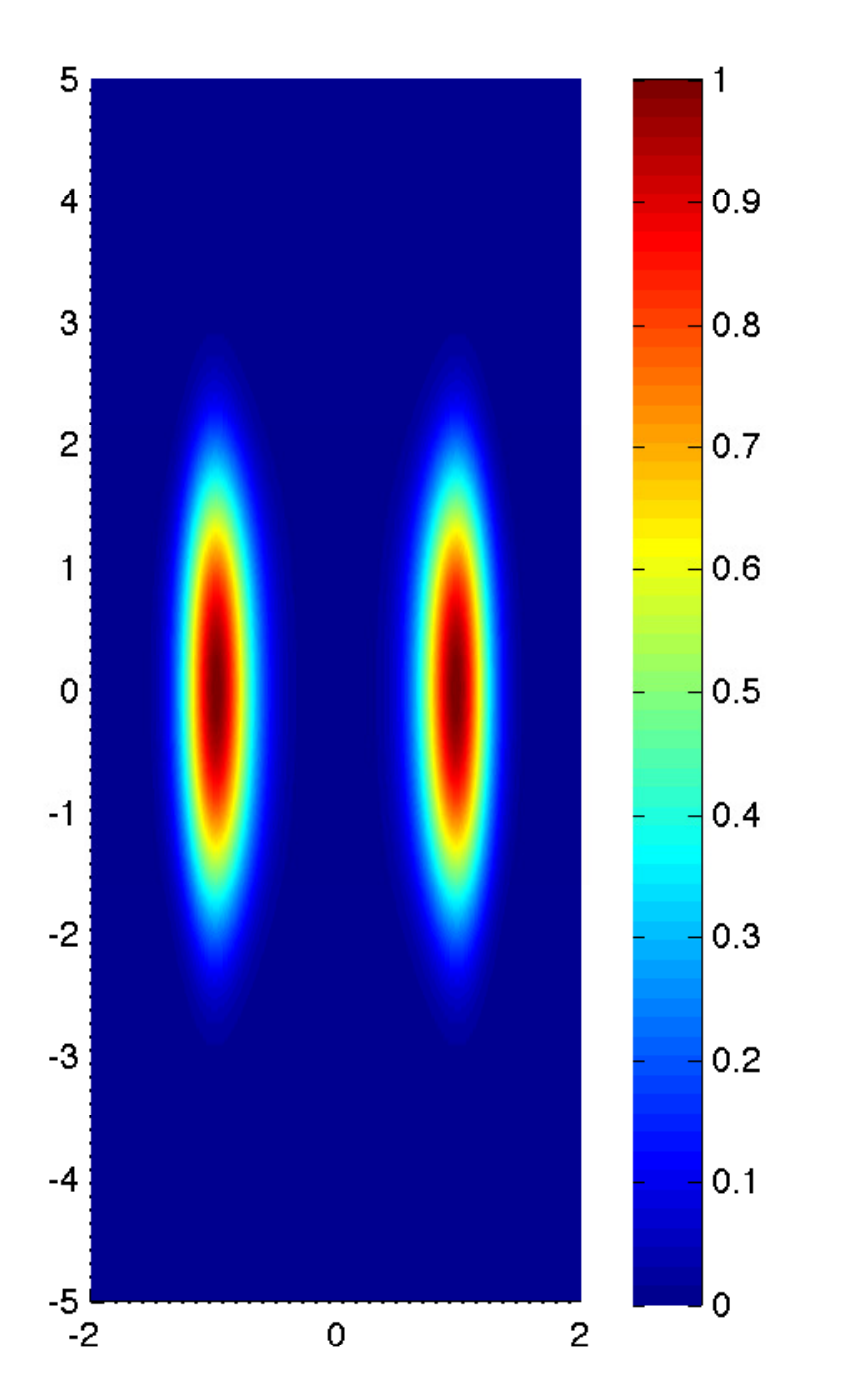}
\includegraphics[width=2.38cm]{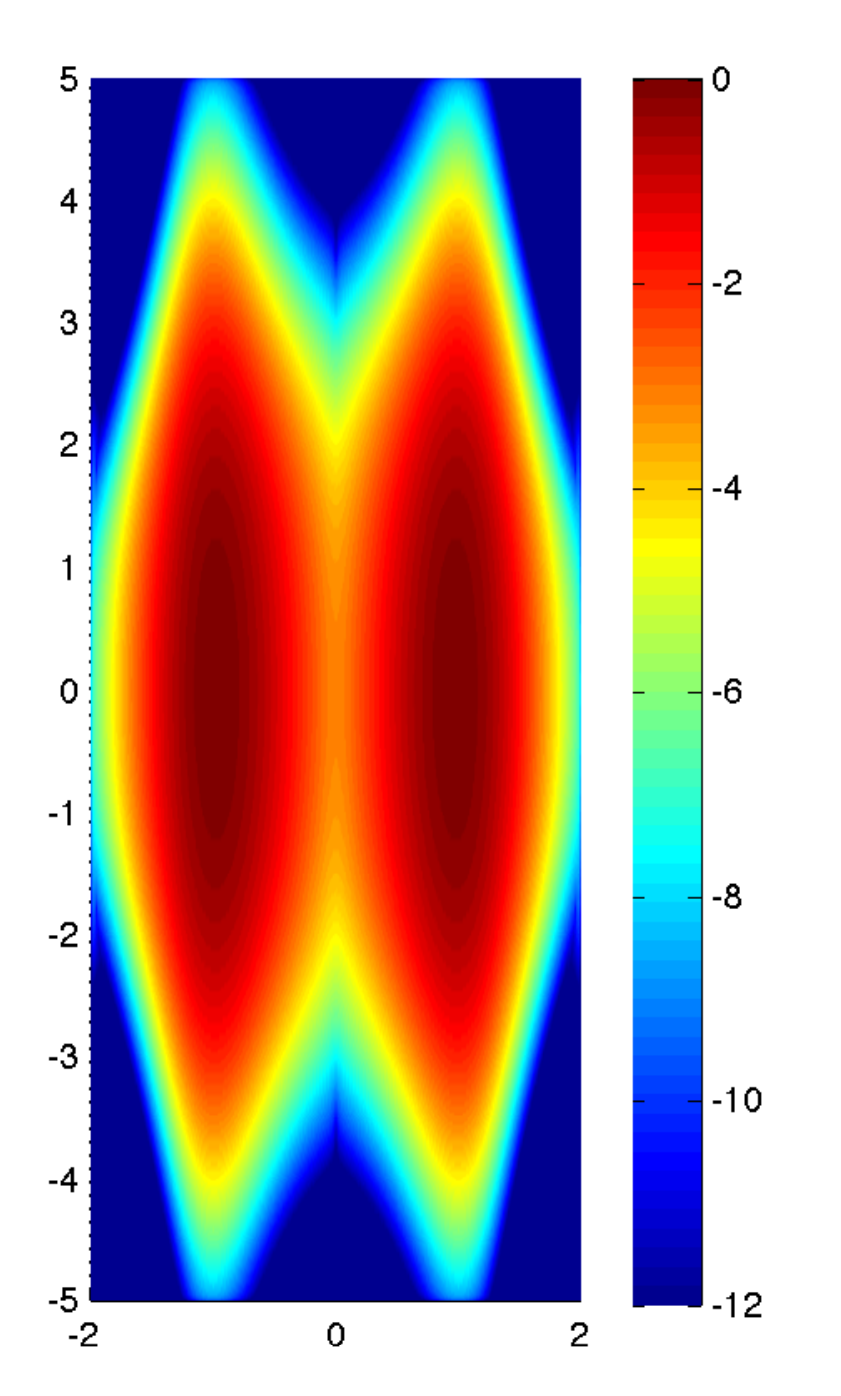}
\includegraphics[width=2.38cm]{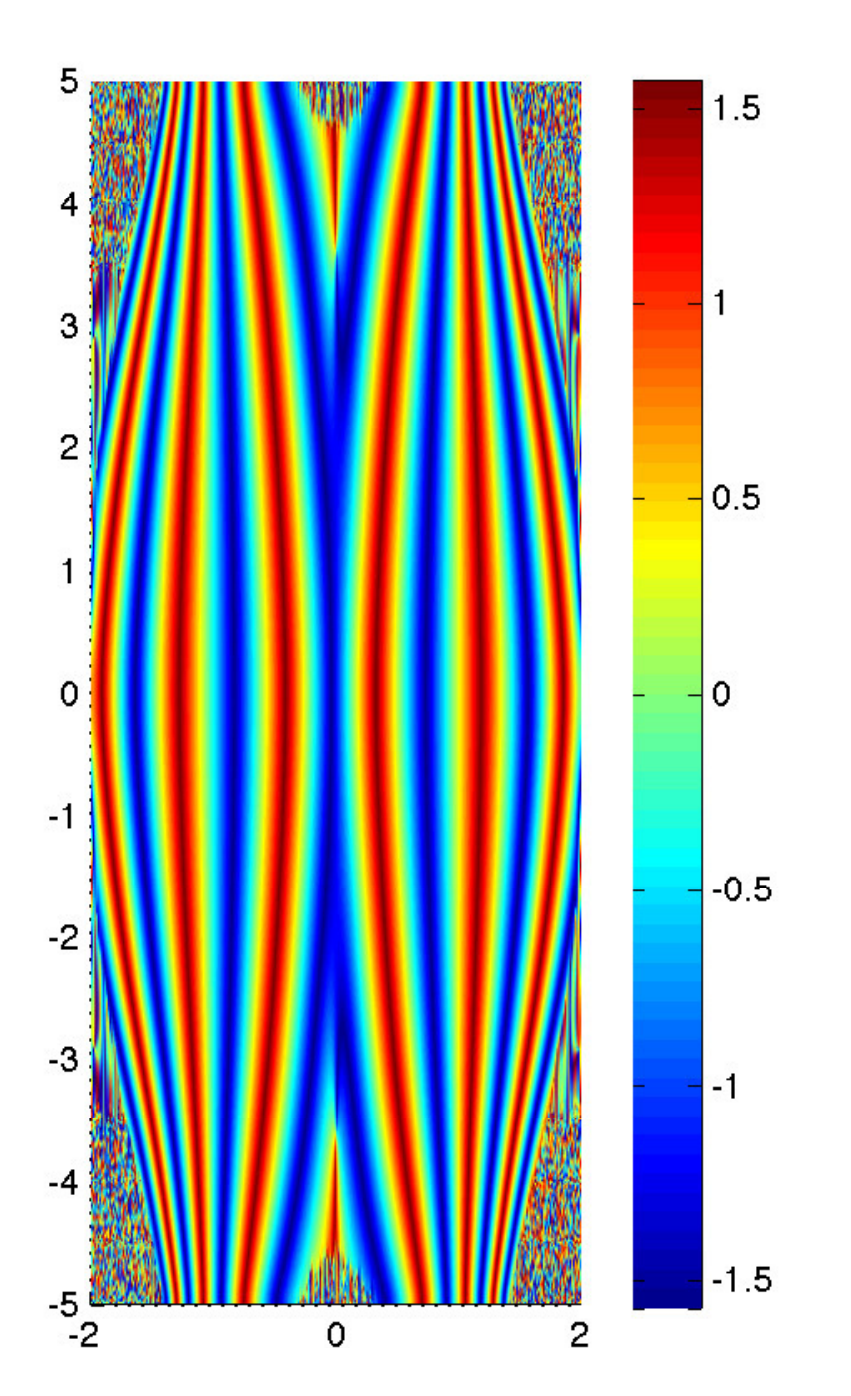}}
\hfill
\subfigure[Second eigenfunction $h=\frac 1{20}$]{\includegraphics[width=2.38cm]{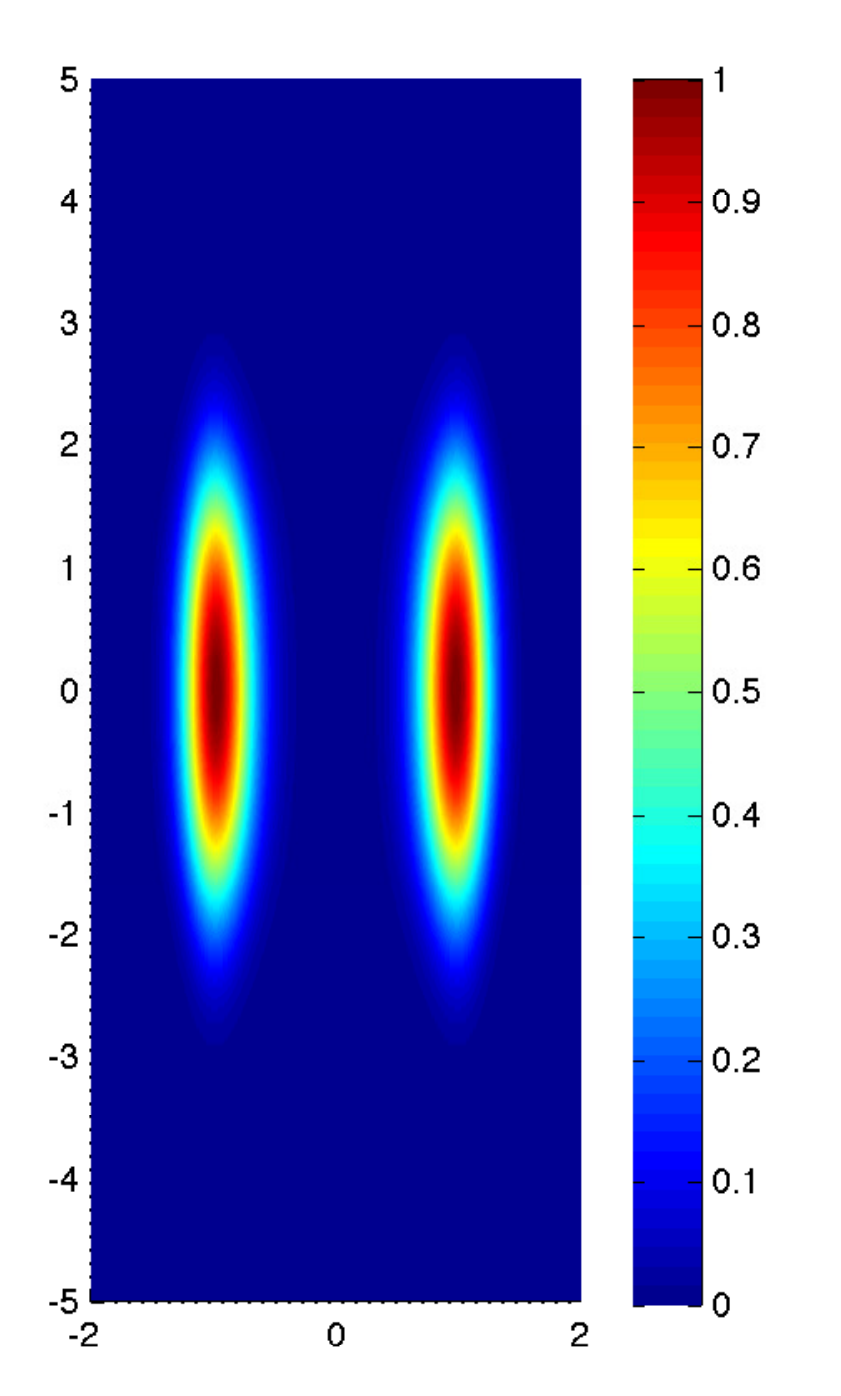}
\includegraphics[width=2.38cm]{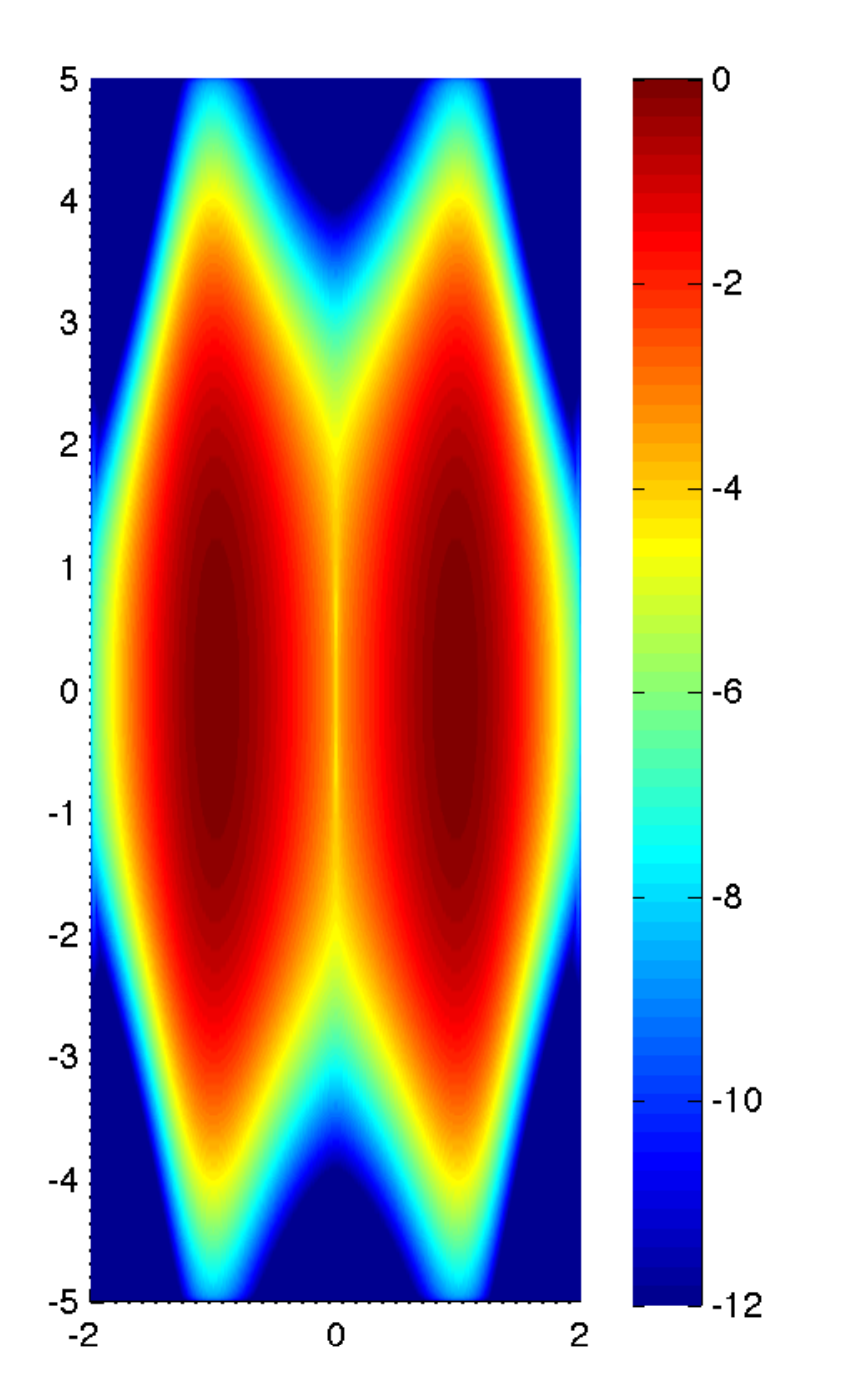}
\includegraphics[width=2.38cm]{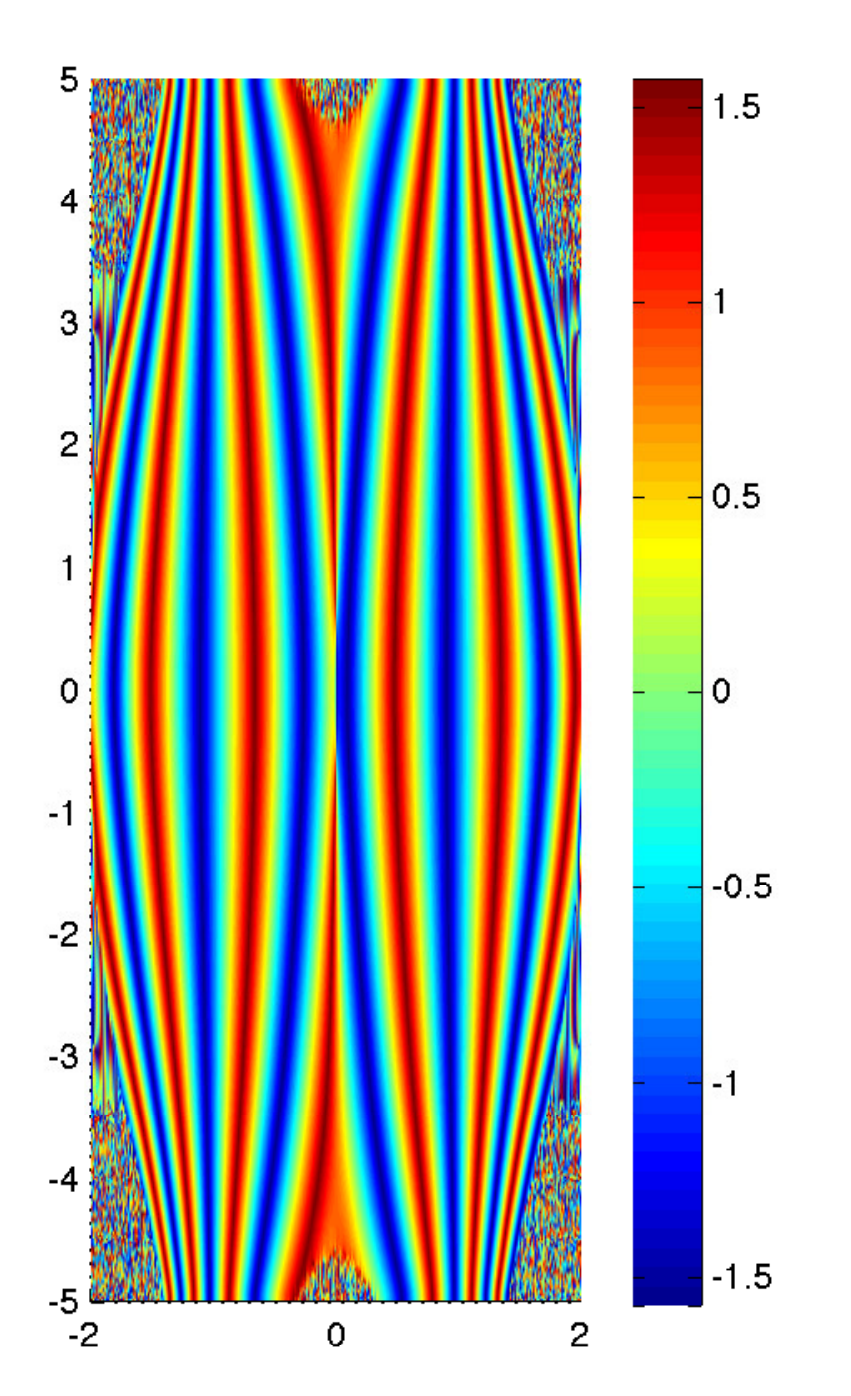}}
\caption{Moduli, log$_{10}$(moduli) and phases of the first two eigenfunctions. \label{fig.VecP}}
\end{center}
\end{figure}
In Figure~\ref{fig.VecP}, we give the first two eigenfunctions (modulus, logarithm of the modulus and phase) of $\mathfrak L_{h}^{[k]}$ for $h=1/15$ if $k=0$ and $h=1/20$ if $k=1$. To compute them, we use a $\mathbb Q_{10}$ approximation and ${\mathcal R}_{k,2,5}$ as artificial domain for computations. We observe the change of symmetry between the first two eigenfunctions: the first eigenfunction satisfies the Neumann condition along the symmetry axis $x=0$ whereas the second one is antisymmetric as it can be seen on the phase or on the  logarithm of the modulus.
If we take too small values for $h$ ($h\leq 1/24$ if $k=0$ and $h\leq 1/33$ if $k=1$), then the accuracy of our computations is no more sufficient to catch the tunneling effect and the modulus the first two eigenfunctions is no more symmetric: The first computed eigenfunction is essentially localized in one well whereas the second one is localized in the other well.

\section{Geometric models: an application of the strategy}\label{Sec:geom}
In this section, we use the same notation as previously but add an exponent $\sharp=\van, \PR,\FH$ to distinguish between our three geometric operators.

\subsection{Vanishing magnetic fields}\label{S:van}
This section is concerned with the result announced in Section \ref{intro-van}.
\subsubsection{Description of the operator in curvilinear coordinates}
If $k\geq 0$ is an integer, we let $\Omega_{k}=\R$ if $k\geq 1$ and $\Omega_{k}=\R_{+}$ if $k=0$. By using the standard tubular coordinates near $\Gamma$ (see \cite[Appendix F]{FouHel10}), we are reduced to analyze the following operator, depending on the integer $k\geq 0$ and acting on $\sL^2(\R\times\Omega_{k}, (1-t\kappa(s))\dx s\dx t)$ and with Neumann condition on $t=0$ if $k=0$:
\begin{multline*}
\mathcal{L}^{\van,[k]}_{\hbar}=(1-t\kappa(s))^{-1}\hbar D_{t}(1-t\kappa(s))\hbar D_{t}\\
+(1-t\kappa(s))^{-1}(\hbar D_{s}-A^{\van,[k]}(s,t))(1-t\kappa(s))^{-1}(\hbar D_{s}-A^{\van,[k]}(s,t)),
\end{multline*}
with
$$A^{\van,[k]}(s,t)=\int_{0}^t(1-t' \kappa(s))B^{\van,[k]}(s,t')\dx t'$$
and where $B^{\van,[k]}$ is a magnetic field which satisfies
$$B^{\van,[k]}(s,t)=\gamma(s)t^{k}+\delta(s)t^{k+1}+\Oc(t^{k+2})$$
so that
$$A^{\van,[k]}(s,t)=\gamma(s)\frac{t^{k+1}}{k+1}+\tilde\delta(s)\frac{t^{k+2}}{k+2}+\Oc(t^{k+3})$$
where
$$\tilde\delta(s)=\delta(s)-\gamma(s)\kappa(s).$$
The function $\kappa$ is nothing but the curvature function of the zero line of the magnetic field (if $k\geq 1$) or of the boundary (if $k=0$).
We will work under the following assumption.
\begin{assumption}\label{van}
The functions $\kappa$ and $\bB$ (or equivalently $B^{\van,[k]}$) are smooth, $\gamma$ is analytic and admits a positive and non degenerate minimum at $s=0$.
\end{assumption}
Let us perform the rescaling
$$h=\hbar^{\frac1{k+2}},\qquad s=\sigma,\qquad t=h\tau.$$
We denote by $\mathfrak{L}^{\van,[k]}_{h}$ the rescaled operator divided by $h^{2k+2}$:
\begin{multline*}
\mathfrak{L}^{\van,[k]}_{h}=(1-h\tau\kappa(\sigma))^{-1}D_{\tau}(1-h\tau\kappa(\sigma))D_{\tau}\\
+(1-h\tau\kappa(\sigma))^{-1}(hD_{\sigma}-A_{h}^{\van,[k]}(\sigma,\tau))(1-h\tau\kappa(\sigma))^{-1}(hD_{\sigma}-A_{h}^{\van,[k]}(\sigma,\tau)),
\end{multline*}
with
$$A_{h}^{\van,[k]}(\sigma,\tau)=h^{-(k+1)}A^{\van,[k]}(\sigma,h\tau).$$

\begin{theorem}\label{WKB-van}
Under Assumption \ref{van}, there exist a function $\Phi=\Phi(\sigma)$ defined in a neighborhood $\Vc$ of $(0,0)$ with  $\Re \Hess\Phi(0) >0$ and, for any $n\geq 1$, a sequence of real numbers $(\lambda_{n,j}^{\van,[k]})_{j\geq 0}$ such that
$$
\lambda_n^{\van,[k]}(h) \underset{h\to 0}{\sim}\sum_{j\geq 0}\lambda_{n,j}^{\van,[k]} h^j,
$$
in the sense of formal series. Besides there exists a formal
series of smooth functions on $ \Vc $
$$
 \an_{n}^{\van,[k]}(. ; h)\underset{h\to 0}{\sim}\sum_{j\geq 0}\an_{n,j}^{\van,[k]} h^j,
$$
with $\an_{n,0}^{\van,[k]}(0,0) \neq 0$ such that
$$
\left(\mathfrak{L}_{h}^{\van,[k]}-\lambda_{n}^{\van,[k]}(h)\right)\left( \an_{n}(. ; h) \re^{-\Phi/h}\right)=\mathcal{O}\left(h^{\infty}\right)  \re^{-\Phi/h}.
$$
We also have that  $\lambda^{{\van,[k]}}_{n,0}=\gamma(0)^{\frac{2}{k+2}}\nun^{[k]}(\zeta_{0}^{[k]})$ and that $\lambda^{{\van,[k]}}_{n,1}$ is the $n$-th eigenvalue of the operator
\begin{equation}\label{op-van}
\tfrac 1 2 \Hess\,\mun^{[k]}(0,\zeta^{[k]}_{0})(\sigma, D_{\sigma})+R^{\van,[k]}(0),
\end{equation}
with
\begin{multline}\label{Rvan}
R^{\van,[k]}(0)=2\gamma(0)\left(\delta(0)+\frac{\kappa(0)\gamma(0)}{k+1}\right)\int_{\Omega_{k}} \frac{\tau^{2k+3}}{(k+1)(k+2)}\left(u_{0,i\Phi'(0)}^{[k]}(\tau)\right)^2   \dx \tau\\
+\kappa(0)\int_{\Omega_{k}} \dr_\tau u_{0,i\Phi'(0)}^{[k]}(\tau) u_{0,i\Phi'(0)}^{[k]}(\tau) \dx\tau.
\end{multline}
The main term in the Ansatz is
$$\an^{\van,[k]}_{n,0}(\sigma, \tau)=f^{\van,[k]}_{n,0}(\sigma)u_{\sigma,i\Phi'(\sigma)}^{[k]}(\tau),$$
where $f^{\van,[k]}_{n,0}(\sigma)$ is the $n$-th normalized eigenfunction of \eqref{op-van}. Moreover, for all $n\geq 1$, there exist $h_{0}>0$, $c>0$ such that for all $h\in(0,h_{0})$, we have
$$\mathcal{B}\big(\lambda^{{\van,[k]}}_{n,0}+\lambda^{{\van,[k]}}_{n,1}h,ch\big)\cap \spe\left(\mathfrak{L}^{\van,[k]}_{h}\right)=\{\lambda^{{\van,[k]}}_{n}(h)\},$$
and $\lambda^{{\van,[k]}}_{n}(h)$ is a simple eigenvalue. 
\end{theorem}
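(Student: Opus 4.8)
The plan is to follow the same two-step scheme already used for Theorem~\ref{WKB-general} and Proposition~\ref{WKB-model}, but now starting from the rescaled operator $\mathfrak{L}^{\van,[k]}_{h}$ written in the tubular coordinates above. First I would expand the operator in powers of $h$. Since the metric weight $(1-h\tau\kappa(\sigma))$ and the rescaled potential $A_{h}^{\van,[k]}(\sigma,\tau)=\gamma(\sigma)\frac{\tau^{k+1}}{k+1}+h\,\tilde\delta(\sigma)\frac{\tau^{k+2}}{k+2}+\Oc(h^2)$ are smooth in $h$, we get a formal expansion $\mathfrak{L}^{\van,[k]}_{h}=\sum_{j\geq 0}h^j\mathfrak{L}^{\van,[k]}_{j}$ whose leading term $\mathfrak{L}^{\van,[k]}_{0}=D_{\tau}^2+\big(\gamma(\sigma)\frac{\tau^{k+1}}{k+1}\big)^2$ is (after the gauge change $\re^{-ig/h}$ centering at $\zeta_0^{[k]}$ and conjugation by $\re^{\Phi/h}$) exactly the operator symbol $\Mc^{[k]}_{\sigma,i\nabla\Phi}$ studied in Section~\ref{Sec:Mont}. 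The new feature is the subprincipal term $\mathfrak{L}^{\van,[k]}_{1}$, which contains both the transport part $\tfrac12(\nabla_\xi\mun^{[k]}\cdot D_\sigma+D_\sigma\cdot\nabla_\xi\mun^{[k]})$ and the additional zeroth-order contributions coming from the curvature $\kappa$ and from the next coefficient $\tilde\delta$ of the magnetic field; these are precisely what produces the extra term $R^{\van,[k]}(0)$ in \eqref{Rvan}.

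Second I would solve the eikonal and transport equations. The eikonal equation $\mun^{[k],\natural}(\sigma,i\Phi'(\sigma))=\mu_0$ is solved exactly as in Lemma~\ref{lem.Morse} (using the analyticity of $\gamma$ and the non-degeneracy of its minimum, Assumption~\ref{van}), yielding a smooth $\Phi$ with $\Phi(0)=\Phi'(0)=0$ and $\Re\Hess\Phi(0)>0$, together with the holomorphic extension $u^{[k]}_{\sigma,i\Phi'(\sigma)}$ of the eigenfunction of \eqref{Montgomery}. The $h^0$ equation then forces $\lambda_0^{\van,[k]}=\gamma(0)^{2/(k+2)}\nun^{[k]}(\zeta_0^{[k]})$ and $\an_0^{\van,[k]}(\sigma,\tau)=f_0(\sigma)u^{[k]}_{\sigma,i\Phi'(\sigma)}(\tau)$. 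At order $h^1$ the Fredholm condition, after using the Feynman--Hellmann formulas of Proposition~\ref{FH} and the reality property \eqref{barbar}, reduces to a scalar transport equation of the form $\big(\mathscr{T}+R^{\van,[k]}(\sigma)\big)f_0=\lambda_1 f_0$, whose linearization at $\sigma=0$ is the harmonic oscillator $\tfrac12\Hess\,\mun^{[k]}(0,\zeta_0^{[k]})(\sigma,D_\sigma)+R^{\van,[k]}(0)$. The precise expression \eqref{Rvan} is obtained by computing $\langle\mathfrak{R}_1 u^{[k]}_{w(0)},\overline{u^{[k]}_{w(0)}}\rangle$ where $\mathfrak{R}_1$ collects the $\kappa$- and $\tilde\delta$-dependent zeroth-order terms; this is the only genuinely new computation. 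Since $m=1$, Assumption~\ref{hyp-gen'} is automatic and the spectrum of this oscillator is simple, so $\lambda_{n,1}^{\van,[k]}$ and $f_{n,0}^{\van,[k]}$ are well defined; the higher-order terms $\an_j,\lambda_j$ are then produced inductively by the same Fredholm/operator-inversion argument as in Section~\ref{sec:WKB}, with Schwartz-class control in $\tau$ from the compact resolvent of the Montgomery operator.

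Finally I would upgrade the formal construction to a genuine spectral statement. A Borel summation gives a true function $\an^{\van,[k]}_{n}(\cdot;h)$ and, after multiplying by a cutoff in $\sigma$ near $0$ and using the exponential decay furnished by $\re^{-\Phi/h}$ with $\Re\Hess\Phi(0)>0$, one obtains quasimodes for $\mathfrak{L}^{\van,[k]}_{h}$ with error $\Oc(h^\infty)$; the spectral theorem then locates an eigenvalue within $\Oc(h^\infty)$ of $\sum h^j\lambda_{n,j}^{\van,[k]}$. To get the separation statement $\mathcal{B}(\lambda^{\van,[k]}_{n,0}+\lambda^{\van,[k]}_{n,1}h,ch)\cap\spe=\{\lambda^{\van,[k]}_n(h)\}$ and simplicity, I would invoke the two-term eigenvalue asymptotics and the effective harmonic-oscillator structure exactly as in Theorem~\ref{theorem-simple-well}: the Agmon/Persson localization (Propositions~\ref{Agmon-tau}--\ref{Agmon-s}), the coherent-states microlocalization (Section~\ref{coherent}) and the Feshbach--Grushin reduction (Section~\ref{projection}) all transpose to the present operator because its symbol, after rescaling, satisfies Assumptions~\ref{hyp-gen}--\ref{confining} (the confinement being checked by the same normal-form argument as in Section~\ref{ss-verify-conf}, with the extra $\kappa$-terms absorbed as lower-order perturbations). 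The main obstacle I anticipate is precisely the bookkeeping in $\mathfrak{L}^{\van,[k]}_{1}$: one must carefully track how the curvature weight $(1-h\tau\kappa(\sigma))^{-1}$, expanded to first order, and the $t^{k+2}$-correction of the vector potential combine after the gauge and weight conjugations to yield exactly the coefficient of $\tau^{2k+3}$ and the $\partial_\tau u\,u$ integral appearing in \eqref{Rvan} — everything else is a routine transposition of the machinery already developed for $\mathfrak{L}^{[k]}_{h}$.
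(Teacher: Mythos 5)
Your construction of the WKB Ansatz is essentially the paper's: same expansion of the weight $(1-h\tau\kappa(\sigma))^{-1}$ and of $A_h^{\van,[k]}$ in powers of $h$, same identification of $\mathfrak{L}_0$ with the operator symbol $\mathcal{M}^{[k]}_{\sigma,i\Phi'(\sigma)}$, same eikonal equation solved by the machinery of Section~\ref{Sec.eikonale}/Lemma~\ref{lem.Morse}, and the same Fredholm condition at order $h$ producing a transport equation $\mathscr{T}f_0+R^{\van,[k]}(\sigma)f_0=\lambda_1 f_0$ whose linearization at $\sigma=0$ yields \eqref{op-van}; the extra term $R^{\van,[k]}(0)$ indeed comes from pairing the $\kappa$- and $\tilde\delta$-dependent zeroth-order pieces of $\mathfrak{L}_1$ against $u^{[k]}_{0,i\Phi'(0)}$, as you say. (The paper does not perform a separate gauge change $\re^{-ig/h}$ here; the linear imaginary part of $\Phi$, fixed by $i\Phi'(0)=\gamma(0)^{1/(k+2)}\zeta_0^{[k]}$, plays that role — a purely cosmetic difference.)

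Where you genuinely diverge is the final separation/simplicity claim. You propose to re-run the whole apparatus of Theorem~\ref{theorem-simple-well} (Agmon--Persson estimates, coherent states, Feshbach--Grushin) on $\mathfrak{L}^{\van,[k]}_h$ after checking Assumptions~\ref{hyp-gen}--\ref{confining}. Be aware that the paper explicitly states in Section~\ref{SS:geom} that its general theorems do \emph{not} directly apply to these geometric models: $\mathfrak{L}^{\van,[k]}_h$ is not of the form \eqref{Lfh} because of the metric weight and the $h$-dependence of $A_h^{\van,[k]}$, so "the symbol satisfies the Assumptions" is not literally sufficient and the transposition you sketch would require a nontrivial amount of additional bookkeeping (controlling the $h$-dependent remainders in the coherent-states decomposition and in the Grushin reduction). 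The paper avoids this entirely by quoting the spectral splitting and simplicity already established in \cite{Ray11b} (for $k=0$) and \cite{DomRay12} (for $k=1$, with the proof adapting to $k\geq 2$). Your route is plausible and more self-contained, but as written it understates the work needed at that step; the citation route is what the paper actually uses.
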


\subsubsection{WKB expansion}
Let us now prove Theorem \ref{WKB-van}.
We have the expansion (in powers of $h$): 
$$A_{h}^{\van,[k]}(\sigma,\tau)=\gamma(\sigma)\frac{\tau^{k+1}}{k+1}+\tilde\delta(\sigma) h \frac{\tau^{k+2}}{k+2}+\Oc(h^2\tau^{k+3}).$$
We can write the following formal series expansion
$$\re^{\Phi(\sigma)/h}\ \mathfrak{L}^{\van,[k]}_{h}\ \re^{-\Phi(\sigma)/h}\sim\sum_{j\geq 0} h^j\mathfrak{L}_{j},$$
where we have
$$\mathfrak{L}_{0}=D_{\tau}^2+\left(i\Phi'(\sigma)-\gamma(\sigma)\frac{\tau^{k+1}}{k+1}\right)^2$$
and
$$\mathfrak{L}_{1}=-\frac{\tau^{k+1}}{k+1}(D_{\sigma}\gamma(\sigma)+\gamma(\sigma) D_{\sigma})+\frac{2\gamma(\sigma)\tilde\delta(\sigma)\tau^{2k+3}}{(k+2)(k+1)}+\kappa(\sigma)\dr_{\tau}+2\tau\kappa(\sigma)\left(\gamma(\sigma)\frac{\tau^{k+1}}{k+1}\right)^2.$$
Our Ans\"atze are again in the form
$$\an\sim\sum_{j\geq 0} h^j\an_{j},\qquad \lambda\sim\sum_{j\geq 0} h^j\lambda_{j}.$$
The first equation is given by
$$\mathfrak{L}_{0}\an_{0}=\lambda_{0}\an_{0}.$$
This leads to the choice
$$\an_{0}(\sigma,\tau)=f_{0}(\sigma)u_{\sigma,i\Phi'(\sigma)}^{[k]}(\tau)$$
and $\Phi$ must be such that
$$\mun^{[k]}(\sigma,i\Phi'(\sigma))=\lambda_{0}$$
and $\lambda_{0}=\gamma(0)^{\frac{2}{k+2}}\nun^{[k]}(\zeta_{0}^{[k]})$.
The next equation to solve is
$$(\mathfrak{L}_{0}-\mu_{0})\an_{1}=(\lambda_{1}-\mathfrak{L}_{1})\an_{0}$$
and the associated Fredholm condition is given by
$$\left\langle\mathfrak{L}_{1}\an_{0},u_{\sigma,-i \overline{\Phi}'(\sigma)}^{[k]}\right\rangle_{\sL^2(\Omega_{k},\dx \tau)}=\lambda_{1}f_{0}(\sigma).$$
We get the transport equation
$$\left(D_{\sigma}\dr_{\zeta}\mun^{[k]}(\sigma,i\Phi'(\sigma))+\dr_{\zeta}\mun^{[k]}(\sigma,i\Phi'(\sigma))D_{\sigma}\right)f_{0}+R^\van(\sigma)f_{0}=\lambda_{1}f_{0},$$
where $R^\van$ is an explicit smooth function. Considering the linearized equation near $\sigma=0$, we are led to choose $\lambda_{1}$ in the set
$$\spe\left(\tfrac 1 2 \Hess\,\mun^{[k]}(0,\zeta^{[k]}_{0})(\sigma, D_{\sigma})\right)+R^\van(0).$$
We recognize the set which appears in \cite[$\gamma_{n,2}$ in Theorem 1.3]{Ray11b} (for $k=0$), \cite[$\theta_{2}^n$ in Theorem 1.6]{DomRay12} (for $k=1$) and in the conjecture of \cite[$A$ for our $R^\van(0)$ in (4.5)]{HelKo09} (for $k\geq 1$). In particular the simplicity of the eigenvalues is established in \cite{Ray11b, DomRay12} for $k=0,1$ whereas slight adaptations have to be done to deal with the case $k\geq 2$.

\subsection{Along a varying edge in dimension three}\label{S:edge}
Let us now deal with the situation described in Section \ref{intro-edge}. Let us recall that the bottom of the spectrum of the magnetic Neumann Laplacian on the wedge $\mathcal{W}_{\alpha}$ with constant aperture $\alpha$ with a magnetic field normal to the symmetry plane is a non increasing function with respect to $\alpha$ (see \cite{Popoff}). The wedge is so that $\mathcal{W}_{\alpha}=\R\times\mathcal{S}_{\alpha}$ where $\mathcal{S}_{\alpha}$ is the angular sector in $\R^2$.

\subsubsection{Framework}
We will need the Neumann realization of the operator defined on $\sL^2(\mathcal{S}_{\alpha_{0}},\dx t \dx z)$ by
$$\mathcal{M}^{\PR}_{s,\eta}=D_{t}^2+\cT(s)^{-2}\cT(0)^2D_{z}^2+(\eta-t)^2,$$
whose form domain is
$$\Dom(\mathcal{Q}^{\PR}_{s,\eta})=\left\{\psi\in\sL^2(\mathcal{S}_{\alpha_{0}}) : D_{t}\psi\in\sL^2(\mathcal{S}_{\alpha_{0}}), D_{z}\psi\in\sL^2(\mathcal{S}_{\alpha_{0}}), t\psi\in \sL^2(\mathcal{S}_{\alpha_{0}}) \right\}$$
and with operator domain
$$\Dom(\mathcal{M}^{\PR}_{s,\eta})=\left\{\psi\in\Dom(\mathcal{Q}^{\PR}_{s,\eta}) :  \mathcal{M}^{\PR}_{s,\eta}\psi\in\sL^2(\mathcal{S}_{\alpha_{0}}), \mathfrak{T}(s)\psi=0\right\},$$
where
$$\mathfrak{T}(s)=-\sgn(z) D_{t}+\cT(s)^{-2}\cT(0) D_{z}.$$
The family $(\mathcal{M}^{\PR}_{s,\eta})_{(s,\eta)\in\R^2}$ is analytic of type (A). Note that, near each point $(s_{1},\eta_{1})\in\R^2$, this family can be holomorphically extended. The lowest eigenvalue of $\mathcal{M}^{\PR}_{s,\eta}$ is denoted by $\mun^\PR(s,\eta)$. As in \cite{PoRay12}, we will also investigate the consequences of the following conjecture (see \cite[Remark 1.8]{PoRay12}).
\begin{conjecture}\label{conj}
For all $\alpha_{0}\in(0,\pi)$, the function  $\eta\mapsto\mun^\PR(0,\eta)$ admits a unique critical point $\eta_{0}$ which is a non degenerate minimum.
\end{conjecture}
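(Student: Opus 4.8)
The statement lives at $s=0$, where $\cT(0)^{-2}\cT(0)^2=1$, so the model collapses to $\Mc^{\PR}_{0,\eta}=D_{t}^2+D_{z}^2+(\eta-t)^2$ on the fixed sector $\mathcal{S}_{\alpha_{0}}=\{t\geq0,\ |z|\leq\cT(0)t\}$ with Neumann condition on the two lateral rays. The plan is first to record the structural facts: the potential $(\eta-t)^2$ confines in $t$ while $\mathcal{S}_{\alpha_{0}}$ is bounded in the transversal variable, so $\Mc^{\PR}_{0,\eta}$ has compact resolvent, and since $(\Mc^{\PR}_{0,\eta})_{\eta\in\R}$ is an analytic family of type (A) with simple ground state, $\eta\mapsto\mun^\PR(0,\eta)$ is real analytic with an $\sL^2$-normalised, analytically varying eigenfunction $u_{\eta}$. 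Then I would analyse the two limits: for $\eta\to-\infty$ the well $\{t=\eta\}$ lies outside $\mathcal{S}_{\alpha_{0}}$, so $(\eta-t)^2\geq\eta^2$ there and $\mun^\PR(0,\eta)\to+\infty$; for $\eta\to+\infty$ the ground state localises near $(t,z)=(\eta,0)$, away from the vertex and from the boundary, and a comparison argument gives $\mun^\PR(0,\eta)\to 1$ (the bulk Landau value), while the existence of a genuine edge bound state (see \cite{Popoff}) forces $\inf_{\eta}\mun^\PR(0,\eta)<1$. Hence $\mun^\PR(0,\cdot)$ attains a global minimum $\mu^{\PR}_{0}<1$ and its critical set is contained in a compact interval.

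Next I would run the Feynman--Hellmann calculus of Proposition~\ref{FH} with the single real parameter $\eta$. Differentiating $\Mc^{\PR}_{0,\eta}u_{\eta}=\mun^\PR(0,\eta)u_{\eta}$ yields $\partial_{\eta}\mun^\PR(0,\eta)=2\langle(t-\eta)u_{\eta},u_{\eta}\rangle$, so a critical point $\eta_{c}$ is precisely a point at which the first moment $\langle(t-\eta_{c})u_{\eta_{c}},u_{\eta_{c}}\rangle$ vanishes, and a second differentiation gives
$$\tfrac12\,\partial_{\eta}^2\mun^\PR(0,\eta_{c})=1-2\big\langle\big(\Mc^{\PR}_{0,\eta_{c}}-\mun^\PR(0,\eta_{c})\big)^{-1}\Pi_{\eta_{c}}^{\perp}\big((t-\eta_{c})u_{\eta_{c}}\big),\,(t-\eta_{c})u_{\eta_{c}}\big\rangle,$$
where $\Pi_{\eta_{c}}^{\perp}$ is the orthogonal projection off $u_{\eta_{c}}$ and the resolvent is understood as the inverse of $\Mc^{\PR}_{0,\eta_{c}}-\mun^\PR(0,\eta_{c})$ on the orthogonal complement of $u_{\eta_{c}}$. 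The logical reduction is then standard: if this quantity is strictly positive at every critical point, then, $\mun^\PR(0,\cdot)$ being real analytic with a global minimum inside a compact interval and with only strict local minima as critical points, it has exactly one critical point, which is automatically the non-degenerate minimum --- which is the conjecture.

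The hard part, and the reason the statement is only conjectural, is precisely to control the sign of that reduced ``effective-mass'' quantity at a critical point. In the one-dimensional de Gennes situation one exploits an algebraic identity relating $\mu$, its first two derivatives and the boundary trace of the eigenfunction, which pins down the sign; in the genuinely two-dimensional sector there is no separation of variables --- the vertex and the Neumann condition on the slanted rays couple $t$ and $z$ --- and no closed analogue of that identity, so $(\Mc^{\PR}_{0,\eta_{c}}-\mun^\PR(0,\eta_{c}))^{-1}\Pi_{\eta_{c}}^{\perp}$ is not explicitly computable. The realistic partial programme is therefore: (i) establish the conjecture in the limiting regimes $\alpha_{0}\to0$ and $\alpha_{0}\to\pi$, where $\Mc^{\PR}_{0,\eta}$ degenerates --- after rescaling, respectively after separating off the free transversal variable --- to one-dimensional de Gennes-type operators for which the corresponding uniqueness and non-degeneracy are known, and then propagate the non-degeneracy by analytic perturbation in $\alpha_{0}$ over a maximal sub-interval; (ii) attempt a direct convexity argument near the minimum, for instance via a monotone rearrangement in the angular variable or a comparison with a solvable model, which would give uniqueness outright; (iii) corroborate these with finite-element computations in the spirit of Section~\ref{sec.methnum}. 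Only route~(i) seems to lie within reach of the present techniques, and the conjecture for general $\alpha_{0}\in(0,\pi)$ remains open.
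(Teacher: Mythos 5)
The statement you were asked to prove is labelled \emph{Conjecture}~\ref{conj} in the paper, and the paper does not prove it: immediately before stating it the authors write that, following \cite[Remark 1.8]{PoRay12}, they will ``investigate the consequences of the following conjecture,'' and the subsequent proposition is explicitly conditional (``if Conjecture~\ref{conj} is true''). So there is no proof in the paper to compare against, and your proposal is right to conclude that the statement remains open; your reduction of the problem --- real analyticity of $\eta\mapsto\mun^\PR(0,\eta)$ via Kato, divergence as $\eta\to-\infty$, convergence to the bulk value $1$ as $\eta\to+\infty$, existence of a sub-threshold minimum by Popoff's edge bound state, Feynman--Hellmann characterization of critical points, and the observation that positivity of the second derivative at every critical point would force uniqueness --- is the natural scaffolding one would build before attempting such a proof, and your assessment of the obstruction (no separation of variables on the sector, no analogue of the de Gennes algebraic identity relating $\nu$, $\nu'$, $\nu''$ and the boundary trace) is exactly right.

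Two small remarks. First, in your second-order perturbation formula
$$\tfrac12\,\partial_{\eta}^2\mun^\PR(0,\eta_{c})=1-2\big\langle\big(\Mc^{\PR}_{0,\eta_{c}}-\mun^\PR\big)^{-1}\Pi_{\eta_{c}}^{\perp}\big((t-\eta_{c})u_{\eta_{c}}\big),\,(t-\eta_{c})u_{\eta_{c}}\big\rangle,$$
the constant in front of the resolvent term should be $4$, not $2$: with $\partial_\eta\Mc^\PR_{0,\eta}=2(\eta-t)$ and $\partial^2_\eta\Mc^\PR_{0,\eta}=2$, Kato's formula at a critical point gives $\partial_\eta^2\mun^\PR = 2 - 8\langle(\Mc^\PR-\mun^\PR)^{-1}\Pi^\perp(\eta-t)u,(\eta-t)u\rangle$, hence $\tfrac12\partial_\eta^2\mun^\PR = 1 - 4\langle\cdots\rangle$. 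This does not affect your logical reduction. Second, the first limit $\alpha_0\to\pi$ in your partial programme~(i) is the most promising, since there the problem degenerates to de Gennes where uniqueness and non-degeneracy are proved (\cite{DauHel}), and analytic perturbation in $\alpha_0$ is indeed the route one would try; the limit $\alpha_0\to0$ is more singular and less obviously tractable. But none of this alters the verdict: you have correctly identified this as an open conjecture and have not claimed a proof, which matches the paper.
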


\begin{proposition}
Under Assumption \ref{alpha-max} and if Conjecture \ref{conj} is true, the function $\mun^\PR$ admits a local non degenerate minimum at $(0,\eta_{0})$. Moreover the Hessian at $(0,\eta_{0})$ is given by
\begin{equation}\label{symbol-PR}
4\kappa\mathcal{T}(0)^{-1}\|D_{z}u^{\PR}_{0,\eta_{0}}\| s^2+\left(\partial^2_{\eta}\mun^\PR\right)_{0,\eta_{0}}\eta^2,
\end{equation}
where $\kappa=-\frac{\mathcal{T}''(0)}{2}>0$.
\end{proposition}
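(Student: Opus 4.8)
The plan is to treat $(s,\eta)\mapsto\mathcal{M}^{\PR}_{s,\eta}$ as an analytic perturbation of the operator of Conjecture~\ref{conj} and to read off the Hessian of $\mun^\PR$ from the Feynman--Hellmann identities of Proposition~\ref{FH}. First I would record that $s$ enters $\mathcal{M}^{\PR}_{s,\eta}$ only through the coefficient $g(s)=\cT(s)^{-2}\cT(0)^2$ in front of $D_z^2$ (and, correspondingly, through the boundary operator $\mathfrak{T}(s)$), and that $g(0)=1$; hence $\mathcal{M}^{\PR}_{0,\eta}=D_t^2+D_z^2+(\eta-t)^2$ is precisely the operator appearing in Conjecture~\ref{conj}. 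Under that conjecture, $\eta\mapsto\mun^\PR(0,\eta)$ has a unique, non degenerate minimum at $\eta_0$, so $\partial_\eta\mun^\PR(0,\eta_0)=0$ and $(\partial_\eta^2\mun^\PR)_{0,\eta_0}>0$.

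Next I would exploit Assumption~\ref{alpha-max}. Since $\cT=\tan(\alpha/2)$ and $\alpha'(0)=0$, one has $\cT'(0)=\tfrac12\sec^2(\alpha_0/2)\alpha'(0)=0$, whence $g'(0)=-2\cT(0)^{-1}\cT'(0)=0$; and since $\alpha''(0)<0$, $\cT''(0)=\tfrac12\sec^2(\alpha_0/2)\alpha''(0)=-2\kappa<0$, so that $g''(0)=-2\cT(0)^{-1}\cT''(0)=4\kappa\cT(0)^{-1}>0$, i.e. $g(s)=1+2\kappa\cT(0)^{-1}s^2+o(s^2)$. Working on the $s$-independent form domain $\Dom(\mathcal{Q}^{\PR}_{s,\eta})$, so that $(\mathcal{M}^{\PR}_{s,\eta})$ is a holomorphic family of type (B) and $\mun^\PR,u^\PR$ depend analytically on $(s,\eta)$ where the lowest eigenvalue stays simple and isolated, the form Feynman--Hellmann formula gives $\partial_s\mun^\PR(s,\eta)=g'(s)\,\|D_z u^\PR_{s,\eta}\|^2$, hence $\partial_s\mun^\PR(0,\eta)=0$ for every $\eta$ near $\eta_0$; differentiating in $\eta$ yields the mixed derivative $(\partial_s\partial_\eta\mun^\PR)_{0,\eta_0}=0$. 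Combined with the previous paragraph, $(0,\eta_0)$ is a critical point of $\mun^\PR$ and $\Hess\mun^\PR(0,\eta_0)$ is block-diagonal in the $(s,\eta)$-splitting.

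It remains to compute the $s$-block. Since $\partial_s\mathcal{Q}^{\PR}_{s,\eta}\big|_{s=0}=0$ (because $g'(0)=0$), the second-order analytic perturbation formula collapses to $(\partial_s^2\mun^\PR)_{0,\eta_0}=\big(\partial_s^2\mathcal{Q}^{\PR}_{s,\eta_0}\big|_{s=0}\big)[u^\PR_{0,\eta_0}]=g''(0)\,\|D_z u^\PR_{0,\eta_0}\|^2=4\kappa\cT(0)^{-1}\|D_z u^\PR_{0,\eta_0}\|^2>0$, which reproduces the $s^2$-coefficient announced in \eqref{symbol-PR}. Equivalently, one may argue on the operator side using \eqref{FH1}--\eqref{FH2}: \eqref{FH1} with $\partial_s\mathcal{M}^{\PR}\big|_{s=0}=0$ and $\partial_s\mun^\PR(0,\eta_0)=0$ forces $\partial_s u^\PR_{s,\eta_0}\big|_{s=0}\in\ker(\mathcal{M}^{\PR}_{0,\eta_0}-\mu_0)$, so after the standard phase normalization it vanishes, and \eqref{FH2} then leaves only $\langle(\partial_s^2\mathcal{M}^{\PR})_{0,\eta_0}u^\PR_{0,\eta_0},u^\PR_{0,\eta_0}\rangle$. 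Positive definiteness of $\Hess\mun^\PR(0,\eta_0)$ then gives that $(0,\eta_0)$ is a non degenerate local minimum with associated quadratic form \eqref{symbol-PR}. This mirrors the curvature-type second-order terms obtained in \cite{Ray11b,PoRay12,DomRay12}.

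The main obstacle is the rigorous perturbation theory in the presence of the $s$-dependent boundary condition $\mathfrak{T}(s)\psi=0$: on the operator side the domain $\Dom(\mathcal{M}^{\PR}_{s,\eta})$ moves with $s$, so one should either carry out all of the above on the constant form domain $\Dom(\mathcal{Q}^{\PR}_{s,\eta})$ with the type-(B) analytic machinery, or straighten the sector by a unitary change of variables; in either case one has to check that the first variation of the family at $s=0$, including its boundary contribution, genuinely vanishes --- which is exactly where $\cT'(0)=0$, i.e. $\alpha'(0)=0$, is used --- and that the bottom of the spectrum of $\mathcal{M}^{\PR}_{s,\eta}$ is a simple eigenvalue isolated below the essential spectrum, depending analytically on $(s,\eta)$ near $(0,\eta_0)$; this last point relies on the spectral information on the constant-aperture model recalled from \cite{Popoff,PoRay12}.
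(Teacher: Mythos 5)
Your proof is correct and reaches the same formula, but it deliberately switches to the quadratic-form side of the perturbation argument, whereas the paper stays on the operator side and explicitly follows the $s$-dependent boundary operator. Concretely, the paper differentiates the pair
$\mathcal{M}^{\PR}_{s,\eta} u^{\PR}_{s,\eta}=\mun^\PR(s,\eta)u^{\PR}_{s,\eta}$, $\mathfrak{T}(s)u^{\PR}_{s,\eta}=0$, records $\mathfrak{T}'(0)=0$ and $\mathfrak{T}''(0)=4\kappa\cT(0)^{-2}D_z$, and after two differentiations gets an inhomogeneous boundary condition $\mathfrak{T}(0)\big(\partial_s^2 u^{\PR}\big)_{0,\eta_0}=-\mathfrak{T}''(0)u^{\PR}_{0,\eta_0}$ that must be fed into the Fredholm computation together with $\partial_s^2\mathcal{M}^{\PR}_{0,\eta_0}=4\kappa\cT(0)^{-1}D_z^2$. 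You instead observe that the \emph{form} domain $\Dom(\mathcal{Q}^{\PR}_{s,\eta})$ is $s$-independent (the Neumann condition is natural for the form), so the family is holomorphic of type (B), and the Hessian can be read off from the second derivative of the form: since $g'(0)=0$ the first-order term $\partial_s\mathcal{Q}^{\PR}_{s,\eta}\big|_{s=0}$ vanishes and the second-order Kato formula collapses to $(\partial_s^2\mathcal{Q}^{\PR})_{0,\eta_0}(u_0,u_0)=g''(0)\|D_z u^{\PR}_{0,\eta_0}\|^2$. This buys you a cleaner treatment of the moving operator domain, which you correctly flag as the only delicate point; the price is that you implicitly rely on the type-(B) version of the Feynman--Hellmann identities rather than the operator-level identities \eqref{FH1}--\eqref{FH2} quoted in the paper. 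Both routes use the same two ingredients — Conjecture~\ref{conj} for the $\eta$-block and the degeneracy $\cT'(0)=0$ from Assumption~\ref{alpha-max} to kill $\partial_s$ and $\partial_s\partial_\eta$ — and arrive at the same block-diagonal Hessian. Two small remarks: your formula $\partial_s\mun^\PR(s,\eta)=g'(s)\|D_z u^{\PR}_{s,\eta}\|^2$ presupposes $u^{\PR}_{s,\eta}$ is $\sL^2$-normalized, which should be said; and your displayed coefficient carries $\|D_z u^{\PR}_{0,\eta_0}\|^2$ with a square, which matches the paper's computation even though the statement of the proposition has an evident typo (a missing square).
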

\begin{proof}
The proof follows from the perturbation theory. We have the eigenvalue equation
$$\mathcal{M}^{\PR}_{s,\eta} u^{\PR}_{s,\eta}=\mun^\PR(s,\eta)u^{\PR}_{s,\eta},	\qquad \mathfrak{T}(s)u^{\PR}_{s,\eta}=0.$$
We notice that  $\mathfrak{T}'(0)=0$ and $\mathfrak{T}''(0)=4\kappa\mathcal{T}(0)^{-2}D_{z}$. Let us analyze the derivative with respect to $s$. We have
$$(\mathcal{M}^{\PR}_{s,\eta}-\mun^\PR(s,\eta))\left(\partial_{s}u^{\PR}\right)_{s,\eta}=\partial_{s}\mun^\PR(s,\eta)u^{\PR}_{s,\eta}-\left(\partial_{s}\mathcal{M}^\PR_{s,\eta}\right)u^{\PR}_{s,\eta}.$$
We notice that $\partial_{s}\mathcal{M}^\PR_{0,\eta}=0$ and $\mathfrak{T}(0)\left(\partial_{s}u^{\PR}\right)_{0,\eta}=0$. This implies that $\partial_{s}\mun^\PR(0,\eta)=0$ by the Fredholm condition. Therefore $(0,\eta_{0})$ is a critical point of $\mun^\PR$. Let us now consider the derivative with respect to $s$ and $\eta$. We have $\partial_{s}\partial_{\eta}\mathcal{M}_{s,\eta}^\PR=0$ and by the Feynman-Hellmann formula
$$\partial_{s}\mun^\PR(0,\eta)=\int_{\mathcal{S}_{\alpha_{0}}} \partial_{s}\mathcal{M}_{0,\eta}^\PR u^{\PR}_{0,\eta} u^{\PR}_{0,\eta}\dx z\dx t,$$
we get
$$\partial_{s}\partial_{\eta}\mun^\PR(0,\eta_{0})=0.$$
We shall now analyze the second order derivative with respect to $s$:
$$(\mathcal{M}^{\PR}_{0,\eta_{0}}-\mun^\PR(0,\eta_{0}))\left(\partial_{s}^2u^{\PR}\right)_{0,\eta_{0}}=\partial^2_{s}\mun^\PR(0,\eta_{0})u^{\PR}_{0,\eta_{0}}-\partial^2_{s}\mathcal{M}^{\PR}_{0,\eta_{0}}u^{\PR}_{0,\eta_{0}},$$
with boundary condition $\mathfrak{T}(0)\left(\partial^2_{s}u^{\PR}\right)_{0,\eta_{0}}=-\mathfrak{T}''(0)u^{\PR}_{0,\eta_{0}}$. We have $\partial^2_{s}\mathcal{M}^{\PR}_{0,\eta_{0}}=4\kappa\mathcal{T}(0)^{-1}D_{z}^2$. With the Fredholm condition, we get $\partial^2_{s}\mun^\PR(0,\eta_{0})=4\kappa \mathcal{T}(0)^{-1}\|D_{z}u^{\PR}_{0,\eta_{0}}\|^2$.
\end{proof}
The function \eqref{symbol-PR} is the symbol of the effective harmonic oscillator introduced in \cite{PoRay12}. We will see that our WKB analysis succeeds as soon as we work near a local and non degenerate minimum of $\mun^\PR$. The goal of Assumption \ref{alpha-max} and Conjecture \ref{conj} is to provide sufficient conditions to have such a critical point as well as to get the spectral splitting as in \cite{PoRay12}.

\subsubsection{Normal form}

We introduce the change of variables
$$\check{s}= s,\qquad \check{t}=t,\qquad \check{z}=\cT( s)^{-1}\cT(0) z$$
and we let
$$\check\nabla_{\hbar}=\begin{pmatrix}
\hbar D_{\check s}\\
\hbar D_{\check t}\\
\hbar\cT(\check s)^{-1}\cT(0)D_{\check z}
\end{pmatrix}+
\begin{pmatrix}
-\check t-\hbar\frac{\cT'}{2\cT}(\check z D_{\check z}+D_{\check z}\check z)\\
0\\
0
\end{pmatrix}.$$
The operator $\mathfrak{L}^{\PR}_{\hbar}$ is unitarily equivalent to the operator $\check{\mathfrak{L}}^{\PR}_{\hbar}$ on $\sL^2(\mathcal{W}_{\alpha_{0}},\dx \check s\dx\check t\dx\check z)$ defined by
$$\check{\mathcal{L}}^{\PR}_{\hbar}=\left(\hbar D_{\check s}-\check t-\hbar\frac{\cT'(\check s)}{2\cT(\check s)}(\check z D_{\check z}+D_{\check z}\check z)\right)^2+\hbar^2D_{\check t}^2+\hbar^2\cT(\check s)^{-2}\cT(0)^2 D_{\check z}^2.$$
The boundary condition becomes, on $\dr\mathcal{W}_{\alpha_{0}}$,
\begin{equation}\label{n-check}
\check\nabla_{\hbar}\check\psi\cdot\check \bfn=0,\qquad\mbox{ with }\qquad
\check\bfn=\begin{pmatrix}
-\cT'(\check s)\check t
 \\
-\cT(\check s)
\\
\pm 1
\end{pmatrix}.
\end{equation}
We now perform the scaling which preserves $\mathcal{W}_{\alpha_{0}}$:
$$h=\hbar^{1/2},\qquad \check s=\hs,\qquad \check t=h^{1/2}\htt,\qquad \check z=h^{1/2}\hz.$$
The operator $\hbar^{-1}\check{\mathcal{L}}^{\PR}_{\hbar}$ becomes
$\mathfrak{L}^{\PR}_{h}$:
$$\mathfrak{L}^{\PR}_{h}
=\left(h D_{\hs}-\htt-h \frac{\cT'(\hs)}{2\cT(\hs)}(\hz D_{\hz}+D_{\hz}\hz)\right)^2+D_{\htt}^2+\cT(\hs)^{-2}\cT(0)^2 D_{\hz}^2.$$
Now, the boundary condition is, on $\dr\mathcal{W}_{\alpha_{0}}$,
\begin{equation}\label{n-checkbis}
\widehat{\nabla}_{h}\psi\cdot\hat \bfn=0 \qquad\mbox{ with }\qquad
\hat \bfn=\hbfn_{0}+h^{1/2}\bfn_{1}
\end{equation}
where
\begin{equation*}
\hbfn_{0}=\begin{pmatrix}
0
 \\
-\cT(\hs)
\\
\pm 1
\end{pmatrix}, \qquad \hbfn_{1}=\begin{pmatrix}
-\cT'(\hs)\htt
 \\
0
\\
0
\end{pmatrix}
\end{equation*}
and
$$\widehat{\nabla}_{h}=\begin{pmatrix}
h D_{\hs}\\
D_{\htt}\\
\cT(\hs)^{-1}\cT(0)D_{\hz}
\end{pmatrix}+
\begin{pmatrix}
-\htt-h \frac{\cT'(\hs)}{2\cT(\hs)}(\hz D_{\hz}+D_{\hz}\hz)\\
0\\
0
\end{pmatrix}.$$

\begin{theorem}\label{WKB-edge}
Under Assumption \ref{alpha-max} and Conjecture \ref{conj}, there exist 
 a function $\Phi=\Phi(\sigma)$ defined in a neighborhood $\Vc$ of $0$ such that $\Re \Hess\Phi(0) >0$ on $\Vc$ and sequence  of real numbers $(\lambda^{\PR}_{n,j})_{j\geq 0}$ such that 
 $$
 \lambda_{n}^{\PR}(h)\underset{h\to 0}{\sim}\sum_{j\geq 0} \lambda^{\PR}_{n,j} h^j.
 $$
 in the sense of formal series.
 Besides there exists a formal series of smooth functions $(\an^{\PR}_{n,j}(\sigma, \tau,\hz))$ defined for $(\sigma, \tau,\hz)\in \Vc\times\mathcal{S}_{\alpha_{0}}$,
 $$\an_{n}^{\PR}\underset{h\to 0}{\sim} \sum_{j \geq 0 } \an^{\PR}_{n,j} h^{j} ,$$
such that
$$
\left(\mathfrak{L}^{\PR}_{h}-\lambda_{n}^{\PR}(h)\right)\left( \an_{n}^{\PR} \re^{-\Phi/h}\right)=\mathcal{O}\left(h^{\infty} \right)  \re^{-\Phi/h}.$$
We also have that  $\lambda^{\PR}_{n,0}=\mun^\PR(0,\eta_{0})$ and that $\lambda^{\PR}_{n,1}$ is the $n$-th eigenvalue of the operator
\begin{equation}\label{op-PR}
\tfrac 1 2 \Hess\,\mun^{\PR}(0,\eta_{0})(\sigma, D_{\sigma}).
\end{equation}
The main term in the Ansatz is in the form $\an^{\PR}_{n,0}(\sigma, \tau,\hz)=f^\PR_{n,0}(\sigma)u_{\sigma,i\Phi'(\sigma)}^{\PR}(\tau,\hz)$.
Moreover, for all $n\geq 1$, there exist $h_{0}>0$, $c>0$ such that for all $h\in(0,h_{0})$,  we have
$$\mathcal{B}(\lambda^{\PR}_{n,0}+\lambda^{\PR}_{n,1}h,ch)\cap \spe\left(\mathfrak{L}^\PR_{h}\right)=\{\lambda^{\PR}_{n}(h)\},$$
and $\lambda^{\PR}_{n}(h)$ is a simple eigenvalue. 
\end{theorem}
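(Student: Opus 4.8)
The proof follows the scheme developed for Theorems~\ref{WKB-general} and~\ref{WKB-van}. The plan is to work with the operator $\mathfrak{L}^{\PR}_{h}$ put in the normal form described above, to conjugate it by the weight $\re^{\Phi(\hs)/h}$ and to cancel each power of $h$ in the eigenvalue equation, determining successively the phase $\Phi$, the coefficients $\lambda^{\PR}_{n,j}$ and the profiles $\an^{\PR}_{n,j}$. First I would set
$$\mathfrak{L}^{\PR}_{\Phi}=\re^{\Phi(\hs)/h}\,\mathfrak{L}^{\PR}_{h}\,\re^{-\Phi(\hs)/h}\underset{h\to0}{\sim}\sum_{j\geq0}h^{j}\mathfrak{L}_{j},$$
with $\mathfrak{L}_{0}=\Mc^{\PR}_{\hs,i\Phi'(\hs)}$ (recall $\mathcal{M}^{\PR}_{s,\eta}=D_{t}^2+\cT(s)^{-2}\cT(0)^2D_{z}^2+(\eta-t)^2$) and $\mathfrak{L}_{1}$ the sum of a first-order transport operator in $D_{\hs}$ of the form $\tfrac12\big(D_{\hs}\cdot(\partial_{\eta}\Mc^{\PR})_{\hs,i\Phi'(\hs)}+(\partial_{\eta}\Mc^{\PR})_{\hs,i\Phi'(\hs)}\cdot D_{\hs}\big)$ and of a zeroth-order term in $D_{\hs}$ coming from the factors $\cT'/\cT$, which \emph{vanishes at $\hs=0$} since $\alpha'(0)=0$ forces $\cT'(0)=0$. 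The boundary condition \eqref{n-checkbis} must be expanded likewise: its leading part $\widehat\nabla_{0}\psi\cdot\hbfn_{0}=0$ is precisely the conormal condition $\mathfrak{T}(\hs)\psi=0$ entering the definition of $\Mc^{\PR}_{\hs,\eta}$, while the correction $h^{1/2}\hbfn_{1}$ (with $\hbfn_{1}(0)=0$) produces inhomogeneous conormal data at the following orders.

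The term of order $h^{0}$ reads $\mathfrak{L}_{0}\an_{0}=\lambda_{0}\an_{0}$. Under Assumption~\ref{alpha-max} and Conjecture~\ref{conj}, $\mun^{\PR}$ admits a local non-degenerate minimum $\mu_{0}=\mun^{\PR}(0,\eta_{0})$ (with Hessian \eqref{symbol-PR}), so we take $\lambda_{0}=\mu_{0}$ and we must solve, after a change of gauge centering the frequency at $\eta_{0}$ (as for the generalized Montgomery operator in Section~\ref{S:verify-assumptions}), the operator-valued eikonal equation $\Mc^{\PR}_{\hs,i\Phi'(\hs)}\an_{0}=\mu_{0}\an_{0}$. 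Choosing $\an_{0}(\sigma,\tau,\hz)=f_{0}(\sigma)\,u^{\PR}_{\sigma,i\Phi'(\sigma)}(\tau,\hz)$ reduces this to the scalar eikonal equation $\mun^{\PR}(\sigma,i\Phi'(\sigma))=\mu_{0}$, that is $p(\sigma,i\nabla\Phi(\sigma))=0$ with $p(\sigma,\xi)=\mun^{\PR}(\sigma,\eta_{0}+\xi)-\mu_{0}$ a real analytic symbol (here the analyticity of $\alpha$ and the holomorphic extension of the type~(A) family with simple lowest eigenvalue are used) having a non-degenerate doubly characteristic point at the origin. This is exactly the framework of Section~\ref{Sec.eikonale}: the complex stable manifold theorem yields a holomorphic solution $\Phi$ with $\Re\Hess\Phi(0)>0$. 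Since $u^{\PR}_{\sigma,i\Phi'(\sigma)}\in\Dom(\Mc^{\PR}_{\sigma,i\Phi'(\sigma)})$, the leading boundary condition is automatically satisfied.

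Collecting the term of order $h$ gives the first transport equation $(\mathfrak{L}_{0}-\mu_{0})\an_{1}=(\lambda_{1}-\mathfrak{L}_{1})\an_{0}$, together with an inhomogeneous conormal condition inherited from $\hbfn_{1}$. The Fredholm alternative, pointwise in $\sigma$, using that $\Ker(\mathfrak{L}_{0}^{*}-\mu_{0})=\spann(u^{\PR}_{\sigma,-i\overline{\Phi'}(\sigma)})$ and the conjugation property $\overline{u^{\PR}_{\sigma,\eta}}=u^{\PR}_{\sigma,\overline{\eta}}$ (which holds because $\Mc^{\PR}_{s,\eta}$ contains no first-order ``$A_{2}$''-type term and is real for real $\eta$), reduces the compatibility condition, exactly as in Lemma~\ref{lem.transporteq} together with the Feynman--Hellmann formulas of Proposition~\ref{FH}, to the effective scalar transport equation $\mathscr{T}b_{0}=\lambda_{1}b_{0}$ with $\mathscr{T}=\tfrac12\big(\partial_{\eta}\mun^{\PR}\cdot D_{\sigma}+D_{\sigma}\cdot\partial_{\eta}\mun^{\PR}\big)$ evaluated at $(\sigma,i\Phi'(\sigma))$; the $\cT'/\cT$ contribution gives only a zeroth-order term whose $\sigma$-linearization at $0$ vanishes, which is why no extra term appears in the effective operator. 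By Remark~\ref{rem.transp}, the linearization of $\mathscr{T}$ at $\sigma=0$ is a complex harmonic oscillator with spectrum $\spe\big(\tfrac12\Hess\mun^{\PR}(0,\eta_{0})(\sigma,D_{\sigma})\big)$; we take $\lambda_{1}=\lambda^{\PR}_{n,1}$ its $n$-th eigenvalue, solve for $b_{0}$ in $\ccc^{\infty}$ near $0$ (following \cite{DiSj99}), and set $\tilde\an_{1}=-(\mathfrak{L}_{0}-\mu_{0})^{-1}(\mathfrak{L}_{1}-\lambda_{1})\an_{0}$ in $(\Ker(\mathfrak{L}_{0}-\mu_{0}))^{\perp}$, solving the associated inhomogeneous Neumann-type problem on the sector $\mathcal{S}_{\alpha_{0}}$ by the Lax--Milgram theorem and elliptic regularity. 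Iterating this double-scale procedure — with the series organised in powers of $h^{1/2}$ to absorb $\hbfn_{1}$, the half-integer contributions to the eigenvalues being cancelled by the parity of the harmonic oscillator as in the remark following Theorem~\ref{theorem-simple-well} — determines all the $\lambda^{\PR}_{n,j}$, $b_{j}$ and $\an^{\PR}_{n,j}$.

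Finally, a Borel-type summation produces smooth functions $\an^{\PR}_{n}(\cdot;h)\sim\sum_{j\geq0}\an^{\PR}_{n,j}h^{j}$ and numbers $\lambda^{\PR}_{n}(h)\sim\sum_{j\geq0}\lambda^{\PR}_{n,j}h^{j}$; truncating $\an^{\PR}_{n}$ with respect to $\sigma$ near $0$ and applying the spectral theorem gives $\dist\big(\lambda^{\PR}_{n}(h),\spe(\mathfrak{L}^{\PR}_{h})\big)=\Oc(h^{\infty})$ together with the quasimode identity $\big(\mathfrak{L}^{\PR}_{h}-\lambda^{\PR}_{n}(h)\big)\big(\an^{\PR}_{n}\re^{-\Phi/h}\big)=\Oc(h^{\infty})\re^{-\Phi/h}$. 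The isolation $\mathcal{B}(\lambda^{\PR}_{n,0}+\lambda^{\PR}_{n,1}h,ch)\cap\spe(\mathfrak{L}^{\PR}_{h})=\{\lambda^{\PR}_{n}(h)\}$ and the simplicity of $\lambda^{\PR}_{n}(h)$ then follow from the spectral splitting established in \cite{PoRay12} (valid under Conjecture~\ref{conj}), by the same comparison argument used at the end of the proof of Theorem~\ref{WKB-general}. I expect the main obstacle to be the rigorous handling of the $h$-dependent conormal boundary condition \eqref{n-checkbis}: one must verify at each order that the inhomogeneous boundary data generated by $\hbfn_{1}$ are compatible with the Fredholm solvability and that the transverse profiles remain smooth up to $\partial\mathcal{S}_{\alpha_{0}}$ and of Schwartz type in $\tau$ uniformly in $\sigma$, notwithstanding the vertex of the sector.
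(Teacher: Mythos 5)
Your proposal is correct and follows essentially the same route as the paper: conjugation of the normal form by $\re^{\Phi(\hs)/h}$, reduction of the order-$h^{0}$ equation to the scalar eikonal equation $\mun^{\PR}(\hs,i\Phi'(\hs))=\mun^{\PR}(0,\eta_{0})$ solved by the stable-manifold construction of Section~\ref{Sec.eikonale}, a Fredholm/Feynman--Hellmann reduction of the order-$h$ equation to the effective transport operator $\tfrac12\big(\partial_{\eta}\mun^{\PR}D_{\hs}+D_{\hs}\partial_{\eta}\mun^{\PR}\big)+R^{\PR}(\hs)$ with $R^{\PR}(0)=0$ because $\cT'(0)=0$, followed by iteration, the spectral theorem, and the splitting of \cite{PoRay12} for the isolation and simplicity. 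The only divergence is presentational: you organise the boundary data in powers of $h^{1/2}$ and invoke parity to kill the half-integer eigenvalue corrections, whereas the paper keeps integer powers and simply pairs the $\hbfn_{1}$-contribution of $\an_{0}$ with the $\hbfn_{0}$-contribution of $\an_{1}$ at the next order; your flagged concern about the conormal condition is legitimate, and the parity argument you propose is the one the paper itself uses after Theorem~\ref{theorem-simple-well}.
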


\subsubsection{WKB expansion for the normal form}
Let us consider the conjugate operator
$$\re^{\Phi(\hs)/h}\mathfrak{L}^{\PR}_{h}
\re^{-\Phi(\hs)/h}=\left(h D_{\hs}+i\Phi'(\hs)-\htt-h\frac{\cT'(\sigma)}{2\cT(\sigma)}(\hz D_{\hz}+D_{\hz}\hz)\right)^2 +D_{\htt}^2+\frac{\cT(0)^2}{\cT(\hs)^{2}} D_{\hz}^2,$$
with the corresponding boundary conditions. We can write the formal power series expansion:
$$\re^{\Phi(\hs)/h}\mathfrak{L}^{\PR}_{h}
\re^{-\Phi(\hs)/h}\sim \sum_{j\geq 0} h^{j}\mathfrak{L}_{j}$$
with
$$\mathfrak{L}_{0}=D_{\htt}^2+\cT(\hs)^{-2}\cT(0)^2 D_{\hz}^2+(i\Phi'(\hs)-\htt)^2,$$
$$\mathfrak{L}_{1}=(i\Phi'(\hs)-\htt)D_{\hs}+D_{\hs}(i\Phi'(\hs)-\htt)-(i\Phi'(\hs)-\htt)\frac{\cT'(\sigma)}{\cT(\sigma)}(\hz D_{\hz}+D_{\hz}\hz).$$
Our Ans\"atze are in the form:
$$\an\sim \sum_{j\geq 0} h^{j}\an_{j},\qquad\lambda\sim\sum_{j\geq 0} h^{j}\lambda_{j}.$$
The first equation is given by
$$\mathfrak{L}_{0}\an_{0}=\lambda_{0}\an_{0},$$
with boundary condition (which is in fact a Neumann condition)
$$\begin{pmatrix}
-\htt\\
D_{\htt}\\
\cT(\hs)^{-1}\cT(0)D_{\hz}
\end{pmatrix}\an_{0}\cdot\bfn_{0}=0.$$
We take
$$\an_{0}(\hs,\htt,\hz)=f_{0}(\hs)u^\PR_{\hs,i\Phi'(\hs)}(\htt,\hz)$$
and $\lambda_{0}=\mun^\PR(0,\eta_{0})$. The equation becomes
$$\mun^{\PR}(\hs,i\Phi'(\hs))=\lambda_{0}.$$
The second equation is
$$(\mathfrak{L}_{0}-\lambda_{0})\an_{1}=(\lambda_{1}-\mathfrak{L}_{1})\an_{0}$$
with boundary condition
$$\begin{pmatrix}
-\htt\\
D_{\htt}\\
\cT(\hs)^{-1}\cT(0)D_{\hz}
\end{pmatrix}\an_{0}\cdot\bfn_{1}+\begin{pmatrix}
-\htt\\
D_{\htt}\\
\cT(\hs)^{-1}\cT(0)D_{\hz}
\end{pmatrix}\an_{1}\cdot\bfn_{0}=0.$$
The Fredholm condition can be rewritten in the form
$$\left\{\frac{1}{2}\left(\dr_{\eta}\mun^{\PR}(\hs,i\Phi'(\hs)) D_{\hs}+D_{\hs}\partial_{\eta}\mun^{\PR}(\hs,i\Phi'(\hs)) \right)+R^\PR(\hs)\right\}f_{0}=\lambda_{1}f_{0},$$
where the smooth function $\hs\mapsto R^\PR(\hs)$ vanishes at $\hs=0$ since $\cT'(0)=0$. The conclusion follows by iteration. The simplicity of the lowest eigenvalues follows from \cite[Theorem 1.14]{PoRay12}.

\subsection{Curvature induced magnetic bound states}
This section is devoted to the analysis of the result announced in Section \ref{intro-c}.
\subsubsection{A higher order degeneracy}
Let us consider the following Neumann realization on $\sL^2(\R^2_{+}, m(s,t)\dx s\dx t)$,
\begin{multline}
\mathcal{L}^{\FH}_{\hbar}=m(s,t)^{-1}\hbar D_{t}m(s,t)\hbar D_{t}\\
+m(s,t)^{-1}\left(\hbar D_{s}+\zeta_{0}\hbar^{\frac 1 2}-t+\kappa(s)\frac{t^2}{2}\right)m(s,t)^{-1}\left(\hbar D_{s}+\zeta_{0}\hbar^{\frac 1 2}-t+\kappa(s)\frac{t^2}{2}\right),
\end{multline}
 where $m(s,t)=1-t\kappa(s)$. Thanks to the rescaling
 $$ h=\hbar^{1/2},\qquad t=h\tau,\qquad s=\sigma,$$
 and after division by $h^2$ the operator $\mathcal{L}^{\FH}_{\hbar}$ becomes
\begin{multline}
\mathfrak{L}^{\FH}_{h}=m(\sigma,h\tau)^{-1}D_{\tau}m(\sigma,h\tau)D_{\tau}\\
+m(\sigma,h\tau)^{-1}\left(hD_{\sigma}+\zeta_{0}-\tau+h\kappa(\sigma)\frac{\tau^2}{2}\right)m(\sigma,h\tau)^{-1}\left(hD_{\sigma}+\zeta_{0}-\tau+h\kappa(\sigma)\frac{\tau^2}{2}\right),
\end{multline}
on the space $\sL^2(m(\sigma,h\tau)\dx \sigma\dx\tau)$.

\begin{theorem}\label{WKB-FH}
Under Assumption \ref{kappa-max}, there exist a function 
$$\Phi=\Phi(\sigma)=\left(\frac{2C_{1}}{\nu''(\zeta_{0})}\right)^{1/2}\left|\int_{0}^\sigma (\kappa(0)-\kappa(\varsigma))^{1/2}\dx \varsigma\right|$$
defined in a neighborhood $\Vc$ of $(0,0)$ such that $\Re\Phi''(0)>0$, and a sequence of real numbers $(\lambda^{\FH}_{n,j})_{j\geq 0}$ such that 
$$
  \lambda_{n}^{\FH}(h)\underset{h\to 0}{\sim} \sum_{j\geq 0} \lambda^{\FH}_{n,j} h^{\frac j 2}.
$$
Besides there exists a formal series of smooth functions on $ \Vc$,
$$
 \an_{n}^{\FH} \underset{h\to 0}{\sim} \sum_{j\geq 0} \an^{\FH}_{n,j} h^{\frac j 2}
$$ 
such that 
$$
\left(\mathfrak{L}^{\FH}_{h}-\lambda_{n}^{\FH}(h)\right)\left(\an_{n}^{\FH} \re^{-\Phi/h^{\frac12}}\right)=\mathcal{O}\left(h^{\infty} \right) \re^{-\Phi/h^{\frac12}}.
$$
We also have that  $\lambda^{\FH}_{n,0}=\Theta_{0}$, $\lambda^{\FH}_{n,1}=0$, $\lambda^{\FH}_{n,2}=-C_{1}\kappa_{\max}$ and  $\lambda^{\FH}_{n,3}=(2n-1)C_{1}\Theta_{0}^{1/4}\sqrt{\frac{3k_{2}}{2}}$. The main term in the Ansatz is in the form
$$\an^{\FH}_{n,0}(\sigma,\tau)=f^\FH_{n,0}(\sigma)u_{\zeta_{0}}(\tau).$$
Moreover, for all $n\geq 1$, there exist $h_{0}>0$, $c>0$ such that for all $h\in(0,h_{0})$,  we have
$$\mathcal{B}\Big(\lambda^{\FH}_{n,0}+ \lambda^{\FH}_{n,2}h + \lambda^{\FH}_{n,3}h^{\frac{ 3}{ 2}}
, ch^{\frac 3 2}\Big)\cap \spe\left(\mathfrak{L}^\FH_{h}\right)=\{\lambda^{\FH}_{n}(h)\},$$
and $\lambda^{\FH}_{n}(h)$ is a simple eigenvalue. 
\end{theorem}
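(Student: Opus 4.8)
}
The plan is to follow the scheme of Theorems \ref{WKB-general}, \ref{WKB-van} and \ref{WKB-edge}, with the extra feature that the minimum of the operator symbol is here \emph{more} degenerate, so that the genuine eikonal equation only appears at second order in the natural small parameter $\epsilon:=h^{1/2}$. First I would write $m(\sigma,h\tau)^{-1}=1+h\tau\kappa(\sigma)+\Oc(h^2)$ and conjugate by the weight $\re^{\Phi/h^{1/2}}$; since $\re^{\Phi/h^{1/2}}(hD_{\sigma})\re^{-\Phi/h^{1/2}}=hD_{\sigma}+ih^{1/2}\Phi'$, one gets a formal expansion
$$\re^{\Phi(\sigma)/h^{1/2}}\,\mathfrak{L}^{\FH}_{h}\,\re^{-\Phi(\sigma)/h^{1/2}}\underset{h\to 0}{\sim}\sum_{j\geq 0}\epsilon^j\,\mathfrak{L}_{j},\qquad \epsilon=h^{1/2},$$
with $\mathfrak{L}_{0}=D_{\tau}^2+(\zeta_{0}-\tau)^2$ --- the de Gennes operator with Neumann condition at $\tau=0$, lowest eigenpair $(\Theta_{0},u_{\zeta_{0}})$, and, crucially, \emph{independent of $\sigma$} --- together with $\mathfrak{L}_{1}=2i\Phi'(\sigma)(\zeta_{0}-\tau)$ and $\mathfrak{L}_{2}=-(\Phi')^2+2(\zeta_{0}-\tau)D_{\sigma}+i\kappa D_{\tau}+\kappa\big(\tau^2(\zeta_{0}-\tau)+\tau(\zeta_{0}-\tau)^2\big)$. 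Since $\mathfrak{L}_{0}$ is self-adjoint and $\sigma$-independent, all the Fredholm conditions below are plain orthogonality to $u_{\zeta_{0}}$ in $\sL^2(\R_{+},\dx\tau)$. Plugging $\an\sim\sum\epsilon^j\an_{j}$, $\lambda\sim\sum\epsilon^j\lambda_{j}$ and cancelling $\epsilon^0$ gives $\lambda_{0}=\Theta_{0}$ and $\an_{0}=f_{0}(\sigma)u_{\zeta_{0}}(\tau)$; the $\epsilon^1$ equation has Fredholm condition $\lambda_{1}=i\Phi'\,\nun'(\zeta_{0})\,f_{0}=0$, and the Feynman--Hellmann identity $(\mathfrak{L}_{0}-\Theta_{0})\dr_{\zeta}u_{\zeta_{0}}=-2(\zeta_{0}-\tau)u_{\zeta_{0}}$ yields $\an_{1}=f_{1}(\sigma)u_{\zeta_{0}}+i\Phi' f_{0}\,\dr_{\zeta}u_{\zeta_{0}}$.

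The eikonal equation appears at order $\epsilon^2$: the Fredholm condition for $(\mathfrak{L}_{0}-\Theta_{0})\an_{2}=(\lambda_{2}-\mathfrak{L}_{2})\an_{0}-\mathfrak{L}_{1}\an_{1}$, simplified with the Feynman--Hellmann identities for $\nun'$, $\nun''$, the boundary term $\langle D_{\tau}u_{\zeta_{0}},u_{\zeta_{0}}\rangle=\tfrac{i}{2}u_{\zeta_{0}}(0)^2$, and the expression of the Montgomery--Helffer constant $C_{1}$ through the moments of $u_{\zeta_{0}}$, reduces to the scalar equation $-\tfrac{1}{2}\nun''(\zeta_{0})(\Phi'(\sigma))^2-C_{1}\kappa(\sigma)=\lambda_{2}$. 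A bounded solution with $\Phi$ minimal at $0$ forces $\lambda_{2}=-C_{1}\kappa(0)=-C_{1}\kappa_{\max}$, and the equation becomes $\tfrac{1}{2}\nun''(\zeta_{0})(\Phi')^2=C_{1}(\kappa(0)-\kappa(\sigma))$, solved by $\Phi'(\sigma)=\big(\tfrac{2C_{1}}{\nun''(\zeta_{0})}\big)^{1/2}(\kappa(0)-\kappa(\sigma))^{1/2}$. By Assumption \ref{kappa-max}, $\kappa(0)-\kappa(\sigma)=\tfrac{k_{2}}{2}\sigma^2(1+\Oc(\sigma))$, so $\sigma\mapsto\big|\int_{0}^{\sigma}(\kappa(0)-\kappa(\varsigma))^{1/2}\dx\varsigma\big|$ is smooth near $0$ and $\Phi$ is exactly the function in the statement, with $\Phi(0)=\Phi'(0)=0$ and $\Re\Phi''(0)=\Phi''(0)=\big(\tfrac{2C_{1}}{\nun''(\zeta_{0})}\big)^{1/2}\sqrt{k_{2}/2}>0$. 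One then takes $\an_{2}=f_{2}u_{\zeta_{0}}+\tilde\an_{2}$ with $\tilde\an_{2}=-(\mathfrak{L}_{0}-\Theta_{0})^{-1}\big(\mathfrak{L}_{1}\an_{1}+(\mathfrak{L}_{2}-\lambda_{2})\an_{0}\big)$, smooth by elliptic regularity and satisfying the Neumann condition.

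At order $\epsilon^3$ the Fredholm condition produces the effective transport equation $\mathscr{T}f_{0}=\lambda_{3}f_{0}$, with $\mathscr{T}$ a first-order operator built from $\Phi$, $\kappa$ and the de Gennes moments, exactly as in Section \ref{sec:WKB}; its linearization at $\sigma=0$ is a harmonic-oscillator-type operator of frequency $\vartheta=\nun''(\zeta_{0})\Phi''(0)$ acting on the weighted space of Remark \ref{rem.transp}, so its $n$-th eigenvalue is $(n-\tfrac{1}{2})\vartheta$. Inserting $\Phi''(0)$ and the spectral identity $\nun''(\zeta_{0})=6C_{1}\Theta_{0}^{1/2}$ of the de Gennes operator gives $\lambda^{\FH}_{n,3}=(2n-1)C_{1}\Theta_{0}^{1/4}\sqrt{3k_{2}/2}$, which is simple and recovers \eqref{splitting-FH}; choosing $f_{0}=f^{\FH}_{n,0}$ the corresponding eigenfunction (solved in $\mathcal{C}^\infty$ near $0$ as in Section \ref{Sec.eikonale}) fixes $\an^{\FH}_{n,0}$. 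Iterating the same bookkeeping at every order $j\geq 4$ produces the full series; a Borel resummation and a $\sigma$-cutoff give a smooth quasimode $\an^{\FH}_{n}\re^{-\Phi/h^{1/2}}$ with residual $\Oc(h^{\infty})\re^{-\Phi/h^{1/2}}$, whence by the spectral theorem $\dist\big(\lambda^{\FH}_{n,0}+\lambda^{\FH}_{n,2}h+\lambda^{\FH}_{n,3}h^{3/2},\spe(\mathfrak{L}^{\FH}_{h})\big)=\Oc(h^{\infty})$. To upgrade this into the ball statement and the simplicity of $\lambda^{\FH}_{n}(h)$, I would invoke the complete eigenvalue asymptotics and the semiclassical simplicity of \cite[Theorem 1.1]{FouHel06a}: after the rescaling $h=\hbar^{1/2}$ and the division by $h^2$, \eqref{splitting-FH} reads $\lambda^{\FH}_{n}(h)=\Theta_{0}-C_{1}\kappa_{\max}h+(2n-1)C_{1}\Theta_{0}^{1/4}\sqrt{3k_{2}/2}\,h^{3/2}+o(h^{3/2})$, so that consecutive eigenvalues are $\gtrsim h^{3/2}$ apart; together with the quasimode this forces $\mathcal{B}(\lambda^{\FH}_{n,0}+\lambda^{\FH}_{n,2}h+\lambda^{\FH}_{n,3}h^{3/2},ch^{3/2})$ to contain exactly the simple eigenvalue $\lambda^{\FH}_{n}(h)$.

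The hard part will be the order $\epsilon^2$ and $\epsilon^3$ Fredholm computations: keeping the contributions of the curvature metric $m^{-1}$, of the Neumann boundary term $\langle D_{\tau}u_{\zeta_{0}},u_{\zeta_{0}}\rangle$ and of the subprincipal term $\kappa\tau^{2}/2$ in the magnetic momentum correctly sorted, and, above all, matching the transport frequency $\nun''(\zeta_{0})\Phi''(0)$ with the explicit constant $C_{1}\Theta_{0}^{1/4}\sqrt{3k_{2}/2}$, which hinges on the nontrivial spectral identities of the de Gennes operator. The remaining steps are a routine, if lengthy, iteration of the double-scale procedure already used in Sections \ref{Sec.2}--\ref{sec:WKB}.
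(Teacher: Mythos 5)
Your proposal is correct and follows essentially the same route as the paper: conjugation by $\re^{\Phi/h^{1/2}}$, expansion in powers of $h^{1/2}$ with the $\sigma$-independent de Gennes operator as leading term, the eikonal equation $\tfrac{1}{2}\nu''(\zeta_0)(\Phi')^2=C_1(\kappa(0)-\kappa(\sigma))$ emerging from the Fredholm condition at order $h$, the transport equation at order $h^{3/2}$ with the identity $\nu''(\zeta_0)=6C_1\Theta_0^{1/2}$, and the ball/simplicity statement obtained from the quasimodes together with the splitting of \cite[Theorem 1.1]{FouHel06a}. The only slip is in your $\mathfrak{L}_2$, where the metric factor $m^{-2}$ contributes $2\kappa\tau(\zeta_0-\tau)^2$ rather than $\kappa\tau(\zeta_0-\tau)^2$ (consistent with the paper's moment identity defining $-C_1$), but you correctly flag this bookkeeping as the delicate point and state the right resulting eikonal equation.
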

\begin{remark}
In particular, Theorem \ref{WKB-FH} proves that there are no odd powers of $\hbar^{\frac{1}{8}}$ in the expansion of the eigenvalues (compare with \cite[Theorem 1.1]{FouHel06a}).
\end{remark}

\subsubsection{WKB expansion}
Let us introduce a phase function $\Phi=\Phi(\sigma)$ defined in a neighborhood of $\sigma=0$ the unique and non degenerate maximum of the curvature $\kappa$. We consider the conjugate operator
$$\mathfrak{L}^{\FH,\wgt}_{h}=\re^{\Phi(\sigma)/h^{\frac{1}{2}}}\mathfrak{L}^{\FH}_{h}\re^{-\Phi(\sigma)/h^{\frac{1}{2}}}.$$
As usual, we look for
$$\an\sim \sum_{j\geq 0} h^{\frac j 2}\an_{j},\qquad \lambda\sim\sum_{j\geq 0} \lambda_{j}h^{\frac j 2}$$
such that, in the sense of formal series we have
$$\mathfrak{L}^{\FH,\wgt}_{h}\an \sim \lambda\an.$$
We may write
$$\mathfrak{L}^{\FH,\wgt}_{h}\sim \mathfrak{L}_{0}+ h^{\frac 1 2}\mathfrak{L}_{1}+ h\mathfrak{L}_{2}+ h^{\frac 3 2}\mathfrak{L}_{3}+\ldots,$$
where
\begin{align*}
&\mathfrak{L}_{0}=D_{\tau}^2+(\zeta_{0}-\tau)^2, \\
&\mathfrak{L}_{1}=2(\zeta_{0}-\tau)i\Phi'(\sigma),\\
&\mathfrak{L}_{2}=\kappa(\sigma)\dr_{\tau}+2\left(D_{\sigma}+\kappa(\sigma)\frac{\tau^2}{2}\right)(\zeta_{0}-\tau)-\Phi'(\sigma)^2+2\kappa(\sigma)(\zeta_{0}-\tau)^2\tau,\\
&\mathfrak{L}_{3}=\left(D_{\sigma}+\kappa(\sigma)\frac{\tau^2}{2}\right)(i\Phi'(\sigma))+(i\Phi'(\sigma))\left(D_{\sigma}+\kappa(\sigma)\frac{\tau^2}{2}\right)+4i\Phi'(\sigma)\tau \kappa(\sigma)(\zeta_{0}-\tau).
\end{align*}
Let us now solve the formal system. The first equation is
$$\mathfrak{L}_{0}\an_{0}=\lambda_{0}\an_{0}$$
and leads to take
$$\lambda_{0}=\Theta_{0},\qquad \an_{0}(\sigma,\tau)=f_{0}(\sigma)u_{\zeta_{0}}(\tau),$$
where $f_{0}$ has to be determined. The second equation is
$$(\mathfrak{L}_{0}-\lambda_{0})\an_{1}=(\lambda_{1}-\mathfrak{L}_{1})\an_{0}=(\lambda_{1}-2(\zeta_{0}-\tau) i\Phi'(\sigma))u_{\zeta_{0}}(\tau) f_{0}(\sigma)$$
and, due to the Fredholm alternative, we must take $\lambda_{1}=0$ and
$$\an_{1}(\sigma,\tau)=i\Phi'(\sigma) f_{0}(\sigma)\left(\dr_{\zeta}u\right)_{\zeta_{0}}(\tau)+f_{1}(\sigma)u_{\zeta_{0}}(\tau),$$
where $f_{1}$ is to be determined in a next step. Then the third equation is
$$(\mathfrak{L}_{0}-\lambda_{0})\an_{2}=(\lambda_{2}-\mathfrak{L}_{2})\an_{0}-\mathfrak{L}_{1}\an_{1}.$$
Let us explicitly write the r.h.s. It equals
\begin{multline*}
\lambda_{2}u_{\zeta_{0}} f_{0}+\Phi'^2(u_{\zeta_{0}}+2(\zeta_{0}-\tau)(\dr_{\zeta}u)_{\zeta_{0}})f_{0}-2(\zeta_{0}-\tau)u_{\zeta_{0}}(i\Phi' f_{1}-i\dr_{\sigma}f_{0})\\
+\kappa(\sigma)f_{0}(\dr_{\tau}u_{\zeta_{0}}-2(\zeta_{0}-\tau)^2\tau u_{\zeta_{0}}-\tau^2(\zeta_{0}-\tau)u_{\zeta_{0}}).
\end{multline*}
Therefore the equation becomes
$$(\mathfrak{L}_{0}-\lambda_{0})\tilde\an_{2}=\lambda_{2}u_{\zeta_{0}} f_{0}+\frac{\nu''(\zeta_{0})}{2}\Phi'^2 u_{\zeta_{0}}f_{0}+\kappa f_{0}(-\dr_{\tau}u_{\zeta_{0}}-2(\zeta_{0}-\tau)^2\tau u_{\zeta_{0}}-\tau^2(\zeta_{0}-\tau)u_{\zeta_{0}}),$$
where $$\tilde\an_{2}=\an_{2}-(\dr_{\zeta}u)_{\zeta_{0}}(i\Phi' f_{1}-i\dr_{\sigma}f_{0})+\tfrac{1}{2}(\dr_{\zeta}^2 u)_{\zeta_{0}}\Phi'^2 f_{0}.$$
Let us now use the Fredholm alternative (with respect to $\tau$). We will need the following lemma the proof of which relies on Feynman-Hellmann formulas (like in Proposition \ref{FH}) and on \cite[p. 19]{FouHel06a} (for the last one).
\begin{lemma}
We have:
\begin{equation*}
\begin{gathered}
\int_{\R_{+}} (\zeta_{0}-\tau)u^2_{\zeta_{0}}(\tau)\dx \tau=0,\qquad \int_{\R_{+}} (\dr_{\zeta} u)_{\zeta_{0}}(\tau) u_{\zeta_{0}}(\tau)\dx \tau=0,\\
2\int_{\R_{+}} (\zeta_{0}-\tau)(\dr_{\zeta} u)_{\zeta_{0}}(\tau) u_{\zeta_{0}}(\tau)\dx \tau=\frac{\nu''(\zeta_{0})}{2}-1,\\
\int_{\R_{+}} \left(2\tau(\zeta_{0}-\tau)^2+\tau^2(\zeta_{0}-\tau)\right)u^2_{\zeta_{0}}+u_{\zeta_{0}}\dr_{\tau}u_{\zeta_{0}} \dx\tau=-C_{1}.
\end{gathered}
\end{equation*}
\end{lemma}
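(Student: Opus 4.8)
The plan is to establish the four identities by differentiating the de Gennes eigenvalue equation at $\zeta_0$ and exploiting the fact that $\zeta_0$ is a critical point of $\nu=\nu^{[0]}$. Write $\mathfrak{h}_\zeta = D_\tau^2 + (\zeta-\tau)^2$ on $\sL^2(\R_+)$ with Neumann condition at $\tau=0$, so that $\dr_\zeta \mathfrak{h}_\zeta = 2(\zeta-\tau)$ and $\dr_\zeta^2 \mathfrak{h}_\zeta = 2$. The normalization $\|u_\zeta\|^2=1$ gives, upon differentiation, $\int_{\R_+} (\dr_\zeta u)_{\zeta_0} u_{\zeta_0}\dx\tau = 0$, which is the second identity. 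The Feynman-Hellmann formula (the analogue of \eqref{FH2bis}) reads $\nu'(\zeta) = \int_{\R_+} 2(\zeta-\tau) u_\zeta^2 \dx\tau$; evaluating at $\zeta=\zeta_0$ and using $\nu'(\zeta_0)=0$ yields the first identity $\int_{\R_+}(\zeta_0-\tau)u_{\zeta_0}^2\dx\tau=0$.

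Next I would differentiate the Feynman-Hellmann formula once more. From $\nu'(\zeta)=\int 2(\zeta-\tau)u_\zeta^2\dx\tau$ we get $\nu''(\zeta) = \int 2 u_\zeta^2\dx\tau + \int 2(\zeta-\tau)\cdot 2 u_\zeta (\dr_\zeta u)_\zeta\dx\tau = 2 + 4\int (\zeta-\tau) u_\zeta (\dr_\zeta u)_\zeta\dx\tau$, using $\|u_\zeta\|=1$. Evaluating at $\zeta_0$ and rearranging gives exactly $2\int_{\R_+}(\zeta_0-\tau)(\dr_\zeta u)_{\zeta_0} u_{\zeta_0}\dx\tau = \frac{\nu''(\zeta_0)}{2} - 1$, the third identity. (Here one uses the analogue of \eqref{FH1}, $(\mathfrak{h}_{\zeta_0}-\Theta_0)(\dr_\zeta u)_{\zeta_0} = (\nu'(\zeta_0) - 2(\zeta_0-\tau))u_{\zeta_0} = -2(\zeta_0-\tau)u_{\zeta_0}$, to justify that $(\dr_\zeta u)_{\zeta_0}$ is well-defined and smooth, and to control boundary terms when integrating by parts.)

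The last identity, $\int_{\R_+}\big(2\tau(\zeta_0-\tau)^2 + \tau^2(\zeta_0-\tau)\big)u_{\zeta_0}^2 + u_{\zeta_0}\dr_\tau u_{\zeta_0}\dx\tau = -C_1$, is the genuinely delicate one: here $C_1 = \frac{u_{\zeta_0}(0)^2}{3\sqrt{\Theta_0}}$ is the universal constant of Helffer-Morame appearing in \eqref{HM-result}. The plan is to invoke the computation already carried out in \cite[p.~19]{FouHel06a}: there the moment $\int_{\R_+} \tau(\zeta_0-\tau)^2 u_{\zeta_0}^2\dx\tau$ is related, via integration by parts against the eigenvalue equation and the virial-type identities for the de Gennes operator, to the boundary value $u_{\zeta_0}(0)$. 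Concretely one writes $u_{\zeta_0}\dr_\tau u_{\zeta_0} = \tfrac12 \dr_\tau(u_{\zeta_0}^2)$ and integrates by parts (picking up a boundary term $-\tfrac12 u_{\zeta_0}(0)^2$ which is absorbed into the identity $\int u_{\zeta_0}\dr_\tau u_{\zeta_0} = -\tfrac12 u_{\zeta_0}(0)^2$), then combines the remaining polynomial moments using $\mathfrak{h}_{\zeta_0}u_{\zeta_0} = \Theta_0 u_{\zeta_0}$ tested against suitable powers of $\tau$. The main obstacle is precisely the careful bookkeeping of boundary terms at $\tau=0$ arising from the Neumann condition and from the repeated integrations by parts against polynomial weights; once these are tracked correctly the identity reduces to the known moment computation in \cite{FouHel06a}. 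Since the eigenfunction $u_{\zeta_0}$ decays super-exponentially (it solves a Weber-type ODE), all integrals converge and no integrability issue arises at $+\infty$.
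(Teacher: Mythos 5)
Your proposal is correct and follows essentially the same route as the paper, which likewise derives the first three identities from the normalization $\|u_{\zeta}\|=1$ and the Feynman--Hellmann formulas (differentiating $\nu'(\zeta)=\int_{\R_{+}}2(\zeta-\tau)u_{\zeta}^2\dx\tau$ once more and using $\nu'(\zeta_{0})=0$), and simply quotes \cite[p.~19]{FouHel06a} for the last moment identity. The only slip is your parenthetical normalization of the constant: the paper takes $C_{1}=\tfrac{1}{3}u_{\zeta_{0}}^2(0)$, without the extra factor $\sqrt{\Theta_{0}}$ in the denominator.
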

We get the equation
$$\lambda_{2}+\frac{\nu''(\zeta_{0})}{2}\Phi'^2(\sigma)+C_{1}\kappa(\sigma)=0,\qquad C_{1}=\frac{u^2_{\zeta_{0}}(0)}{3}.$$
This eikonal equation is the one of a pure electric problem in dimension one whose potential is given by the curvature. Thus we take
$$\lambda_{2}=-C_{1}\kappa(0),$$
and
$$\Phi(\sigma)=\left(\frac{2C_{1}}{\nu''(\zeta_{0})}\right)^{1/2}\left|\int_{0}^\sigma (\kappa(0)-\kappa(\varsigma))^{1/2}\dx \varsigma\right|.$$
In particular we have:
$$\Phi''(0)=\left(\frac{k_{2}C_{1}}{\nu''(\zeta_{0})}\right)^{1/2},$$
where $k_{2}=-\kappa''(0)>0$.\\
This leads to take
$$\an_{2}=f_{0}\hat\an_{2}+(\dr_{\zeta}u)_{\zeta_{0}}(i\Phi' f_{1}-i\dr_{\sigma}f_{0})-\tfrac{1}{2}(\dr^2_{\eta}u)_{\zeta_{0}}\Phi'^2 f_{0}+f_{2}u_{\zeta_{0}},$$
where $\hat\an_{2}$ is the unique solution, orthogonal to $u_{\zeta_{0}}$ for all $\sigma$, of
$$(\mathfrak{L}_{0}-\nu_{0})\hat\an_{2}=\nu_{2}u_{\zeta_{0}} +\frac{\nu''(\zeta_{0})}{2}\Phi'^2 u_{\zeta_{0}}+\kappa\left(-\dr_{\tau}u_{\zeta_{0}}-2(\zeta_{0}-\tau)^2\tau u_{\zeta_{0}}-\tau^2(\zeta_{0}-\tau)u_{\zeta_{0}}\right),$$
and $f_{2}$ has to be determined.\\
Finally we must solve the fourth equation given by
$$(\mathfrak{L}_{0}-\lambda_{0})\an_{3}=(\lambda_{3}-\mathfrak{L}_{3})\an_{0}+(\lambda_{2}-\mathfrak{L}_{2})\an_{1}-\mathfrak{L}_{1}\an_{2}.$$
The Fredholm condition provides the following equation in the variable $\sigma$:
$$\langle \mathfrak{L}_{3}\an_{0}+(\mathfrak{L}_{2}-\lambda_{2})\an_{1}+\mathfrak{L}_{1}\an_{2} ,u_{\zeta_{0}}\rangle_{\sL^2(\R_{+},\dx\tau)}=\lambda_{3}f_{0}.$$
Using the previous steps of the construction, it is not very difficult to see that this equation does not involve $f_{1}$ and $f_{2}$ (due to the choice of $\Phi$ and $\lambda_{2}$ and Feynman-Hellmann formulas). Using the same formulas, we may write it in the form
\begin{equation}\label{transport-FH}
\frac{\nu''(\zeta_{0})}{2}\left(\Phi'(\sigma)\dr_{\sigma}+\dr_{\sigma}\Phi'(\sigma)\right)f_{0}+F(\sigma)f_{0}=\lambda_{3}f_{0},
\end{equation}
where $F$ is a smooth function which vanishes at $\sigma=0$. Therefore the linearized equation at $\sigma=0$ is given by
$$\Phi''(0)\frac{\nu''(\zeta_{0})}{2}\left(\sigma\dr_{\sigma}+\dr_{\sigma}\sigma\right)f_{0}=\lambda_{3}f_{0}.$$
We recall that
$$\frac{\nu''(\zeta_{0})}{2}=3C_{1}\Theta_{0}^{1/2}$$
so that the linearized equation becomes
$$C_{1}\Theta_{0}^{1/4}\sqrt{\frac{3k_{2}}{2}}\left(\sigma\dr_{\sigma}+\dr_{\sigma}\sigma\right)f_{0}=\lambda_{3}f_{0}.$$
We have to choose $\lambda_{3}$ in the spectrum of this transport equation, which is given by the set
$$\left\{(2n-1)C_{1}\Theta_{0}^{1/4}\sqrt{\frac{3k_{2}}{2}},\quad n\geq 1\right\}.$$
If $\lambda_{3}$ belongs to this set, we may solve locally the transport equation \eqref{transport-FH} and thus find $f_{0}$. This procedure can be continued at any order.

\subsubsection{Numerical estimates of the magnetic camel}
For the numerical computations, we consider the magnetic potential $\bA=(-x_{2},0)$ and we denote by $(\lambda_{n}^{\FH}(h),u_{n,h}^{\FH})$ the $n$-th eigenpair of the magnetic Laplacian
$(hD_{x_{1}}-x_{2})^2+h^2D_{x_{2}}^2$ on $\Omega$.

\paragraph{Camel with one bump}
Let us first consider the case where $\Omega$ is an unbounded domain with a unique point with maximal curvature. We consider
$$\Omega = \{(x_{1},x_{2})\in\R^2, x_{2}< -4x_{1}^2\}.$$
For the numerical computations, we proceed as explained in Section~\ref{sec.methnum}: we bound the domain and impose Dirichlet condition on the artificial boundary. Let us define the truncated domain
$$\Omega_{H} = \{(x_{1},x_{2})\in\Omega, x_{2}>-H\}.$$
We consider triangular elements of degree $\mathbb P_{6}$. For the numerical computations, we take $H=2.5, 3, 4$ and a mesh with approximately 3000, 3600, 4800 triangular elements and $1/h\in\{1:0.1:1000\}$.

Figure~\ref{fig.VecPDroma} illustrates the asymptotic expansion \eqref{HM-result} for the first eigenvalue:
\begin{equation}
\frac{\lambda_{1}^{\FH}(h)}h=\Theta_{0}-C_{1}\kappa_{max} h^{1/2}+o(h^{1/2}),\qquad \mbox{ with }C_{1}=\frac{u^2_{\zeta_{0}}(0)}{3}.
\end{equation}
In our example, we have $\kappa_{max}=8$.
Using \cite{Bon12}, we have
$$\Theta_{0}\simeq 0.59010\qquad\mbox{ and }\qquad C_{1}\simeq 0.873043.$$
Figure~\ref{fig.drom1} shows the convergence to $\Theta_{0}$, which is quite slow because only in ${\Oc}(h^{1/2})$.
In Figures~\ref{fig.drom2}--\ref{fig.drom3}, we aim at recovering numerically the power appearing in the expansion.
For this, we plot, according to $\ln\frac1h$ the quantities
$$\ln\Big(\Theta_{0}-\frac{\lambda^{\FH}_{1}(h)}h\Big)\qquad\mbox{ and }\qquad
\ln\frac{\lambda^{\FH}_{1}(h)}h-\Theta_{0}+C_{1}\kappa_{max}h^{1/2}.$$

\begin{figure}[h!t]
\begin{center}
\subfigure[$\frac{\lambda^{\FH}_{1}(h)}h$ vs. $\frac1h$\label{fig.drom1}]{\includegraphics[height=3.8cm]{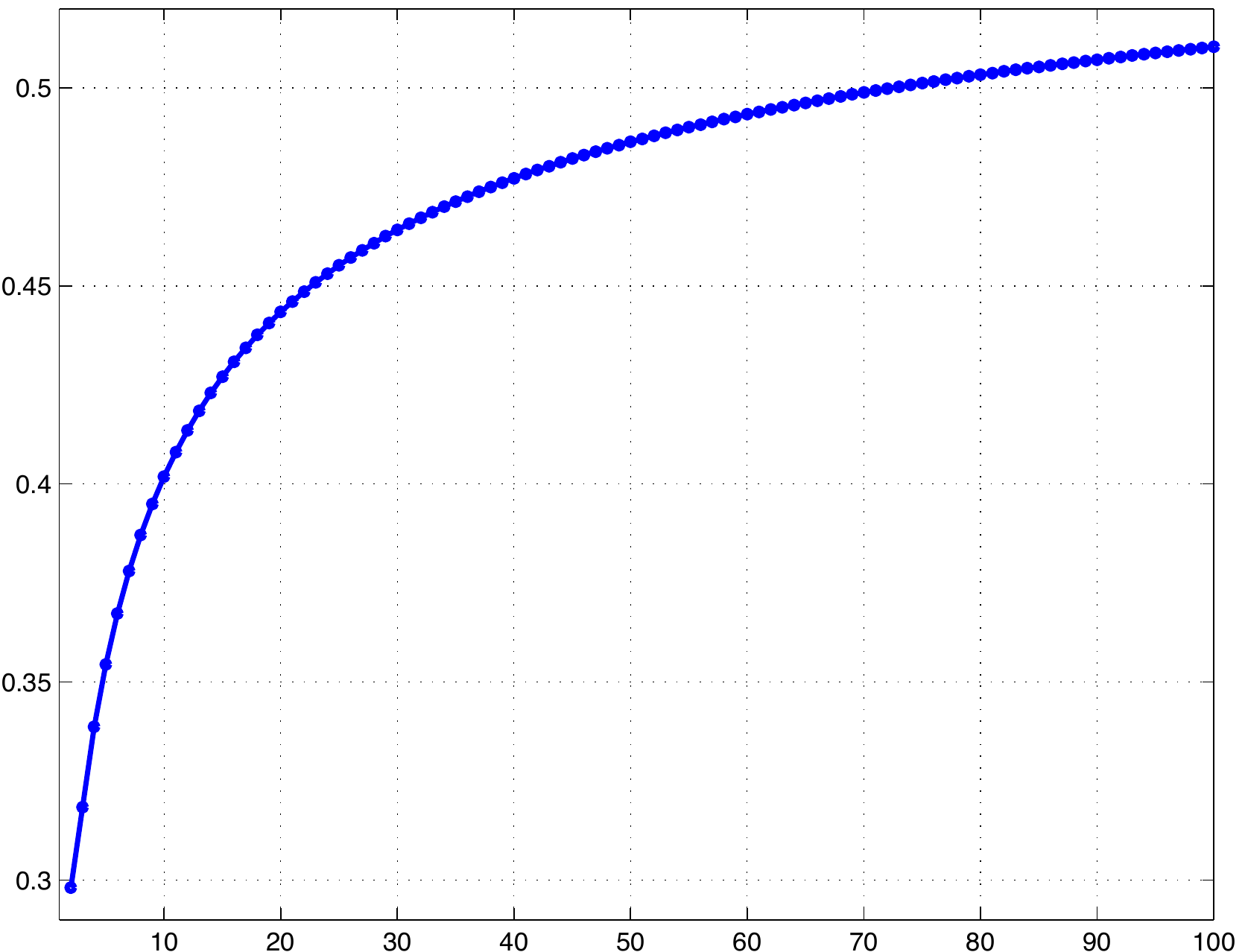}}
\subfigure[$\ln\Big(\Theta_{0}-\frac{\lambda^{\FH}_{1}(h)}h\Big)$ vs. $\ln\frac1h$\label{fig.drom2}]{\includegraphics[height=3.8cm]{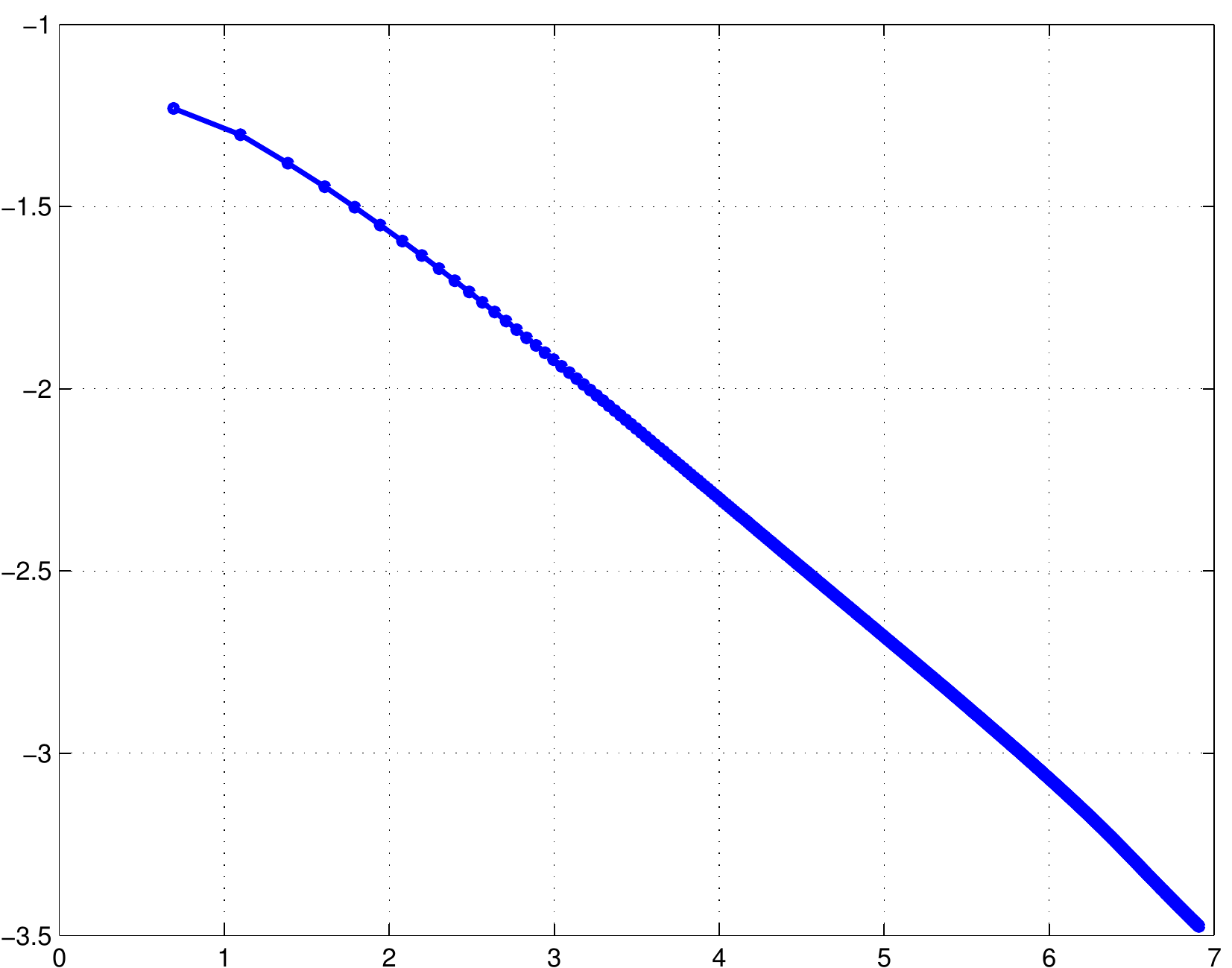}}
\subfigure[$\ln\frac{\lambda^{\FH}_{1}(h)}h-\Theta_{0}+C_{1}\kappa_{max}h^{\frac 1 2}$ vs. $\ln\frac 1h$\label{fig.drom3}]{\includegraphics[height=3.8cm]{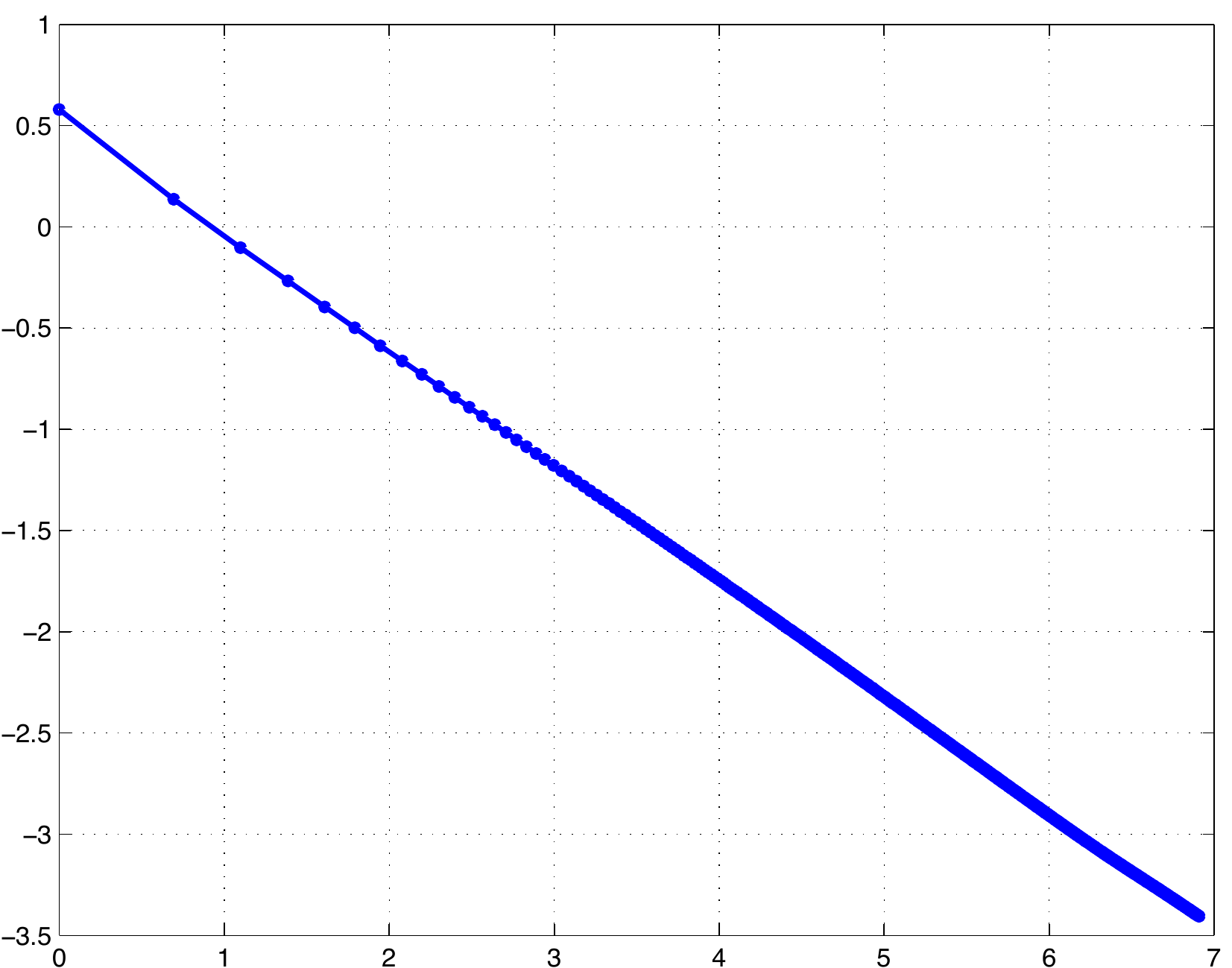}}
\caption{Convergence of the eigenvalues $\lambda^{\FH}_{1}(h)$. \label{fig.VPDroma}}
\end{center}
\end{figure}

In Figure~\ref{fig.VecPDoublepuitsbis}, are represented the modulus, the logarithm of the modulus and the phase of the first eigenfunction for $h=1/20$.
\begin{figure}[h!t]
\begin{center}
\includegraphics[height=4.1cm]{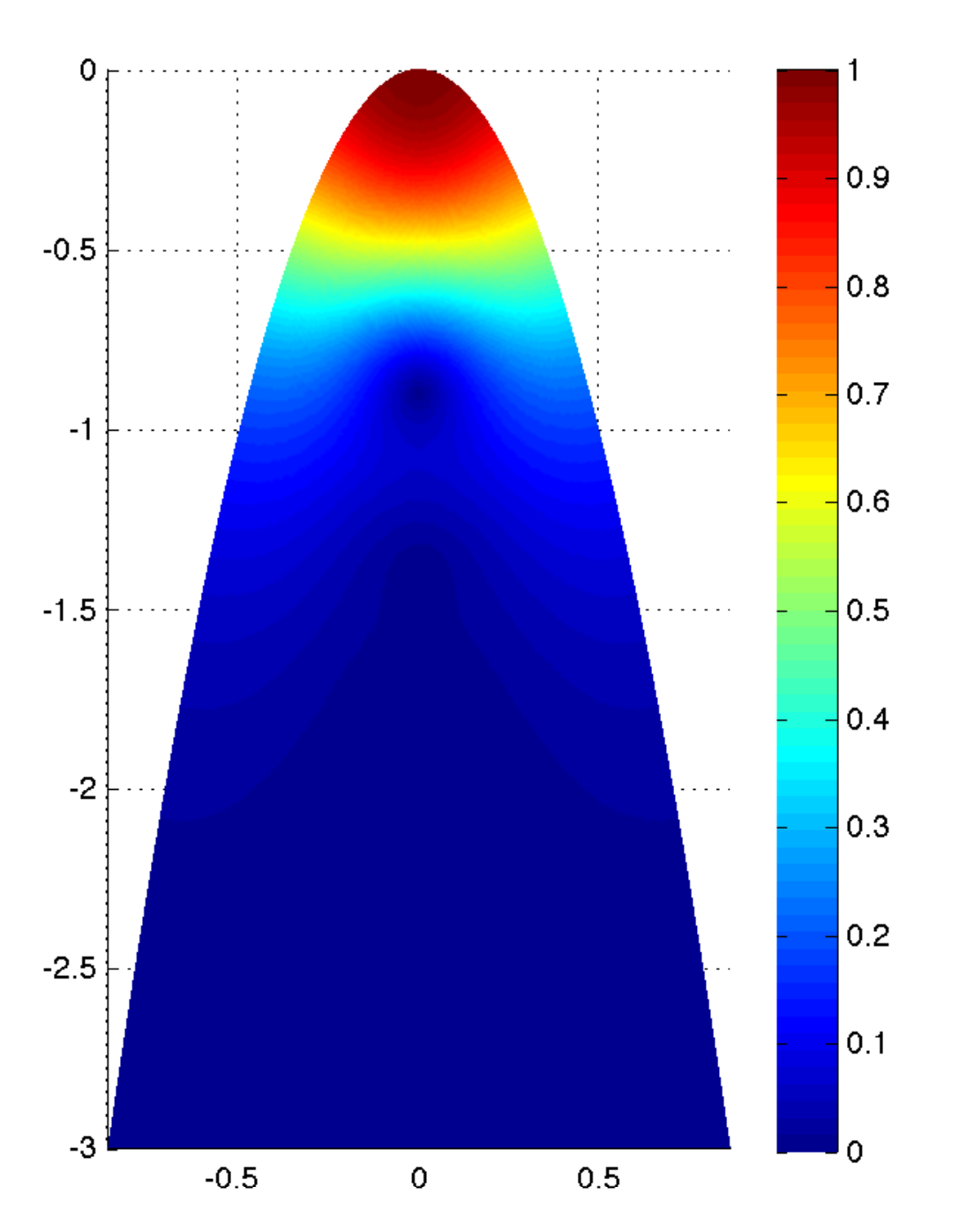}
\includegraphics[height=4.1cm]{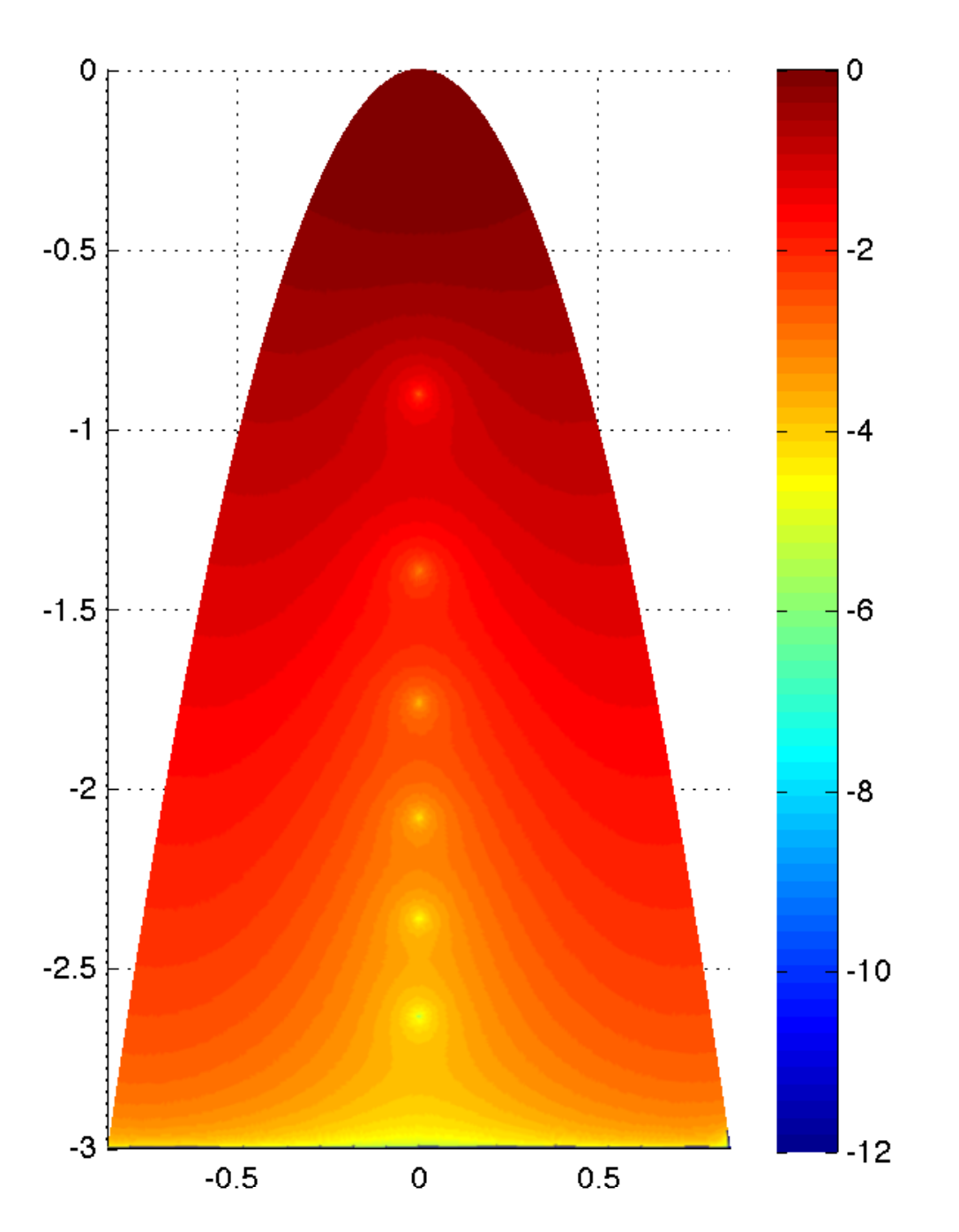}
\includegraphics[height=4.1cm]{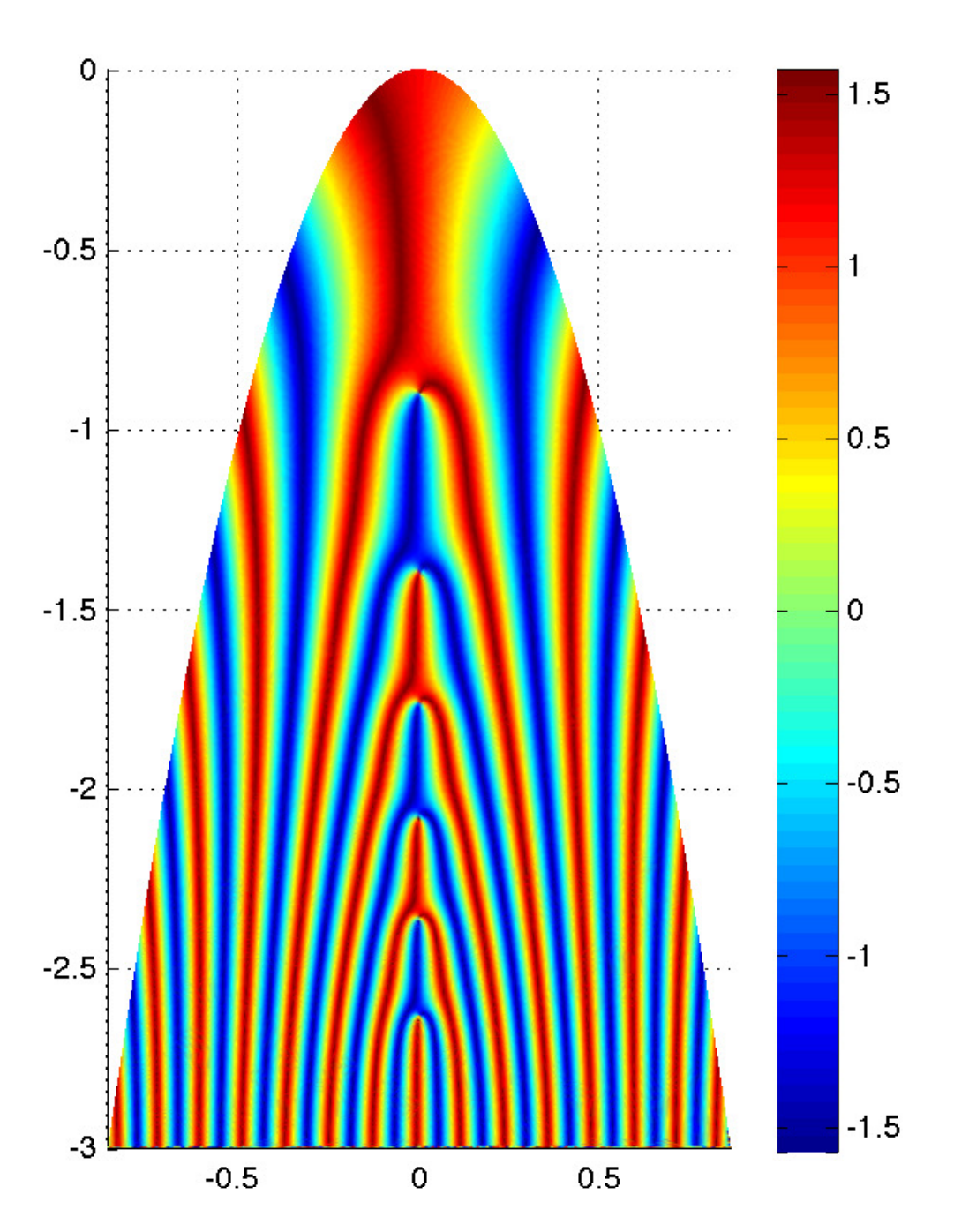}
\caption{Moduli, log$_{10}$(moduli) and phases of the first two eigenfunctions, $h=\frac{1}{20}$. \label{fig.VecPDroma}}
\end{center}
\end{figure}

\paragraph{Camel with two bumps}
Let us now deal with the case of a double well on the geometry. For this, we consider
$$\Omega = \{(x_{1},x_{2})\in\R^2, x_{2}<- (1-x_{1}^2)^2\}.$$
Let us look at the behavior of the first two eigenpairs. Figure~\ref{fig.chameau1} illustrates the convergence of the first two eigenvalues $\lambda_{n}^{\FH}(h)$ to $\Theta_{0}$ as $h\to 0$. We represent
$$\frac 1h\mapsto \frac{\lambda_{n}^{\FH}(h)}h,\qquad n=1,2.$$
To analyze the splitting between the first two eigenvalues, we plot in Figures~\ref{fig.chameau2}--\ref{fig.chameau3}, according to $\ln1/h$
$$ \frac{\lambda_{2}^{\FH}(h)-\lambda_{1}^{\FH}(h)}h\qquad\mbox{Êand }\qquad
-h^{1/4}\ln\frac{\lambda_{2}^{\FH}(h)-\lambda_{1}^{\FH}(h)}h.$$
For the last figure, we take $h\geq 1/70$ otherwise the splitting computed numerically is of the same order as the accuracy of our computation and the numerics is no more relevant when $h<1/70$. These computations suggest that
$$\frac{\lambda_{2}^{\FH}(h)-\lambda_{1}^{\FH}(h)}h={\Oc}(\re^{-C/h^{1/4}})\qquad\mbox{ with }\quad 5.2\leq C\leq 5.4.$$
\begin{figure}[h!t]
\begin{center}
\subfigure[$\frac{\lambda_{n}^{\FH}(h)}h$, $n=1,2$ vs. $\frac1h$\label{fig.chameau1}]{\includegraphics[height=3.8cm]{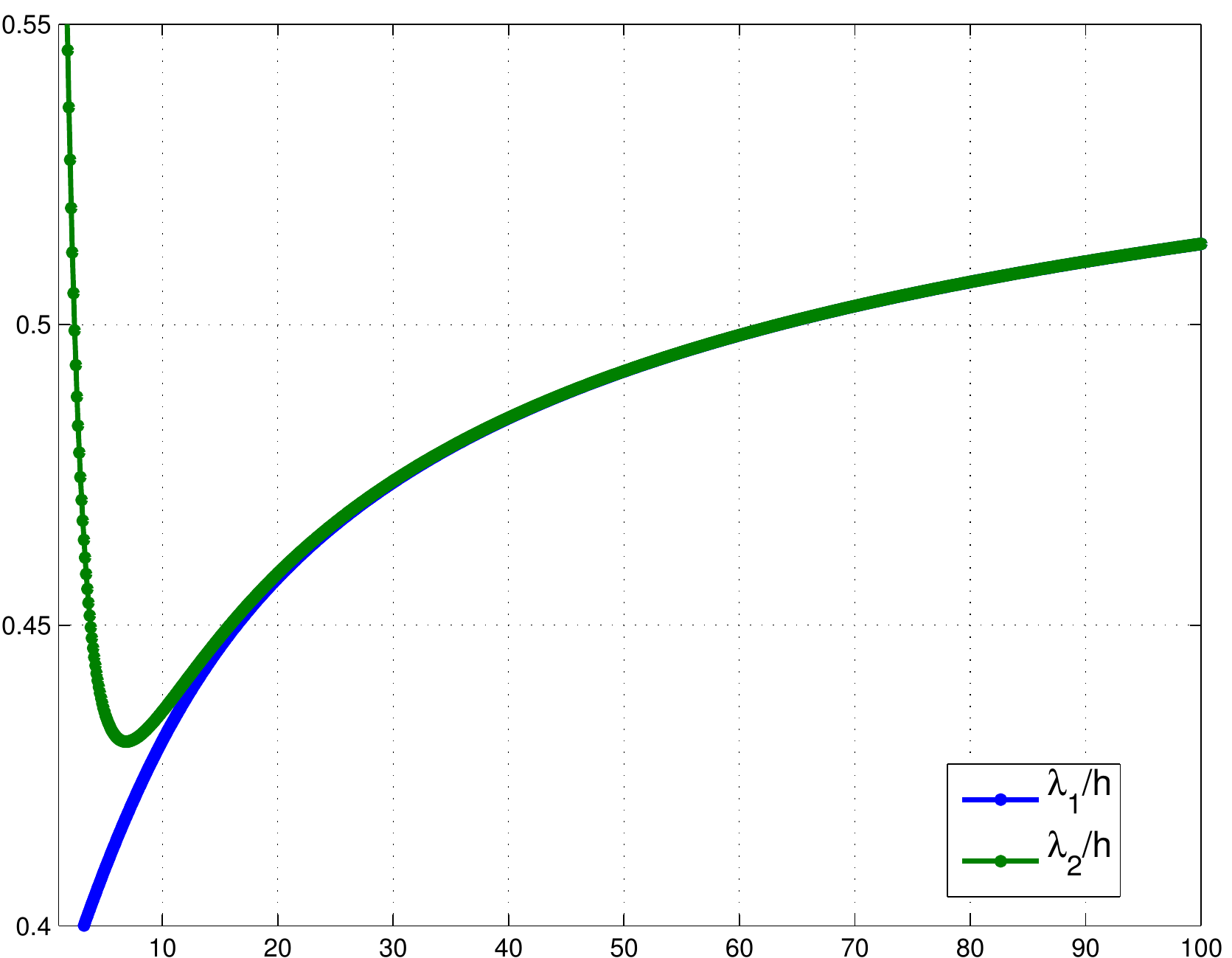}}
\subfigure[$\frac{\lambda_{2}^{\FH}(h)-\lambda_{1}^{\FH}(h)}h$ vs. $\frac1h$\label{fig.chameau2}]{\includegraphics[height=3.8cm]{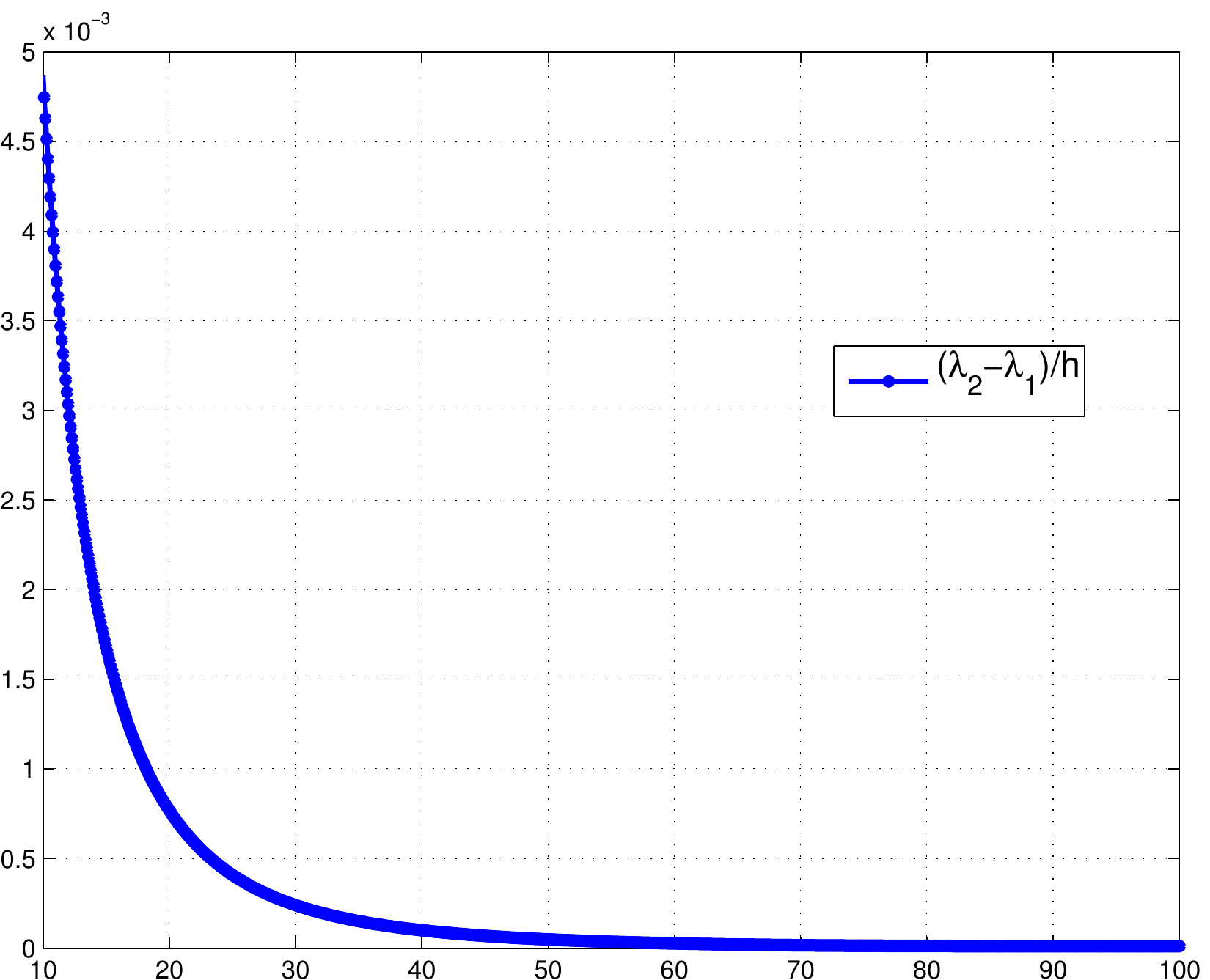}}
\subfigure[$-h^{1/4}\ln\frac{\lambda_{2}(h)-\lambda_{1}(h)}h$ vs. $\frac1h$\label{fig.chameau3}]{\includegraphics[height=3.8cm]{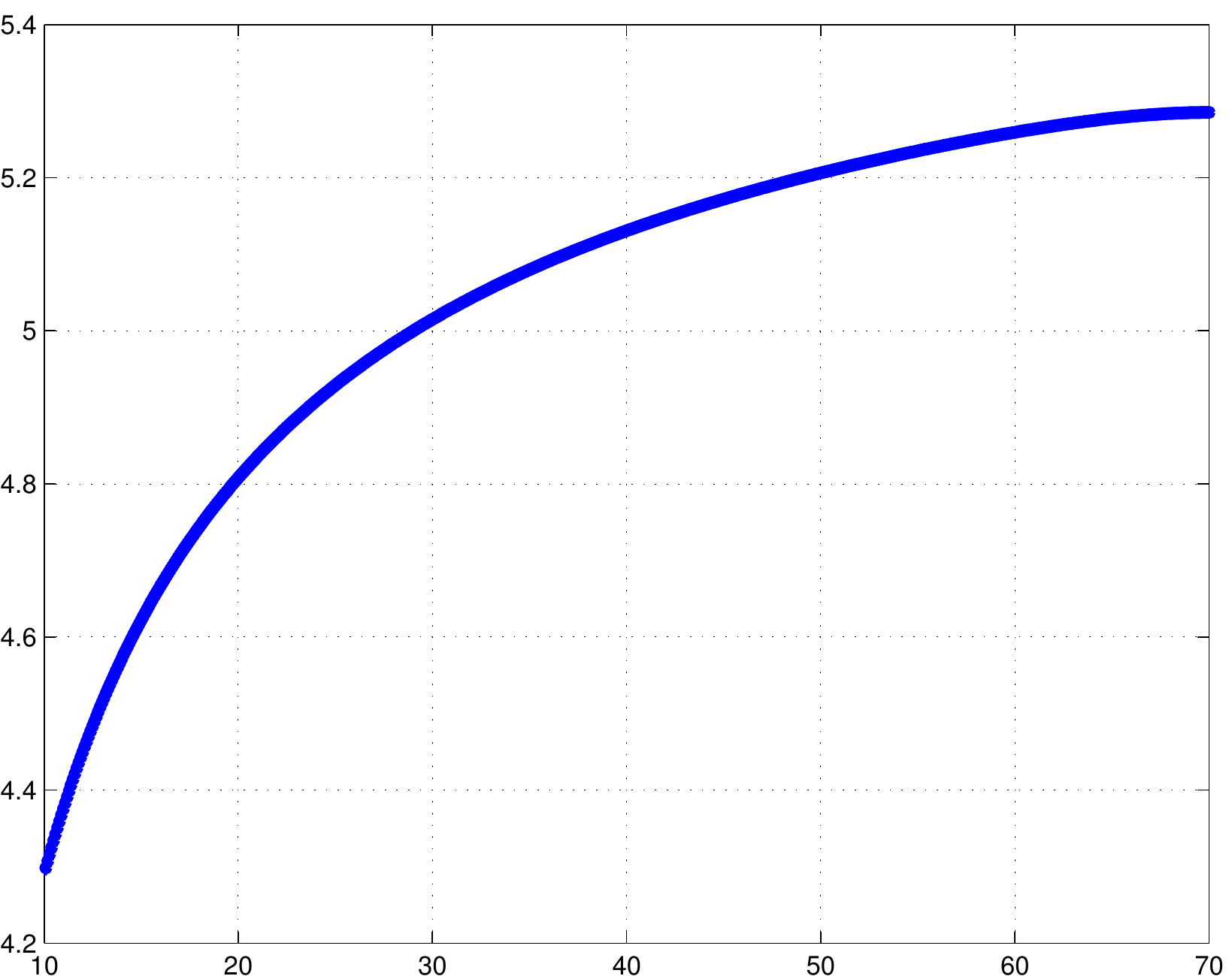}}
\caption{Convergence for $\lambda_{n}^{\FH}(h)$. \label{fig.VecPDoublepuitsVP}}
\end{center}
\end{figure}

Figure~\ref{fig.VecPDoublepuitsbis} gives the modulus, logarithm of the modulus and the phase of the first two eigenfunctions for $h=1/20$.
\begin{figure}[h!t]
\begin{center}
\subfigure[First eigenfunction]{\includegraphics[height=4cm]{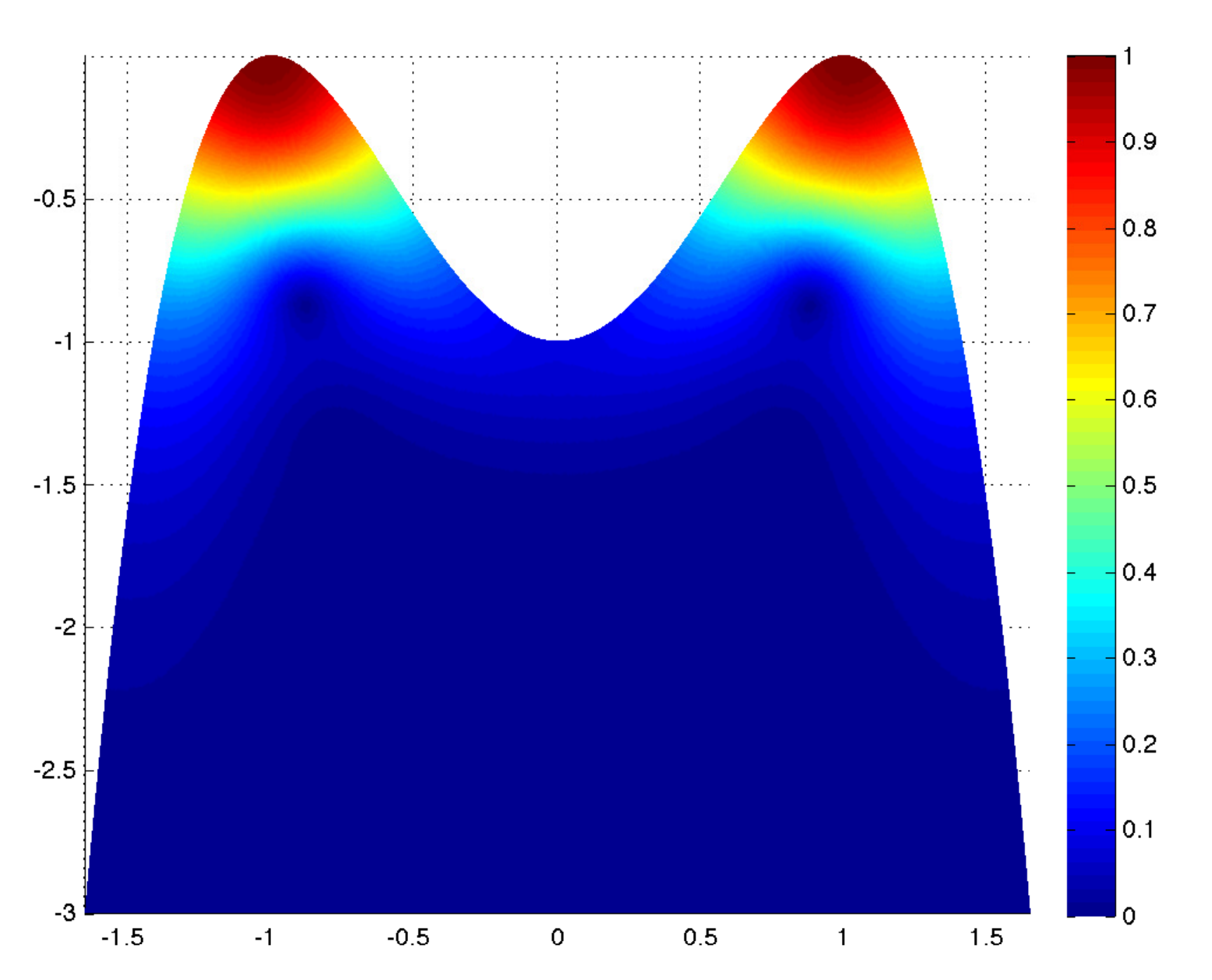}
\includegraphics[height=4cm]{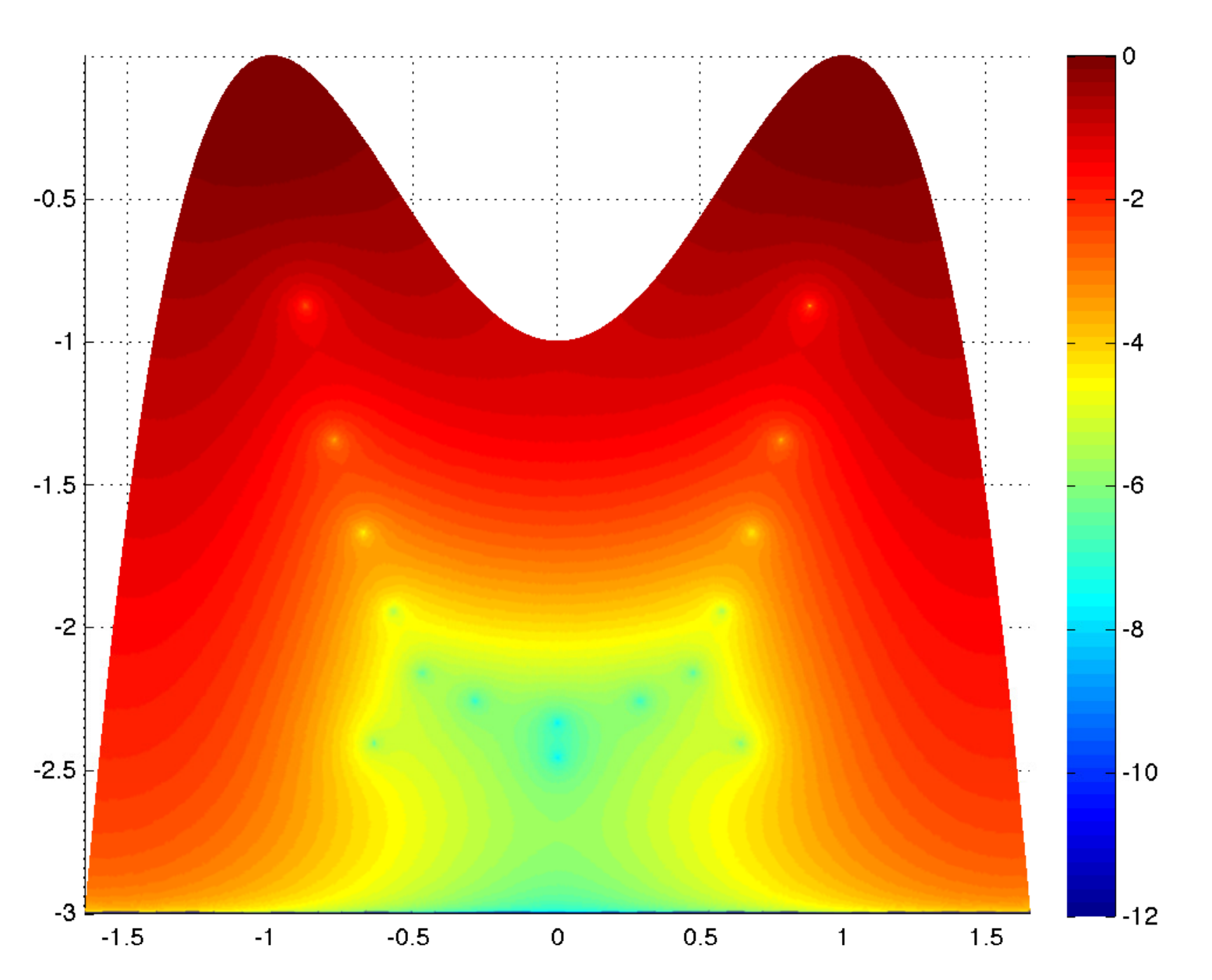}
\includegraphics[height=4cm]{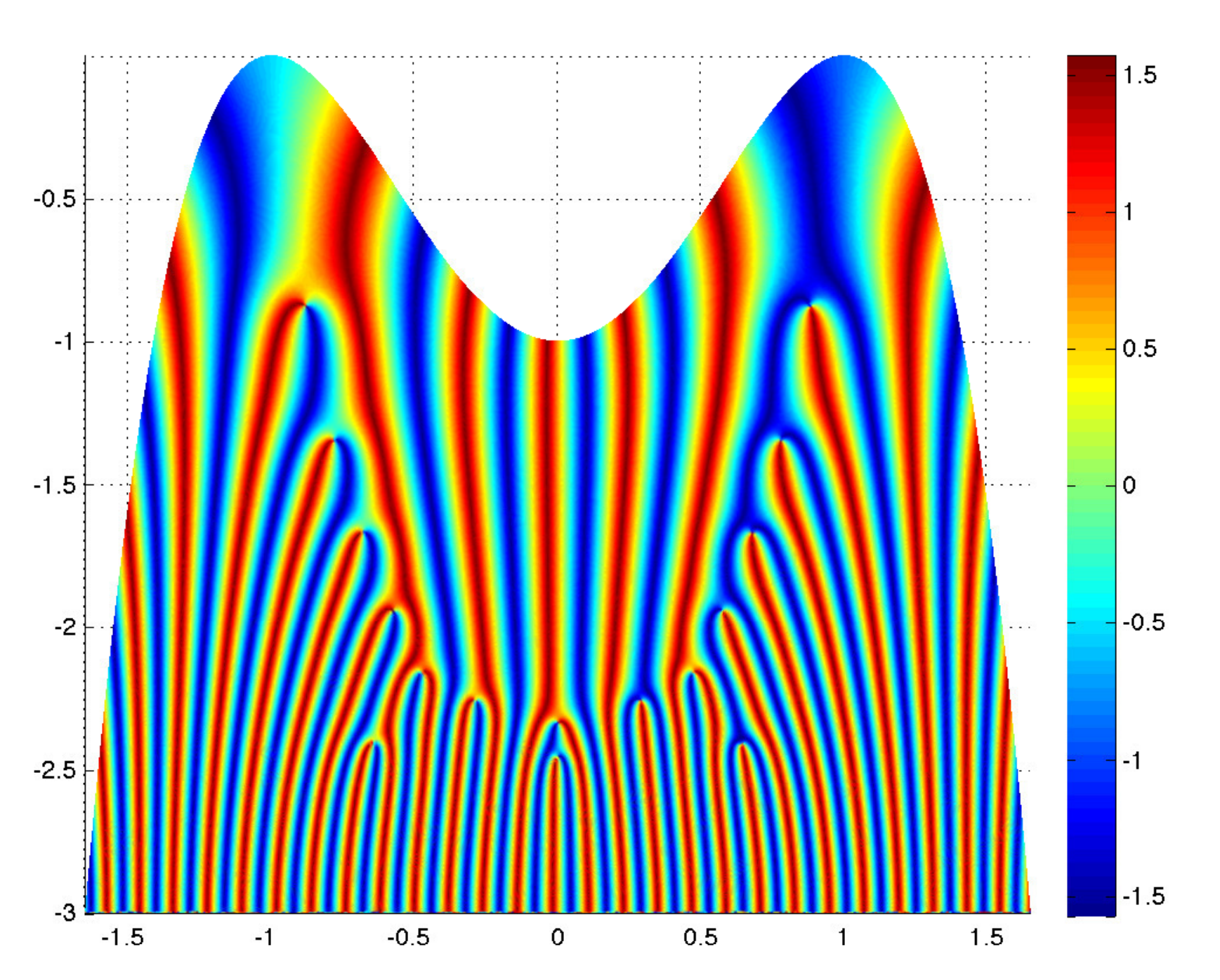}}
\subfigure[Second eigenfunction]{\includegraphics[height=4cm]{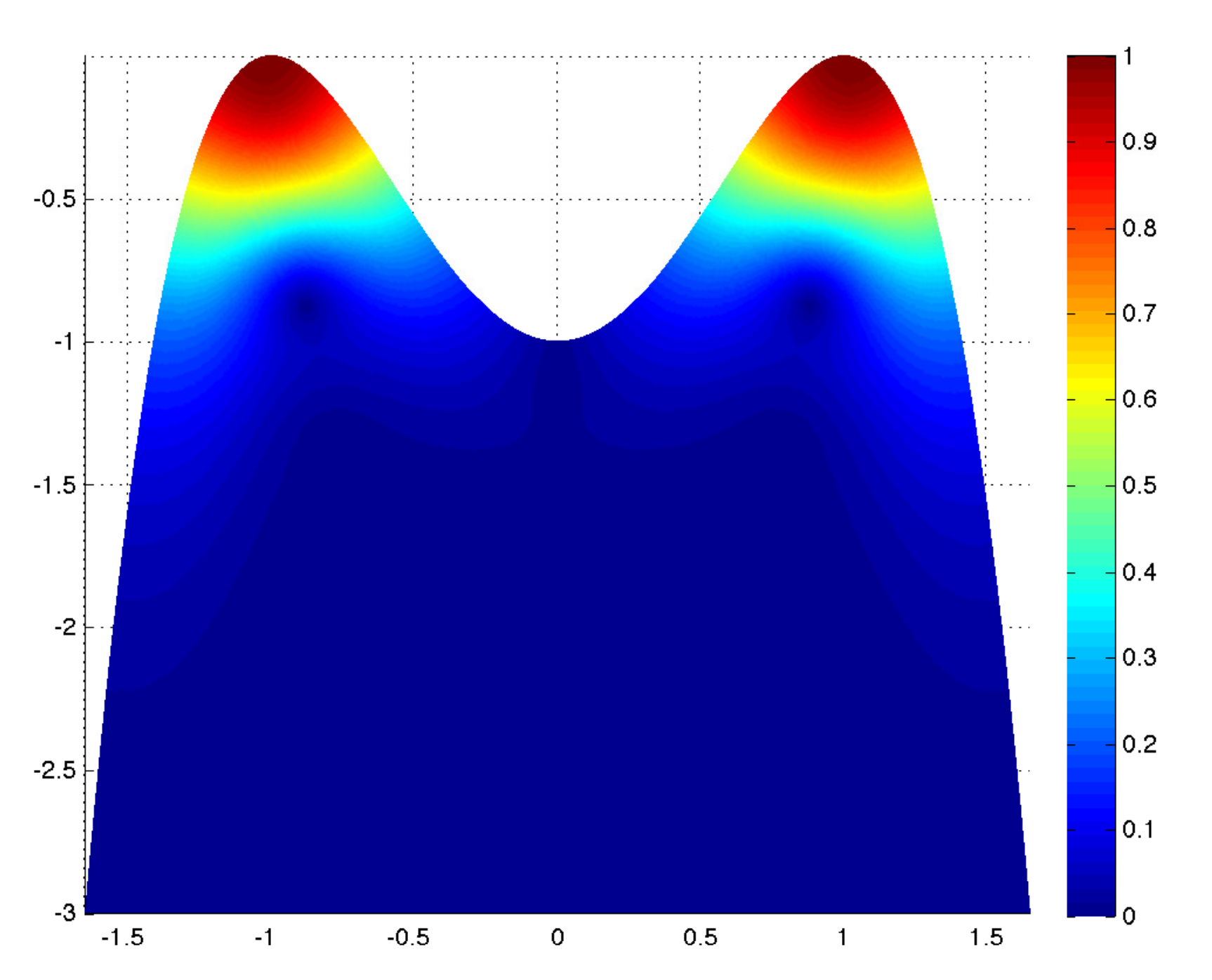}
\includegraphics[height=4cm]{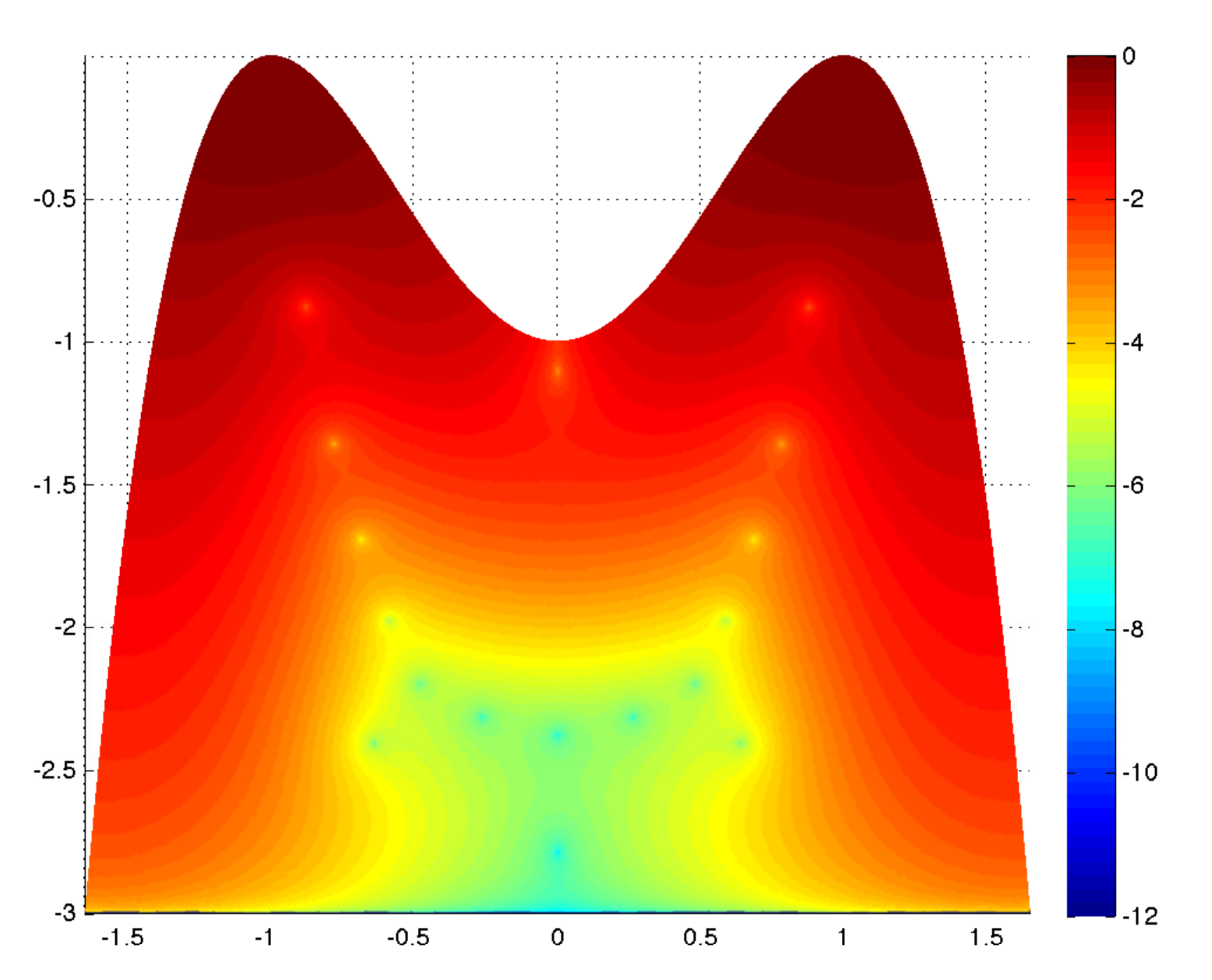}
\includegraphics[height=4cm]{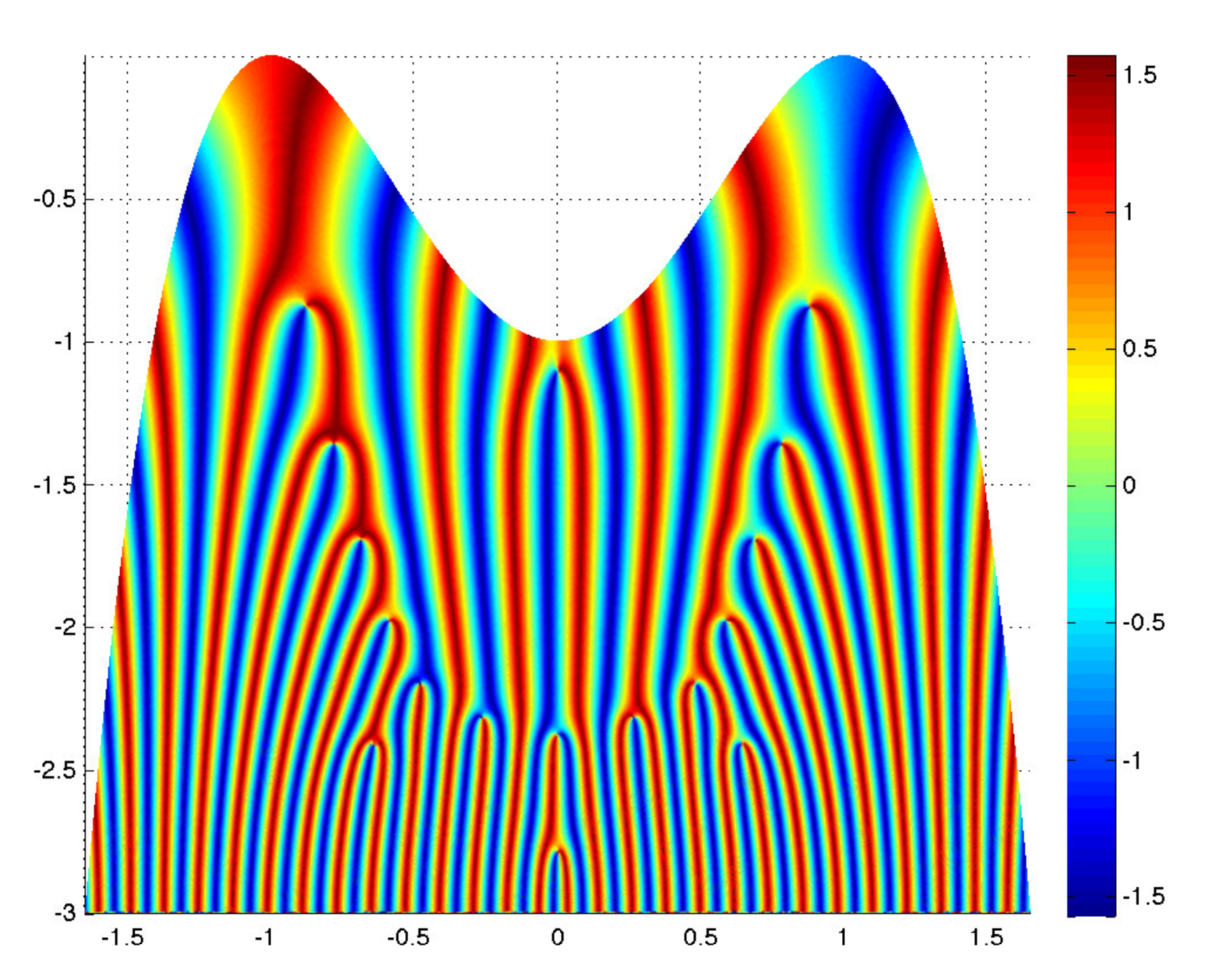}}
\caption{Moduli, log$_{10}$(moduli) and phases of the first two eigenfunctions, $h=\frac{1}{20}$. \label{fig.VecPDoublepuitsbis}}
\end{center}
\end{figure}

\section{Perspectives}
Let us finally provide some perspectives. As we have seen in Section \ref{SS:Agmon}, even in explicit situations, the optimal Agmon estimates are still an open problem. If these estimates are improved, one will obtain an accurate asymptotics of the splitting between the low-lying eigenvalues. Extended enough WKB constructions for computing the exponentially small splitting are related to the holomorphic extensions of the  model operators eigenpairs (for instance the generalized Montgomery operators). Furthermore in the case of curvature induced magnetic bound states, we have proved, at the WKB expansion level, that the effective operator is purely electric so that we can think that the optimal  Agmon estimates are accessible. Numerically, this paper was concerned with one symmetry (camel with two bumps) and we observed that the lowest eigenvalues seemed to be simple. With more symmetries, we expect multiplicity (see Figure \ref{fig.ellipse}). Moreover, in more singular geometrical situations (see \cite{BDMV07}), the WKB structure of the eigenfunctions is not clear at all since there is no obvious dimensional reduction (for example, the case of polygonal domains is based on models on angular sectors).

\begin{figure}[h!t]
\begin{center}
\subfigure[$\frac{\lambda_{n}(h)}{h}$ vs $\frac 1h$.]{\includegraphics[width=5cm]{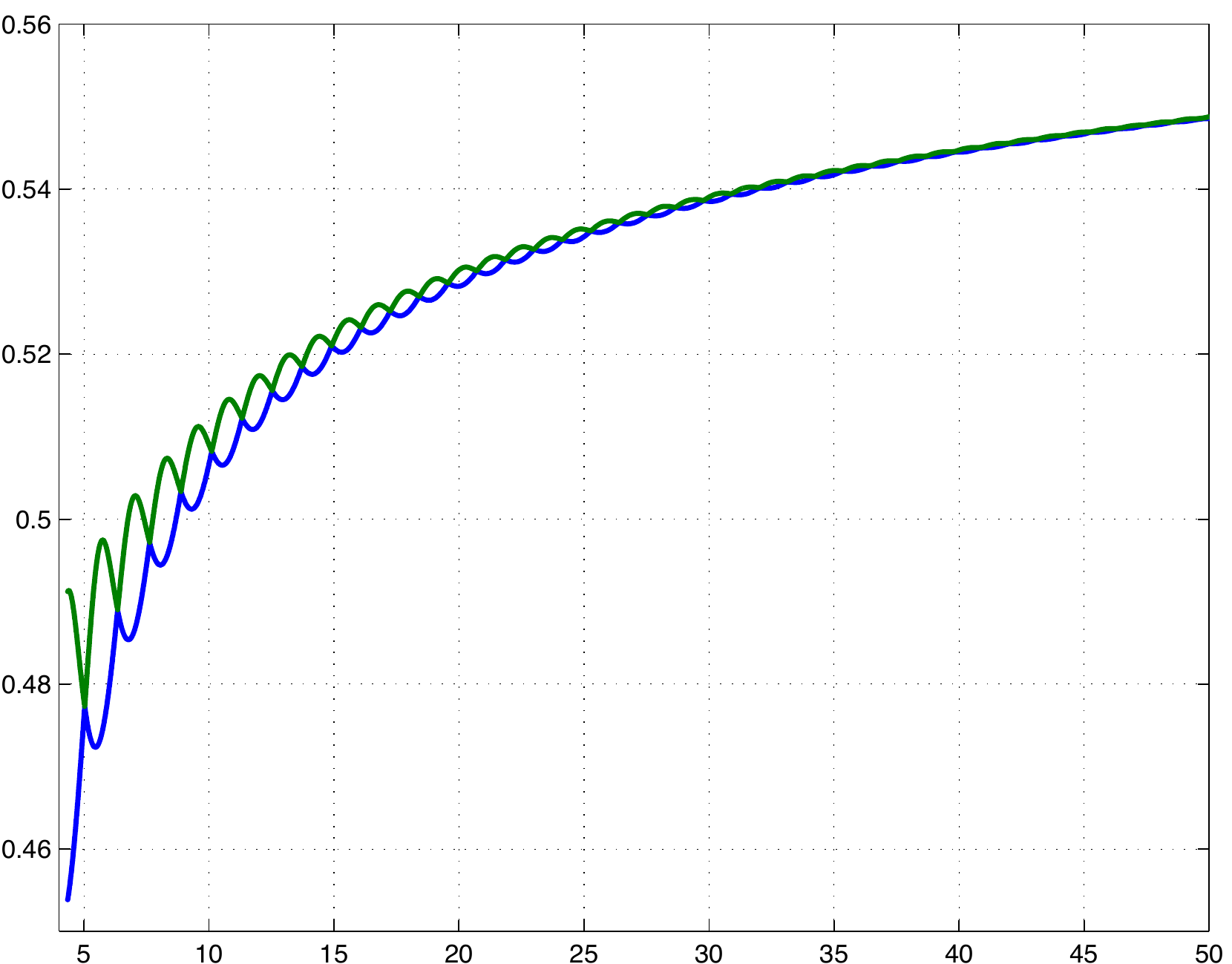}}
\subfigure[First two eigenfunctions, $h=\frac 1{50}$]
{\begin{tabular}{cc}
\ \vspace{-4cm}
\\
\includegraphics[width=4cm]{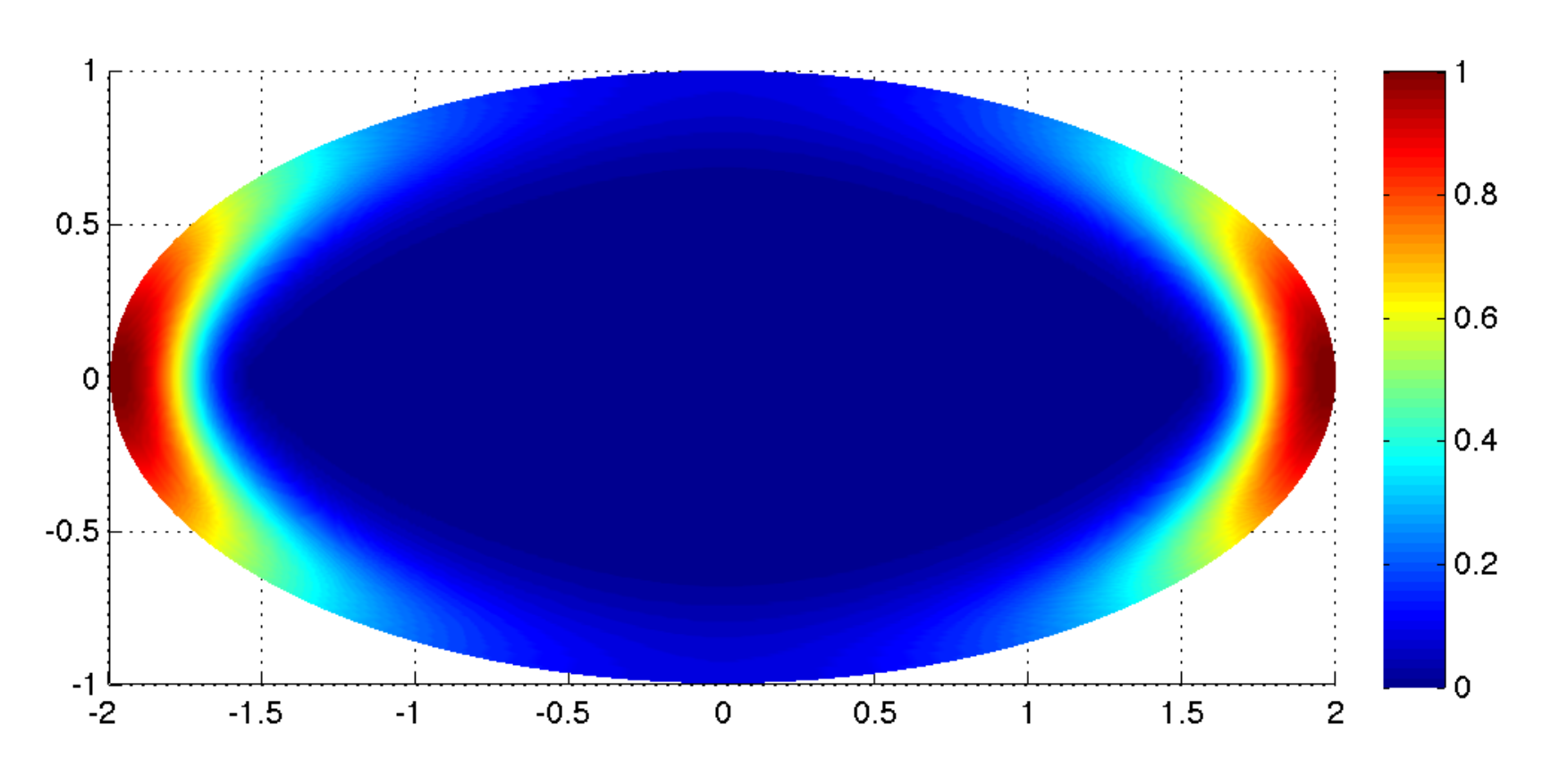}
&\includegraphics[width=4cm]{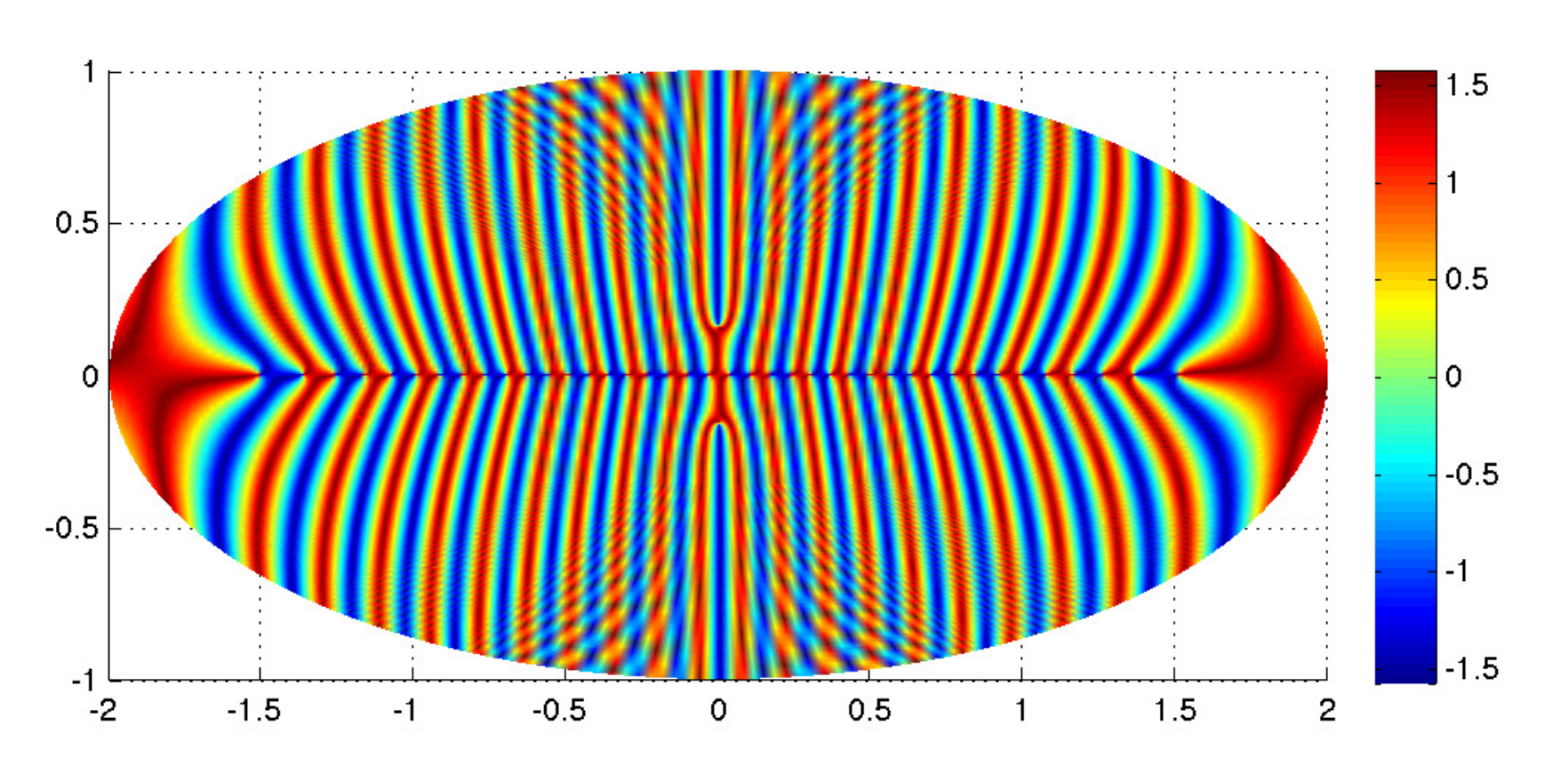}\\
\includegraphics[width=4cm]{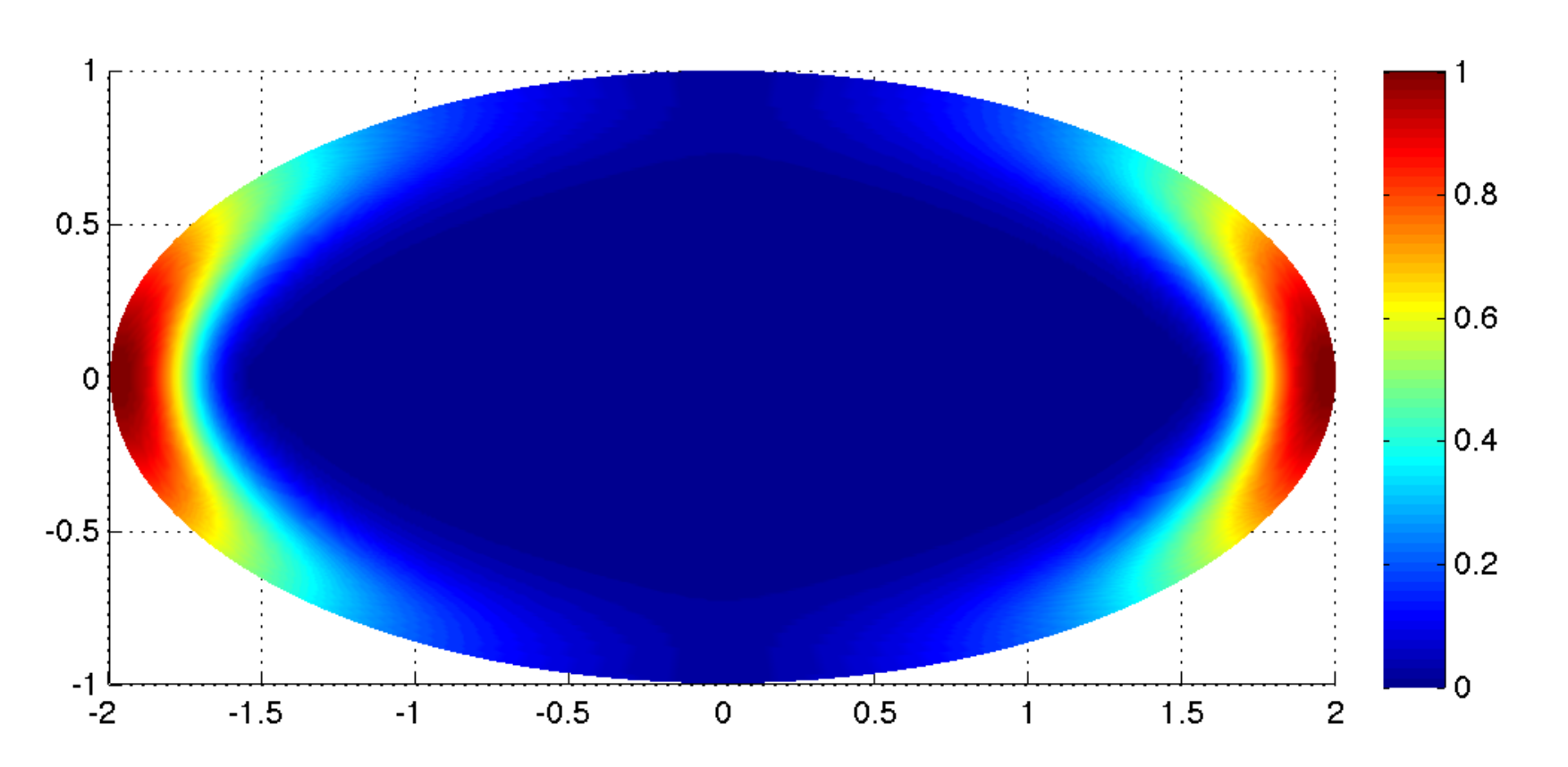}
&\includegraphics[width=4cm]{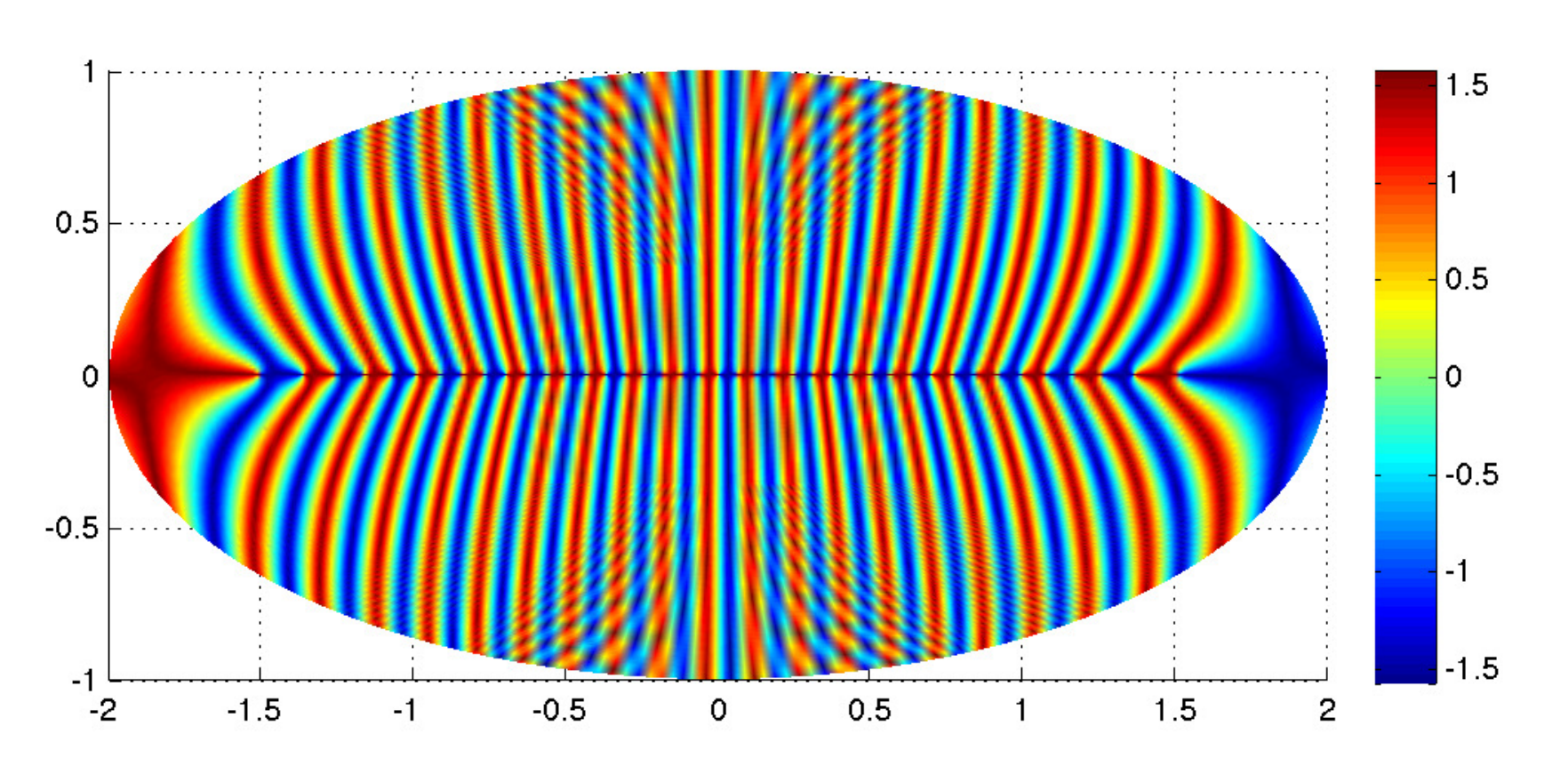}
\end{tabular}
}
\caption{Eigenpairs on the ellipse, $\bA=(-x_{2},0)$. \label{fig.ellipse}}
\end{center}
\end{figure}

\paragraph{Acknowledgments.} 
The authors would like to thank Bernard Helffer and Joseph Viola for fruitful discussions.
This work was partially supported by the ANR (Agence Nationale de la Recherche), project {\sc Nosevol} n$^{\rm o}$ ANR-11-BS01-0019 and by the Centre Henri Lebesgue (program \enquote{Investissements d'avenir} -- n$^{\rm o}$ ANR-11-LABX-0020-01).

\small

\def\cprime{$'$}

\end{document}